\definecolor{region1}{RGB}{35, 150, 255}
\definecolor{region2}{RGB}{0, 160, 150}
\definecolor{region3}{RGB}{255, 165, 79}
\newcommand{\N}{\mathbb{N}}
\newcommand{\R}{\mathbb{R}}
\def\R{{\mathbb R}}
\def\N{{\mathbb N}}
\def\eps{\epsilon}
\date{\today}
\def\cal{\mathcal}
\def\tt{\mathtt}
\newcommand{\var}{{\rm Var}}
\newcommand{\E}{\mathbb{E}}
\newcommand{\h}{{\rm h}}
\newcommand{\lA}[1]{\nocolorref{def  hK}{{\rm h}_K(#1)}}
\newcommand{\rk}[1]{\nocolorref{def rk}{{\rm k}_{#1}}} 
\newcommand{\rkr}{\nocolorref{def  rk}{\rk{\rp}}}
\newcommand{\nocolorref}[2]{\hypersetup{linkcolor=black}\hyperref[#1]{#2}\hypersetup{linkcolor=blue}} 
\newcommand{\maxnorm}[1]{\nocolorref{def  max-norm}{\ensuremath{\|#1\|_{\rm max}}}}
\newcommand{\Unorm}[2]{\nocolorref{def  U-norm}{\ensuremath{\|#1\|}}_{#2}}
\newcommand{\Tnorm}[2]{\nocolorref{eq Tnorm}{\ensuremath{\|#1\|}}_{#2 \otimes \Dm{#2}}}
\newcommand{\CE}[1]{\nocolorref{def  E}{\mathbb{E}_{#1}}}
\newcommand{\EE}[1]{\nocolorref{def  E}{\mathbb{E}^{#1}}}
\newcommand{\D}[1]{\nocolorref{def  E}{\mathbb{D}_{#1}}}
\newcommand{\idtt}[1]{\nocolorref{def  idtt}{\ensuremath{{\bf I}_{#1}}}}
\newcommand{\F}[1]{\nocolorref{def  F}{{\cal F}(#1)}} 
\newcommand{\Fz}[1]{\nocolorref{def  F}{{\mathcal F}_0(#1)}}
\newcommand{\cR}[2]{\nocolorref{def  CR}{{\mathcal R}(#1;#2)}}
\newcommand{\dR}[1]{\nocolorref{def  dR}{{\mathcal R}_{\rp}(#1)}}
\newcommand{\T}[1]{{\nocolorref{def  T}{\mathcal T}_K(#1)}}
\newcommand{\Tp}[1]{{\nocolorref{def  T}{\mathcal T}_{K+1}(#1)}}
\newcommand{\TT}[2]{{\nocolorref{def  Tu}{\mathcal T}_K(#1;#2)}}
\newcommand{\Tr}[1]{{\nocolorref{def  Tu}{\mathcal T}_K\big(#1;[0, \rkr - \rk{#1}]\big)}}
\newcommand{\dT}[1]{{\nocolorref{def  Tu}{\mathcal T}_K\big(#1;[-1, \rkr - \rk{#1}-1]\big)}}
\newcommand{\OO}[2]{{\nocolorref{def  Tu}{ A}(#1;#2)}}
\newcommand{\tm}{{\nocolorref{def tm}{{\tt m}}}}
\newcommand{\Dm}[1]{{\nocolorref{def  Du}{D}_{\tm}(#1)}}
\newcommand{\Deg}[1]{{\nocolorref{def Deg}{\mathcal D}_{1}(#1)}}
\newcommand{\hB}{\nocolorref{def  hB}{\ensuremath{\rm h}^\diamond}}
\newcommand{\PK}[1]{\nocolorref{def  PK}{\ensuremath{{\Pi}_{#1}}}}
\newcommand{\PT}[1]{\nocolorref{def  PT}{\ensuremath{{\overline \Gamma}_{#1}}}}
\newcommand{\PDM}[1]{\nocolorref{def PDM}{\ensuremath{{{\bf \Gamma}}_{#1}}}}
\newcommand{\tlam}{\nocolorref{def  varepsilon}{\tilde \lambda}_{\eps}}
\newcommand{\lame}{\nocolorref{def  varepsilon}{\lambda}_{\eps}}
\renewcommand{\eps}{\nocolorref{def  varepsilon}{\ensuremath{\varepsilon}}}
\newcommand{\CC}{\nocolorref{def  CC}{\ensuremath{{\cal C}}}}
\newcommand{\tC}{\nocolorref{def  CC}{\ensuremath{{\tt C}}}}
\newcommand{\tCB}{\nocolorref{prop PT MAIN}{\ensuremath{{\tt C}^\diamond}}}
\newcommand{\CW}[1]{\nocolorref{def  CW}{\ensuremath{\zeta_{#1}}}}
\newcommand{\dW}[1]{\nocolorref{def  dW}{\ensuremath{\zeta_{#1}}}}
\newcommand{\fc}[1]{\nocolorref{def  anc}{\ensuremath{{\frak c}(#1)}}} 
\newcommand{\tCR}{\nocolorref{def  tCR}{\ensuremath{{\tt C}_{\cal R}}}}
\newcommand{\V}[1]{\nocolorref{def Vk}{\ensuremath{V_{#1}(\rho')}}}
\newcommand{\tCM}{\nocolorref{def  tCM}{\ensuremath{{\tt C}_{M}}}}
\newcommand{\bL}{\mathbf{L}} 
\newcommand{\cW}{\mathcal{W}} 
\newcommand{\RDelta}{\nocolorref{def  RDelta}{\ensuremath{\Delta}}}
\newcommand{\Dt}{\nocolorref{def  Dt}{\ensuremath{\Delta}}}
\newcommand{\pvt}[1]{\nocolorref{def pivot}{\ensuremath{u_{#1}}}} 
\newcommand{\rp}{\rho'} 
\newcommand{\pe}{\preceq}
\newcommand{\step}[1]{ \vspace{0.3cm} \noindent  \underline{#1}: \vspace{0.15cm}}
\newcommand{\anc}[2]{\nocolorref{def anc}{{#1^{(#2)}}}}
\newcommand{\etalchar}[1]{$^{#1}$}
\definecolor{hh}{HTML}{5B68FF}
\definecolor{teal}{HTML}{008080}
\newtheorem{theor}{Theorem}[section]
\newtheorem*{theor*}{Theorem}
\newtheorem*{theorem*}{Theorem}
\newtheorem*{prop*}{Proposition}
\newtheorem*{lemma*}{Lemma}
\newtheorem{lemma}[theor]{Lemma}
\newtheorem{cor}[theor]{Corollary}
\newtheorem{defi}[theor]{Definition}
\newtheorem{prop}[theor]{Proposition}
\newtheorem{fact}[theor]{Fact}
\theoremstyle{definition}
\newtheorem{assumption}[theor]{Assumption}
\newtheorem{rem}[theor]{Remark}
\theoremstyle{plain}
\newtheorem{IH}[theor]{Induction Hypothesis}
\title{Optimal Low degree hardness for Broadcasting on Trees}
\author{
  \begin{minipage}[t]{0.45\textwidth}
    \centering
   Han Huang\\
  University of MIssouri \\
  Columbia, MO \\
  \texttt{hhuang@missouri.edu}
  \end{minipage}
  \hfill
  \begin{minipage}[t]{0.45\textwidth}
    \centering
  Elchanan Mossel \\
  MIT \\
  Cambridge, MA \\
  \texttt{elmos@mit.edu}
  \end{minipage}
}
\begin{document}
\maketitle

\begin{abstract}
    Broadcasting on trees is a fundamental model from statistical physics that plays an important role in information theory, noisy computation and phylogenetic reconstruction within computational biology and linguistics. While this model permits efficient linear-time algorithms for the inference of the root from the leaves, recent work suggests that non-trivial computational complexity may be required for inference.

    The inference of the root state can be performed using the celebrated Belief Propagation (BP) algorithm, which achieves Bayes-optimal performance. Although BP runs in linear time using real arithmetic operations, recent research indicates that it requires non-trivial computational complexity using more refined complexity measures. 

    Moitra, Mossel, and Sandon demonstrated such complexity by constructing a Markov chain for which estimating the root better than random guessing (for typical inputs) is $NC^1$-complete. Kohler and Mossel constructed chains where, for trees with $N$ leaves, achieving better-than-random root recovery requires polynomials of degree $N^{\Omega(1)}$. The papers above raised the question of whether such complexity bounds hold generally below the celebrated Kesten-Stigum bound.

    In a recent work, Huang and Mossel established a general degree lower bound of $\Omega(\log N)$ below the Kesten-Stigum bound. Specifically, they proved that any function expressed as a linear combination of functions of at most $O(log N)$ leaves has vanishing correlation with the root. In this work, we get an exponential improvement of this lower bound by establishing an $N^{\Omega(1)}$ degree lower bound, for any broadcast process in the whole regime below the Kesten-Stigum bound.
\end{abstract}
\maketitle

\begingroup
          \renewcommand\thefootnote{}%
          \footnotetext{The extended abstract of this paper was accepted for presentation at the Conference on Learning Theory (COLT) 2025.}%
          \addtocounter{footnote}{-1}%
\endgroup
\section{Introduction}

\paragraph{Broadcasting on Trees.}
In the \emph{Broadcasting on Trees} (BOT) model, one begins with a $d$-ary tree $T$, assigns a label from a finite state space to the root, and then draws each child's label according to a Markov transition matrix $M$ from its parent. This randomness propagates through each layer down to the leaves, and the central task is to reconstruct the root's label purely from observations at the leaves.

The BOT question arose independently statistical physics~\cite{Higuchi:77,Spitzer:75} as a new question related to phase transitions and in 
 \emph{phylogenetics}~\cite{Farris:73,Neyman:71,Cavender:78} where the goal was to infer ancestral genetic traits from current species. BOT also arises in \emph{community detection} and \emph{network clustering}~\cite{DKMZ:11} and follow up work which impose hierarchical random structures on graphs. More generally, it offers a fundamental example of a Markov Random Field on trees, illustrating how noise or mutations along each edge can affect global correlation between the root and leaves. 

 A landmark result in this area was proven in the language of multi-type branching processes by \cite{KestenStigum:66} which identified a threshold for the qualitative behavior of the limiting distribution of the number of leaves of each type. 
 Two key parameters of the model are tree's arity, $d$, and the magnitude of the second eigenvalue, $\lambda$, of the broadcast chain. The Kesten-Stigum (or KS) threshold $d\lambda^2 =1$ is a key threshold in studying BOT and related problems.
 Formally, if $d\lambda^2>1$, then certain simple (degree‐1) estimators achieve non‐trivial correlation with the root; if $d\lambda^2<1$, they fail. This threshold plays a key role in analyzing BOT and its numerous applications.  We refer the reader to~\cite{Mossel:23} for a survey on the BOT problem. 


\paragraph{Low-Degree Complexity vs.\ Linear-Time Algorithms.}
 The low degree method has emerged as a predictive tool for establishing computational hardness for statistical problems.  
This method postulate that \emph{if any good estimator is a high degree function, then the problem is computationally hard} see e.g.~\cite{BHK+19,hopkins2017efficient,hopkins2018statistical,schramm2020computational}. (see also \cite{kunisky2019notes} for a survey.) 
We note that at least some sense the low-degree framework capture some simple classes of algorithms such as local algorithms~\cite{gamarnik2014limits,chen2019suboptimality} and approximate message passing~\cite{MontanariWein2024}.

Recent works~\cite{KoehlerMossel:22,HanMossel:23} examined BOT from the perspective of \emph{low-degree polynomial} methods for root estimation. There are also related work by~\cite{Mossel:19deep,JKLM:19,MoMoSa:20}. 
From the low-degree method's belief, if polynomials of degree $\omega(\log N)$ fail, one believes it is an indication that the underlying problem “\emph{computationally hard}”, as this regime capture many leading algorithmic approaches such as spectral methods, approximate
message passing and small subgraph counts. In the BOT model, the value $N$ refers to the number of leaves in the tree.  
The work of~\cite{KoehlerMossel:22} showed  polynomials of degree $N^c$ for a small $c > 0$ are not able to correlate with the root label (as $\ell$ tends to $\infty$) but in a very special case of $\lambda = 0$, where one can leverage independence between random variables in the BOT model. Our previous work ~\cite{HanMossel:23} showed that for any polynomial of degree $O(\log N)$, the correlation with the root label vanishes as $\ell \rightarrow \infty$ for general chains across all regimes below the Kesten-Stigum (KS) bound, which marks KS-bound as a threshold for low-degree hardness in BOT but fail to pass the $\omega(\log N)$ mark. 

Yet passing $\omega(\log N)$ seems to clashes with BOT’s known \emph{linear‐time} solution—Belief Propagation (BP)—which perfectly reconstructs the root. 
How can a problem with a fast exact algorithm appear “hard” under the low‐degree lens? 
The above series of papers are based on the following premise: While BP runs in linear time, it does require ``depth" below the KS bound~\cite{MoMoSa:20,KoehlerMossel:22,HanMossel:23}.
Interestingly, this behavior parallels certain phenomena in deep learning, where multiple layers are indispensable for success.


\paragraph{Our Contribution}
From a technical perspective, it is challenging to prove low degree hardness in this setting as intuitively, there is an inherent tension between the existence of linear time algorithm and the desire to establish computational lower bounds. Another technical challenge is the global dependency
between variables. We do not know of a useful way to map this problem to a setting of independent random variables as was done in the study of low-degree hardness, see ~\cite{schramm2020computational} and the references
within. In this paper, we resolve the main open question regarding low‐degree hardness for BOT in full generality. 

\begin{theor*}[Informal]
  For any BOT model with \emph{general}(ergodic) Markov Chains below the KS- threshold, any function expressible as a linear combination of functions of at most $N^c$ leaves (for some $c>0$) has \emph{vanishing} correlation with the root.  
\end{theor*}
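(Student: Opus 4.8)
Writing the leaf count as $N=d^{\ell}$ for the depth-$\ell$, $d$-ary tree and $\pi$ for the stationary law of the broadcast matrix $M$, the plan is to reduce the statement to a single sharp estimate on belief propagation and then to prove that estimate by a recursion over depth.

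\emph{Reduction to the dominant eigenvalue.} A standard Bayes/second-moment argument reduces ``$f$ has vanishing correlation with $\sigma_{\rho}$'' to bounding $\Exp[f\cdot g(\sigma_{\rho})]$ for $f$ a linear combination of functions of at most $k=N^{c}=d^{c\ell}$ leaves with $\|f\|_{2}\le 1$, and $g$ ranging over the (generalized) eigenvectors of $M$ orthogonal to the constants. Strictly below Kesten--Stigum, every such eigenvalue $\mu$ satisfies $d|\mu|^{2}\le d\lambda^{2}\le 1-\delta$ for a fixed $\delta=\delta(M,d)>0$, $\lambda$ being the second-largest eigenvalue in modulus; for non-normal $M$ the generalized eigenvectors contribute only $\mathrm{poly}(\ell)$ factors, which are harmless, so I will take $g=x$, the (normalized, mean-zero) eigenvector of $\lambda$.

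\emph{Passing to the low-degree part of BP.} Let $h_{\ell}:=\Exp[x(\sigma_{\rho})\mid\text{leaves}]$ be the Bayes-optimal (belief-propagation) estimator of $x(\sigma_{\rho})$ and let $V_{\le k}\subseteq L^{2}(\text{leaf law})$ be the span of all functions of at most $k$ leaves; since $f\in V_{\le k}$ we have $\Exp[f\,x(\sigma_{\rho})]=\langle f,h_{\ell}\rangle=\langle f,\Pi_{\le k}h_{\ell}\rangle$ with $\Pi_{\le k}$ the orthogonal projection onto $V_{\le k}$, so it suffices to show $\|\Pi_{\le k}h_{\ell}\|_{2}\to 0$, i.e.\ that the weight of belief propagation on ``at most $k$ leaves'' vanishes. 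To run a recursion I would first build a graded orthogonal decomposition $\varphi=\sum_{S}\varphi^{=S}$ of $L^{2}(\text{leaf law})$ indexed by leaf-subsets, with $\|\varphi\|_{2}^{2}=\sum_{S}\|\varphi^{=S}\|_{2}^{2}$ and $\varphi^{=S}=0$ for $|S|>k$ whenever $\varphi\in V_{\le k}$; the leaf law is not a product measure, but it is a product over the $d$ principal subtrees once one conditions on the root, and iterating this down the tree yields such a decomposition.

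\emph{The recursion over depth (the core).} Track the weight profile $w^{(\ell)}_{m}:=\sum_{|S|=m}\|h_{\ell}^{=S}\|_{2}^{2}$. Conditioning the root on its children makes the subtrees independent, and $h_{\ell}$ is then one step of the analytic BP map applied to the identically distributed subtree estimators $h_{\ell-1}^{(1)},\dots,h_{\ell-1}^{(d)}$; expanding that map multilinearly, a set $S$ of leaves splits as $S=S_{1}\sqcup\dots\sqcup S_{d}$, each active subtree costs a factor $\lambda$ through its edge to $\rho$, and degrees add, $|S|=\sum_{i}|S_{i}|$. This gives, schematically,
\[
w^{(\ell)}_{m}\;\le\;\sum_{a\ge 1}c_{a}(\ell,m)\,\lambda^{2a}\!\!\!\sum_{\substack{m_{1}+\dots+m_{a}=m\\ m_{i}\ge 1}}\ \prod_{i=1}^{a}w^{(\ell-1)}_{m_{i}},
\]
where the $a=1$ term (one active subtree, $d$ choices of it) already contracts by $\asymp d\lambda^{2}\le 1-\delta$, and the $a\ge 2$ terms are ``spread out'': the degree budget $m$ is split among $\ge 2$ subtrees, each getting $<m$. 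Iterating $\ell$ times from $w^{(1)}$ (supported on degrees $\le d$, total mass $\le 1$) and summing over $m\le k$ yields a bound of the shape $\mathrm{poly}(k)\,(1-\delta)^{\Omega(\ell)}=d^{O(c\ell)}(1-\delta)^{\Omega(\ell)}\to 0$ once $c=c(d,\lambda,\delta)$ is small enough. Below depth $\approx\log_{d}k=c\ell$ every subtree carries at most one leaf of $S$, which is exactly what keeps the split-sums and the coefficients $c_{a}(\ell,m)$ tame and pins the admissible degree at $N^{\Omega(1)}$.

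\emph{Main obstacle.} The hard part is not the single-level estimates or the eigenvalue reduction but making this depth recursion lossless enough to close while still seeing the low-degree restriction. One must (i) control the nonlinear terms of the BP map and the ``collision'' contributions, where several leaves of $S$ land in one subtree --- these should only help, paying extra $\lambda$'s along shared ancestral edges, but one has to check they do not inflate the $\ell^{2}$ normalization; (ii) stop the combinatorial coefficients $c_{a}(\ell,m)$ from compounding over the $\ell$ levels (they are benign precisely because $m\le N^{c}$, and this has to be exploited level by level --- at degree $m\asymp N$ they genuinely blow up, which is why the theorem is false for $k=N$); and (iii) package everything into a potential that simultaneously dominates the correlation, is monotone under the BP map, and factorizes over subtrees, which is the role of the recursively defined objects ($\mathcal{T}_{K}$, $\mathcal{R}(\cdot;\cdot)$, the pivots) set up below. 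Verifying that this recursion closes uniformly over all chains strictly below Kesten--Stigum --- including those above the reconstruction threshold, where $\|h_{\ell}\|_{2}$ does not vanish --- is where essentially all the work lies.
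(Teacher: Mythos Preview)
Your approach is fundamentally different from the paper's, and the obstacles you flag in your ``Main obstacle'' section are precisely the ones the paper's machinery is built to avoid---they are not minor technical points but the heart of the problem.

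First, on the graded orthogonal decomposition $\varphi=\sum_S\varphi^{=S}$ with $\|\varphi\|_2^2=\sum_S\|\varphi^{=S}\|_2^2$: the paper explicitly states (Section~1) that ``we do not know of a useful way to map this problem to a setting of independent random variables.'' Conditioning on the root gives independence of subtrees, yes, but then your decomposition and your norms depend on the root value, and you cannot simply average back---the whole point is that conditioning on the root is exactly what you are trying to distinguish. The paper handles this by working with norms on tensor-product spaces $\T{A}$ for antichains $A$, comparing them to the true $\|\cdot\|_{\rho}$ norm only \emph{after} establishing decay (Lemma~\ref{lemma: coreNorm}), and never attempting a global Efron--Stein decomposition.

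Second, and more seriously, your BP recursion does not close. The BP map is a rational function of the subtree posteriors, not a multilinear one; ``expanding multilinearly'' produces infinitely many terms, and the normalization is exactly what makes the nonlinear contributions non-negligible below KS when reconstruction is possible. Moreover, even granting a schematic recursion of your form, the coefficients $c_a(\ell,m)$ compound multiplicatively over $\ell$ levels, and you give no mechanism for why they stay $d^{O(c\ell)}$ rather than $d^{\Omega(\ell)}$. This is essentially the barrier that capped the prior work of Huang--Mossel at $\Omega(\log N)$: their decomposition required each piece $f_u$ to depend only on leaves below $u$ (your ``single active subtree''), and iterating that only doubles the degree additively, not multiplicatively.

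The paper's route is entirely different: it never touches BP. It inducts on $K=\log_2(\text{degree})$, defining for each $K$ a height threshold $\h_K$ above which a uniform bound $\maxnorm{\D{u}(fg)}\le e^{-\varepsilon(\h(u)-\h_K)}\|f\|_u\|g\|_u$ holds for all $f,g\in\T{u}$. The inductive step shows $\h_{K+1}-\h_K\le C$, which is what gives degree $2^{\Omega(\ell)}=N^{\Omega(1)}$. The key technical device is the decomposition $f=\sum_u f_u$ with $f_u\in\dR{u}$ where $f_u$ is allowed to depend on leaves \emph{outside} $L_u$ (through the antichain $A(u)$), and correlation decay between $f_u$ and $f_v$ is extracted not from variable separation but from the $\mathcal{R}$-space orthogonality (Lemma~\ref{lem PK_Orthogonal}) combined with submultiplicativity of tensor-operator norms (Section~\ref{sec T}). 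This ``separation-degree-density'' decomposition is the new idea; your proposal does not contain an analogue of it.
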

This matches the performance of Belief Propagation, which uses \emph{all} $N$ leaves and succeeds in linear time. Consequently, we obtain an \emph{optimal} low-degree hardness result for BOT.


Our work provides exponential improvement over the degree lower bound established in our previous work~\cite{HanMossel:23}. Our result shows that the KS-bound represents a sharp transition from feasibility via degree‐1 estimators (simply counting leaf types) \emph{above} KS to exponential‐degree requirements \emph{below} it.
This exponential improvement also ``properly'' establish computatioinal-hardness for low-degree polynomials below the KS-bound. 
Thus, our result marks BOT as an interesting counterexample to the ``computational-hardness'' belief on low-degree analysis.
 As we mentioned earlier this is an analogy of a real phenomena that is observed but not theoretically understood for deep nets. We explain this in a bit more detail below: 

\paragraph{A Viewpoint in Terms of Neural Network Depth.} The Belief Propagation (BP) algorithm on the BOT (Broadcasting on Tree) model can be presented as a feed-forward model with the computational graph the same as the tree. In other words, this can be interpreted as a neural network of depth $\ell$ (one layer per tree level), with a linear number of parameters in the number of leaves (where one uses universal approximation via ReLU gates to approximate the actual activation function at each node).



Our low‐degree hardness results imply that, below the KS threshold, this \emph{depth} might be essential. If we restrict to bounded‐degree polynomial activations (say of degree bounded by $k$), then any network with fewer than $c\ell$ layers can be expressed as a polynomial of degree $\lesssim k^{c\ell}\approx N^{\log(k)\cdot c/\log(d)}$, which is insufficient to correlate with the root from our result. Thus “shallow” networks fail,   
whereas a “deep” network—like one implementing Belief Propagation—succeeds with linear in $N$ pararmeters. 

This viewpoint bridges classical message‐passing on trees with modern insights into the power of depth in neural networks. Proving depth lower bounds for more general activation functions (such as ReLU or Majority gates) remains notoriously difficult, but the low‐degree paradigm offers a conjectural signature of the depth requirement below the threshold.

\subsection{Additional Background}

 A fundamental result in this area \cite{kesten1966additional}, proven by Kesten and Stigum, is that when $d |\lambda|^2 > 1$  nontrivial reconstruction of the root is possible using a linear estimator in the number of the leaves taking different values, whereas when $d |\lambda|^2 < 1$ such linear estimators have no mutual information with the root.

This threshold $d |\lambda|^2 = 1$ is known as the \emph{Kesten-Stigum threshold}.
A series of works showed that the KS threshold is the information theory threshold for non-trivial root inference for some specific channels, including the binary symmetric
channel~\cite{BlRuZa:95,EvKePeSc:00,Ioffe:96a,Ioffe:96b} and binary channels that are close to symmetric~\cite{BCMR:06}, as well as $3 \times 3$ symmetric channels for large $d$~\cite{Sly:09}.

While the Kesten-Stigum bound is easy to compute, it turns our that in many cases, it is {\em not}  the information-theoretic threshold for root recovery.
This was first established in~\cite{Mossel:01} for symmetric channels with sufficiently many states and later shown for symmetric channels with $q \geq 5$ states in~\cite{Sly:09}. Recent results~\cite{MoSlSo:23} provide more information about the case of
$q=3$ and $q=4$. Many of the finer results in this area prove predictions from statistical physics. The connection between the broadcast problems and phase transitions in statistical physics was made in~\cite{MezardMontanari:06}. More recent predictions include~\cite{Moore:17,AbbeSandon:18,RiSeZd:19}. We also note that already~\cite{Mossel:01}
showed that there are many channels where non-trivial inference of the root is possible, yet $|\lambda| = 0$. Much of the interest in Kesten-Stigum threshold comes from the fundamental role it plays in problems, such as algorithmic recovery in the stochastic block model \cite{DKMZ:11,MoNeSl:15,BoLeMa:15,MoNeSl:18,abbe2017community} and phylogenetic reconstruction \cite{Mossel:04a}.

In~\cite{KoehlerMossel:22} it was shown that $\lambda = 0$ even polynomials of degree $N^c$, where $N = d^\ell$ is the number of leaves of for a $d$-ary tree of depth $\ell$, for a small $c > 0$ are not able to correlate with the root label (as $\ell$ tends to $\infty$) whereas computationally efficient reconstruction is generally possible as long as $d$ is a sufficiently large constant~\cite{Mossel:01}.

The main motivation of~\cite{KoehlerMossel:22} was to prove that low degree polynomials fail below the Kesten Stigum bound:
``It is natural to wonder if the Kesten-Stigum threshold $d|\lambda|^2 = 1$ is sharp for low-degree polynomial reconstruction, analogous to how it is sharp for robust reconstruction."
However the main result of~\cite{KoehlerMossel:22} only established this in the very special case of $\lambda = 0$.
This problem is also stated in the ICM 2022 paper and talk on the broadcast process~\cite{Mossel:23}:
``
The authors of~\cite{KoehlerMossel:22} ask if a similar phenomenon holds through the non-linear regime. For example,
is it true that polynomials of bounded degree have vanishing correlation with $X_0$ in the regime where $d \lambda^2 < 1$?
"
In a recent work by ~\cite{HanMossel:23}, it was shown that for any polynomial of degree $O(\log N)$, where $N = d^\ell$ is the number of leaves of a $d$-ary tree of depth $\ell$, the correlation with the root label vanishes as $\ell \rightarrow \infty$.



We note that predictions in statistical physics related the computational complexity of the root inference in BOT to the computational complexity of inference problems related to the block model, see e.g.~\cite{MezardMontanari:06,decelle2011asymptotic} and follow up work.
While we are not aware of conjectures directly relating the low degree hardness of inference in BOT and the low degree hardness of problems in community detection, we note that some of the foundations results on SoS and low degree hardness established low-degree lower bound for problems associated with community detection starting with~\cite{hopkins2017efficient,hopkins2018statistical}.
\subsection{Definitions and Main Result}
\paragraph{Tree Notations}
Let $T$ be a rooted tree with root vertex $\rho$. We define a natural \emph{partial ordering} $\preceq$ on the vertex set, denoted as $V(T)$, as follows:
\begin{align}
    \label{defi partialOrder}
    v \preceq u
\end{align}
for any two vertices $u, v \in V(T)$ if $u$ lies on the (unique) path from $v$ to the root $\rho$. In this case, $u$ is called an \emph{ancestor} of $v$, and $v$ is called a \emph{descendant} of $u$. In particular, if $(v, u)$ is an edge in $T$, then $v$ is called a \emph{child} of $u$, and $u$ is the \emph{parent} of $v$.

For a vertex $u$, the \emph{$k$-th descendants} of $u$ are the set of vertices $v \in V(T)$ such that $v \preceq u$ and the path from $v$ to $u$ contains exactly $k$ edges. If $u$ has no \emph{children}, we call $u$ a \emph{leaf}. The set of leaves is denoted by $L$.
We also refer the set of $k$-th descendants of the root $\rho$ as the \emph{$k$-th layer} of $T$. The depth of $T$, denoted by $\ell$, is the maximum layer of $T$. In the context of broadcasting on trees, the set of leaves $L$ coincides with the $\ell$-th layer of the tree. We assume that every vertex not in the $\ell$-th layer has at least one child.

Additionally, define the height of a vertex $u$, denoted by $\h(u)$, as:
\[
    \h(u) := \ell - \text{(the layer of } u) = \text{graph distance from } u \text{ to the leaves } L.
\]

In this paper, we will consider trees of the following types.
\begin{defi}
    A rooted tree $T$ with root $\rho$ has degree dominated by $d \ge 1$ with parameter $R \ge 1$ if for every vertex $u$ and positive integer $k$, the number of $k$th descendants of $u$ is at most $Rd^k $.
\end{defi}


 \begin{figure}[h]
 
     \centering
     \includegraphics[width=0.7\textwidth]{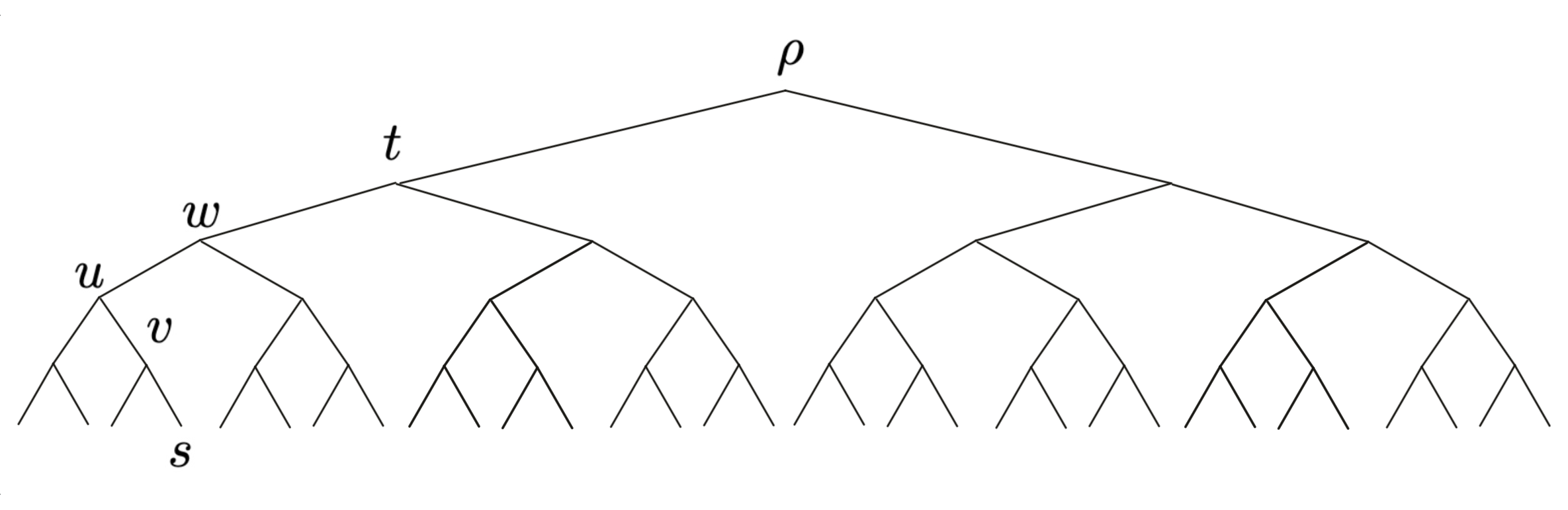} 
     \caption{ An example of a binary rooted tree of depth 5 is shown.
         The vertex $u$ is at the 3rd layer and $\h(u)=2$. Further, the following relationships hold:  $v < u$ and $v$ is a child of $u$,  $s \in L$ is a 2nd descendant of $u$, $w$ is the parent of $u$, and $t$ is the 2nd ancestor of $u$.}
     \label{fig:tree}
 \end{figure}

\paragraph{Broadcasting Process.}
We have a finite state space $[q]$ and an ergodic $q \times q$ transition matrix $M$ with stationary distribution~$\pi$.  The process $(X_u)_{u \in V(T)}$ is defined by drawing $X_\rho \sim \pi$ at the root and then propagating labels down each edge independently according to $M$.  That is, for each $u \in V(T)$ with a child $v$, given $X_u=i$, we draw $X_v=j$ with probability $M_{ij}$. The formal definition (with arbitrary initial distribution) is given below:  
\begin{defi}
    A \emph{broadcasting process} on a rooted tree $T$ is a random process $X = (X_u)_{u \in V(T)}$ with state space $[q]$, transition matrix $M$, and initial distribution $\mu$, defined as follows:
    \[
        \forall x = (x_u)_{u \in V(T)} \in [q]^{V(T)}, \quad
        \mathbb{P}[ X = x ] = \mu(x_\rho) \prod_{(u,v) \in E(T)} M_{x_u x_v},
    \]
    where the product is taken over all edges $(u,v)$ with $v$ being a child of $u$.
\end{defi}
In this rest of the paper, we reserve the notation $X = (X_u)_{u \in V(T)}$ for the broadcasting process on $T$ with initialization $X_\rho \sim \pi$. Note that with this choice of initialization, $X_u \sim \pi$ for all $u \in V(T)$.
\begin{rem} [Markov Property] \label{rem:MarkovProperty}
    The broadcasting process establishes a \emph{ Markov Random Field} on tree $T$: Given any three disjoint subsets $A, B,$ and $C$ of $V(T)$, if every path from a vertex in $A$ to a vertex in $C$ passes through a vertex in $B$, then the random variables $X_A = (x_u)_{u \in A}$ and $X_C = (x_u)_{u \in C}$ are conditionally independent given $X_B = (x_u)_{u \in B}$.
\end{rem}
A natural notion of degree in this setting is:
\begin{defi} [Efron-Stein Degree]
    A function $f$ with variables $x_L = (x_v)_{v\in L}$ is said to have \emph{Efron-Stein} degree at most $k$ if it can be expressed as a finite sum of functions, each depending on no more than  $k$ variables. Formally, this means:
    \begin{align*}
        f(x_L) = \sum_{S\subseteq L, |S| \le k} \phi_S(x_S),
    \end{align*}
    where $\phi_S$ is a function of $x_S$.
\end{defi}
We can now state our main result:
\begin{theor}
    \label{thm: main}
    Consider a broadcasting process $X$ on a rooted tree $T$ with root $\rho$. The tree has $\ell$ layers and its degree is dominated by $d$ with a parameter $R \ge 1$. The transition matrix $M$ is ergodic, and the initial state $X_\rho$ follows the stationary distribution $\pi$.
    Let $\lambda$ be the second largest eigenvalue of $M$ in absolute value.
    If $d\lambda^2<1$, then there exists a constant $c = c(M,d)>0$ such that the following holds:
    For any polynomial $f$ of the leave values $(X_v)_{v\in L}$ with degree bounded by
    $\exp\Big(c\ell / (\log(R)+1) \Big),$
    we have
    \begin{align*}
        \var[ \E[ f(X_{L}) \,|\, X_{\rho}] ]
        \le
        \exp(-c\ell)
        \var[f(X_{L})].
    \end{align*}
\end{theor}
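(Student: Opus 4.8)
The plan is to reduce the statement about arbitrary-degree-$k$ polynomials to a statement about the correlation of \emph{single} small leaf-sets with the root, and then to iterate a contraction estimate level by level. First I would pass to the Efron-Stein / Fourier decomposition of $f$ with respect to the product of per-leaf eigenbases of $M$: since $M$ is ergodic with stationary distribution $\pi$, for each leaf $v$ we have an orthonormal basis of $L^2(\pi)$ consisting of (left) eigenfunctions, and any polynomial of the $(X_v)_{v\in L}$ of degree $\le k$ can be written as a linear combination of tensor products $\bigotimes_{v\in S}\psi_v$ with $|S|\le k$. By orthogonality of the Efron-Stein pieces under $\E[\,\cdot\,\mid X_\rho]$ (the conditional expectation respects the grading because the broadcast process is a Markov random field on the tree), it suffices to bound, for each such basis product supported on a set $S$ with $|S|\le k$, the ratio $\var(\E[\,\cdot\mid X_\rho])/\var(\cdot)$, and then sum. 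So the crux becomes: \emph{for every set $S$ of at most $k$ leaves, the ``information'' that $X_S$ carries about $X_\rho$ decays exponentially in $\ell$, at a rate that beats the number of such sets, which is at most $(Rd^\ell)^k$.}

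Next I would exploit the tree geometry together with $d\lambda^2<1$. Fix $S$ with $|S|\le k$. Run the standard ``pruning'' argument: walk up from the root and look at the subtree spanned by $S$; because $|S|$ is small and the tree has degree dominated by $d$, the Steiner tree of $S\cup\{\rho\}$ has few branch vertices, and along long edge-paths the broadcast channel contracts eigen-correlations by $|\lambda|$ per level. Quantitatively, conditioning on $X_\rho$ and propagating, the covariance contributed by the basis product on $S$ is controlled by $\lambda^{(\text{total path length from }\rho\text{ to }S)}$ times combinatorial factors coming from how the $|S|$ paths merge; the total path length is at least of order $\ell$ (each leaf is at depth $\ell$), and more carefully, summing over the at most $Rd^\ell$ choices for \emph{each} of the $k$ leaves in $S$ and using $d\lambda^2<1$, i.e.\ $d|\lambda|^2 = e^{-\delta}$ for some $\delta>0$, one gets a bound of the form $\big(R\,(d\lambda^2)^{\ell/2}\big)^{k}\cdot(\text{poly factors}) = \big(R\,e^{-\delta\ell/2}\big)^{k}$ per set, and then $(Rd^\ell)^k$ sets. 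For this to be $\le e^{-c\ell}\var(f)$ we need $k$ small enough that $R^{O(k)} e^{-c'\ell k}\cdot d^{\ell k}$ is still tiny — balancing the $d^{\ell k}$ growth against the $e^{-\delta \ell k/2}$ decay forces $k \le \exp(c\ell/(\log R+1))$, which is exactly the theorem's bound (the $\log R$ in the denominator is precisely what absorbs the $R^{k}$ factor). I would organize this as: (i) a one-set lemma giving exponential-in-$\ell$ decay with an explicit $(d\lambda^2)^{\Theta(\ell)}$ rate, proved by an inductive sweep over the layers using the Markov property and the spectral gap of $M$; (ii) a union/counting step over all $\binom{|L|}{\le k}$ sets; (iii) a final choice of $c$.

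The one-set lemma (step (i)) is where the real work lies, and it is essentially the technical heart of the paper: naively bounding the covariance of $X_S$ with $X_\rho$ by $|\lambda|^{\ell}$ per leaf and multiplying gives $|\lambda|^{\ell|S|}$, which is far too lossy once we multiply by $d^{\ell|S|}$ choices — we would need $d|\lambda|<1$, not $d\lambda^2<1$. To get the correct exponent one must use the $L^2$/second-moment structure: the relevant quantity is $\var(\E[\phi_S\mid X_\rho])$, whose leading term behaves like a \emph{square} of a propagated eigen-amplitude summed over configurations, which is what converts $|\lambda|^\ell$ into $|\lambda|^{2\ell}$ and lets $d\lambda^2<1$ do its job; implementing this carefully in the presence of non-reversible $M$ (so left vs.\ right eigenvectors, possibly complex $\lambda$, Jordan blocks if $M$ is not diagonalizable) and of the non-regular tree (degree only \emph{dominated} by $d$, with the slack parameter $R$) is the main obstacle. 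I would handle non-diagonalizability by working with a spectral decomposition into $M$-invariant subspaces and absorbing polynomial-in-$\ell$ Jordan factors into the constant, and handle irregularity by carrying the factor $R$ through every layer's count, which is the source of the $(\log R + 1)$ in the degree bound. Everything else — the orthogonality of Efron-Stein levels under $\E[\cdot\mid X_\rho]$, the reduction from polynomials to tensor-product basis functions, and the final union bound — is routine once the one-set estimate is in hand.
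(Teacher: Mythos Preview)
Your proposal has a fundamental gap at the very first step. You write that you will ``pass to the Efron-Stein / Fourier decomposition of $f$ with respect to the product of per-leaf eigenbases of $M$'' and then invoke ``orthogonality of the Efron-Stein pieces under $\E[\,\cdot\,\mid X_\rho]$.'' But the leaves $(X_v)_{v\in L}$ are \emph{not} independent: they are globally correlated through the tree. The Efron-Stein basis is orthogonal only under the product measure $\pi^{\otimes L}$, not under the true law of $X_L$ (conditionally on $X_\rho$ or not). So neither $\var(f)=\sum_S\var(\phi_S)$ nor $\var(\E[f\mid X_\rho])=\sum_S\var(\E[\phi_S\mid X_\rho])$ holds, and the reduction ``it suffices to bound, for each basis product on $S$, the ratio and then sum'' collapses: the cross terms between different $S$ are exactly what carries the problem. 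The paper flags this explicitly in the overview (``we do not know of a useful way to map this problem to a setting of independent random variables'') and remarks that no tractable (even rough) orthogonal basis is available once the degree is large.

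Because of this, the paper's argument looks nothing like a union bound over leaf-sets. It is an induction on $K$ where the working hypothesis is an operator-type inequality: for every $u$ with $\h(u)\ge \h_K$ and every $f,g\in\mathcal T_K(u)$, $\|\mathbb D_u(fg)\|_{\max}\le e^{-\varepsilon(\h(u)-\h_K)}\|f\|_u\|g\|_u$. The base case $K=0$ is essentially your ``one-set lemma'' (and already requires care, since $\sum_u\E f_u^2=O(\E f^2)$ is nontrivial when the $f_u$ are correlated). The inductive step decomposes $f\in\mathcal T_{K+1}(\rho')$ not over leaf-sets but over \emph{tree vertices} $u$, with $f_u$ lying in carefully constructed $\mathcal R$-spaces (projections onto the complement of degree-$2^K$ inside larger tensor products $\bigotimes_{v\in A(u)}\mathcal T_K(v)$). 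The correlation between different $f_u,f_v$ is controlled not by orthogonality but by submultiplicativity of tensor-operator norms combined with the Markov-chain contraction along the path to the nearest common ancestor; this is what produces the $\lambda^{\h(w)-\h(u)}e^{-\varepsilon(\h(u)-\h_K)}$ factors and ultimately gives $\sum_u\|f_u\|^2\asymp\|f\|^2$ and $\h_{K+1}\le \h_K+C(\log R+1)$. Your proposed arithmetic (balancing $d^{\ell k}$ against $e^{-\delta\ell k/2}$) never enters, and the $\exp(c\ell/(\log R+1))$ degree bound comes instead from iterating the doubling $2^K\mapsto 2^{K+1}$ for $K\asymp \ell/(\log R+1)$ steps.
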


\begin{rem}
    We note that the number of leaves $N := |L| \le Rd^\ell$. Thus, the degree of the polynomials can be as high as a polynomial $N^{c'}$, with the exponent $c'$ depending on $M$ and $d$, but not on the depth of the tree.
\end{rem}
\begin{rem} [Variance Decay implies Vanishing Correlation with Root]
    With the same setting as in Theorem~\ref{thm: main}, for any function $f(x_L)$ of Efron-Stein degree $\le \exp( \frac{c \ell}{\log(R)+1})$, and any function $g(x_\rho)$ of the root value, we can apply conditional expectation and Cauchy-Schwarz inequality to get
    \begin{align*}
        |{\rm Corr}(f(X_L),g(X_\rho)) |
        \le & \exp(-c\ell/2)\,.
    \end{align*}
\end{rem}

\subsection{Proof Overview}
To outline the proof, we begin by introducing some basic notations and definitions.
For any vertex $u \in V(T)$, let $L_u := \{v \in L : v \preceq u\}$ denote the set of leaves that are descendants of $u$.
Further, define
\[
    T_u := \text{the subgraph of } T \text{ induced by the set } \{v \in V(T) : v \preceq u\},
\]
which forms a subtree of $T$ rooted at $u$. Next, for any subset $U \subseteq V(T)$, let $\mathcal{F}(U)$ represent the set of functions of the variables $x_U := (x_v)_{v \in U} \in [q]^U$. For simplicity, we will use the notation:
\[
    \mathcal{F}(u_{\preceq}) := \mathcal{F}( \{ v \in V(T) : v \preceq u \}).
\]

Further, in this paper, we interpret (conditional) expectations as linear maps acting on function spaces:
\begin{defi}
    For each $u \in V(T)$, we define the following linear maps:
    \[
        \CE{u}: \mathcal{F}(u_{\preceq}) \rightarrow \mathcal{F}(u), \quad \EE{u}: \mathcal{F}(u_{\preceq}) \rightarrow \mathbb{R},
        \quad and \quad \D{u} : \mathcal{F}(u_{\preceq}) \rightarrow \mathcal{F}(u)\,.
    \]
    These maps are defined as follows:
    Let $Y = (Y_v)_{v \preceq u}$ be a broadcasting process on the subtree $T_u$ with transition matrix $M$.
    \begin{enumerate}
        \item $\CE{u}$: For each $x_u \in [q]$ and $f \in \F{u_{\preceq}}$,
              \[
                  (\CE{u} f)(x_u) := \mathbb{E}[ f(Y) ] \mbox{ with initialization } Y_u = x_u.
              \]
        \item $\EE{u}$: For $f \in \F{u_{\preceq}}$,
              \[
                  \EE{u} f := \mathbb{E}[ f(Y) ] \mbox{ with initialization } Y_u \sim \pi.
              \]
        \item $\D{u}$: $\D{u}$ is the difference operator, defined as $\D{u} = \CE{u} - \EE{u}$.
    \end{enumerate}
\end{defi}

Note that $(\CE{u}f)(x_u) = \mathbb{E}[ f(X) \mid X_u = x_u]$ for each $f \in \mathcal{F}(u_{\preceq})$, and $\EE{u}f = \mathbb{E}[ f(X) ]$ since $Y_{L_u} \sim X_{L_u}$ due to both $Y_u$ and $X_u$ has the distribution $\pi$.
The significance of this interpretation lies in viewing these expectations as linear maps on function spaces, rather than associating them with a specific distribution of $X$. This distinction is crucial, as we will consider various broadcasting processes on subtrees/subforests throughout the proof.

For any $U \subseteq V(T)$,
we define a natural norm $\maxnorm{\cdot}$ on the spaces $\F{U} = \R^{[q]^U}$ as follows:
\begin{align}
    \label{def  max-norm}
    \maxnorm{\phi} := \max_{\theta \in [q]^U} |\phi(\theta)| \mbox{ for } \phi \in \F{U}\,.
\end{align}
There is a subtle difference between the $\maxnorm{\cdot}$ and the $\ell_\infty$-norm, as the $\ell_\infty$-norm is defined on the support of $X_U$, which might not be the entire space $[q]^U$.  For each $u \in V(T)$, we define the $\ell_2$-norm on $\F{u_{\pe}}$ with respect to $\EE{u}$ as:
\begin{align*}
    \Unorm{f}{u} := \sqrt{\EE{u} f^2} \mbox{ for } f \in \F{u_{\pe}}.
\end{align*}

For discussion of the proof overview, we assume $T$ is a rooted $d$-ary tree of depth $\ell$.

\paragraph{\bf Overall inductive argument.}
For each $K \in \N \cup \{0\}$ and $u \in V(T)$, let
\begin{align*}
    {\cal T}_K(u)  :=
    \left\{
    \mbox{ functions with variables $x_{L_u}$ of degree $\le 2^K$}
    \right\}\,.
\end{align*}

We will choose a suitably small constant \(\varepsilon = \varepsilon(d, \lambda)\). For each \(K \in \mathbb{N} \cup \{0\}\), let \(\h_K\) be the smallest non-negative integer such that the following holds: For every \(u \in V(T)\)
satisfying \(\h(u) \geq \h_K\)
and all \(f, g \in \mathcal{T}_K(u)\):
\begin{align}
    \label{eq introInduction}
    \maxnorm{\D{u}fg}
    =
    \maxnorm{\CE{u} fg - \EE{u} fg}
    \leq
    \exp \big( - \varepsilon (\h(u) - \h_K) \big) \Unorm{f}{u} \Unorm{g}{u}.
\end{align}
The left-hand side of \eqref{eq introInduction} compares the inner products of \(f\) and \(g\) with respect to the law of the broadcasting process on \(T_u\), evaluated under different initializations.
On the right-hand side, the term \(\exp(-\varepsilon \h(u))\) represents an exponential decay associated with the distance from \(u\) to the leaves. The parameter \(\h_K\) serves to offset this decay by accounting for the complexity of the polynomials involved. Consequently, when \(\h(u) - \h_K\) is large, inequality \eqref{eq introInduction} quantifies that
\emph{the inner product on $\T{u}$ introduced by broadcasting process behaves almost identically, regardless of the initial configuration.}

Furthermore, for any polynomial \(f \in \T{\rho}\), applying inequality \eqref{eq introInduction} to \(f - \EE{\rho}f\) and \(g=1\) yields:
$$
    \var[ \E[ f(X_{L}) \,|\, X_{\rho}] ]
    \le
    \maxnorm{ \CE{\rp} f- \EE{\rp}f}^2
    \le
    \exp(- 2\varepsilon(\h(u) - \h_K)) \var[f(X_{L})]\,.
$$

To establish Theorem~\ref{thm: main}, it suffices to demonstrate the existence of a constant \(C = C(M,d) \ge 1\) such that the following two conditions are satisfied:
\begin{align*}
\emph{
    (Base Case) \(\h_0 \le C\), and (Inductive Step) for each \(K \in \N\), \(\h_{K+1} \le \h_K + C\).}
\end{align*}
If these conditions holds, we can select \(K\) to be proportional to \(\ell\) such that \(\h_K \le \ell / 2\), which implies $K \simeq \ell /2 C$. Consequently, the theorem follows.

\paragraph*{\bf Variance decay in the degree 1 case}

It is worth to discuss the proof of the base case, not only because it captures decay below the Kesten-Stigum threshold, but also because it provides insight into how to properly decompose a higher degree polynomial. 
First, every degree $1$ polynomial of the leaves can be expressed in the form
$$
    f = \sum_{u \in L} f_u,
$$
where each $f_u \in \F{u}$. Given our focus on the variance,  we may assume $\E f_u = 0$ for each $u \in L$. (To clarify, $\mathbb{E}$ will always refers to taking expectation with respect to the law of $X$.) Then, our goal is to prove
$\mathbb{E}\big[ (\CE{\rho}f)^2\big]$
is negligible comparing to $\E f^2$.

\begin{align}
    \label{eq intro00}
    \E \big[ (\CE{\rho}f)^2\big]
    \le
    |L| \sum_{u \in L} \mathbb{E}\big[ (\CE{\rho}f_u)^2 \big]
    \lesssim
    d^\ell \sum_{u \in L}  \lambda^{2\ell} \mathbb{E}f_u^2
    =
    (d\lambda^2)^\ell
    \sum_{u \in L} \mathbb{E}f_u^2,\,
\end{align}
where the second inequality is derived from the variance decay property of in a Markov Chain. If we can establish
\begin{align}
    \label{eq intro01}
    \sum_{u \in L}  \E f_u^2 = O\big( \E f^2\big)\,,
\end{align}
then the Base Case is resolved with $\varepsilon$ chosen to satisfy
\begin{align}
\label{eq intro_eps}
\sqrt{d\lambda^2} < \exp( - \varepsilon) < 1.
\end{align}
To see why \eqref{eq intro01} would hold, consider
$u,v \in L$, and $w$ being \emph{the nearest common ancestor} of $u$ and $v$, then
\begin{align*}
    \big|\E \big[ f_u f_v \big]\big|
    =
    \big|\E \big[ \CE{w}f_uf_v]\big|
    =
    \big|\E \big[ \CE{w}f_u \cdot \CE{w}f_v \big]\big|\,,
\end{align*}
where the second equality is due to independence of $X_u$ and $X_v$ given $X_w$. Applying the Cauchy-Schwarz inequality, the above expression is bounded by
\begin{align*}
    (*) \le
    \sqrt{\E (\CE{w}f_u)^2} \sqrt{\E (\CE{w}f_v)^2}
    \lesssim
    \lambda^{2\h(w)} \sqrt{\E f_u^2} \sqrt{\E f_v^2}\,,
\end{align*}
where the last inequality is due to standard decay of a Markov Chain.
Relying on the above inequality, if one carefully sums over all pairs of leaves $u$ and $v$, then the following inequality holds
\begin{align*}
    \Big|\E f^2 - \sum_{u \in L} \E f_u^2\big|
    =
    \frac{1}{2}\Big|\sum_{u \neq v} \E \big[ f_u f_v \big]\Big|
    \le
    \frac{1}{2}\sum_{u \neq v} \Big|\E \big[ f_u f_v \big]\Big|
    \simeq
    \frac{1}{1-d\lambda^2} \sum_{u} \E f_u^2 \,,
\end{align*}
where the term $\frac{1}{1-d\lambda^2}$ arose from the geometric series of summing $(d\lambda^2)^k$ over $k \ge 1$. Clearly, the bound is not strong enough to show \eqref{eq intro01} and indeed some further technical adjustments are needed.
Nevertheless, the computations above still capture two important aspects: (1) the correct decay of correlation between
$f_u$  and  $f_v$  when  $u$  and  $v$  are far apart in graph distance,
and (2) the Kesten-Stigum threshold. These insights will serve as a guideline for establishing the inductive step.

\paragraph*{\bf Decomposition of a $2^{K+1}$ degree polynomial}
When handling functions of degree $\le 2^{K+1}$, we no longer have a natural basis decomposition as shown above for degree 1 functions.
Indeed, the dimension of the space for functions of degree $2^{K+1}$ is at least $ {d^\ell \choose 2^{K+1}}$. At some point the value $2^{K+1}$ will reach exponential in $\ell$, then the dimension become double exponential in $\ell$. \emph{It is hard to get an orthogonal basis of the space, or any rough orthogonal basis which is computationally tractable.} 
Instead, we aim to decompose a function $f$ of degree $2^{K+1}$ into
a form $f = \sum_{u \in V_K} f_u$ such that the decomposition satisfies the properties which are similar to the degree 1 case:
\begin{enumerate}
    \item[$\blacklozenge 1$]
          First, the index set
          $$
              V_K = \{ u \in V(T) \,|\, \h(u) \ge  \h_K + \text{large constant}\}\,
          $$
          is the set of vertices with height greater than $\h_K$ by a large constant.
    \item[$\blacklozenge 2$]
          Second, for each index $u \in V_K$,
          \begin{align}
              \label{eq introDecomposition2}
              \E [ (\CE{\rho} f_u)^2]
              \lesssim
              \lambda^{2(\ell - \h(u))} \exp(-2\eps (\h(u) - \h_K) )  \mathbb{E} f_u^2\,.
          \end{align}
    \item[$\blacklozenge 3$] Third, for $u,v \in V_K$, and $w$ being the nearest common ancestor of $u$ and $v$, then
          \begin{align}
              \label{eq introDecomposition3}
              \big|\E \big[ f_u f_v \big]\big|
              \lesssim
              \lambda^{(\h(w) - \h(u))} \exp(- \eps(\h(u)-\h_K))
              \lambda^{(\h(w) - \h(v))} \exp(- \eps(\h(v)-\h_K))
              \sqrt{\E f_u^2} \sqrt{\E f_v^2}\,.
          \end{align}
\end{enumerate}
Indeed, if such a decomposition exists and $\varepsilon$ is chosen to satisfy \eqref{eq intro_eps},
then, based on the properties outlined above, it is possible to show
$\sum_{u \in V_K} \E f_u^2 = O(\E f^2)$, similar to the degree 1 case.
With these properties in hand, it becomes easier to derive the variance decay of $f$ and extend this to the inductive step with some additional assumption on $\varepsilon$.

\paragraph*{Tensor Products and submultiplicativity of norms}
Let us first show how can we establish some decay properties for simple $\le 2^{K+1}$ degree functions
from the assumption of $\h_K$.
Fix $u \in V_K$ and let $u_1$ and $u_2$ be two children of $u$. Consider a function of the form
$$
    f_u(x_L) = \sum_{i \in I} \phi_{i,1}(x_{L_{u_1}}) \cdot \phi_{i,2}(x_{L_{u_2}})
$$
where $I$ is a finite index set and $\phi_{i,j} \in \T{u_j}$ for $j=1,2$.
In this representation, $f_u$ is a polynomial of degree $\le 2^{K+1}$ and the space of such functions can be identified with the tensor product space $\T{u_1} \otimes \T{u_2} $.

\noindent
\underline{Submultiplicativity of tensor product norms}
Let $g_u$ be another function in $\T{u_1}\otimes \T{u_2}$.
Consider the following term
\begin{align*}
    \CE{u_1}\otimes \CE{u_2} f_ug_u - \EE{u_1}\otimes \EE{u_2}f_ug_u\,,
\end{align*}
which is a function in $\F{u_1} \otimes \F{u_2}$.
By the telescopic sum and the definition of $\D{u_i}$, we have
\begin{align}
    \label{eq introTensorDecompose}
    \CE{u_1}\otimes \CE{u_2} - \EE{u_1}\otimes \EE{u_2}
    =
    \D{u_1} \otimes \D{u_2} +
    \D{u_1} \otimes \EE{u_2} + \EE{u_1}\otimes \D{u_2} \,.
\end{align}

Let us lay out one version of the \emph{submultiplicativity} property of tensor product norms:
Suppose $\bL_i: H_i \times H_i \rightarrow \R^q$ for $i \in [2]$ are two bilinear maps from a Hilbert space $H_i$ to $\R^q$.
Then,
\begin{align*}
    \forall i \in [2],\quad
    \maxnorm{\bL_i (\psi_{i,1},  \psi_{i,2})}& \le  \delta_i \|\psi_{i,1}\| \|\psi_{i,2}\|
    \mbox{ for } \psi_{i,1}, \psi_{i,2} \in H_i\,.                                         \\
                                                  & \Rightarrow  \;
    \maxnorm{\bL_1 \otimes \bL_2 (\psi_1, \psi_2)} \le \delta_1 \delta_2 \|\psi_1\| \|\psi_2\| \mbox{ for } \psi_1, \psi_2 \in H_1 \otimes H_2\,.
\end{align*}

If we apply this to $H_i = \T{u_i}$ with inner product introduced by $\EE{u_i}$
and to each term in \eqref{eq introTensorDecompose}
(with $\delta_i = \exp(-\eps(\h(u_i) - \h_K))$ if $\bL_i = \D{u_i}$, and $\delta_i = 1$ for $\bL_i = \EE{u_i}$), then we have
\begin{align}
    \label{eq introTensorDecay}
    \maxnorm{\CE{u_1}\otimes \CE{u_2} f_ug_u - \EE{u_1}\otimes \EE{u_2}f_ug_u }
    \lesssim &
    \exp(-\eps(\h(u)-\h_K))
    \sqrt{\EE{u_1}\otimes \EE{u_2} f_u^2}
    \sqrt{\EE{u_1}\otimes \EE{u_2} g_u^2} \,.
\end{align}

\noindent
\underline{Probabilistic Interpretation.}
To properly understand the above inequality, 
consider two independent broadcasting processes: \( Y = (Y_v)_{v \pe u_1} \) on the subtree \( T_{u_1} \), with initialization \( Y_{u_1} \sim \pi \),
and \( Z = (Z_v)_{v \pe u_2} \) on the subtree \( T_{u_2} \), with initialization \( Z_{u_2} \sim \pi \).
Then,
$$
    \EE{u_1}\otimes \EE{u_2}f_ug_u =  \E f_u(Y_{L_{u_1}}, Z_{L_{u_2}}) g_u(Y_{L_{u_1}}, Z_{L_{u_2}})\,,
$$
where \( Y \) and \( Z \) are defined as above. On the other hand,
for $(x_{u_1}, x_{u_2})$ such that  \( \mathbb{P}(X_{u_1} = x_{u_1}, X_{u_2} = x_{u_2}) > 0 \), we have
\begin{align*}
    (\CE{u_1}\otimes \CE{u_2} f_ug_u)(x_{u_1}, x_{u_2})
    = &
    \E[ f_u(Y_{L_{u_1}}, Z_{L_{u_2}}) g_u(Y_{L_{u_1}}, Z_{L_{u_2}}) \mid Y_{u_1} = x_{u_1}, Z_{u_2} = x_{u_2}] \\
    = &
    \E[ f_u(X_{L_{u_1}}, X_{L_{u_2}}) g_u(X_{L_{u_1}}, X_{L_{u_2}}) \mid X_{u_1} = x_{u_1}, X_{u_2} = x_{u_2}]\,,
\end{align*}
where the last equality holds since $X_{L_{u_1}}$ and $X_{L_{u_2}}$ are independent given $X_{u_1}=x_{u_1}$ and $X_{u_2}=x_{u_2}$ and they have the same laws as $Y_{L_{u_1}}$ and $Z_{L_{u_2}}$ given $Y_{u_1}=x_{u_1}$ and $Z_{u_2} = x_{u_2}$.
Thus, the inequality \eqref{eq introTensorDecay} suggests that the inner product of $f_u$ and $g_u$ with respect to the law of either \( (Y, Z) \) or \( X \) is roughly the same when \( \h(u) - \h_K \) is large. In particular, this already implies that  $\sqrt{\EE{u_1}\otimes \EE{u_2} f_u^2} \simeq \Unorm{f}{u}$ and $\sqrt{\EE{u_1}\otimes \EE{u_2} g_u^2} \simeq \Unorm{g}{u}$. Using this, along with some algebraic manipulations, we can show that:
\begin{align}
    \label{eq introTensorDecay2}
    \maxnorm{\D{u}f_ug_u} \lesssim \exp(-\eps(\h(u) - \h_K)) \Unorm{f_u}{u} \Unorm{g}{u}.
\end{align}
This result provides a ``toy" version of the inductive step. Further, if we choose $f_u$ to be mean $0$ and $g_u$ simply to be the constant function $1$, then the above inequality leads to \eqref{eq introDecomposition2} in ${\blacklozenge 2}$, where the additional factor of $\lambda^{(\ell - \h(u))}$ arose from the comparison of
$\E(\CE{\rho} f_u)^2$ and $\E(\CE{u}f_u)^2$. In the actual setting, $f_u$ resides in a tensor product space \( \T{A(u)} := \bigotimes_{v \in A(u)} \T{v} \), where \( A(u) \subseteq V(T) \) is an antichain with respect to the $\pe$ partial order of size proportional to $\h(\rho)-\h(u)$. Nevertheless, we rely on the same submultiplicativity property to establish a similar inequality like \eqref{eq introTensorDecay2} for functions in \( \T{A(u)} \).

\paragraph*{\bf A simple case of $f_u$ and $f_v$ satisfying \eqref{eq introDecomposition3} in $\blacklozenge 3$}

Let us extend the previous example slightly.
Fix $u, v \in V_K$ as two incomparable vertices of $T$ (i.e., they do not have an ancestor-descendant relationship). Let $u_1,u_2$ and $v_1,v_2$ be children of $u$ and $v$, respectively.

Consider the space ${\cal R}(u) \subseteq \T{u_1}\otimes \T{u_2}$, which consists of all $ \phi \in \T{u_1}\otimes \T{u_2}$ such that
\begin{align*}
    \EE{u} \phi \psi = 0 \mbox{ for all } \psi \in \T{u}\,.
\end{align*}
Intuitively, $\phi \in {\cal R}(u)$ is a polynomial of degree strictly greater than $2^K$ in $x_{L_u}$ projecting onto the orthogonal complement degree $\le 2^K$ polynomials of $x_{L_u}$.
Similarly, define ${\cal R}(v) \subseteq \T{v_1}\otimes \T{v_2}$ in the same manner.
\vspace{1mm}


 \begin{figure}[h]
     \centering
     \includegraphics[width=0.6\textwidth]{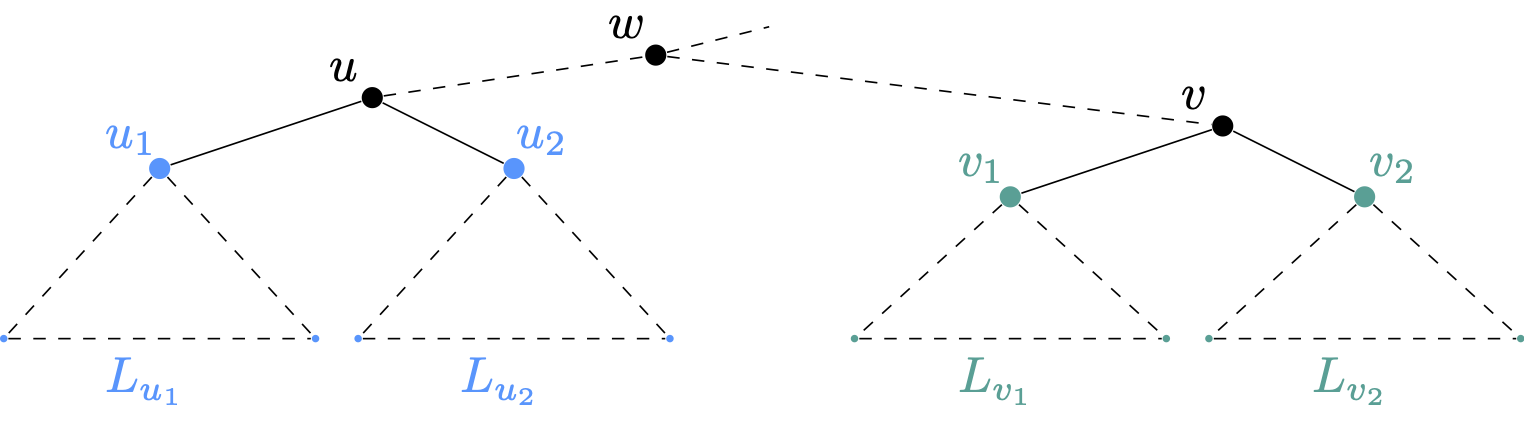}
     \label{fig A}
     \caption{Both $f_u$ and $f_v$ are in variables $x_{L_u}$ and $x_{L_v}$, respectively.
         On the other hand, $f_u$ has higher degrees in $x_{L_u}$ and $f_v$ has higher degrees in $x_{L_v}$.}
 \end{figure}

Now, let  $f_u \in {\cal R}(u) \otimes \T{v}$ and $f_v \in \T{u} \otimes {\cal R}(v)$. In other words, $f_u$ has higher degrees in $x_{L_u}$, but low degrees in $x_{L_v}$, and conversely, $f_v$ has higher degrees in $x_{L_u}$, but lower degrees in $x_{L_v}$. From the definition of ${\cal R}(u)$ and ${\cal R}(v)$, we have
$$
    \EE{u}\otimes \EE{v} f_uf_v = 0 \quad \Rightarrow \quad
    \CE{u}\otimes \CE{v} f_uf_v = \D{u}\otimes \D{v} f_uf_v\,.
$$
From \eqref{eq introTensorDecay2}, we have establish the decay of $\D{u}$ and $\D{v}$ for functions in ${\cal T}(u_1)\otimes {\cal T}(u_2)$ and ${\cal T}(v_1)\otimes {\cal T}(v_2)$, respectively.
We can further apply the submultiplicativity of tensor product norms, leading to the bound
\begin{align}
    \label{eq introTensorDecay3}
    \maxnorm{\CE{u}\otimes \CE{v} f_uf_v}
    \lesssim \exp(-\epsilon(\h(u) - \h_K)) \exp(-\epsilon(\h(v) - \h_K))
    \underbrace{\sqrt{\EE{u}\otimes \EE{v} f_u^2}}_{\simeq \Unorm{f_u}{\rho}}
    \underbrace{\sqrt{\EE{u}\otimes \EE{v} f_v^2}}_{\simeq \Unorm{f_v}{\rho}}\,.
\end{align}
This reveals the origin of the exponential terms in equation \eqref{eq introDecomposition3}.
While this is not the exact form of equation \eqref{eq introDecomposition3}, let us point out that $$ \CE{u} \otimes \CE{v} (f_uf_v) = \mathbb{E}[f_u(X_L) f_v(X_L) \,\vert\, X_u,X_v]
$$ is the conditional expectation given \( X_u \) and \( X_v \). The additional factors \( \lambda^{(h(w) - h(u))} \) and \( \lambda^{(h(w) - h(v))} \) in \eqref{eq introDecomposition3} essentially emerge from taking the conditional expectation $\E[f_u(X_L)f_v(X_L) \mid X_w]$ where
where \( w \) is the nearest common ancestor of \( u \) and \( v \). This captures the decay of the two Markov chains \( (X_u, \dots, X_w) \) and \( (X_v, \dots, X_w) \), in addition to the decay obtained in \eqref{eq introTensorDecay3}.
In the actual setting, the decomposition of $f$ into $f_u$ and $f_v$ is more intricate, where $f_u$ resides in  subspace $\bigotimes_{v \in A(u)} \T{v}$ with

 \begin{figure}[h] 
     \includegraphics[trim=0cm 0.4cm 0cm 0cm, clip, width=0.5\textwidth]{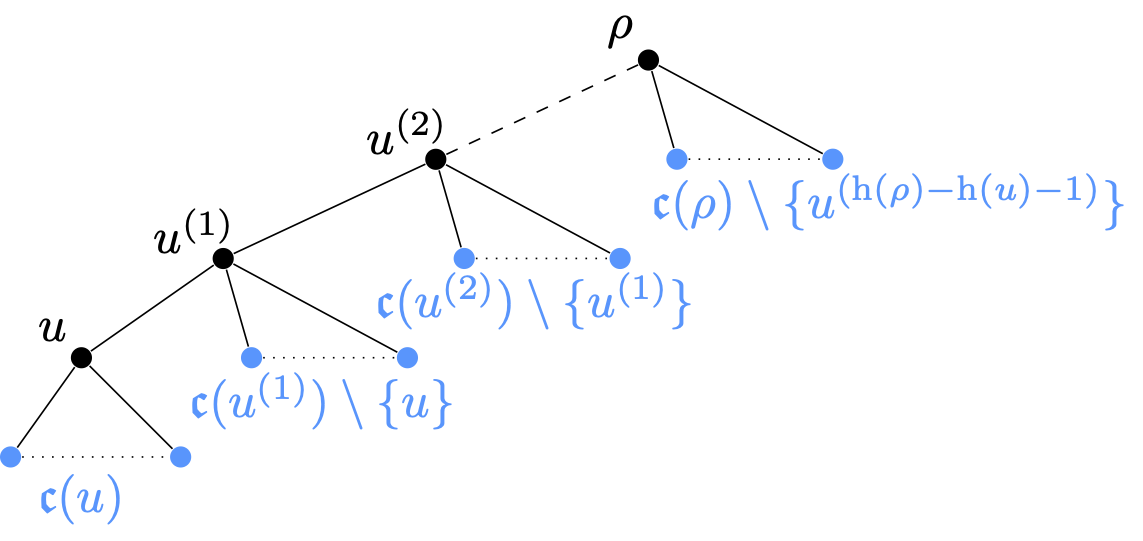}
 \end{figure}
\begin{align*}
    A(u)
    := &
    \Big\{ v \in V(T) \setminus\{u, \, \anc{u}{1},\,  \ldots, \rho\}  \,:\, 
      \in  
    \{u, \, \anc{u}{1},\,  \ldots, \rho\}
     \Big\}                                                     
    =   \fc{u} \cup \bigcup_{k=1}^{\h(\rho)-\h(u)} \Big(\fc{\anc{u}{k}} \setminus \{\anc{u}{k-1}\} \Big)\,,
\end{align*}
\noindent
where, for each $v \in V(T)$, $\fc{v}$ is the set of children $v$. (See the above figure for an illustration.)

\vspace{2mm}
\paragraph*{\bf Comparison to Prior works} 
Our work are based on modification on the inductive and decomposition framework approach from  \cite{HanMossel:23}. The difference arises once we exaime the decomposition of the function $f$ in the inductive step. 

In the work of \cite{HanMossel:23}, the proposed decomposition requires that each component $f_u$ depend solely on the variables $x_{L_u}$. Comparing with the above toy example, the derivation of
\eqref{eq introDecomposition3} follows easily from the fact that $X_{L_u}$ and $X_{L_v}$ are jointly independent given $X_w$, where w is their nearest common ancestor. In short, the their approach relies on \emph{seperation of varaibles to gain correlation decay}.

However, the requirement that each $f_u$ depend solely on $x_{L_u}$ severely limits which functions $f$ can be treated in the inductive step. For instance, even if one establishes decay properties for all polynomials of degree up to $2^K$, applying the inductive argument of \cite{HanMossel:23} only guarantees decay for polynomials of the form
$$
f = \sum_{u \in V_K} f_u, 
\quad 
f_u \in \bigotimes_{v \in \fc{u}} \T{v},
$$
\begin{figure}[h]    
  \includegraphics[width=0.25\textwidth]{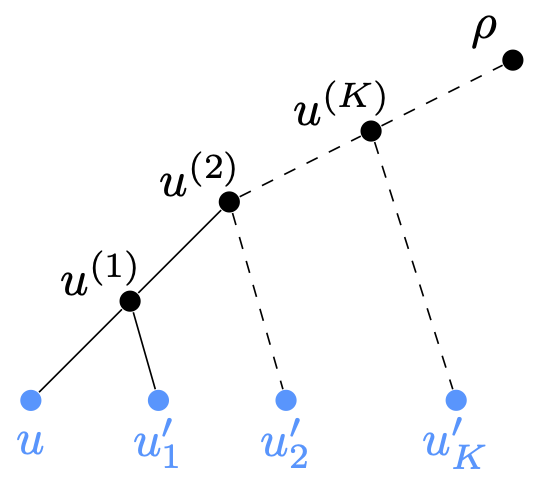}
  \caption{A function of degree $K+1$ with variable $u,u_1,\dots, u_K$ which cannot be handled by prior work \cite{HanMossel:23} when $K$ is beyond $O(\log(N))$.}
\end{figure}

\noindent
which is just a subset of the polynomials of degree $\le 2^{K+1}$. 
Consequently, iterating this procedure $K$ times from degree-1 polynomials covers only polynomials of degree $\le K$. As a concrete example, for any leaf $u \in L$ and leaves $u'_i \in L$ at graph distance $2i$ from $u$, the prior work fails to establish decay for $f = X_u \prod_{i=1}^K X_{u'_i}$ in the first $K$ inductive step, which has degree $K+1$. This essentially leads to the $\Omega(\log(N))$ bound, rather than the $N^{\Omega(1)}$ bound we establish here. 


In contrast, our approach bypasses the constraint of variable separation and instead relies on \emph{separation-degree-density decomposition}. Consequently, we can simply perform induction on $K$, the $\log$ of the degree.


The trade-off is a more intricate analysis, where we need to analyze, for example, $\D{\rho}(f_u f_v)$, with $f_u$ lying in a subspace of $\bigotimes_{w \in A(u)} \T{w}$ and $f_v$ in $\bigotimes_{w \in A(v)} \T{w}$. These two spaces overlap significantly. We address this using an \emph{operator-based analysis}, which is developed based on the basic idea illustrated by the toy example.

\section{Basic Notations, Properties, and Parameter Settings}
For two integers $a < b$, let $[a,b]$ denote the set of integers $\{a, a+1, \ldots, b\}$.
For $u \in V(T)$, let
\begin{align}
    \label{def anc}
    \anc{u}{k} \mbox{ denote the $k$th ancestor of $u$ and } \fc{u} \mbox{ denote the set of children of $u$}.
\end{align}
Further, we abuse the notation and write $\anc{u}{0} = u $.

Let $T$ be a rooted tree and $M$ is an ergodic transition matrix described in Theorem \ref{thm: main}, and
$$
    X = (X_v)_{v \in V(T)} \mbox{ is a broadcasting process on } T \mbox{ with transition matrix } M \mbox{ and initialization } X_\rho \sim \pi\,.
$$

\begin{defi}[Antichains]
    A subset $A \subseteq V(T)$ is called an {\bf antichain} if for any $u,v \in A$, $u \neq v$, we have $u \not \prec v$ and $v \not \prec u$.
    Also, we set
    $$
        A_{\pe} := \{ u \in A\,:\, u \preceq v \mbox{ for some } v \in A'\}\,
        \mbox{ and } A_{\prec} := A_\prec \setminus A \,
    $$
    For an other antichain $A'$,  we write
    $$
        A \pe A'
    $$
    if for every $a \in A$, there exists $a' \in A'$ such that $a \preceq a'$. We will reserves the letter $A$, $A'$, etc. for subsets of $V(T)$ that are antichains.
\end{defi}

\subsection{(Conditional) Expectations as linear operators between function spaces}

\begin{defi}[Space of functions]
    For each set $U \subseteq V(T)$, let
    \begin{align}
        \label{def  F}
        \F{U} := \{ f\,:\, f \mbox{ is a function of } x_{U} \}\,.
    \end{align}
    For simplicity, we will write $\F{u}$ for $\F{\{u\}}$, $\F{u_{\preceq}} = \F{\{v\,:\, v \pe u\}}$.
    Next, we define
    \begin{align*}
        \Fz{u} := \{ f \in \F{u} \,:\, \EE{u}f(X_u) = 0\}\,.
    \end{align*}

    Last, for each non-negative integer $K$, let
    \begin{align}
        \label{def  T}
        \T{u} :=  \left\{ f\,:\, f \mbox{ is a function of variables } x_{L_u} \mbox{ with degree } \le 2^K \right\}\,,
    \end{align}
    and for each antichain $A$,
    \begin{align*}
        \T{A} := \bigotimes_{u \in A} \T{u}\,.
    \end{align*}

\end{defi}
We note that the degree of a function $f \in \T{A}$ can be up to $2^{K|A|}$.

\begin{defi}[Identification of tensor product spaces]
    \label{def  IdentificationXi}
    Suppose ${\cal W}_1,\dots {\cal W}_k$ are subspaces of $\F{V(T)}$, we write
    $$
        \Xi : {\cal W}_1 \otimes {\cal W}_2 \otimes \cdots \otimes {\cal W}_k \rightarrow \F{V(T)}
    $$
    be the multilinear map defined by
    \begin{align}
        \label{eq N_IdentificationOfTensor}
        w_1 \otimes w_2 \otimes \cdots \otimes w_k \mapsto w_1(x)w_2(x)\cdots w_k(x)\,.
    \end{align}
    If there exists $U_1, U_2, \dots, U_k$ which are \emph{disjoint subsets} of $V(T)$ such that ${\cal W}_i \subseteq \F{U_i}$, then the map $\Xi$ is \emph{injective} and, in this paper, if not mentioned, we always identify
    $$
        {\cal W}_1 \otimes {\cal W}_2 \otimes \cdots \otimes {\cal W}_k \equiv \Xi\Big({\cal W}_1 \otimes {\cal W}_2 \otimes \cdots \otimes {\cal W}_k \Big)\,.
    $$
\end{defi}
\begin{rem}
    There is only one occasion where the above identification does not holds, which shows up in Section \ref{sec: CWTK}, and we will explicitly mention it and provide a detailed explanation. For the rest of the paper, we will always identify the tensor product spaces as in Definition \ref{def  IdentificationXi} and it should be clear from the context.
\end{rem}

Given we have identify $\T{A}$ as a subspace of $\F{V(T)}$, here is a basic fact about inclusion relation of two $\T{A}$ spaces: 
\begin{lemma}
    \label{lem TAinclusion}
    Suppose two antichains $A \preceq A'$ satisfies that
    \begin{align*}
         \{L_v \}_{v \in A} \mbox{ is a finer or the same partition as } 
            \{L_v \}_{v \in A'} \mbox{ for the set } \sqcup_{v \in A'}L_v\,.
    \end{align*} 
    Then, 
    \begin{align*}
        \T{A} \subseteq \T{A'}\,.
    \end{align*}
\end{lemma}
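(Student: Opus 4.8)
# Proof Proposal for Lemma~\ref{lem TAinclusion}

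The plan is to reduce the inclusion of tensor product spaces to a pointwise statement about functions and their degrees, using the identification $\Xi$ from Definition~\ref{def IdentificationXi}. Recall that under this identification, a generic element of $\T{A}$ is (a finite sum of) products $\prod_{v \in A} \phi_v(x_{L_v})$ where each $\phi_v$ is a polynomial in $x_{L_v}$ of degree $\le 2^K$. The hypothesis that $\{L_v\}_{v \in A}$ refines $\{L_w\}_{w \in A'}$ as a partition of $\sqcup_{v \in A'} L_v$ means that for each $w \in A'$ the leaf set $L_w$ is the disjoint union of those $L_v$ with $v \in A$ and $v \preceq w$; in particular every $v \in A$ sits below a unique $w \in A'$, and the $L_v$'s partition $L_w$. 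So the first step is simply to record this combinatorial dictionary between $A$ and $A'$.

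Next I would take an arbitrary spanning element $f = \prod_{v \in A} \phi_v$ of $\T{A}$ with $\deg \phi_v \le 2^K$, and show $f \in \T{A'}$. Group the factors according to the parent $w \in A'$ they fall under: write $f = \prod_{w \in A'} \Psi_w$, where $\Psi_w := \prod_{v \in A,\, v \preceq w} \phi_v$. Since the $L_v$'s appearing in $\Psi_w$ are disjoint subsets of $L_w$, the function $\Psi_w$ is a polynomial in the variables $x_{L_w}$, and I need its degree to be $\le 2^K$. This is the one point that requires a small argument: the degree of a product of polynomials in disjoint sets of variables is the \emph{maximum} of the individual degrees, not the sum — because each monomial of the product is a product of monomials, one from each $\phi_v$, and these involve disjoint variables, so the total monomial has degree equal to the sum of the degrees of its parts, but to bound the degree of $\Psi_w$ we only need that \emph{some} such combination is large, and in fact the Efron–Stein/number-of-variables notion of degree used here ($f = \sum_{S, |S|\le k}\phi_S$) behaves the same way. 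Wait — I should be careful here: with the ``number of distinct variables'' notion of degree, a product of $\phi_v$'s each on $\le 2^K$ variables lands on up to $|A|\cdot 2^K$ variables, which is \emph{not} $\le 2^K$ in general. So the correct reading must be that $\T{u}$ uses ordinary polynomial (total-degree) degree in the leaf variables, and then a product over disjoint variable blocks has total degree the sum. Re-examining: $\T{u}$ in \eqref{def T} says ``degree $\le 2^K$'' for a function of $x_{L_u}$, and $\T{A}=\bigotimes_{u\in A}\T{u}$ has degree ``up to $2^{K|A|}$'' per the remark — so the degree convention is total polynomial degree and tensoring multiplies the bound only because $|A|$ factors each contribute $2^K$. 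But that still gives $\Psi_w$ total degree up to $(\#\{v \preceq w\})\cdot 2^K$, not $\le 2^K$.

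Given this tension, the real content of the lemma must be that $\T{A'}$ is \emph{also} a tensor product whose bound absorbs the refinement correctly: $\T{A'} = \bigotimes_{w\in A'}\T{w}$ where $\T{w}$ consists of \emph{all} functions of $x_{L_w}$ of degree $\le 2^K$, and $\Psi_w$ must be shown to lie in $\T{w}$. So the correct approach is: do \emph{not} multiply out; instead observe that $\T{A} = \bigotimes_{v\in A}\T{v}$, regroup the tensor factors by parent, $\bigotimes_{v\in A}\T{v} = \bigotimes_{w\in A'}\big(\bigotimes_{v\in A,\,v\preceq w}\T{v}\big)$, and show the inner tensor product $\bigotimes_{v\in A,\,v\preceq w}\T{v}$ (identified via $\Xi$ as a subspace of $\F{L_w}$) is contained in $\T{w}$, i.e.\ that a product of degree-$\le 2^K$ polynomials on the disjoint blocks $\{L_v\}$ partitioning $L_w$ is a degree-$\le 2^K$ polynomial on $L_w$. \textbf{This is the main obstacle}, and it forces the degree notion in $\T{u}$ to be the Efron–Stein / junta degree (number of variables), under which: a function that is a sum of products, where each product involves, for each block, a $\phi_v$ depending on $\le 2^K$ variables of $L_v$ — hmm, that is still not bounded. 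I think the resolution the authors intend is that $\T{u}$'s bound $2^K$ is on the degree and the key monotonicity is $\T{u}\subseteq \T{u'}$ whenever $L_u\subseteq L_{u'}$ \emph{for a single vertex}, combined with: $\bigotimes_{v \preceq w} \T{v} \subseteq \T{w}$ because any element is a polynomial in $x_{L_w}$ whose degree, in the \emph{per-block} sense that matches how $\T{w}$'s tensor structure will later be used, stays $\le 2^K$ in each block and hence $\le 2^K$ when $L_w$ is treated as one block only if $|A|$-many blocks don't stack — so actually the clean statement is just $\T{A}\subseteq \T{A'}$ at the level of \emph{spaces of polynomials of bounded degree} and the proof is: pick $f\in\T{A}$, it is a polynomial in $\bigsqcup_{v\in A}L_v = \bigsqcup_{w\in A'}L_w$, of degree $\le 2^K$ in each variable-block $L_v$; since $\{L_v\}$ refines $\{L_w\}$ and degrees are subadditive over disjoint blocks, reading $f$ as a polynomial on the coarser blocks $\{L_w\}$ it has degree $\le 2^K$ \emph{per block} — no, that needs the refinement in the \emph{other} direction. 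I would therefore: (i) settle the exact degree convention by rereading \eqref{def T} and the surrounding text, (ii) formulate the per-block degree monotonicity lemma ``$L_u \subseteq L_{u'} \Rightarrow \T{u}\subseteq\T{u'}$'' which is immediate since every polynomial of degree $\le 2^K$ in $x_{L_u}$ is one in $x_{L_{u'}}$, (iii) lift this through the tensor product using that $\{L_v\}_{v\in A}$ refines $\{L_w\}_{w\in A'}$ so each block $L_w$ of $A'$ is an honest union of blocks $L_v$ of $A$, giving a natural inclusion $\bigotimes_{v\in A}\F{L_v}\hookrightarrow \bigotimes_{w\in A'}\F{L_w}$ compatible with $\Xi$, and (iv) check this inclusion sends $\T{A}$ into $\T{A'}$ by the degree bound. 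The only genuinely delicate point — and where I'd spend the care — is verifying that the $\Xi$-identifications on both sides are compatible under the block refinement, i.e.\ that multiplying the $\phi_v$'s and then viewing the result on the coarser partition gives the same function as forming the tensor in $\T{A'}$ directly; this follows because $\Xi$ is just pointwise multiplication and the partition refinement does not change which variables each factor touches.
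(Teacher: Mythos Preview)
Your confusion is entirely justified, and you have in fact located a genuine error — but in the \emph{statement}, not in your reasoning. The lemma as written claims $\T{A} \subseteq \T{A'}$, but this is a typo: the paper's own proof establishes the reverse inclusion $\T{A'} \subseteq \T{A}$, and the single place the lemma is invoked later (in the proof of Lemma~\ref{lem R Decompose}, to justify $\T{\anc{u}{k}} \subseteq \T{u} \otimes \TT{u}{[0,k-1]}$) also uses this reverse direction. Your degree-stacking objection — that a product $\prod_{v \preceq w} \phi_v$ with each $\phi_v$ depending on $\le 2^K$ variables in $L_v$ can depend on many more than $2^K$ variables in $L_w$ — is exactly why $\T{A} \subseteq \T{A'}$ is false in general.

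The correct direction $\T{A'} \subseteq \T{A}$ has a clean proof, which the paper gives. First reduce to the case $A' = \{a\}$ a singleton. Take any monomial $\phi \in \T{a}$: it depends on some set $S \subseteq L_a$ with $|S| \le 2^K$. Since $\{L_v\}_{v \in A}$ partitions $L_a$, the pieces $S_v := S \cap L_v$ partition $S$, and each satisfies $|S_v| \le |S| \le 2^K$. Now write $\phi$ as a sum of indicator products,
\[
  \phi(x_S) \;=\; \sum_{x'_S} \phi(x'_S)\,\prod_{v \in A} \mathbbm{1}_{\{x'_{S_v}\}}(x_{S_v}),
\]
where each factor $\mathbbm{1}_{\{x'_{S_v}\}} \in \T{v}$ since it depends on $|S_v| \le 2^K$ variables. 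Hence $\phi \in \bigotimes_{v \in A} \T{v} = \T{A}$. The general case follows by grouping $A$ under each $u \in A'$ and applying the singleton case blockwise. The key point you were circling but could not reach — because you were aiming at the wrong target — is that going from the \emph{coarser} partition to the \emph{finer} one only shrinks the variable count per block, so the degree bound survives.
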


\begin{proof}
\step{Basic Case: $A'$ is a singleton}

Suppose $A' = \{a\}$. It is suffice to show that a monomial $\phi \in \T{A'} = \T{a}$ is contained in $\T{A}$. 
Consider any monomial $\phi \in \T{A'} = \T{a}$, 
which is a function of a subset $S \subseteq L_a$ satisfying $|S| \le 2^K$. 
Because $A \preceq A'$, the sets $\{\,S_v\}_{v\in A}$ 
defined by 
$$
    S_v = S \cap L_v
$$
form a partition of $S$. Thus, $\phi$ can equivalently be seen as a function of the variables 
$\bigl\{x_{S_v} \mid v \in A\bigr\}$.

Next, we write \(\phi\) as a sum of indicator functions over all realizations of \(x_S\). Specifically,
\[
  \phi\bigl((x_{S_v})_{v \in A}\bigr)
  \;=\;
  \sum_{(x'_{S_v})_{v \in A}}
    \phi\bigl((x'_{S_v})_{v \in A}\bigr)
    \;\prod_{v \in A}\;
    \mathbbm{1}_{\{x'_{S_v}\}}\bigl(x_{S_v}\bigr)
  \;\in\;
  \bigotimes_{v \in A}\;\T{v}
  \;=\;
  \T{A},
\]
where the sum is taken over all possible realizations \(\bigl(x'_{S_v}\bigr)_{v \in A}\), and \(\mathbbm{1}_{\{x'_{S_v}\}}\) is the indicator function for the event \(x_{S_v} = x'_{S_v}\). Since each factor belongs to \(\T{v}\), their product lies in \(\otimes_{v\in A}\,\T{v} = \T{A}\), completing the proof of the lemma.

\step{Generalization}
We now generalize the argument to show that any function of the form
\[
  \bigotimes_{u \in A'} \phi_u
  \;\in\;
  \bigotimes_{u \in A'} \T{u}
\]
is contained in \(\T{A}\).

For each \(u \in A'\), our assumption on \(A\) and \(A'\) implies that \(L_u\) is partitioned into the sets \(\{\,L_v : v \preceq u,\,v \in A\}\). By the base case established earlier, every \(\phi_u\) belongs to 
\(\T{\{\,v \in A' : v \preceq u\}}\). 
Hence,
\[
  \bigotimes_{u \in A'} \phi_u
  \;\in\;
  \bigotimes_{u \in A'} \T{\{v \in A' : v \preceq u\}}
  \;=\;
  \T{A}.
\]
Given simple tensors formed a basis of $\T{A'}$, we conclude that \(\T{A'} \subseteq \T{A}\).
\end{proof}

In this paper, we will focus on $\T{A}$ for the following types of antichain $A$:
\begin{defi}
    \label{def  TuOu}
    \label{def  Tu}
    For $u \in V(T)$, let
    \begin{align*}
        \OO{u}{k}
        :=
        \begin{cases}
            {\mathfrak c}(u)                                     & \mbox{ if } k = -1,  \\
            {\mathfrak c}(\anc{u}{k+1}) \setminus \{\anc{u}{k}\} & \mbox{ if } k \ge 0.
        \end{cases}
    \end{align*}
    (See Figure \ref{fig:Ouk} for a visual illustration.)
    It is worth to remark every vertex in $\OO{u}{k}$ has height $\h(u)+k$. Next, let
    $$
        \OO{u}{[s,t]} := \bigcup_{k \in [s,t]} \OO{u}{k}
        \mbox{ and }
        \OO{u}{} := \OO{u}{[-1,\infty)}.
    $$
    Further, we define the corresponding space of functions:
    \begin{align*}
        \TT{u}{k}
        :=
        \T{\OO{u}{k}}
        \mbox{ and }
        \TT{u}{[s,t]}
        :=
        \T{\OO{u}{[s,t]}}.
    \end{align*}

    It is also worth to remark two identities that will be used in the paper:
    For $0 \le a \le b$, we have
    \begin{align*}
        \OO{u}{[a,b]} = \OO{\anc{u}{a}}{[0,b]} \Rightarrow \TT{u}{[a,b]} = \TT{\anc{u}{a}}{[0,b]}\,,
    \end{align*}
    and
    \begin{align*}
        \{u\} \cup \OO{u}{[0,b]} = \OO{\anc{u}{1}}{[-1,b-1]} \Rightarrow
        \T{u} \otimes \TT{u}{[0,b]} = \TT{\anc{u}{1}}{[-1,b-1]}\,.
    \end{align*}

\end{defi}

\begin{figure}[h]
    \centering
    \includegraphics[width=0.8\textwidth]{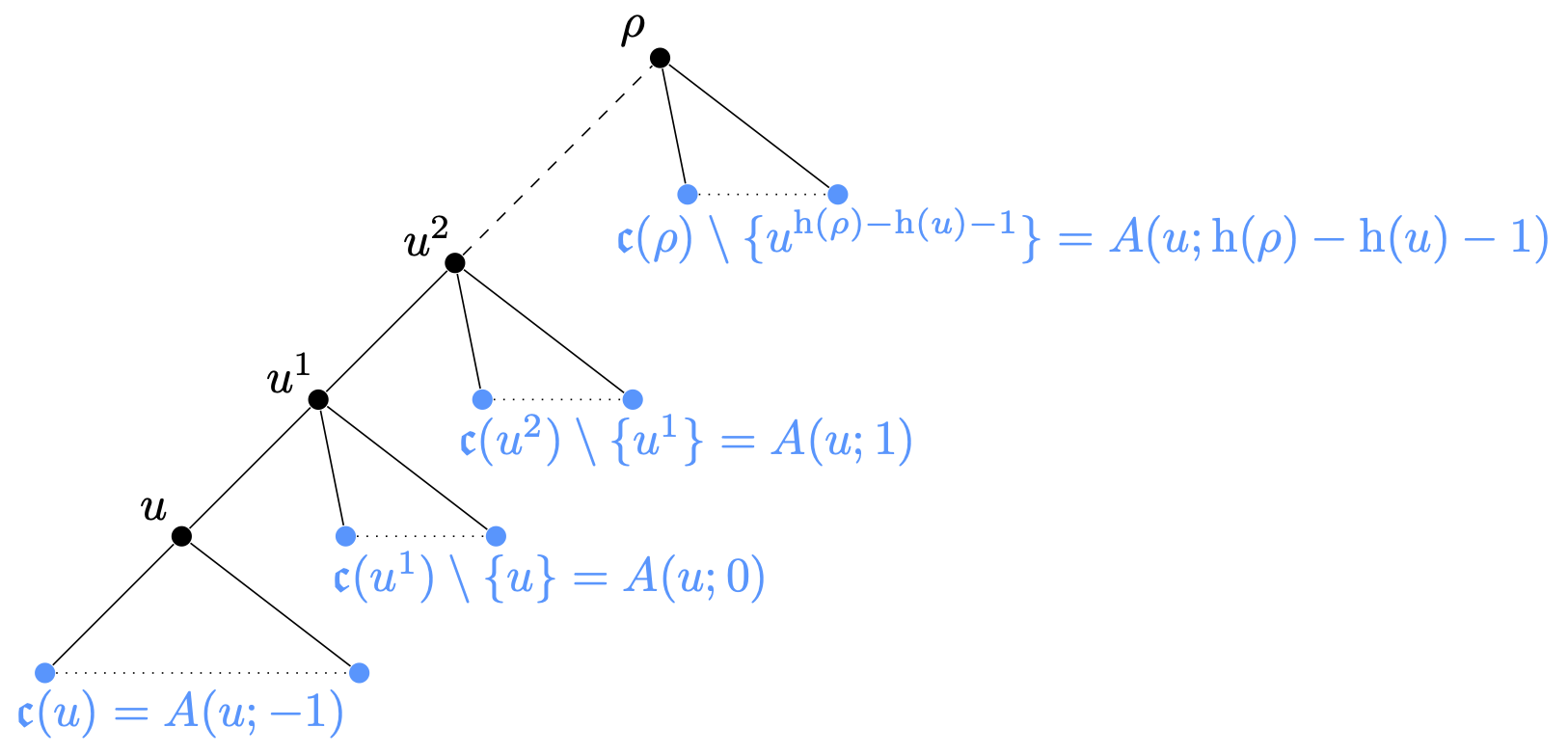}
    \label{fig:Ouk}
    \caption{An illustration of the sets $A(u;k)$.}
\end{figure}

\begin{defi}[Linear Operators]
    For each $u \in V(T)$, we define the linear maps
    \begin{align}
        \label{def  E}
        \CE{u} : \F{u_{\preceq}}  \rightarrow \F{u}\,\,\,\,  \EE{u} : \F{u_{\preceq}}  \rightarrow \mathbb{R}
        \mbox{ and }
        \D{u} := \CE{u} - \EE{u}\,,
    \end{align}
    as follows:
    \begin{enumerate}
        \item[$\CE{u}$:] For each $x_u \in [q]$, let $Y = (Y_v)_{v \le u}$ be the broadcasting process on the substree $T_u$ initialization $Y_u = x_u$. For each $f \in \F{u_{\preceq}}$, set $(\E_u f)(x_u) = \E f(Y)$.
        \item[$\EE{u}$:] Let $Y$ be the broadcasting process on the subtree $T_u$ with $Y_u \sim \pi$, where $\pi$ is the stationary distribution of $M$.  For each $f \in \F{u_{\preceq}}$, $(\E_u f)$ is defined as the expectation of $f(Y)$.
        \item[$\D{u}$:] The difference of $\CE{u}$ and $\EE{u}$, where we simply interpret $\EE{u}f$ as a constant function in $\F{u}$.
    \end{enumerate}
    For an antichain $A \subseteq V(T)$, we denote their tensorization as
    $$
        \CE{A} := \bigotimes_{u \in A} \CE{u},\, \EE{A} := \bigotimes_{u \in A} \EE{u}\,,
        \mbox{ and } \D{A} := \CE{A} - \EE{A}\,.
    $$
    Further, we define
    \begin{align}
        \label{def  idtt}
        \idtt{A} : \F{A_{\pe}} \rightarrow \F{A_{\pe}}\,,
    \end{align}
    to be the identity map.
\end{defi}

\begin{lemma}
    [Basic Properties of $\CE{A}, \EE{A}, $ and $\D{A}$]
    \label{lem basicEECED}
    Consider two antichains $A \pe A'$. For any function $\phi \in \F{A_{\pe}}$,
    \begin{align}
        \label{eq EEU'ECU}
        \CE{A'}\phi = \CE{A'} \circ \CE{A}\phi \,,\,
        \EE{A'}\phi = \EE{A'} \circ \CE{A}\phi \,,\, \text{ and }
        \D{A'}\phi = \D{A'} \circ \CE{A}\phi\,.
    \end{align}

    Further, for $v \prec u$, we have
    \begin{align*}
        \EE{u} f = \EE{v} f \,\, \mbox{ for } f \in \F{v_{\pe}}\,.
    \end{align*}
\end{lemma}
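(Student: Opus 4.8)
The plan is to derive all three identities in \eqref{eq EEU'ECU} from the first one, $\CE{A'}\phi=\CE{A'}\circ\CE{A}\phi$, and to obtain that one from the Markov property of the broadcasting process dressed up in tensor notation. For the first reduction, note that for every vertex $a'$ one has $\EE{a'}g=\sum_{x\in[q]}\pi(x)\,(\CE{a'}g)(x)$, so $\EE{A'}=\big(\bigotimes_{a'\in A'}P_\pi\big)\circ\CE{A'}$ with $P_\pi\colon\F{a'}\to\R$ the functional $g\mapsto\sum_x\pi(x)g(x)$. Applying $\bigotimes_{a'}P_\pi$ to $\CE{A'}\phi=\CE{A'}\circ\CE{A}\phi$ then gives $\EE{A'}\phi=\EE{A'}\circ\CE{A}\phi$, and subtracting this from the first identity gives $\D{A'}\phi=\D{A'}\circ\CE{A}\phi$ since $\D{A'}=\CE{A'}-\EE{A'}$ by definition. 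So it remains to prove the first identity for $\phi\in\F{A_{\pe}}$.

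Next I would reduce to the case $A'=\{w\}$ a single vertex. As $A'$ is an antichain and $A\pe A'$, each $a\in A$ has a unique ancestor in $A'$ (two distinct ones would be comparable), which partitions $A=\bigsqcup_{a'\in A'}A^{a'}$ with $A^{a'}:=\{a\in A\colon a\pe a'\}$; each $A^{a'}$ is an antichain and $A^{a'}\pe\{a'\}$, and the downward closures $(A^{a'})_{\pe}\subseteq (a')_{\pe}$ are pairwise disjoint (the $(a')_{\pe}$ being disjoint as $A'$ is an antichain). Hence $\F{A_{\pe}}=\bigotimes_{a'\in A'}\F{(A^{a'})_{\pe}}$, and by linearity it suffices to treat a simple tensor $\phi=\bigotimes_{a'}\phi_{a'}$ with $\phi_{a'}\in\F{(A^{a'})_{\pe}}$. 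Regrouping the tensor factors of $\CE{A}=\bigotimes_{a\in A}\CE{a}$ by their ancestor in $A'$ gives $\CE{A'}\phi=\bigotimes_{a'}\CE{a'}\phi_{a'}$ and $\CE{A}\phi=\bigotimes_{a'}\CE{A^{a'}}\phi_{a'}$, so the claim follows once we show, for a vertex $w$, an antichain $B\pe\{w\}$, and $\psi\in\F{B_{\pe}}$, that $\CE{w}\psi=\CE{w}\circ\CE{B}\psi$.

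For this core step I would work directly with the process. If $w\in B$ then $B=\{w\}$ and there is nothing to prove, so assume $b\prec w$ for all $b\in B$; then $w\notin B_{\pe}$ and $B_{\pe}\setminus B$ is disjoint from $B\cup\{w\}$. Let $Y=(Y_v)_{v\pe w}$ be the broadcasting process on $T_w$. By the Markov property (Remark~\ref{rem:MarkovProperty}), conditionally on $Y_B$ the subtrees hanging below the distinct $b\in B$ are mutually independent, each carrying the law used to define $\CE{b}$, whence $(\CE{B}\psi)(Y_B)=\E[\psi(Y_{B_{\pe}})\mid Y_B]$ (by linearity one checks this on product $\psi$); and again by the Markov property $Y_{B_{\pe}\setminus B}$ is conditionally independent of $Y_w$ given $Y_B$, since $B$ separates $B_{\pe}\setminus B$ from $w$. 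Conditioning on $(Y_B,Y_w)$ and using this,
\[
  \big(\CE{w}\circ\CE{B}\,\psi\big)(x_w)
  =\E\big[(\CE{B}\psi)(Y_B)\mid Y_w=x_w\big]
  =\E\big[\psi(Y_{B_{\pe}})\mid Y_w=x_w\big]
  =(\CE{w}\psi)(x_w),
\]
the desired identity. For the final assertion $\EE{u}f=\EE{v}f$ with $v\prec u$ and $f\in\F{v_{\pe}}$: run $Y$ on $T_u$ with $Y_u\sim\pi$; since $\pi$ is $M$-stationary we have $Y_v\sim\pi$, and conditionally on $Y_v$ the restriction $(Y_w)_{w\pe v}$ has the law used to define $\CE{v}$, so $Y_{v_{\pe}}$ is distributed as the process defining $\EE{v}$; taking expectations of $f$ yields the equality.

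The step I expect to require the most care is this core step, though the difficulty is bookkeeping rather than substance: one must invoke the Markov property for the \emph{strict} descendant set $B_{\pe}\setminus B$ (which is genuinely disjoint from $B\cup\{w\}$), not for $B_{\pe}$ itself, and the tensor-regrouping in the reduction must explicitly use that the downward closures of the blocks $A^{a'}$ are disjoint so that $\CE{A}$ and $\CE{A'}$ factor over $A'$. Everything else is a direct unwinding of the definitions of $\CE{\cdot}$, $\EE{\cdot}$, and $\D{\cdot}$.
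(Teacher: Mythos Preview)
Your proof is correct and follows essentially the same approach as the paper: both rely on the tower property of conditional expectation together with the Markov property of the broadcasting process, and both handle the final assertion $\EE{u}f=\EE{v}f$ by noting that $Y_v\sim\pi$ under the $\pi$-initialized process on $T_u$. The only difference is organizational: the paper works directly with the family of independent processes indexed by $A'$ on one probability space, whereas you first factor over $A'$ via the tensor decomposition and reduce to the single-vertex case $A'=\{w\}$, which is a slightly more explicit route to the same destination.
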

\begin{proof}
    For each $u \in A$, let $T_u$ be the induced subgraph of $T$ with vertex set $\{v\in V(T)\,:\, v \le u\}$ which is a subtree with root $u$. Let $\{Y_{u_{\preceq}}\}$ be independent random broadcasting process where $Y_{u_{\preceq}}$ is the broadcasting process on $T_u$ with $Y_u \sim \pi$. In other words, $Y_{u_{\preceq}} \sim X_{u_{\preceq}}$. For simplicity, let $Y = \{Y_{u_{\preceq}}\}_{u \in A'}$ and
    $$
        Y_{ A_{\not \prec}} = \big\{Y_v\,:\, \not\exists u \in A  \text{ s.t. } v \prec u \big\} \,.
    $$
    Given that $\{v \,:\, \not \exists u \in A \text{ s.t. } v \prec u \} \supseteq A' $,
    for each $\phi \in \F{A_{\pe}}$,
    \begin{align*}
        \E\left[ \phi(Y) \right]
        = &
        \E\Big[
            \E\big[ \phi(Y) \,|\, Y_{A_{\not \prec}}
                \big]\Big]
        \text{ and }
        \E\left[ \phi(Y) \,\vert\, Y_{A'} \right]
        =
        \E\Big[
            \E\big[ \phi(Y) \,|\, Y_{A_{\not \prec}} \big ]
            \Big\vert\, Y_{A'} \Big]\,.
    \end{align*}
    Then, following by the correspondence of $\EE{A'}, \EE{A'}$ and $\CE{A}$ with the above expectation and conditional expectation, we have the desired result
    \begin{align*}
        \EE{A'}\phi = \EE{A'} \circ \CE{A}\phi
        \quad  \mbox{ and } \quad
        \CE{A'}\phi = \CE{A'} \circ \CE{A}\phi \,.
    \end{align*}
    and $\D{A'} \circ \CE{A}\phi = \D{A'}\phi$ follows immediately from the above two identities.

    Now, it remains to show that $\EE{u}f = \EE{v}f$ for $f \in \F{v_{\pe}}$.
    Let $Y_{u_{\pe}}$ be the broadcasting process on $T_u$ with $Y_u \sim \pi$
    and $Z_{v_{\pe}}$ be the broadcasting process on $T_v$ with $Z_v \sim \pi$.
    Observe that $Y_v, Y_{\anc{v}{1}},\cdots Y_u$ is a Markov Chain with $Y_u \sim \pi$. This implies that
    $Y_v \sim \pi$ as well. In other words, $Z_v \sim Y_v$.  Next, we invoke the first part of the proof to conclude that
    $$
        \EE{u}f = \EE{u} (\CE{v}f) = \mathbb{E}[(\CE{v}f)(Y_v)] = \mathbb{E}[(\CE{v}f)(Z_v)]
        = \EE{v}f\,.
    $$

\end{proof}

\begin{defi}[Norms of Spaces]
    \label{def norm}
    For $u \in V(T)$, we define
    \begin{align*}
        \forall f \in \F{u_{\preceq}}\,,  \|f\|_u := \sqrt{\EE{u} f^2}\,,
    \end{align*}
    and for an antichain $A \subseteq V(T)$, we define
    \begin{align}
        \label{def  U-norm}
        \forall f \in \F{A_{\pe}}\,,  \Unorm{f}{A} := \sqrt{\EE{A} f^2}\,.
    \end{align}
    Next, for a set $U \subseteq V(T)$ (not necessarily an antichain), we define
    \begin{align*}
        \maxnorm{f} :=  \max_{x_U \in [q]^U} |f(x_U)|\,.
    \end{align*}
\end{defi}

We remark that, given $A$ is an antichain, $\Unorm{\cdot}{A}$ is the tensorization of $\|\cdot\|_u$ for $u \in A$.

\subsection{Induction Step Setup}
\begin{defi}[Utilizing the gap between $d\lambda^2$ and $1$: Parameters $\eps, \lame, \tlam, \kappa$ and $\CC$]
    \label{def  varepsilon}
    \label{def  kappaBarLambda}
    First, we define $\varepsilon > 0$ to be a small constant such that
    \begin{align}
        \label{defi: epsilon}
        \exp( - 1.2 \eps) =  \sqrt{\max\{ d\lambda^2, \lambda\}}.
    \end{align}
    Then, we define $ \lame > \tlam > \lambda$ to be two values such that
    \begin{align*}
        \exp(-\eps) > &
        \sqrt{\max\{ d\lame^2, \lame\}} = \exp(-1.1\eps)                   \\
        >             & \sqrt{ \max\{ d\tlam^2, \tlam\}} = \exp(-1.15\eps) \\
        >             & \sqrt{\max\{ d\lambda^2, \lambda\}}\,.
    \end{align*}
    The actual values of $1.2$, $1.15$, and $1.1$ are not important, as long as there is a gap between them. The technical reason for introducing $\lame$ and $\tlam$ is later we will encounter geometric series of the following kind:
    \begin{align*}
        \sum_{k=0}^\ell \big( \sqrt{d \lame^2} \big)^k \exp(-\eps (\ell - k))
        \mbox{ or }
        \sum_{k=0}^\ell \lambda^k \exp(-\eps (\ell - k))\,,
    \end{align*}
    the gap allows us to bound the above series by a huge (but independent of $\ell$) constant times $\exp(-\eps \ell)$.

    Further, let $\kappa = \kappa(\eps,d)>0$ be a sufficient small constant such that
    $$
        (1+ \kappa)^6 \tlam \le \lame\,.
    $$
  \end{defi}

  \begin{defi} [$(M,d)$--dependent values]
    \label{def CC}
    We define 
    \begin{align*}
        \CC= {\cal C}(M, d)
        := \{ \tC(M, d) \,:\,  0< \tC(M, d) <+\infty \mbox{ if } d\lambda^2 <1 \},
    \end{align*}
    the collection of $(M, d)$-dependent positive values which are bounded away from $0$ and $\infty$ as long as  $d\lambda^2<1$. For example, $\eps, \lame, \tlam,$ and $\kappa$ are elements in $\CC$.
    As suggested by the notation, we will reserve $\tC, \tC'$ for the elements in $\CC$.
\end{defi}
\begin{rem}
  \label{rem:constant-convention}
  Throughout this paper, whenever we refer to a Lemma or Proposition X whose statement is of the form 
  \emph{“There exists a constant \(\tC \in \CC\) such that \(\dots\)”}, 
  we denote this specific constant by \(\tC_{\rm X}\). In other words, \(\tC_{\rm X}\) always 
  refers to the constant \(\tC\) asserted in the statement of Lemma or Proposition ~X.
  \end{rem}

Let us group the properties we want from the Markov Chain $M$ in the following definition:
\begin{defi}
    \label{def  tCM}
    Let $\tCM \in \CC$ be a large enough constant so that the following holds:
    \begin{enumerate}
        \item  For $f \in \F{u}$,
              \begin{align*}
                  \frac{1}{\tCM} \maxnorm{f} \le
                  \EE{u}|f|
                  \le
                  \Unorm{f}{u} \le \tCM \maxnorm{f}\,.
              \end{align*}
        \item (Markov Chain Decay) For $f \in \Fz{u}$ and $\anc{u}{k}$ exists,
              \begin{align}
                  \label{eq 1variableDecay}
                  \maxnorm{\CE{\anc{u}{k}}f} \le  \tCM\tlam^k \maxnorm{f}\,.
              \end{align}
    \end{enumerate}
\end{defi}

For the sake of completeness, we restate the definition of $\h_K$ in \eqref{eq introInduction} from the introduction, along with a notion of relative height $\lA{u}$ for $u \in V(T)$:
\begin{defi}
    \label{def  hK}
    For each non-negative integer $K$, let $\h_K$ be the smallest
    non-negative integer such that the following property holds: For every $u \in V(T)$ satisfying $\h(u) \ge \h_K$, we have
    \begin{align*}
        \forall f,g \in {\cal T}_K(u), \, \,
        \maxnorm{\D{u}(fg)} \le \exp(-\eps (\h(u) - \h_K)) \|f\|_u \|g\|_u\,.
    \end{align*}
    Further, for simplicity, we will define the relative height of $u$ as
    \begin{align}
        \label{def  hKu}
        \lA{u} := \h(u) - \h_K\,.
    \end{align}
\end{defi}

\begin{defi} [Decay Parameter $\tCR$]
    Let
    \begin{align}
        \label{def  tCR}
        \tCR \in \CC
    \end{align}
    be a large constant (but only depending on $M$ and $d$), with its value to be determined later.
    As we will see soon, the parameter $\tCR$ plays a crucial role: We will always consider $u \in V(T)$ satisfying
    $$
        \lA{u} \ge \tCR (\log(R)+1)\,,
    $$
    which not only ensures some decay for $\D{u}fg$ with $f,g \in \T{u}$, but also provides room for the decay of
    $\D{A}fg $ for some antichain $A \in V(T)$, where the above condition holds for every with $u \in A$, and $f,g \in \T{A}$ or some of its variations. Moreover, the larger the value of $\tCR$, the more room we have for the decay of $\D{A}fg$. On the other hand, the cost we pay for a larger $\tCR$ will be reflected when bounding $\h_{K+1}-\h_K$ in the later stage. However, it can be done in a way so that $\tCR$ only depends on $M$ and $d$.
\end{defi}

Finally, the case $K=0$, or degree 1 polynomials, was already resolved in the work of \cite{HanMossel:23}, let us cite it as a Proposition.
\begin{prop}
    \label{prop K=0}
    With the same assumption as in Theorem \ref{thm: main},
    there exists $\tC_0 \in \CC$ such that
    \begin{align*}
        \h_0 \le \tC_0 (\log(R)+1)\,.
    \end{align*}
\end{prop}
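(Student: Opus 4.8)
This is the base case $K=0$ of the induction, and it coincides with the main estimate of \cite{HanMossel:23}; the plan I would follow is the rigorous form of the ``degree $1$'' discussion in the proof overview. Fix $u\in V(T)$ (with $\h(u)$ to be constrained below only through $R$ and the $\CC$-constants) and $f,g$ of Efron--Stein degree $\le 1$ in $x_{L_u}$. Any such function decomposes as $f=\EE{u}f+\sum_{w\in L_u}f_w$ with each $f_w\in\Fz{w}$ a mean-zero function of the single leaf variable $x_w$, and similarly $g=\EE{u}g+\sum_w g_w$. Using $\EE{u}f_w=0$ and bilinearity,
\[
\D{u}(fg)=(\EE{u}f)\sum_{w}\CE{u}g_w+(\EE{u}g)\sum_{w}\CE{u}f_w+\sum_{w,w'}\D{u}(f_wg_{w'}),
\]
and the task is to bound the $\maxnorm{\cdot}$ of each of the three groups.

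\emph{Pairwise estimates and summation.} For a bilinear term fix $w,w'$ and let $s$ be their nearest common ancestor in $T_u$. By the Markov property $X_w\perp X_{w'}\mid X_s$, so $\CE{u}(f_wg_{w'})=\CE{u}\big[(\CE{s}f_w)(\CE{s}g_{w'})\big]$; since $\CE{s}f_w,\CE{s}g_{w'}\in\Fz{s}$ and $\EE{u}$ and $\EE{s}$ agree on $\F{s_{\preceq}}$ (Lemma~\ref{lem basicEECED}), subtracting the mean and applying the single-chain decay \eqref{eq 1variableDecay} three times (to $\CE{s}f_w=\CE{\anc{w}{\h(s)}}f_w$, to $\CE{s}g_{w'}$, and to $\CE{u}$ acting on the resulting mean-zero function of $x_s$), together with the norm equivalences built into $\tCM$, yields $\maxnorm{\D{u}(f_wg_{w'})}\le 2\tCM^5\tlam^{\h(u)+\h(s)}\|f_w\|_w\|g_{w'}\|_{w'}$. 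Grouping pairs by the height $k=\h(s)\in[0,\h(u)]$ of their nearest common ancestor --- for which degree-domination permits $\le Rd^k$ choices of $w'$ per $w$ --- the matrix $\big(\tlam^{\h(u)+\h(s_{w,w'})}\big)$ has $\ell_\infty\!\to\!\ell_\infty$ norm $\le\sum_{k=0}^{\h(u)}Rd^k\tlam^{\h(u)+k}=R(d\tlam^2)^{\h(u)}\sum_{j\ge0}(d\tlam)^{-j}\le\tC R\,e^{-2\eps\h(u)}$ for some $\tC\in\CC$ (using $d\tlam^2=e^{-2.3\eps}<1$ and handling $d\tlam\gtrless1$ separately) once $\h(u)$ is large. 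Hence $\maxnorm{\sum_{w,w'}\D{u}(f_wg_{w'})}\le 2\tCM^5\tC R\,e^{-2\eps\h(u)}\big(\sum_w\|f_w\|_w^2\big)^{1/2}\big(\sum_w\|g_w\|_w^2\big)^{1/2}$. The two linear terms are easier: Cauchy--Schwarz over the $|L_u|\le Rd^{\h(u)}$ leaves and $\sqrt d\,\tlam=e^{-1.15\eps}$ bound them by $\tC'\sqrt R\,e^{-1.15\eps\h(u)}|\EE{u}f|\big(\sum_w\|g_w\|_w^2\big)^{1/2}$ and its mirror, with $|\EE{u}f|\le\|f\|_u$ and $|\EE{u}g|\le\|g\|_u$.

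\emph{The main obstacle} is the remaining frame-type lower bound: one needs $\sum_{w\in L_u}\|f_w\|_w^2\le\tC_{\mathrm f}(R)\,\|f\|_u^2$ (and likewise for $g$) with $\tC_{\mathrm f}(R)$ polynomial in $R$. The naive route --- expanding $\|f\|_u^2-(\EE{u}f)^2=\sum_w\|f_w\|_w^2+\sum_{w\ne w'}\E[f_wf_{w'}]$ and bounding $|\E[f_wf_{w'}]|\le\tCM^2\tlam^{2\h(s_{w,w'})}\|f_w\|_w\|f_{w'}\|_{w'}$ --- is too lossy, since the induced perturbation of the identity can have operator norm $\asymp R$. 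The fix combines positive-semidefiniteness --- $\E[f_wf_{w'}]=\EE{s}\big[(\CE{s}f_w)(\CE{s}f_{w'})\big]$ is a Gram-type inner product, so the normalized correlation matrix restricted to a block of leaves under a common ancestor of a fixed $\CC$-height is a positive perturbation of a positive-definite part and hence bounded below by a $\CC$-constant multiple of the identity uniformly in block size --- with a coarsening to the antichain of height-$r$ vertices below $u$, $r=\lceil\tC''(\log R+1)\rceil$, across whose blocks the nearest common ancestors have height $>r$ and the cross-block perturbation has operator norm $\le\tfrac12$. This gives $\tC_{\mathrm f}(R)=\mathrm{poly}(R)$, and together with the factor $R$ from the pair-counting above is the source of the $\log R$. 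This multiscale frame estimate is the genuinely delicate ingredient; everything else is geometric-series bookkeeping driven by $d\tlam^2<1$.

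\emph{Conclusion.} Combining the three groups, $\maxnorm{\D{u}(fg)}\le\tC_\star\,\mathrm{poly}(R)\,e^{-1.1\eps\h(u)}\|f\|_u\|g\|_u$ for a $\CC$-constant $\tC_\star$ and a fixed polynomial. Let $\h_0$ be the least non-negative integer with $\tC_\star\,\mathrm{poly}(R)\,e^{-0.1\eps\h_0}\le1$. Then for every $u$ with $\h(u)\ge\h_0$ one has $\maxnorm{\D{u}(fg)}\le e^{-\eps(\h(u)-\h_0)}\|f\|_u\|g\|_u$ for all $f,g$ of degree $\le1$, which is exactly the defining property of $\h_0$ in the case $K=0$; and $\h_0=O\big((\log R+1)/\eps\big)$, hence $\h_0\le\tC_0(\log R+1)$ for some $\tC_0\in\CC$. (Since the statement is identical to the main estimate of \cite{HanMossel:23}, citing that paper is itself a legitimate proof.)
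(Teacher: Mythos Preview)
Your proposal is correct and matches the paper exactly: the paper's entire proof of this proposition is the sentence ``the case $K=0$, or degree 1 polynomials, was already resolved in the work of \cite{HanMossel:23}, let us cite it as a Proposition,'' and you explicitly note that citing \cite{HanMossel:23} is a legitimate proof. Your additional sketch follows the same outline as the paper's informal ``Variance decay in the degree 1 case'' discussion in the proof overview (including the acknowledgment that the frame-type inequality $\sum_w\|f_w\|_w^2\lesssim\|f\|_u^2$ is the delicate step requiring ``further technical adjustments''), which the paper likewise does not flesh out here.
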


Now, let us state the inductive step as a theorem
\begin{theor}
    \label{theor main}
    With the same assumption as in Theorem \ref{thm: main},  there exists a constant $\tCR \in \CC$ such that for each $K \ge 0$,
    \begin{align*}
        \h_{K+1} - \h_K \le 2\tCR (\log(R)+1)\,.
    \end{align*}
\end{theor}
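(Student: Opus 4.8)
Fix $K\ge 0$; the constant $\tCR\in\CC$ is fixed at the very end. By minimality in the definition of $\h_{K+1}$, it suffices to show that $\h_K+2\tCR(\log R+1)$ is an admissible value, i.e. that
\begin{align}
  \label{eq:planGoal}
  \maxnorm{\D{u}(fg)}\ \le\ \exp\!\big(-\eps\big(\lA{u}-2\tCR(\log R+1)\big)\big)\,\Unorm{f}{u}\Unorm{g}{u}
\end{align}
holds for every $u\in V(T)$ with $\lA{u}\ge 2\tCR(\log R+1)$ and all $f,g\in\Tp{u}$. Since $(f,g)\mapsto\D{u}(fg)$ is symmetric and bilinear, polarization together with the scaling $f\mapsto tf,\ g\mapsto t^{-1}g$ reduces \eqref{eq:planGoal} to the case $f=g$; so the task is to bound $\maxnorm{\D{u}(f^2)}$, which we do by expanding $f^2=\sum_{w,w'}f_wf_{w'}$ over a decomposition $f=\sum_{w\in V}f_w$ and estimating termwise.

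\emph{Step 1: the decomposition.} I would produce $f=\sum_{w\in V}f_w$ where $V$ is a set of vertices $w\preceq u$ with $\lA{w}\ge\tCR(\log R+1)$ (with $w=u$ carrying the part of $f$ of degree $\le 2^K$), and each $f_w$ lies in the ``high-at-$w$'' subspace of $\bigotimes_{v\in A(w)}\T{v}$ — the functions $\EE{w}$-orthogonal to $\T{w}$ — where $A(w)$ is the antichain of the proof overview, restricted to $\{v\preceq u\}$, so that $\{L_v\}_{v\in A(w)}$ partitions $L_u$. A monomial $x_S$ with $S\subseteq L_u$ and $|S|\le 2^{K+1}$ is routed to the deepest vertex $w$ with $|S\cap L_w|>2^K$: because $2^{K+1}=2\cdot 2^K$, every block $L_v$, $v\in A(w)$, then meets $S$ in at most $2^K$ points, so $x_S$ indeed lies in $\bigotimes_{v\in A(w)}\T{v}$; monomials whose target vertex has relative height $<\tCR(\log R+1)$ are lifted to the threshold level, where the degree domination bound (at most $Rd^k$ $k$-th descendants) is used in a density/counting argument to show the lifted mass costs only a constant factor. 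The outputs I need are: (i) every $f_w$ is genuinely $\EE{w}$-orthogonal to $\T{w}$; and (ii) a Bessel-type inequality $\sum_{w\in V}\Unorm{f_w}{u}^2\lesssim\Unorm{f}{u}^2$, the higher-degree analogue of \eqref{eq intro01}, which itself follows from the pairwise correlation bounds of Step 2 together with the comparison $\EE{A(w)}f_w^2\simeq\Unorm{f_w}{u}^2$, valid because $\lA{w}$ is large (this is the probabilistic interpretation of the overview).

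\emph{Step 2: termwise decay.} For $w,w'\in V$ let $z$ be their nearest common ancestor. On the path strictly above $z$, which is shared by both pieces, I would extract the Markov decay $\tlam^{\,\h(z)-\h(w)}\tlam^{\,\h(z)-\h(w')}$ using the single-variable estimate \eqref{eq 1variableDecay}, exactly as in the degree-$1$ computation and in properties $\blacklozenge 2$ and $\blacklozenge 3$. For the part on and below $z$ I would use Lemma~\ref{lem basicEECED} to push $\D{u}$ through conditional expectations onto the relevant sub-antichains — with Lemma~\ref{lem TAinclusion} allowing one to realise the two pieces inside a common $\T{A}$ when needed — then expand via the telescoping identity \eqref{eq introTensorDecompose} and apply submultiplicativity of tensor-product norms, with factor $\exp(-\eps\lA{v})$ at each $v$ on which a $\D{v}$ acts and $1$ at each $v$ on which an $\EE{v}$ acts. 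When $w\ne w'$, property (i) forces the all-$\EE{}$ part of the relevant product to vanish, i.e. $\CE{\cdot}(f_wf_{w'})=\D{\cdot}(f_wf_{w'})$, and this is precisely what upgrades ``boundedness'' to the genuine decay $\exp(-\eps\lA{w})\exp(-\eps\lA{w'})$. The combinatorial sum over which $\D{v}$'s act amounts to bounding $\prod_{v\in A(w)}(1+\exp(-\eps\lA{v}))$, which is at most $2$ once $\tCR$ is large, since $\sum_{v\in A(w)}\exp(-\eps\lA{v})\lesssim Rd\,\exp(-\eps\lA{w})$. The diagonal term $w=w'$ is handled the same way, with \eqref{eq introTensorDecay} replacing (i), yielding $\maxnorm{\D{u}(f_w^2)}\lesssim\tlam^{\,2(\h(u)-\h(w))}\exp(-2\eps\lA{w})\Unorm{f_w}{u}^2$, and the single term $w=w'=u$ is immediate from the definition of $\h_K$.

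\emph{Step 3: summation, choice of $\tCR$, and the main obstacle.} Feeding the termwise bounds into $\maxnorm{\D{u}(f^2)}\le\sum_{w,w'}\maxnorm{\D{u}(f_wf_{w'})}$, applying Cauchy--Schwarz against the Bessel inequality of Step 1, and summing the geometric series in $j:=\h(u)-\h(w)$ — at most $Rd^{\,j}$ vertices sit $j$ levels below $u$, each contributing a power $\tlam^{\,j}$ or $\tlam^{\,2j}$, so convergence follows from $\sqrt{d\lambda^2}<\sqrt{d\lame^2}<e^{-\eps}$ and the gap between $\tlam$ and $\lame$ — collapses everything to $\tC\,\exp(-2\eps\lA{u})\,\Unorm{f}{u}^2$ for some $\tC\in\CC$. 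One then chooses $\tCR=\tCR(M,d)\in\CC$ large enough that $\tC\le\exp(4\eps\tCR)$ and $R\le\exp\!\big(4\eps\tCR(\log R+1)\big)$, which gives $\tC\exp(-2\eps\lA{u})\le\exp\!\big(-\eps(\lA{u}-2\tCR(\log R+1))\big)$ and hence \eqref{eq:planGoal}, i.e. $\h_{K+1}-\h_K\le 2\tCR(\log R+1)$. I expect the main obstacle to be Step 2: the clean single tensor factorisation available for degree-$1$ pieces (and in \cite{HanMossel:23}) is no longer available because $f_w$ and $f_{w'}$ live in heavily overlapping spaces $\bigotimes_{v\in A(w)}\T{v}$ and $\bigotimes_{v\in A(w')}\T{v}$, so $\D{u}(f_wf_{w'})$ must be analysed by an honest operator calculus interleaving conditional expectations along the tree, keeping the degree decay $e^{-\eps}$ and the Markov decay $\tlam$ from contaminating one another while controlling the proliferation of cross terms; a secondary difficulty is engineering the Step-1 decomposition so that every $f_w$ is genuinely orthogonal to $\T{w}$ with an $O(1)$ Bessel constant, in particular the ``density'' handling of monomials whose deepest heavy vertex lies too close to the leaves.
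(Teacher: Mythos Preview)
Your high-level plan matches the paper's architecture—decompose $f$ over vertices, extract pairwise decay via tensor submultiplicativity and Markov decay, sum a geometric series—and your identification of the overlapping-antichain operator calculus as the main obstacle is correct. But there is a genuine gap in Step~1 that you flag as ``secondary'' when it is in fact the crux of the argument.

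When a monomial $x_S$ has its pivot $w_0$ (the deepest vertex with $|S\cap L_{w_0}|>2^K$) at relative height below your threshold, you propose to ``lift'' it to the threshold ancestor $w$. But the child $v$ of $w$ on the path to $w_0$ still satisfies $|S\cap L_v|>2^K$, so $x_S\notin\T{v}$ and hence $x_S\notin\bigotimes_{v'\in A(w)}\T{v'}$. The lifted piece therefore does not live in the tensor product space on which your Step~2 operates: the submultiplicativity argument needs every tensor leg to be a degree-$\le 2^K$ function so that the induction hypothesis supplies the per-leg factor $\exp(-\eps\lA{v})$. This is a space mismatch, not a cardinality issue, and no density or counting argument repairs it.

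The paper devotes all of Section~\ref{sec: CWTK} to exactly this point. For bottom-layer vertices $w$ it takes $f_w$ in $(\idtt{w}-\PK{w})\Tp{w}$, which is $\EE{w}$-orthogonal to $\T{w}$ but \emph{not} contained in $\TT{w}{-1}$, and constructs a dedicated operator $\PDM{w}=\bigotimes_{v\in\Dm{w}}\PT{v}$ that maps $\Tp{w}$ into $\Deg{\Dm{w}}\otimes\T{\Dm{w}}$—a product of a degree-$1$ function of the $\tm$-th descendants with a degree-$\le 2^K$ function—while preserving $\CE{w}[(\idtt{w}-\PK{w})f\cdot g]$ for all $g\in\T{w}$. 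Decay for this image then comes from combining the degree-$1$ base case (Proposition~\ref{prop K=0}) on a depth-$\tm$ subtree with the induction hypothesis on $\T{\Dm{w}}$. This is precisely why $\hB$ carries the extra margin $\tm=\lfloor\tfrac{\eps}{10d}\tCR(\log R+1)\rfloor$ on top of $\tCR(\log R+1)$: the bottom-layer pieces need both the usual height to invoke the induction hypothesis and an additional $\tm$ layers on which to run the degree-$1$ argument. Your proposal has no analogue of this mechanism; without it the bottom-layer contribution to $\D{u}(f^2)$ is uncontrolled and the inductive step does not close.
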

\begin{rem}
    First, the constant $2$ appeared only for technical reasons, as $\tCR$ also has a different role discussed previously. Second, while this theorem holds for arbitrary $K$, once
    $\h_K$ exceeds $\ell$, the depth of the tree, the statement of the theorem becomes meaningless.
\end{rem}

Finally, the Theorem \ref{thm: main} follows by choosing
\begin{align*}
    K =  \frac{1}{2\tCR (\log(R)+1)} \cdot \frac{\ell}{2} \,.
\end{align*}
The above theorem implies that $\h_K \le \ell/2$.
It implies that for every polynomial $\phi$ of degree at most
$$
    \exp\left( \frac{\ell}{4 \tCR (\log(R)+1)}\right)\,.
$$
Applying the property corresponds to $\h_K$ with $f = \phi - \EE{\rho}\phi$ and $g=1$, we have
\begin{align*}
    ({\rm Var}[ \E[f(X_L)\,|\,X_\rho] ])^{1/2}
    \le
    \max_{x_\rho} \left|\mathbb{E}[ f(X_L) \,\vert\, X_\rho ]\right|
    \le
    \exp(- \eps (\ell - \ell/2)) {\rm Var}[f]^{1/2}  .
\end{align*}

\paragraph*{Organization of the paper}
In Section \ref{sec T}, we introduce some basic properties of submultiplicativity of tensor operator norms specifically for the spaces of our interest. The paper is then divided into two parts: In {\bf Part I}, we define the ${\cal R}(u)$ types of spaces and derive their corresponding properties. In {\bf Part II}, we focus on the structural properties of the decomposition $f = \sum_u f_u$, which leads to the proof of Theorem \ref{theor main}.

\section{Submultiplicativity of tensor operator norms}
\label{sec T}
In this section, we begin by stating two fundamental submultiplicativity properties of tensor operator norms. They form the building block for the arguments of the proof of Theorem \ref{theor main}.

Before stating the first lemma, let us briefly recall how tensor products allow us to combine multiple bilinear maps into a single multilinear map.
Let \( k \) be a positive integer, and for each \( i \in [k] \) let \( H_i^+ \) and \( H_i^- \) be finite-dimensional vector spaces.
Suppose we have bilinear maps
\[
    L_i : H_i^+ \times H_i^- \to W_i \quad \text{for each } i \in [k],
\]
where each \( W_i \) is a finite-dimensional vector space.

Consider the following map from
$H_1^+ \times H_2^+ \times \cdots \times H_k^+ \times H_1^- \times H_2^- \times \cdots \times H_k^-$
to \( W_1 \otimes W_2 \otimes \cdots \otimes W_k \):
\[
    (f_1,\dots, f_k , g_1, \dots, g_k) \mapsto
    \bigotimes
    L_i(f_i, g_i)
\]

This map is multilinear, and by the universal property of tensor products, it extends uniquely to a linear map
\[
    \bigotimes_{i \in [k]} H_i^+ \otimes
    \bigotimes_{i \in [k]} H_i^- \;\to\;
    \bigotimes_{i \in [k]} W_i.
\]

We may also view \(\bigotimes_{i \in [k]} H_i^+\) and \(\bigotimes_{i \in [k]} H_i^-\) themselves as vector spaces. Applying the universal property in the opposite direction, we obtain a bilinear map
\[
    \bigotimes_{i \in [k]} L_i :
    \left(\bigotimes_{i \in [k]} H_i^+\right) \times \left(\bigotimes_{i \in [k]} H_i^-\right)
    \;\to\; \bigotimes_{i \in [k]} W_i,
\]
which on pure tensors is given by
\[
    \bigotimes_{i \in [k]} L_i( f_1 \otimes \cdots \otimes f_k, \; g_1 \otimes \cdots \otimes g_k )
    = \bigotimes_{i \in [k]} L_i(f_i, g_i).
\]

With this background in mind, we now present the following lemma.

\begin{lemma}
    \label{lem: mainTensorProduct}
    Let $k$ be a positive integer, and for each $i \in [k]$ let
    $H_i^+$ and $H_i^-$ be two finite-dimensional vector spaces equipped with symmetric, semi-positive definite bilinear forms $E_i^+$ and $E_i^-$, respectively.

    Let $U_1, \dots, U_k$ be finite sets, and consider bilinear maps
    $L_i: H_i^+ \times H_i^- \to \R^{U_i}$.  Suppose there exists $\delta_i >0$ such that for all $f \in H_i^+$ and $g \in H_i^-$,
    \begin{align*}
        \|L_i(f,g)\|_{\rm max} \le \delta_i \sqrt{E_i^+(f,f)} \sqrt{E_i^-(g,g)} \,.
    \end{align*}

    Then, for all $f \in \bigotimes_{i \in [k]} H_i^+$ and $g \in \bigotimes_{i \in [k]} H_i^-$,

    \begin{align*}
        \|\bigotimes_{i \in [k]} L_i (f,g)\|_{\rm max} \le \prod_{i \in k}\delta_i \sqrt{\bigotimes_{i \in [k]} E_i^+(f,f)}
        \sqrt{\bigotimes_{i \in [k]} E_i^-(g,g) }\,.
    \end{align*}
\end{lemma}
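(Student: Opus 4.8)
\textbf{Proof plan for Lemma~\ref{lem: mainTensorProduct}.}
The plan is to prove the statement by induction on $k$, reducing the general case to the case $k=2$, and to prove the case $k=2$ by a direct computation that expresses the bilinear form $\bigotimes E_i^\pm$ via an orthonormal-type basis. Actually, since $k=1$ is the hypothesis, the whole content is an inductive step of the form: if the submultiplicativity holds for a family of $k$ maps, then adjoining one more map (with its own constant $\delta_{k+1}$) preserves it. So it suffices to carry out the two-factor case cleanly, with $H_1^\pm = \bigotimes_{i\le k} H_i^\pm$ carrying the tensor bilinear form (which is again symmetric and semi-positive definite), $L_1 = \bigotimes_{i\le k} L_i$ with constant $\prod_{i\le k}\delta_i$ by the induction hypothesis, and $H_2^\pm, L_2, \delta_2$ the new factor; here the target space is $\R^{U}\otimes\R^{U_{k+1}} = \R^{U\sqcup U_{k+1}}$, and $\|\cdot\|_{\rm max}$ on the tensor product of two $\R^{U}$-spaces is exactly the max-norm on $\R^{U\sqcup U_{k+1}}$.

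For the two-factor step, fix $f\in H_1^+\otimes H_2^+$ and $g\in H_1^-\otimes H_2^-$. Since $E_2^+$ is symmetric semi-positive definite, diagonalize it: pick a basis $e_1,\dots,e_m$ of $H_2^+$ with $E_2^+(e_a,e_b)=\mathbbm{1}_{a=b}c_a$ for $c_a\in\{0,1\}$ (rescaling), and similarly a basis $e'_1,\dots,e'_n$ of $H_2^-$ diagonalizing $E_2^-$. Write $f=\sum_a f_a\otimes e_a$ and $g=\sum_b g_b\otimes e'_b$ with $f_a\in H_1^+$, $g_b\in H_1^-$. Then for any coordinate $(x,y)\in U\times U_{k+1}$,
\begin{align*}
    \Big(\bigotimes_{i\in[2]} L_i(f,g)\Big)(x,y)
    =
    \sum_{a,b} L_1(f_a,g_b)(x)\cdot L_2(e_a,e'_b)(y).
\end{align*}
Fix $(x,y)$ and set $\alpha_a := L_1(f_a,\cdot)$-type scalars; more precisely, applying the hypothesis on $L_1$ and $L_2$ coordinatewise and then Cauchy--Schwarz twice (once in $a$, once in $b$), one bounds $|\sum_{a,b}L_1(f_a,g_b)(x)L_2(e_a,e'_b)(y)|$ by $\prod_{i\le k}\delta_i\cdot\delta_2$ times $\big(\sum_a E_1^+(f_a,f_a)c_a\big)^{1/2}\big(\sum_b E_1^-(g_b,g_b)c'_b\big)^{1/2}$, uniformly in $(x,y)$. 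The key algebraic identity closing the argument is that, in the diagonalizing bases, $\bigotimes_{i\in[2]}E_i^+(f,f)=\sum_a E_1^+(f_a,f_a)\,c_a$ and likewise for $g$; this is just the definition of the tensor product of bilinear forms evaluated on $f=\sum_a f_a\otimes e_a$.

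The routine but slightly delicate point — and the main thing to get right — is the order of the two Cauchy--Schwarz applications and making sure the matrix $\big(L_2(e_a,e'_b)(y)\big)_{a,b}$ is controlled in operator norm by $\delta_2$ after restricting to the support where $c_a=c'_b=1$: one uses that for any fixed $y$, $\phi\mapsto L_2(\phi,\psi)(y)$ is a linear functional of norm $\le\delta_2\sqrt{E_2^-(\psi,\psi)}$ with respect to the seminorm $\sqrt{E_2^+(\cdot,\cdot)}$, hence factors through the quotient by the kernel, where it becomes a genuine bounded functional. The degenerate directions (where $c_a=0$ or $c'_b=0$) contribute nothing because the corresponding rows/columns of $L_2(e_a,e'_b)(y)$ vanish, as a $\delta_2\sqrt{E_2^+(e_a,e_a)}\sqrt{E_2^-(e'_b,e'_b)}$ bound forces them to. Once the $k=2$ case is in hand, the induction on $k$ is immediate and the general statement follows.
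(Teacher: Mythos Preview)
Your proposal is correct and takes essentially the same approach as the paper: reduce to $k=2$ by induction, freeze a coordinate $(x,y)\in U_1\times U_2$ to reduce to a scalar-valued target, handle the degeneracy of $E_i^\pm$ by diagonalizing (equivalently, quotienting by the radical so the bilinear forms become genuine inner products), and then bound the resulting bilinear form using the operator-norm interpretation of $L_i$ together with Cauchy--Schwarz. The paper packages the scalar-target two-factor step as a separate lemma and fully diagonalizes all four spaces $H_i^\pm$, whereas you diagonalize only the second factor and keep $H_1^\pm$ abstract; these are cosmetic differences.

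One small caution: the phrase ``Cauchy--Schwarz twice (once in $a$, once in $b$)'' is a bit loose. A naive application of the entrywise bound $|L_1(f_a,g_b)(x)|\le\delta_1\|f_a\|\|g_b\|$ followed by two Cauchy--Schwarz inequalities would produce the Frobenius norm of $(L_2(e_a,e'_b)(y))_{a,b}$ rather than its operator norm, which is too weak. The correct route---which you do signal with ``controlled in operator norm by $\delta_2$''---is to first use bilinearity to write the sum as $\sum_a L_1(f_a,\tilde g_a)(x)$ with $\tilde g_a=\sum_b M_{ab}g_b$, apply the $L_1$ bound and Cauchy--Schwarz in $a$, and then use the operator-norm bound $\|M\|\le\delta_2$ to control $\sum_a E_1^-(\tilde g_a,\tilde g_a)\le\delta_2^2\sum_b c'_bE_1^-(g_b,g_b)$. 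This last step is the one place where the argument is not purely Cauchy--Schwarz, and is exactly what the paper does (viewing $L_i$ as an operator $H_i^+\to H_i^-$ of norm $\le\delta_i$).
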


To build intuition, we now consider a simplified setting and connect it to the familiar notion of submultiplicativity of operator norms.

In a simpler scenario where $H_i^+ = H_i^- = \mathbb{R}^n$ with the standard inner product and each $U_i$ a signleton, each $L_i$ reduces to a linear operator represented by an $n \times n$ matrix. In this case, the condition that on $L_i$ is simply translates to saying that $L_i$ has operator norm at most $\delta_i$. The lemma then asserts that the operator norm of the tensor product of the $L_i$s is at most the product of the $\delta_i$s, reflecting the standard submultiplicativity of matrix norms.

If $U_i$ is not a singleton but remains finite, we consider each $a \in U_i$. For each such $a$, the bilinear form $f,g \mapsto L_i(f,g)(a)$ corresponds to a matrix $L_{i,a}$. The lemma's assumption ensure that all these matrices $L_{i,a}$ have operator nomrs uniformly bounded by $\delta_i$.
Correspondly, the lemma's assertion is that for each $(a_1,a_2,\dots,a_k) \in U_1\times U_2 \cdots \times U_k$, the operator norm of the tensor product of $L_{i,a_i}$ is bounded by $\prod_{i \in k}\delta_i$. The rest of the generalization is also straightforward, but we leave it to the appendix.

Given the proof is an exercise in linear algebra, we defer it to the appendix.Further, let us state the second lemma about submultiplicativity of tensor product norms:
\begin{lemma}
    \label{lem tensorNorm}
    Consider $k$ finite-dimensional vector spaces $H_1, H_2, \dots, H_k$, each equipped with a symmetric, semi-positive definite bilinear form
    $E_i: H_i \times H_i \to \R$ for $i = 1,2,\dots, k$.
    Suppose there exists linear operators $L_i: H_i \rightarrow  H_i$ for $i \in [k]$ such that
    \begin{align*}
        E_i(L_i(f), L_i(f)) \le \delta_i E_i(f,f) \mbox{ for every } f \in H_i\,,
    \end{align*}
    for some $\delta_i \ge 0$.
    Then it follows that
    \begin{align*}
        \bigotimes_{i=1}^k E_i \left(
        \bigotimes_{i=1}^k L_i f, \bigotimes_{i=1}^k L_i f \right)
        \le \prod_{i=1}^k \delta_i
        \bigotimes_{i=1}^k E_i(f,f) \mbox{ for every } f \in \bigotimes_{i=1}^kH_i \,.
    \end{align*}
\end{lemma}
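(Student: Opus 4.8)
The plan is to reduce the $k$-fold bound to a single one-factor contraction estimate and then iterate. Observe that $\bigotimes_{i=1}^k L_i = M_1 \circ M_2 \circ \cdots \circ M_k$, where $M_i$ is the operator on $\bigotimes_{j=1}^k H_j$ that acts as $L_i$ on the $i$-th factor and as the identity on every other factor; these operators pairwise commute. Hence it suffices to prove, for each fixed $i$, the estimate
\[
    \Big(\bigotimes_{j=1}^k E_j\Big)\big( M_i v, M_i v\big) \le \delta_i \Big(\bigotimes_{j=1}^k E_j\Big)(v,v)
    \qquad \text{for all } v \in \bigotimes_{j=1}^k H_j ,
\]
since composing these $k$ inequalities — applied successively to $v = M_{i+1}\cdots M_k h$ — yields the claim with constant $\prod_{i=1}^k \delta_i$.

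After permuting the tensor factors, under which $\bigotimes_j E_j$ is invariant, the one-factor estimate is exactly the case $k=2$ with the operator $L \otimes I$: given $L\colon H\to H$ with $E(Lf,Lf)\le \delta E(f,f)$ for all $f$, and an auxiliary finite-dimensional space $H'$ with a symmetric semi-positive definite form $E'$, one must show $(E\otimes E')\big((L\otimes I)h,(L\otimes I)h\big)\le \delta\,(E\otimes E')(h,h)$ for every $h\in H\otimes H'$. To see this, diagonalize $E'$: choose a basis $e_1,\dots,e_m$ of $H'$ that is $E'$-orthogonal (some $E'(e_r,e_r)$ may equal $0$, which is harmless), and write $h=\sum_{r=1}^m h_r\otimes e_r$ with $h_r\in H$. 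Because the $e_r$ are $E'$-orthogonal, both $(E\otimes E')(h,h)$ and $(E\otimes E')\big((L\otimes I)h,(L\otimes I)h\big)$ collapse to the weighted sums $\sum_r E'(e_r,e_r)\,E(h_r,h_r)$ and $\sum_r E'(e_r,e_r)\,E(Lh_r,Lh_r)$ respectively; applying the hypothesis $E(Lh_r,Lh_r)\le \delta E(h_r,h_r)$ term by term and using $E'(e_r,e_r)\ge 0$ finishes it. The same diagonalization simultaneously shows that $\bigotimes_j E_j$ is again symmetric and semi-positive definite, so the statement is well posed.

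The only real subtlety is that the forms $E_i$ are merely semi-positive definite, so one cannot normalize bases or invoke the Hilbert-space identity $\|A\otimes B\|=\|A\|\,\|B\|$ directly. Allowing the $E'$-orthogonal basis above to contain null vectors sidesteps this entirely. An alternative route is to quotient each $H_i$ by the radical of $E_i$: the inequality $E_i(L_if,L_if)\le \delta_iE_i(f,f)$ forces $L_i$ to map the radical into itself, so $L_i$ descends to an operator of norm $\le \sqrt{\delta_i}$ on the resulting genuine inner product space, and one then applies submultiplicativity of the operator norm under tensor products. I expect the diagonalization argument to be the quickest to write out in full, and I do not anticipate any serious obstacle beyond bookkeeping.
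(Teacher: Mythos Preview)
Your proposal is correct and follows essentially the same approach as the paper: factor $\bigotimes_i L_i$ as a composition of single-factor operators $L_i \otimes I$, and prove the one-factor contraction by diagonalizing the bilinear form on the passive factor so the tensor norm splits as a nonnegative-weighted sum to which the hypothesis applies termwise. The paper organizes the reduction as an induction on $k$ down to $k=2$ and then applies $L_1\otimes I_2$ followed by $I_1\otimes L_2$, whereas you iterate the one-factor bound directly, but this is only cosmetic.
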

We will also defer the proof of this lemma to the appendix.

In the second part of this section, we will apply the lemmas discussed above to the spaces of interest—specifically, the spaces of functions defined on the vertices of the tree. The following result follows as a straightforward consequence of Lemma \ref{lem: mainTensorProduct}.
\begin{lemma}
    \label{lem TU}
    Consider an antichain $A \subseteq V(T)$ where every $u \in A$ has height satisfying $\h(u) \ge \h_K$. For any $f,g \in \T{A}$, the expression $\CE{A}fg$ can be decomposed as follows:

    \begin{align*}
        (\CE{A} fg) (x_A)
        =
        \sum_{A' \subseteq A} a_{A'}(x_{A'})\,,
    \end{align*}
    where each $a_{A'} \in \F{A'}$ satisfies
    \begin{align*}
        \maxnorm{a_{A'}}
        \le \prod_{u \in A'}\exp( - \eps \h_K(u)) \Unorm{f}{A}\Unorm{g}{A}\,.
    \end{align*}
    Furthermore, we have $a_{\emptyset} = \EE{A}fg$ and
    \begin{align*}
        \maxnorm{\D{A}fg}
        \le
        \left( \sum_{\emptyset \neq A' \subseteq A} \prod_{u \in A'}\exp( - \eps \h_K(u)) \right) \Unorm{f}{A}\Unorm{g}{A}\,.
    \end{align*}
\end{lemma}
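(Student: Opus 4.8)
The plan is to obtain the decomposition $(\CE{A}fg)(x_A) = \sum_{A'\subseteq A} a_{A'}(x_{A'})$ by applying the telescoping identity $\CE{u} = \D{u} + \EE{u}$ simultaneously at every vertex $u \in A$, exactly as in the degree-$1$ toy computation \eqref{eq introTensorDecompose}--\eqref{eq introTensorDecay} of the overview but now for a general antichain. Concretely, since $\CE{A} = \bigotimes_{u\in A}\CE{u} = \bigotimes_{u\in A}(\D{u}+\EE{u})$, expanding the tensor product gives
\begin{align*}
    \CE{A} = \sum_{A'\subseteq A}\Big(\bigotimes_{u\in A'}\D{u}\Big)\otimes\Big(\bigotimes_{u\in A\setminus A'}\EE{u}\Big)\,,
\end{align*}
and I would set $a_{A'} := \big(\bigotimes_{u\in A'}\D{u}\otimes\bigotimes_{u\in A\setminus A'}\EE{u}\big)(fg)$. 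Each such term indeed lies in $\F{A'}$, since $\EE{u}$ collapses the $u$-coordinate to a scalar while $\D{u}$ outputs a function in $\F{u}$; the term $A'=\emptyset$ is $\EE{A}fg$ by definition of $\EE{A}$, and summing all terms recovers $\CE{A}fg$, with $\D{A}fg = \CE{A}fg - \EE{A}fg = \sum_{\emptyset\neq A'\subseteq A}a_{A'}$.

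The heart of the argument is the norm bound on each $a_{A'}$, and here I would invoke Lemma~\ref{lem: mainTensorProduct}. Take $k = |A|$, index the factors by $u\in A$, and for each $u$ let $H_u^+ = H_u^- = \T{u}$ equipped with the bilinear form $E_u^+(f,g) = E_u^-(f,g) = \EE{u}(fg)$ (symmetric and semi-positive definite since it is an expectation of a product), and let $U_u = \{*\}$ when $u\in A\setminus A'$ (so $\R^{U_u}=\R$) and $U_u = [q]$ when $u\in A'$. The bilinear map $L_u$ is $\EE{u}$ (a scalar-valued bilinear form) for $u\notin A'$ and $\D{u}$ for $u\in A'$, where $\D{u}(f,g) := \D{u}(fg)$ viewed as a bilinear map $\T{u}\times\T{u}\to\R^{[q]}$. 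The per-coordinate hypothesis of Lemma~\ref{lem: mainTensorProduct} is then: for $u\notin A'$, $|\EE{u}(fg)|\le \sqrt{\EE{u}f^2}\sqrt{\EE{u}g^2}$ by Cauchy--Schwarz (so $\delta_u = 1$); for $u\in A'$, $\maxnorm{\D{u}(fg)}\le \exp(-\eps\h_K(u))\|f\|_u\|g\|_u$, which is precisely the defining property of $\h_K$ from Definition~\ref{def  hK}, \emph{applicable because the hypothesis $\h(u)\ge\h_K$ is assumed for every $u\in A$} (so $\delta_u = \exp(-\eps\h_K(u))$ for $u\in A'$). Lemma~\ref{lem: mainTensorProduct} then yields
\begin{align*}
    \maxnorm{a_{A'}} \le \Big(\prod_{u\in A'}\exp(-\eps\h_K(u))\Big)\sqrt{\bigotimes_{u\in A}\EE{u}(f^2)}\,\sqrt{\bigotimes_{u\in A}\EE{u}(g^2)} = \prod_{u\in A'}\exp(-\eps\h_K(u))\,\Unorm{f}{A}\Unorm{g}{A}\,,
\end{align*}
using $\bigotimes_{u\in A}\EE{u} = \EE{A}$ and $\Unorm{f}{A} = \sqrt{\EE{A}f^2}$ from Definition~\ref{def norm}.

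Finally, the bound on $\maxnorm{\D{A}fg}$ follows by the triangle inequality applied to $\D{A}fg = \sum_{\emptyset\neq A'\subseteq A}a_{A'}$, summing the individual bounds. The main subtlety I anticipate is a bookkeeping one rather than a conceptual one: making sure the tensor-factor identifications are consistent — that $fg$, as an element of $\bigotimes_{u\in A}\T{u}$ obtained by the identification $\Xi$ of Definition~\ref{def  IdentificationXi} (legitimate since the $L_u$ have pairwise disjoint supports), really does get sent to $\prod_u(\D{u}\text{ or }\EE{u})$ applied coordinatewise, and that $\T{u}$ is closed under products so that $fg$ and $f^2,g^2$ land in the right spaces when we feed them to $\EE{u}$. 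One should also note that $f,g\in\T{A}$ need not be pure tensors, but Lemma~\ref{lem: mainTensorProduct} is stated for arbitrary elements of the tensor product, so no extra work is needed there. Modulo these identifications, the proof is a direct application of the submultiplicativity lemma.
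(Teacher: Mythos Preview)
Your proposal is correct and matches the paper's proof essentially line for line: the same expansion $\CE{A}=\bigotimes_{u\in A}(\EE{u}+\D{u})=\sum_{A'\subseteq A}\bigotimes_{u\in A'}\D{u}\otimes\bigotimes_{u\in A\setminus A'}\EE{u}$, the same definition of $a_{A'}$, the same per-coordinate inputs (Cauchy--Schwarz for $\EE{u}$, the defining inequality of $\h_K$ for $\D{u}$), and the same appeal to Lemma~\ref{lem: mainTensorProduct} to tensorize. Your worry about $\T{u}$ being closed under products is unnecessary, since $\EE{u}$ and $\D{u}$ are defined on all of $\F{u_\preceq}$ and Lemma~\ref{lem: mainTensorProduct} only requires bilinear maps and semi-definite forms on the $H_i$, not that products stay in $\T{u}$.
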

\begin{proof}
    We start with the decomposition
    \begin{align*}
        \CE{A}
        =
        \bigotimes_{u \in A} \CE{u}
        =
        \bigotimes_{u \in A} \left( \EE{u} + \D{u} \right)
        =
        \sum_{A' \subseteq A} \underbrace{\bigotimes_{u \in A'} \D{u} \bigotimes_{u \in A \setminus A'} \EE{u}}_{:=\bL_{A'}}\,.
    \end{align*}

    For each subset $A' \subseteq A$, define
    \begin{align*}
        a_{A'}
        :=
        \bigotimes_{u \in A'} \bL_{A'}fg \,.
    \end{align*}
    By construction, $a_{A'}$ is an element of
    \[
        \bigotimes_{u \in A'} \F{u} \otimes \bigotimes_{u \in A \setminus A'} \R  = \F{A'}\,.
    \]

    Consider $u \in A$.
    By definition of $\lA{u}$,
    \begin{align*}
        \maxnorm{\D{u}f_ug_u}
        \le
        \exp(-\varepsilon \lA{u}) \Unorm{f_u}{u} \Unorm{g_u}{u}\,,
        \quad
        f_u,g_u \in \T{u}\,,
    \end{align*}
    and by Cauchy-Schwarz inequality, we also have
    $$
        \maxnorm{ \EE{u}f_ug_u}
        =
        |\EE{u} f_u g_u|
        \le
        \Unorm{f_u}{u} \Unorm{g_u}{u}\,,
        \quad
        f_u,g_u \in \T{u}\,,
    $$
    Applying Lemma~\ref{lem: mainTensorProduct}, we can “tensorize” these inequalities. For each subset $A' \subseteq A$, when we combine the factors corresponding to each $u \in A'$, the resulting inequality for $\bL_{A'}(f,g)$ reads
    \[
        \maxnorm{\bL_{A'}fg} \le \left(\prod_{u \in A'} \exp(-\eps \lA{u})\right)\Unorm{f}{A}\Unorm{g}{A}.
    \]
    Thus, we obtain the desired bound for each $a_{A'}$.

    Finally, the second statement of the lemma follows directly from the first one, using the fact that $a_{\emptyset} = \EE{A}fg$ and recalling $\D{A} = \CE{A} - \EE{A}$.

\end{proof}

Now, we extend the above lemma dedicated to the types of antichain we are interested in, namely any
$$
    A \subseteq \OO{u}{} \mbox{ for } u \in V(T)\,.
$$
(See Definition \ref{def  TuOu} for the definition of $\OO{u}{}$.)

\begin{lemma}
    \label{lem basicDecayAu}
    There exists $\tC \in \CC$ such that the following statement holds for sufficiently large $\tCR$:
    Let $u \in V(T)$ satisfy $\lA{u} \geq \tCR (\log(R)+1)$,
    and let $A \subseteq \OO{u}{}$. Then, for any functions $f,g \in \T{A}$ we have
    \begin{align*}
        \maxnorm{\D{A}fg}
        \le
        \tC R \exp(-\eps \lA{u}) \Unorm{f}{A} \Unorm{g}{A}\,.
    \end{align*}
    and
    \begin{align*}
        \maxnorm{\CE{A}fg}
        \le
        \left( 1 + \tC R \exp(-\eps \lA{u}) \right) \Unorm{f}{A} \Unorm{g}{A}\,.
    \end{align*}
\end{lemma}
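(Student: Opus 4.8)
The plan is to reduce the bound on $\maxnorm{\D{A}fg}$ for $A \subseteq \OO{u}{}$ to the already-established Lemma~\ref{lem TU}, by controlling the combinatorial sum $\sum_{\emptyset \neq A' \subseteq A} \prod_{w \in A'}\exp(-\eps \lA{w})$ that appears there. The key geometric input is the observation recorded in Definition~\ref{def  TuOu}: every vertex $w \in \OO{u}{k}$ has $\h(w) = \h(u)+k$, hence $\lA{w} = \lA{u}+k$. Moreover $\OO{u}{k}$ is contained in the set of children of $\anc{u}{k+1}$, so since the tree has degree dominated by $d$ with parameter $R$, we have $|\OO{u}{k}| \le |\fc{\anc{u}{k+1}}| \le Rd$. (I should double-check the exact form of the "degree dominated by $d$ with parameter $R$" bound applied to a single generation; it gives $|\OO{u}{k}| \le Rd$, and this is all we need.)

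First I would verify that $A \subseteq \OO{u}{}$ implies every $w\in A$ has $\h(w)\ge \h(u) \ge \h_K$ — indeed $\h(w) = \h(u)+k \ge \h(u)-1$ for $w\in\OO{u}{k}$ with $k\ge -1$, and for the boundary case $k=-1$ one has $\h(w)=\h(u)-1$; since $\lA{u}\ge \tCR(\log R+1)$ is large, $\h(w)\ge \h_K$ holds for $\tCR$ large enough. So Lemma~\ref{lem TU} applies to $A$ and yields
\begin{align*}
    \maxnorm{\D{A}fg}
    \le
    \left( \sum_{\emptyset \neq A' \subseteq A} \prod_{w \in A'}\exp(-\eps \lA{w}) \right) \Unorm{f}{A}\Unorm{g}{A}.
\end{align*}
Next I would bound the sum. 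Write $S := \sum_{w\in A}\exp(-\eps \lA{w})$; then $\sum_{\emptyset\neq A'\subseteq A}\prod_{w\in A'}\exp(-\eps\lA{w}) = \prod_{w\in A}(1+\exp(-\eps\lA{w})) - 1 \le e^{S}-1 \le S e^{S}$. Now estimate $S$ itself: grouping $A$ by the layers $\OO{u}{k}$,
\begin{align*}
    S \le \sum_{k=-1}^{\infty} |\OO{u}{k}|\, \exp(-\eps(\lA{u}+k))
    \le Rd\, e^{\eps}\exp(-\eps\lA{u}) \sum_{k=-1}^{\infty} e^{-\eps k}
    = \frac{Rd\, e^{2\eps}}{1-e^{-\eps}}\, \exp(-\eps \lA{u}).
\end{align*}
The constant $\frac{d e^{2\eps}}{1-e^{-\eps}}$ belongs to $\CC$ since $\eps\in\CC$ and $d$ is fixed; call it $\tC'$. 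So $S \le \tC' R\exp(-\eps\lA{u})$. Because $\lA{u}\ge \tCR(\log R+1)$, we have $R\exp(-\eps\lA{u}) = \exp(\log R - \eps\lA{u}) \le \exp(-(\eps\tCR - 1)(\log R+1))$, which can be made as small as we like — in particular $\le 1$ — by taking $\tCR$ large (depending only on $\eps$, hence on $M,d$). Therefore $S \le \tC'$ is bounded, so $e^S \le e^{\tC'}$ is a $\CC$-constant, and $\maxnorm{\D{A}fg} \le \tC' e^{\tC'} R\exp(-\eps\lA{u})\,\Unorm{f}{A}\Unorm{g}{A}$. Setting $\tC := \tC' e^{\tC'} \in \CC$ gives the first inequality. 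The second inequality follows immediately: $\CE{A} = \EE{A} + \D{A}$, and $\maxnorm{\EE{A}fg} = |\EE{A}fg| \le \Unorm{f}{A}\Unorm{g}{A}$ by Cauchy–Schwarz (as in the proof of Lemma~\ref{lem TU}), so $\maxnorm{\CE{A}fg} \le (1+\tC R\exp(-\eps\lA{u}))\Unorm{f}{A}\Unorm{g}{A}$.

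The main obstacle I anticipate is not any single step but bookkeeping the "$\tCR$ sufficiently large" quantifier correctly: one must check that the threshold on $\tCR$ needed here (to force $R\exp(-\eps\lA{u})\le 1$ and to ensure $\h(w)\ge \h_K$ for the boundary layer $\OO{u}{-1}$) depends only on $\eps$ and $d$, i.e. lies in $\CC$, and does not secretly depend on $R$, $\ell$, or $K$. Since $\eps\tCR - 1 > 0$ already suffices for the former and $\tCR(\log R+1)\ge 1$ suffices for the latter, this is fine, but it is worth stating explicitly. A minor subtlety is the $k=-1$ layer, where $\lA{w} = \lA{u}-1$ rather than $\lA{u}$; this only costs an extra factor $e^{\eps}$, already absorbed above.
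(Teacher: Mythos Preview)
Your proposal is correct and follows essentially the same approach as the paper: invoke Lemma~\ref{lem TU}, rewrite the combinatorial sum as $\prod_{w\in A}(1+\exp(-\eps\lA{w}))-1$, bound it via $1+s\le e^s$ and a layer-wise count $|\OO{u}{k}|\le Rd$ together with the geometric series in $k$, then use $\lA{u}\ge\tCR(\log R+1)$ to make the exponent small; the second inequality comes from Cauchy--Schwarz on $\EE{A}fg$. If anything, you are slightly more careful than the paper in explicitly checking that the $k=-1$ layer still has $\h(w)\ge\h_K$ and in making the $e^S-1\le Se^S$ step explicit.
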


\begin{proof}
    First, as an immediate consequence of Lemma \ref{lem TU}, we have
    \begin{align*}
        \maxnorm{\D{A}fg}
        \le
        \left( \sum_{\emptyset \neq A' \subseteq A} \prod_{u \in A'}\exp( - \eps \h_K(u)) \right) \Unorm{f}{u}\Unorm{g}{u}\,.
    \end{align*}

    Next, we estimate this sum.

    For each integer $t \ge -1$, the set $A \cap \OO{u}{t}$ is contained in ${\mathfrak c}(u)$, and thus can have at most $Rd$ vertices at height $\h(u') = \h(u) + t$. Since $A \subset \OO{u}{}$, we derive
    \begin{align*}
        \left( \sum_{\emptyset \neq A' \subseteq A} \prod_{u \in A'}\exp( - \eps \h_K(u)) \right)
        =   &
        \prod_{u' \in A} \left( 1 + \exp(-\eps \lA{u'}) \right)  - 1                  \\
        \le &
        \prod_{t=-1}^\infty \Big( 1 + \exp \big(-\eps (\lA{u}+t)\big) \Big)^{Rd}  - 1 \\
        \le &
        \exp\left(
        \sum_{t=-1}^\infty \exp(-\eps (\lA{u}+t))Rd
        \right)
        -1\,,
    \end{align*}
    where the last inequality follows from the fact that $(1+s) \le \exp(s)$ for $s \in \mathbb{R}$.
    Next, we consider the assumption in the lemma that
    $$
        \lA{u} \ge \tCR (\log(R)+1) \mbox{ and } \tCR \mbox{ is sufficiently large.}
    $$
    If $\tC$ is a constant multiple of $ \frac{\exp(\eps)}{1-\exp(-\eps)}d$, then
    \[
        (*) \le \tC R \exp(-\eps \lA{u})\,.
    \]

    To prove the second inequality, note that by the Cauchy–Schwarz inequality,
    \[
        |\EE{A}fg| \le \Unorm{f}{A}\Unorm{g}{A}.
    \]
    Hence, with
    \[
        \maxnorm{\CE{A}fg}
        = \maxnorm{\EE{A}fg + \D{A}fg}
        \le \maxnorm{\EE{A}fg} + \maxnorm{\D{A}fg}\,,
    \]
    the second statement follows.

\end{proof}

In particular, having established controlled decay (Lemma \ref{lem basicDecayAu}), we can now use these estimates to compare norms defined on different antichains. The next lemma shows that if the perturbation operator \(\D{A}\) is sufficiently small, then norms defined via different antichains remain close to each other.

\begin{lemma}
    [Norm comparison]
    \label{lem normComparison}
    For any given antichain $A \subseteq V(T)$, suppose ${{\cal W}} \subseteq \F{A_{\pe}}$ is a subspace of functions such that
    $$
        \maxnorm{ \D{A}fg }
        \le
        c \Unorm{f}{A} \Unorm{g}{A} \mbox{ for } f,g \in {{\cal W}},
    $$
    for some $c \in  (0,1/2)$.
    Then, for any antichain $A' \subseteq V(T)$ such that $A \pe A'$,
    $$
        \frac{1}{1+c} \|f\|_A \le \|f\|_{A'} \le (1+c) \|f\|_A \mbox{ for } f \in {{\cal W}}.
    $$
\end{lemma}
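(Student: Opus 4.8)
The goal is to compare the two quadratic forms $\|f\|_A^2 = \EE{A}f^2$ and $\|f\|_{A'}^2 = \EE{A'}f^2$ for $f \in \cal W$. The plan is to write the difference of these two forms in terms of the operator $\D{A}$ and then apply the hypothesis. Concretely, since $A \pe A'$, Lemma~\ref{lem basicEECED} gives $\EE{A'}\phi = \EE{A'}\circ \CE{A}\phi$ for any $\phi \in \F{A_{\pe}}$. Taking $\phi = f^2 = f\cdot f$ (viewed as the product of $f$ with itself, an element of $\F{A_{\pe}}$), we get
\[
  \|f\|_{A'}^2 = \EE{A'}(f^2) = \EE{A'}\big(\CE{A}(f^2)\big) = \EE{A'}\big(\EE{A}(f^2) + \D{A}(f^2)\big) = \|f\|_A^2 + \EE{A'}\big(\D{A}(f^2)\big),
\]
using $\CE{A} = \EE{A} + \D{A}$ and the fact that $\EE{A}(f^2)$ is a constant, so $\EE{A'}$ of it is itself.

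The second step is to bound the error term $|\EE{A'}(\D{A}(f^2))|$. Since $\EE{A'}$ is an expectation (an average over a probability distribution), it is dominated in absolute value by the sup-norm of its argument: $|\EE{A'}(\D{A}(f^2))| \le \maxnorm{\D{A}(f^2)}$. Here one should be slightly careful that $\maxnorm{\cdot}$ is the max over the whole space $[q]^{A'}$ while $\EE{A'}$ only integrates over the support, but the inequality still holds. Now apply the hypothesis with $g = f$: $\maxnorm{\D{A}(f\cdot f)} \le c\,\Unorm{f}{A}^2 = c\|f\|_A^2$. Combining,
\[
  \big|\,\|f\|_{A'}^2 - \|f\|_A^2\,\big| \le c\,\|f\|_A^2,
\]
which gives $(1-c)\|f\|_A^2 \le \|f\|_{A'}^2 \le (1+c)\|f\|_A^2$.

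The final step is to take square roots and massage the constants into the stated form. From $\|f\|_{A'}^2 \le (1+c)\|f\|_A^2$ we get $\|f\|_{A'} \le \sqrt{1+c}\,\|f\|_A \le (1+c)\|f\|_A$ since $\sqrt{1+c} \le 1+c$ for $c \ge 0$. From $\|f\|_{A'}^2 \ge (1-c)\|f\|_A^2$ we get $\|f\|_A^2 \le \frac{1}{1-c}\|f\|_{A'}^2$, and since $c \in (0,1/2)$ we have $\frac{1}{1-c} \le (1+c)^2$ (equivalently $(1-c)(1+c)^2 \ge 1$, which holds for $c \in (0,1/2)$, e.g. because $(1-c)(1+c)^2 = (1-c^2)(1+c) = 1 + c - c^2 - c^3 \ge 1$ for $c\in[0,1)$), hence $\|f\|_A \le (1+c)\|f\|_{A'}$, i.e. $\frac{1}{1+c}\|f\|_A \le \|f\|_{A'}$.

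I do not anticipate a genuine obstacle here; this is a short computation. The only points requiring a little care are: (i) invoking the correct identity from Lemma~\ref{lem basicEECED}, which requires $A \pe A'$ and that $f^2 \in \F{A_{\pe}}$ (true since $\cal W \subseteq \F{A_{\pe}}$ and $\F{A_{\pe}}$ is closed under products); (ii) the passage from $\EE{A'}$ to $\maxnorm{\cdot}$; and (iii) the elementary constant bookkeeping $\sqrt{1+c} \le 1+c$ and $\tfrac{1}{1-c} \le (1+c)^2$ for $c \in (0,1/2)$. If one wanted the cleanest statement one could simply record $(1-c)\|f\|_A^2 \le \|f\|_{A'}^2 \le (1+c)\|f\|_A^2$ and then state the square-root version; the asymmetric-looking $\frac{1}{1+c}$ versus $(1+c)$ bounds are just a convenient symmetric-looking relaxation.
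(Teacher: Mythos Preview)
Your proof is correct and follows essentially the same route as the paper: express $\|f\|_{A'}^2 - \|f\|_A^2$ via $\EE{A'}\circ\CE{A} = \EE{A'}$ and $\D{A} = \CE{A}-\EE{A}$, bound the error by $\maxnorm{\D{A}f^2}\le c\|f\|_A^2$, and then take square roots with the elementary inequalities for $c\in(0,1/2)$. The only cosmetic difference is in the final constant manipulation (you verify $\frac{1}{1-c}\le(1+c)^2$, the paper states the equivalent $\sqrt{1-c}\ge\frac{1}{1+c}$), but the argument is the same.
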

\begin{proof}
    First, we invoke Lemma \ref{lem basicEECED} to get
    $$
        \EE{A'} \circ \CE{A} \phi = \EE{A'}\phi \mbox{ for } \phi \in \F{A_{\pe}}\,.
    $$

    Then, for any $f \in \F{A_{\pe}}$, we have
    $$
        \big| \|f\|_{A'}^2 - \|f\|_A^2 \big|
        =
        |\EE{A'} \CE{A}f^2 - \EE{A}f^2|
        \le
        \EE{A'} |\CE{A}f^2 - \EE{A}f^2|
        =
        \EE{A'} |\D{A}f^2|
        \le
        \maxnorm{\D{A}f^2}
        \le
        c \Unorm{f}{A}^2,
    $$
    which in turn implies
    \begin{align*}
        (1-c) \Unorm{f}{A}^2 \le \Unorm{f}{A'}^2 \le (1+c) \Unorm{f}{A}^2.
    \end{align*}
    Now, taking square root on each term above, and together with the fact that
    $$
        \sqrt{1-c} \le \frac{1}{1+c} \mbox{ and } \sqrt{1+c} \le 1+ c
        \mbox{ for } c \in (0,1/2)\,,
    $$
    the lemma follows.
\end{proof}

Now we state a specific version of norm comparisons for the spaces of interest.

\begin{lemma}
    \label{lemma: coreNorm}
    The following statement holds for sufficiently large $\tCR$.
    Let $u \in V(T)$ satisfy $\lA{u} \ge \tCR (\log(R)+1)$,
    and let $k \in \mathbb{N}$ be an integer for which $\anc{u}{k}$ is well-defined.
    Then, for any antichain $A'$ such that
    \begin{align*}
        \{u\} \cup  \OO{u}{[0,k-1]} \pe  A',
    \end{align*}
    the following inequality holds: for each $f \in \F{u_{\pe}} \otimes \TT{u}{[0,k-1]}$,
    \begin{align*}
        \frac{1}{\sqrt{1+\kappa}}
        \Unorm{f}{\{u\} \cup \OO{u}{[0,k-1]}}
        \le
        \Unorm{f}{A'}
        \le
        \sqrt{1+\kappa}
        \Unorm{f}{\{u\} \cup \OO{u}{[0,k-1]}} .
    \end{align*}
\end{lemma}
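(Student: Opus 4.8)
The plan is to invoke the general norm-comparison principle (Lemma~\ref{lem normComparison}) with the antichain $A = \{u\} \cup \OO{u}{[0,k-1]}$ and the subspace $\cW = \F{u_{\pe}} \otimes \TT{u}{[0,k-1]}$. To do so I first need to verify the hypothesis of that lemma: namely that there is a constant $c \in (0,1/2)$ with $\maxnorm{\D{A}fg} \le c\,\Unorm{f}{A}\Unorm{g}{A}$ for all $f,g \in \cW$. This is where Lemma~\ref{lem basicDecayAu} comes in, but there is a mismatch to resolve: that lemma is stated for $A \subseteq \OO{u}{}$ and functions in $\T{A}$, whereas here $A$ contains the vertex $u$ itself and the functions have an \emph{arbitrary} (not degree-bounded) dependence on $x_{u_{\pe}}$.

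To bridge this, I would first apply Lemma~\ref{lem basicEECED} to reduce the dependence on $x_{u_{\pe}}$ to a dependence on $x_u$: since $\D{A} = \D{A} \circ \CE{u}$ (as $\{u\} \in A$ — more precisely using the part of Lemma~\ref{lem basicEECED} that lets one precompose with $\CE{u}$), it suffices to bound $\maxnorm{\D{A}(\tilde f \tilde g)}$ where $\tilde f = \CE{u} f \in \F{u} \otimes \TT{u}{[0,k-1]}$ and similarly for $\tilde g$, and moreover $\Unorm{\tilde f}{A} = \Unorm{f}{A}$. Now we are in the setting of the tensor-product decomposition: writing $A = \{u\} \sqcup (A \cap \OO{u}{[0,k-1]})$ and using the identity $\{u\} \cup \OO{u}{[0,k-1]} = \OO{\anc{u}{1}}{[-1,k-1]}$ recorded in Definition~\ref{def  Tu}, one can run essentially the same computation as in the proof of Lemma~\ref{lem basicDecayAu}: expand $\CE{A} = \bigotimes_{v\in A}(\EE{v}+\D{v})$, and apply Lemma~\ref{lem: mainTensorProduct} to tensorize. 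The one new factor is the $\EE{u} + \D{u}$ on the $\F{u}$ slot; but for $\F{u}$ the operator $\D{u}$ has a trivial-ish bound — in fact on the one-dimensional-per-state space one just uses $\maxnorm{\D{u}(\phi\psi)} \le \maxnorm{\CE{u}(\phi\psi)} + |\EE{u}(\phi\psi)| \le \tCM^2\maxnorm{\phi}\maxnorm{\psi} + \Unorm{\phi}{u}\Unorm{\psi}{u}$, so this contributes only a bounded constant factor and, crucially, the geometric-series argument over the heights $\h(u)+t$, $t \ge 0$, still produces a bound proportional to $\tC R \exp(-\eps \lA{u})$. Under the hypothesis $\lA{u} \ge \tCR(\log(R)+1)$ with $\tCR$ large, this is $\le \kappa/ C'$ for any prescribed constant, in particular $\le \min\{1/2,\ \kappa\}$ after using $\sqrt{1+\kappa} \ge 1/(1+\kappa')$ type estimates relating $c$ and $\kappa$.

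Having established $\maxnorm{\D{A}fg} \le c\,\Unorm{f}{A}\Unorm{g}{A}$ with $c$ as small as we like (take $c$ small enough that $1+c \le \sqrt{1+\kappa}$ and $1/(1+c) \ge 1/\sqrt{1+\kappa}$), Lemma~\ref{lem normComparison} applied with any $A' \succeq A$ immediately gives
\[
  \frac{1}{\sqrt{1+\kappa}}\,\Unorm{f}{A} \le \Unorm{f}{A'} \le \sqrt{1+\kappa}\,\Unorm{f}{A}
  \quad\text{for } f \in \cW,
\]
which is exactly the claimed inequality since $A = \{u\}\cup\OO{u}{[0,k-1]}$. I should double-check that the hypothesis $A \pe A'$ of Lemma~\ref{lem normComparison} is met: this is precisely the stated assumption $\{u\}\cup\OO{u}{[0,k-1]} \pe A'$.

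The main obstacle I expect is the bookkeeping in the first step: cleanly reducing from $f \in \F{u_{\pe}}\otimes\TT{u}{[0,k-1]}$ to the degree-bounded / single-variable-at-$u$ situation so that Lemma~\ref{lem basicDecayAu}'s proof technique applies verbatim, and making sure the extra $\F{u}$-tensor-slot is handled by a valid instance of Lemma~\ref{lem: mainTensorProduct} (one needs the right bilinear forms $E^\pm$ on $\F{u}$, namely the one induced by $\EE{u}$, and the right $\delta$ for the $\D{u}$ piece, which is only an $O(1)$ constant rather than something decaying — so it must be absorbed into the $\tC$ prefactor, not the exponential). Everything else is a routine combination of Lemmas~\ref{lem basicEECED}, \ref{lem basicDecayAu}, and \ref{lem normComparison}.
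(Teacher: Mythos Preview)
There is a genuine gap. Your reduction step ``$\D{A}(fg)=\D{A}(\tilde f\,\tilde g)$ with $\tilde f=\CE{u}f$'' is incorrect: conditional expectation does not commute with products, so $\CE{u}(fg)\neq(\CE{u}f)(\CE{u}g)$ on the $u_{\pe}$-slot, and likewise $\Unorm{\tilde f}{A}\neq\Unorm{f}{A}$ in general. More importantly, even if you drop that reduction, the strategy of verifying the hypothesis of Lemma~\ref{lem normComparison} for $A=\{u\}\cup\OO{u}{[0,k-1]}$ and $\cW=\F{u_\pe}\otimes\TT{u}{[0,k-1]}$ cannot work: the required bound $\maxnorm{\D{A}(fg)}\le c\,\Unorm{f}{A}\Unorm{g}{A}$ with small $c$ is \emph{false}. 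Take $f=g=\mathbf{1}\{x_u=\theta\}\in\F{u}\subset\cW$; then $\D{A}f^2=\D{u}f=\mathbf{1}\{x_u=\theta\}-\pi(\theta)$, whose max-norm is comparable to $\Unorm{f}{A}^2=\pi(\theta)$. In your tensor decomposition $\D{A}=\D{u}\otimes\CE{B}+\EE{u}\otimes\D{B}$ the first term carries no decay at all on $\F{u_\pe}$, and your remark that ``this contributes only a bounded constant factor'' is exactly the problem: a bounded constant is not a small $c$.

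The paper's proof avoids this by never demanding a pointwise-in-$x_u$ bound. It sets $A:=\OO{u}{[0,k-1]}$ (without $u$), freezes $x_{u_\pe}$ entirely, and applies Lemma~\ref{lem basicDecayAu} only to the function $\phi(\,\cdot\,)=f(x_{u_\pe},\,\cdot\,)\in\TT{u}{[0,k-1]}$, yielding
\[
(1-\delta)\,\E_Z\bigl[f^2(x_{u_\pe},Z)\bigr]\;\le\;\E_Z\bigl[f^2(x_{u_\pe},Z)\mid Z_A=x_A\bigr]\;\le\;(1+\delta)\,\E_Z\bigl[f^2(x_{u_\pe},Z)\bigr]
\]
for every fixed $x_{u_\pe}$, with $\delta=\tC R\exp(-\eps\lA{u})$. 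One then integrates both sides against the $A'$-process $Y'$. The point that makes this succeed---and that a bare appeal to Lemma~\ref{lem normComparison} is too crude to capture---is that the $u_\pe$-marginal is the \emph{same} under $\EE{\{u\}\cup A}$ and under $\EE{A'}$ (both have $X_u\sim\pi$), so no decay is needed on that slot at all.
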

\begin{rem}
    Although we do not use the following observation,
    any set $A'$ satisfying the condition in the lemma is necessarily of the form
    \begin{align*}
        \{\anc{u}{s}\} \cup \OO{u}{[s,k-1]}
        =
        \{\anc{u}{s}\} \cup \OO{\anc{u}{s}}{[0,k-s-1]}
        \mbox{ for some } s \in [0,k]\,.
    \end{align*}
\end{rem}
\begin{rem}
    \label{rem: coreNorm}
    If $A''$ is another antichain satisfies the description of $A'$ above, then invoking the Lemma twice (applied to $A'$ and $A''$) we have
    \begin{align*}
        \frac{1}{1+\kappa}
        \Unorm{f}{A''}
        \le
        \Unorm{f}{A'}
        \le
        (1+\kappa)
        \Unorm{f}{A''}.
    \end{align*}
\end{rem}
\begin{proof}
    For simplicity, let $A := \TT{u}{[0,k-1]}$.
    We invoke Lemma \ref{lem basicDecayAu} to show that for
    $\phi \in \TT{u}{[0,k-1]}$,
    \begin{align*}
         &                                       &
         & \maxnorm{\CE{A}\phi^2 - \EE{A}\phi^2}
        \le
        \underbrace{\tC R
            \exp( - \eps \lA{u})}_{:=\delta} \EE{A}\phi^2
        \\
         & \Leftrightarrow                       &
         & (1- \delta)  \EE{A}\phi^2
        \le
        (\CE{A}\phi^2)(x_A)
        \le
        (1+ \delta) \EE{A}\phi^2 \mbox{ for every }x_A \in [q]^A\,.
    \end{align*}

    \step{Random processes corresponding to $\EE{A}$ and $\EE{A'}$}

    Let \(Y = (Y_{v_{\pe}})_{v \in A}\) be a family of independent random processes, where each \(Y_{v_{\pe}}\) is a broadcasting process on the subtree \(T_v\) initialized by \(Y_v \sim \pi\). Likewise, let \(Y' = (Y'_{v_{\pe}})_{v \in A'}\) be the analogous random processes for \(A'\), defined independently from \(Y\).

    Now, fix \(f \in \F{u_{\pe}} \otimes \TT{u}{[0,k-1]}\) and choose any \(x_{u_{\pe}} \in [q]^{u_{\pe}}\). Define
    \[
        \phi(x_{A_{\pe}})
        := f\bigl(x_{u_{\pe}},\,x_{A_{\pe}}\bigr)
        \;\in\;
        \TT{u}{[0,k-1]}.
    \]
    In particular,
    \[
        \EE{A} \phi^2
        \;=\;
        \E\bigl[f^2(x_{u_{\pe}},Y)\bigr]
        \quad\text{and}\quad
        \CE{A}\phi^2(x_A)
        \;=\;
        \E\!\bigl[f^2(x_{u_{\pe}},Y)\,\big\vert\,Y_A = x_A\bigr],
    \]
    where \(Y_A = (Y_v)_{v \in A}\). Hence, the inequality from the previous step implies that for each \(x_{u_{\pe}}\) and \(x_A\),
    \[
        (1-\delta)\,\E\bigl[f^2(x_{u_{\pe}},Y)\bigr]
        \;\;\le\;\;
        \E\!\Bigl[f^2(x_{u_{\pe}},Y)\,\Big\vert\,Y_A = x_A\Bigr]
        \;\;\le\;\;
        (1+\delta)\,\E\bigl[f^2(x_{u_{\pe}},Y)\bigr].
    \]

    Next, we take the expectation of each term above with respect to \(Y'\):
    \[
        (1-\delta)\,\E\bigl[f^2\bigl(Y'_{u_{\pe}},Y\bigr)\bigr]
        \;\;\le\;\;
        \E_{Y'}\!\Bigl[
        \E\!\bigl[f^2(Y'_{u_{\pe}},Y)\,\big\vert\,Y_{A'} = Y'_{A'}\bigr]
        \Bigr]
        \;\;\le\;\;
        (1+\delta)\,\E\bigl[f^2\bigl(Y'_{u_{\pe}},Y\bigr)\bigr].
    \]

    Observe that
    \[
        \E\bigl[f^2\bigl(Y'_{u_{\pe}},Y\bigr)\bigr]
        \;=\;
        \EE{u} \otimes \EE{A}\,f^2
        \;=\;
        \EE{\{u\}\cup A}\,f^2
        ,
    \]
    because \(Y'_{u_{\pe}}\) can itself be viewed as a broadcasting process on \(T_u\) with initialization \(Y_u \sim \pi\). Moreover, when \(Y_{A_{\pe}}\) is conditioned on \(Y_A = Y_{A'}\), it has the same distribution as \(Y'_{A_{\pe}}\) conditioned on \(Y_{A'}\). Consequently,
    \[
        \E_{Y'}\!\Bigl[
        \E\!\bigl[f^2(Y'_{u_{\pe}},Y)\,\big\vert\,Y_{A'} = Y'_{A'}\bigr]
        \Bigr]
        \;=\;
        \E \big[
        f^2(Y'_{u_{\pe}},Y')
        \big]
        =
        \EE{A'}\,f^2.
    \]
    %
    %
    %
    %
    %
    %
    Therefore,
    we get
    \begin{align*}
        (1-\delta)
        \EE{\{u\}\cup A} f^2
        \le
        \EE{A'} f^2
        \le
        (1+\delta)
        \EE{\{u\}\cup A} f^2\,.
    \end{align*}
    Finally, recall that $\kappa \in \CC$ is prefixed, and $\lA{u} \ge \tCR (\log(R)+1)$, it is sufficient to set $\tCR$ to be large enough so that $\delta \le \kappa$.
    Now, the rest of the proof follows from Lemma \ref{lem normComparison}.

\end{proof}

\section{\bf Part I: \texorpdfstring{$\mathcal{R}$}{R}-space and its corresponding properties}
In this section, we define the $\mathcal{R}$-spaces and outline the properties we aim to establish in {\bf Part I}. We begin by introducing certain projection-like operators.

\begin{defi}
    For each $u \in V(T)$, let
    \begin{align}
        \label{def  PK}
        \PK{u} = \Pi_{u,K} : \F{u_{\pe}} \to \T{u},\,
    \end{align}
    be a linear operator satisfying the following two conditions:
    \begin{itemize}
        \item For any $f \in \F{u_{\pe}}$,
              \begin{align}
                  \label{eq PiProperty1}
                  \EE{u} fg = \EE{u} \left[(\PK{u} f)g\right] \mbox{ for } g \in \T{u}\,.
              \end{align}
        \item For any $f \in \F{u_{\pe}}$,
              \begin{align}
                  \label{eq PiProperty2}
                  \mbox{ if }\quad \EE{u} fg = 0 \mbox{ for every } g \in \T{u}\,, \quad \text{then } \quad \PK{u} f  = 0\,.
              \end{align}
    \end{itemize}
\end{defi}
\begin{rem}
    If the law of the broadcasting process \( Y_{u_{\preceq}} \) on \( T_u \), initialized with \( Y_u \sim \pi \), assigns non-zero probtbility on each \( y_{u_{\preceq}} \in [q]^{u_{\preceq}} \), then \( \PK{u} \) is simply the projection operator from \( \mathcal{F}(u_{\preceq}) \) to \( \T{u} \) under the inner product induced by the law of \( Y_{u_{\preceq}} \).
    However, if the transition matrix \( M \) has zero entries, then the support of the law of \( Y_{u_{\preceq}} \) will be strictly smaller than \( [q]^{u_{\preceq}} \).
    Because we frequently switch probability laws, we should not define \( \PK{u} \) in an almost-sure sense; instead, we specifically require the two properties stated above. The existence of such an operator is a straightforward result of linear algebra. We include the proof in the appendix (see Lemma \ref{lem LA_Pi2}).
\end{rem}

\begin{defi}
    \label{def  CR}
    For each $u \in V(T)$, consider a linear subspace ${{\cal W}_u} \subseteq \F{u_{\pe}}$.  We define a sequence of subspaces
    $ \cR{{{\cal W}_u}}{k} \subseteq \F{\anc{u}{k}_{\pe}}$ for $k \ge 0$, provided $\anc{u}{k}$ exists:  \\
    Recall that for every $v \in V(T)$, $\idtt{v}$ is the identity operator from $\F{v_{\pe}}$ to itself.
    Let
    \begin{align*}
        \cR{{{\cal W}_u}}{0}
        =
        (\idtt{u} - \PK{u}) {{\cal W}_u}
        \subseteq
        \F{u_{\pe}}\,,
    \end{align*}
    and recursively define
    \begin{align*}
        \cR{{{\cal W}_u}}{k}
        =
        (\idtt{\anc{u}{k}}-\PK{\anc{u}{k}})
        \underbrace{
            \left(\cR{{{\cal W}_u}}{k-1} \otimes \TT{u}{k-1} \right)
        }_{
        \subseteq
        \F{\anc{u}{k-1}} \otimes \TT{u}{k-1}
        \subseteq \F{\anc{u}{k}_{\pe}}
        }
        \subseteq
        {\cal F}(\anc{u}{k}_{\pe})\,.
    \end{align*}
\end{defi}
\begin{figure}[h]
    \centering
    \includegraphics[width=0.8\textwidth]{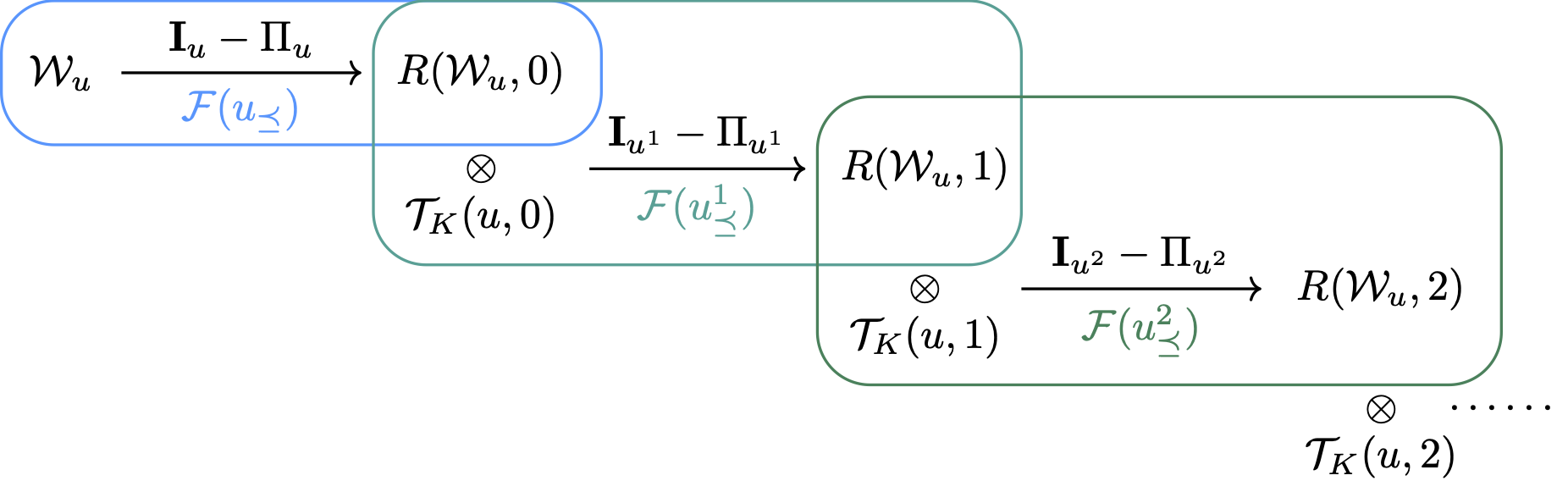}
    \caption{An illustration of the definition of ${\cal R}({{\cal W}_u},k)$ for $k=0,1,2$.}
    \label{fig:R}
\end{figure}

\begin{lemma}[Orthogonality]
    \label{lem PK_Orthogonal}
    For each $f \in \cR{{\cal W}_u}{k}$ and $g \in \T{\anc{u}{k}}$, we have
    \begin{align*}
        \EE{\anc{u}{k}}[f \cdot g] = 0\,.
    \end{align*}
    Equivalently, as a function of $x_{\anc{u}{k}}$, it has mean $0$ with respect to the law corresonding to $\EE{\anc{u}{k}}$:
    \begin{align}
        \label{eq R_CRortho}
        \CE{\anc{u}{k}}[f \cdot g]  \in \Fz{\anc{u}{k}}\,.
    \end{align}
\end{lemma}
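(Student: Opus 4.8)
\textbf{Proof plan for Lemma~\ref{lem PK_Orthogonal}.}
The plan is to prove the statement by induction on $k$, tracking the recursive definition of $\cR{\cW_u}{k}$ in Definition~\ref{def  CR}. The key observation is that the defining property \eqref{eq PiProperty1} of $\PK{v}$ says precisely that $(\idtt{v} - \PK{v})h$ is $\EE{v}$-orthogonal to all of $\T{v}$, so at each stage of the recursion we strip off a factor that is orthogonal to the relevant $\T{\anc{u}{k}}$; the only work is to check that the tensoring-in of $\TT{u}{k-1}$ between stages does not destroy this, and that the orthogonality relation we get at stage $k-1$ is of the right form to feed into stage $k$.

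First I would do the base case $k=0$: by definition $\cR{\cW_u}{0} = (\idtt{u}-\PK{u})\cW_u$, so any $f \in \cR{\cW_u}{0}$ is of the form $h - \PK{u}h$ with $h \in \cW_u \subseteq \F{u_{\pe}}$, and then for $g \in \T{u}$ we have $\EE{u}[f g] = \EE{u}[hg] - \EE{u}[(\PK{u}h)g] = 0$ by \eqref{eq PiProperty1}. For the inductive step, suppose the claim holds at level $k-1$, i.e. every element of $\cR{\cW_u}{k-1}$ is $\EE{\anc{u}{k-1}}$-orthogonal to $\T{\anc{u}{k-1}}$. An element $f \in \cR{\cW_u}{k}$ has the form $f = (\idtt{\anc{u}{k}} - \PK{\anc{u}{k}}) \tilde f$ with $\tilde f \in \cR{\cW_u}{k-1}\otimes \TT{u}{k-1} \subseteq \F{\anc{u}{k}_{\pe}}$. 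Then for any $g \in \T{\anc{u}{k}}$, property \eqref{eq PiProperty1} of $\PK{\anc{u}{k}}$ gives directly $\EE{\anc{u}{k}}[f g] = \EE{\anc{u}{k}}[\tilde f g] - \EE{\anc{u}{k}}[(\PK{\anc{u}{k}}\tilde f) g] = 0$. In fact this already proves the lemma — the key point is that the final operator applied, $(\idtt{\anc{u}{k}} - \PK{\anc{u}{k}})$, alone guarantees the orthogonality, so the inductive hypothesis on $\cR{\cW_u}{k-1}$ is not even needed for this particular statement (it will matter for later lemmas that bound norms). The equivalent reformulation \eqref{eq R_CRortho} then follows since $\EE{\anc{u}{k}}[fg] = \EE{\anc{u}{k}}[\CE{\anc{u}{k}}(fg)]$ and $\CE{\anc{u}{k}}(fg) \in \F{\anc{u}{k}}$, so a function in $\F{\anc{u}{k}}$ with $\EE{\anc{u}{k}}$-mean zero against the constant $g = 1 \in \T{\anc{u}{k}}$ lies in $\Fz{\anc{u}{k}}$.

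The one genuine point requiring care — and what I expect to be the only real obstacle — is the bookkeeping of which function space each object lives in, so that $\EE{\anc{u}{k}}$ and $\PK{\anc{u}{k}}$ are applied to objects in their domains. Specifically one must confirm $\tilde f \in \F{\anc{u}{k}_{\pe}}$ (which the excerpt already annotates in the definition: $\cR{\cW_u}{k-1}\otimes \TT{u}{k-1} \subseteq \F{\anc{u}{k-1}}\otimes \TT{u}{k-1} \subseteq \F{\anc{u}{k}_{\pe}}$, using $\cR{\cW_u}{k-1} \subseteq \F{\anc{u}{k-1}_{\pe}}$ and that $\OO{u}{k-1} \subseteq \anc{u}{k}_{\pe}$), and that $g \in \T{\anc{u}{k}} \subseteq \F{\anc{u}{k}}$ so that $fg \in \F{\anc{u}{k}_{\pe}}$ and $\EE{\anc{u}{k}}$ makes sense on it. Once these inclusions are in place, the argument is a one-line application of \eqref{eq PiProperty1}. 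I would also remark that the identity $\EE{\anc{u}{k}}[fg] = \EE{\anc{u}{k}}[(\CE{\anc{u}{k}}f)\cdot g]$ used to pass to the reformulation is an instance of the tower property captured by Lemma~\ref{lem basicEECED} together with the definition of $\EE{\anc{u}{k}}$, though here the simplest route is just to note $g$ is already a function of $x_{\anc{u}{k}}$.
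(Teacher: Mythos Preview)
Your proof is correct and follows essentially the same approach as the paper: both exploit that any $f \in \cR{\cW_u}{k}$ has the form $(\idtt{\anc{u}{k}} - \PK{\anc{u}{k}})\tilde f$ and then use property \eqref{eq PiProperty1} to conclude $\EE{\anc{u}{k}}[fg]=0$. Your observation that no induction is actually needed is right, and your one-line computation $\EE{\anc{u}{k}}[\tilde f g] - \EE{\anc{u}{k}}[(\PK{\anc{u}{k}}\tilde f)g] = 0$ is in fact slightly more direct than the paper's version, which applies \eqref{eq PiProperty1} twice (first replacing $f$ by $\PK{\anc{u}{k}}f$, then expanding $\PK{\anc{u}{k}}f = \PK{\anc{u}{k}}\tilde f - \PK{\anc{u}{k}}^2\tilde f$ and applying \eqref{eq PiProperty1} again to cancel the two terms).
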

\begin{proof}
    Let
    $$
        \tilde f \in \cR{{\cal W}_u}{k-1} \otimes \TT{u}{[0,k-1]}
    $$
    be a function so that $f = ({\bf I}_{\anc{u}{k}} -\PK{\anc{u}{k}}) \tilde f$. Then, based on the projection property \eqref{eq PiProperty1} for $\PK{\anc{u}{k}}$, we have
    \begin{align}
        \label{eq R_remark_00}
        \EE{\anc{u}{k}} [f \cdot g]
        =
        \EE{\anc{u}{k}} \left[
            ((\idtt{\anc{u}{k}} - \PK{\anc{u}{k}}) \tilde f)
            \cdot
            g
            \right]
        \stackrel{\eqref{eq PiProperty1}}{=}
        \EE{\anc{u}{k}} \left[
            (\PK{\anc{u}{k}} (\idtt{\anc{u}{k}} - \PK{\anc{u}{k}}) \tilde f)
            \cdot g
            \right]
    \end{align}
    With
    $$
        \PK{\anc{u}{k}} (\idtt{\anc{u}{k}} - \PK{\anc{u}{k}}) \tilde f
        =
        (\PK{\anc{u}{k}}\tilde f)
        - \PK{\anc{u}{k}}( \PK{\anc{u}{k}} \tilde f)\,,
    $$
    we have
    \begin{align*}
        \eqref{eq R_remark_00}
        =
        \EE{\anc{u}{k}} \left[(\PK{\anc{u}{k}}\tilde f) \cdot g \right]
        -
        \EE{\anc{u}{k}} \left[
            (\PK{\anc{u}{k}}( \PK{\anc{u}{k}} \tilde f))
            \cdot
            g
            \right]
        \stackrel{\eqref{eq PiProperty1}}{=}
        0\,.
    \end{align*}
\end{proof}

\begin{lemma}\label{lem R Decompose}
    Let $u \in V(T)$, and suppose ${{\cal W}_u} \subseteq \F{u_{\pe}}$ is a linear subspace.  
    For every integer $k \ge 0$ and every function
    \[
      \phi \;\in\; \mathcal{W}_u \;\otimes\; \TT{u}{[0,k-1]},
    \]
    there exists some $\psi \,\in\, \cR{{\cal W}_u}{k}$ such that
    \[
      \psi \;-\;\phi
      \;\in\;
      \T{u}\,\otimes\, \TT{u}{[0,k-1]}.
    \]
    Further, we also have 
    \begin{align*}
        \cR{{\cal W}_u}{k} 
    \subseteq 
        ({\cal W}_u + \T{u}) \otimes \TT{u}{[0,k-1]}.
    \end{align*}
  \end{lemma}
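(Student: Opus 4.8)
Here is the plan. I would prove both assertions together, by induction on $k$. The only substantive ingredient is the structural containment
\begin{equation}\label{eq:structinc}
  \T{\anc{u}{k}} \;\subseteq\; \T{u}\otimes\TT{u}{[0,k-1]},
\end{equation}
valid for every $k\ge 0$: the leaves below $\anc{u}{k}$ decompose as $L_{\anc{u}{k}} = L_u\sqcup\bigsqcup_{v\in\OO{u}{[0,k-1]}}L_v$, so the antichain $\{u\}\cup\OO{u}{[0,k-1]}$ refines the trivial partition of $L_{\anc{u}{k}}$ and satisfies $\{u\}\cup\OO{u}{[0,k-1]}\pe\{\anc{u}{k}\}$; Lemma~\ref{lem TAinclusion} then yields $\T{\anc{u}{k}}\subseteq\T{\{u\}\cup\OO{u}{[0,k-1]}} = \T{u}\otimes\TT{u}{[0,k-1]}$. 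I will also use freely that $\OO{u}{[0,k-1]} = \OO{u}{[0,k-2]}\sqcup\OO{u}{k-1}$ (vertices at distinct heights), hence $\TT{u}{[0,k-1]} = \TT{u}{[0,k-2]}\otimes\TT{u}{k-1}$ under the identification of Definition~\ref{def  IdentificationXi}, together with the convention $\TT{u}{[0,-1]}=\R$.

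For the base case $k=0$ we have $\TT{u}{[0,-1]}=\R$: given $\phi\in\mathcal{W}_u$, take $\psi := (\idtt{u}-\PK{u})\phi\in\cR{\mathcal{W}_u}{0}$, so $\psi-\phi = -\PK{u}\phi\in\T{u}$; and $\cR{\mathcal{W}_u}{0} = (\idtt{u}-\PK{u})\mathcal{W}_u\subseteq\mathcal{W}_u+\T{u}$. For the inductive step of the second assertion, assume $\cR{\mathcal{W}_u}{k-1}\subseteq(\mathcal{W}_u+\T{u})\otimes\TT{u}{[0,k-2]}$; tensoring with $\TT{u}{k-1}$ gives $\cR{\mathcal{W}_u}{k-1}\otimes\TT{u}{k-1}\subseteq(\mathcal{W}_u+\T{u})\otimes\TT{u}{[0,k-1]}$. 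On this space $\idtt{\anc{u}{k}}$ is the identity and $\PK{\anc{u}{k}}$ has image in $\T{\anc{u}{k}}\subseteq\T{u}\otimes\TT{u}{[0,k-1]}$ by \eqref{eq:structinc}; since $\cR{\mathcal{W}_u}{k}$ is the image of $\idtt{\anc{u}{k}}-\PK{\anc{u}{k}}$ on $\cR{\mathcal{W}_u}{k-1}\otimes\TT{u}{k-1}$, it lies in $(\mathcal{W}_u+\T{u})\otimes\TT{u}{[0,k-1]}$.

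For the inductive step of the first assertion, fix $k\ge 1$ and assume both assertions for $k-1$. Since every map in play is linear, it suffices to treat $\phi = \phi'\otimes\phi''$ with $\phi'\in\mathcal{W}_u\otimes\TT{u}{[0,k-2]}$ and $\phi''\in\TT{u}{k-1}$ (such tensors span $\mathcal{W}_u\otimes\TT{u}{[0,k-1]}$, and a general $\phi$ is handled by summing the resulting $\psi$'s, as $\cR{\mathcal{W}_u}{k}$ and $\T{u}\otimes\TT{u}{[0,k-1]}$ are subspaces). Applying the induction hypothesis to $\phi'$ gives $\psi'\in\cR{\mathcal{W}_u}{k-1}$ with $\psi'-\phi'\in\T{u}\otimes\TT{u}{[0,k-2]}$; then $\psi'\otimes\phi''\in\cR{\mathcal{W}_u}{k-1}\otimes\TT{u}{k-1}$, so $\psi := (\idtt{\anc{u}{k}}-\PK{\anc{u}{k}})(\psi'\otimes\phi'')$ belongs to $\cR{\mathcal{W}_u}{k}$, and
\[
  \psi-\phi \;=\; (\psi'-\phi')\otimes\phi'' \;-\; \PK{\anc{u}{k}}(\psi'\otimes\phi'').
\]
The first summand lies in $(\T{u}\otimes\TT{u}{[0,k-2]})\otimes\TT{u}{k-1} = \T{u}\otimes\TT{u}{[0,k-1]}$ and the second in $\T{\anc{u}{k}}\subseteq\T{u}\otimes\TT{u}{[0,k-1]}$ by \eqref{eq:structinc}, so $\psi-\phi\in\T{u}\otimes\TT{u}{[0,k-1]}$, completing the induction. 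The one step I expect to require genuine care is \eqref{eq:structinc} --- namely recognizing that a degree-$\le 2^K$ function of the leaves under $\anc{u}{k}$ splits, along the partition induced by $\{u\}\cup\OO{u}{[0,k-1]}$, into a product of pieces each still of degree $\le 2^K$, which is precisely the content of Lemma~\ref{lem TAinclusion}; everything else is bookkeeping with tensor subspaces and the fact that $\PK{v}$ maps into $\T{v}$.
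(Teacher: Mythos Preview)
Your proposal is correct and follows essentially the same inductive scheme as the paper's proof: same base case, same reduction to simple tensors $\phi'\otimes\phi''$ for the first assertion, and the same use of the inclusion $\T{\anc{u}{k}}\subseteq\T{u}\otimes\TT{u}{[0,k-1]}$ (via Lemma~\ref{lem TAinclusion}) for the second. If anything, your write-up is slightly more careful in the inductive step of the first assertion: the paper stops at exhibiting $\psi_1\otimes\phi_2\in\cR{{\cal W}_u}{k-1}\otimes\TT{u}{k-1}$ with the right difference, leaving implicit the final application of $\idtt{\anc{u}{k}}-\PK{\anc{u}{k}}$ needed to land in $\cR{{\cal W}_u}{k}$, whereas you carry that step out explicitly and check that the extra term $-\PK{\anc{u}{k}}(\psi'\otimes\phi'')$ also lies in $\T{u}\otimes\TT{u}{[0,k-1]}$.
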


\begin{proof}
    We prove both statements by induction on \(k\).
    
    \step{Base Case (\texorpdfstring{$k=0$}{k=0})}
    If \(k=0\), set
    \[
      \psi \;=\; \bigl(\idtt{u} - \PK{u}\bigr)\,\phi \;\in\; \cR{{\cal W}_u}{0}.
    \]
    Then
    \[
      \psi \;-\;\phi
      \;=\;
      -\,\PK{u}\,\phi
      \;\in\;
      \T{u},
    \]
    as desired. Moreover, any \(\psi \in \cR{{\cal W}_u}{0}\) is obtained by applying the operator \(\idtt{u} - \PK{u}\) to some function in \({\cal W}_u\). Hence, the above argument shows
    \[
      \cR{{\cal W}_u}{0} \;\subseteq\; {\cal W}_u + \T{u}.
    \]
    
    \step{Inductive Step}
    Assume the statements hold for \(k-1 \ge 0\). We now prove them for \(k\).
    
    First, consider the simple case
    \[
      \phi \;=\; \phi_1 \,\otimes\, \phi_2,
    \]
    where \(\phi_1 \in \mathcal{W}_u \otimes \TT{u}{[0,k-2]}\) and \(\phi_2 \in \TT{u}{k-1}\). By the inductive hypothesis, there exists
    \[
      \psi_1 \;\in\; \cR{{\cal W}_u}{\,k-1}
    \]
    such that
    \[
      \psi_1 \;-\;\phi_1
      \;\in\;
      \T{u}\,\otimes\,\TT{u}{[0,k-2]}.
    \]
    Then
    \[
      \psi_1 \,\otimes\, \phi_2
      \;\;\in\;\;
      \cR{{\cal W}_u}{\,k-1}\,\otimes\, \TT{u}{\,k-1}
    \]
    also satisfies the required condition, because
    \[
      \bigl(\psi_1 \otimes \phi_2\bigr) \;-\; \bigl(\phi_1 \otimes \phi_2\bigr)
      \;=\;
      \bigl(\psi_1 - \phi_1\bigr)\otimes \phi_2
      \;\in\;
      \T{u}\,\otimes\,\TT{u}{[0,k-1]}.
    \]
    
    For the general case, write \(\phi\) as a finite sum
    \[
      \phi
      \;=\;
      \sum_i 
        \bigl(\phi_{1,i} \,\otimes\, \phi_{2,i}\bigr),
    \]
    where each summand is of the simple form \(\phi_{1,i} \otimes \phi_{2,i}\). Applying the above argument to each summand completes the inductive step.
    
    As for the second statement, we have
    \begin{align*}
          \cR{{\cal W}_u}{k}
      &=  (\idtt{\anc{u}{k}} - \PK{\anc{u}{k}})
          \Bigl(\cR{{\cal W}_u}{k-1} \otimes \TT{u}{k-1}\Bigr) \\
      &=  (\idtt{\anc{u}{k}} - \PK{\anc{u}{k}})
          \Bigl(({\cal W}_u + \T{u}) \otimes \TT{u}{[0,k-1]}\Bigr) \\
      &=  ({\cal W}_u + \T{u}) \otimes \TT{u}{[0,k-1]}
      \;-\;
      \PK{\anc{u}{k}}
      \Bigl(({\cal W}_u + \T{u}) \otimes \TT{u}{[0,k-1]}\Bigr).
    \end{align*}
    Since the image of \(\PK{\anc{u}{k}}\) is contained in
    \[
      \T{\anc{u}{k}}
      \;\subseteq\;
      \T{u} \otimes \TT{u}{[0,k-1]},
    \]
    where the inclusion follows from Lemma~\ref{lem TAinclusion}. We conclude that 
    \[
      \cR{{\cal W}_u}{k}
      \;\subseteq\;
      ({\cal W}_u + \T{u}) \otimes \TT{u}{[0,k-1]}.
    \]
    \end{proof}

\begin{rem}
    \label{rem: RInclusion}
    In this paper, we only focus on two specific choices for ${\cal W}_u$: ${\cal W}_u = \TT{u}{-1}$ and ${\cal W}_u = {\cal T}_{K+1}(u)$.

    Note that in both caess we have $\T{u} \subseteq \TT{u}{-1}$
    and also $\T{u} \subseteq {\cal T}_{K+1}(u)$.
    Consequently, applying the inclusion property from the previous lemma, we obtain
    \begin{align*}
        \cR{\TT{u}{-1}}{k} \subseteq \TT{u}{-1} \otimes \TT{u}{[0,k-1]} = \TT{u}{[-1,k-1]}\,,
    \end{align*}
    and similarly,
    \begin{align*}
        \cR{ {\cal T}_{K+1}(u)}{k} \subseteq {\cal T}_{K+1}(u) \otimes \TT{u}{[0,k-1]} \,.
    \end{align*}
    These two cases cover all the instances of ${\cal W}_u$ appearing in our later arguments.
\end{rem}

Next, we introdcue a decay related parameter $\CW{{\cal W}_u}$ for each subspace ${\cal W}_u$.
\begin{defi}
    \label{def CW}
    For each $u \in V(T)$ and a linear subspace ${{\cal W}_u} \subseteq \F{u_{\pe}}$, we define $\CW{{\cal W}_u}$ to be the smallest constant such that for any $f \in \cR{{{\cal W}_u}}{0}$ and $g \in \T{u}$,
    \begin{align*}
        \maxnorm{\CE{u} fg}
        \le
        \CW{{\cal W}_u} \Unorm{f}{u} \Unorm{g}{u}\,.
    \end{align*}
\end{defi}

In {\bf Part I}, we aim to establish two properties of $\cR{{{\cal W}_u}}{k}$.


\begin{prop}
    \label{prop: coreR}
    There exists a constant $\tC \in \CC$ such that the following statement holds for sufficiently large $\tCR$:
    Let $u \in V(T)$ satisfy $\lA{u} \ge \tCR (\log(R)+1)$ and assume $\anc{u}{k}$ is well-defined.
    Consider a linear subspace ${{\cal W}_u} \subseteq \F{u_{\pe}}$.
    For any $f \in \cR{{{\cal W}_u}}{k}$, the following properties holds:
    \begin{enumerate}
        \item For any $g \in \T{\anc{u}{k}}$,
              $$
                  \maxnorm{ \CE{\anc{u}{k}} fg}
                  \le
                  \tC \CW{{\cal W}_u} \lame^k
                  \Unorm{f}{\anc{u}{k}} \Unorm{g}{\anc{u}{k}} \,.
              $$
        \item For any $t \in [0,k-1]$, there exists
              $$
                  f_t \in \cR{{{\cal W}_u}}{t} \otimes \TT{u}{[t,k-1]}
              $$
              such that $f - f_t \in \T{\anc{u}{t}} \otimes \TT{u}{[t,k-1]}$ and
              \begin{align*}
                  \Unorm{f - f_t}{\anc{u}{k}}
                  \le
                  \CW{{\cal W}_u} \cdot
                  \tC R \exp(-\eps \lA{u})
                  \lame^t \exp(- \eps t)
                  \Unorm{f}{\anc{u}{k}}\,.
              \end{align*}
    \end{enumerate}
\end{prop}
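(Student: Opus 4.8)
The plan is to prove both statements by induction on $k$, tracking the two assertions together since statement~(2) feeds into the proof of statement~(1) at the next level. For the base case $k=0$: statement~(1) is just the definition of $\CW{{\cal W}_u}$ (with $\tC$ absorbing a factor, noting $\lame^0=1$), and statement~(2) is vacuous since $[0,-1]=\emptyset$. For the inductive step, fix $f\in\cR{{\cal W}_u}{k}$, so by definition $f=(\idtt{\anc{u}{k}}-\PK{\anc{u}{k}})\tilde f$ for some $\tilde f\in\cR{{\cal W}_u}{k-1}\otimes\TT{u}{k-1}$. The key structural fact (Lemma~\ref{lem R Decompose} and Remark~\ref{rem: RInclusion}) is that $f$ lives in ${\cal W}_u+\T{u})\otimes\TT{u}{[0,k-1]}$, which by the tensor identities in Definition~\ref{def  Tu} can be regrouped across any intermediate level $\anc{u}{t}$.

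For statement~(2), I would first produce the function $f_t$. Start from $f\in\cR{{\cal W}_u}{k}$ and strip off the projections layer by layer: using Lemma~\ref{lem R Decompose} applied to the tail, there is $f_{k-1}\in\cR{{\cal W}_u}{k-1}\otimes\TT{u}{k-1}$ with $f-f_{k-1}\in\T{\anc{u}{k-1}}\otimes\TT{u}{k-1}$; iterating the inductive hypothesis' part~(2) downward gives $f_t\in\cR{{\cal W}_u}{t}\otimes\TT{u}{[t,k-1]}$ with the stated membership $f-f_t\in\T{\anc{u}{t}}\otimes\TT{u}{[t,k-1]}$. The quantitative bound on $\Unorm{f-f_t}{\anc{u}{k}}$ comes from the fact that $f-f_t$ is of the form $\PK{\anc{u}{j}}(\cdots)$ summed over $j\in[t+1,k]$ — each such term is a projection of a function that already has a factor lying in $\cR{\cdot}{j-1}$, hence is orthogonal (Lemma~\ref{lem PK_Orthogonal}) to lower-degree pieces, so $\D{\anc{u}{j}}$-type decay applies. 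I would use Lemma~\ref{lem basicDecayAu} to extract the $\tC R\exp(-\eps\lA{u})$ factor and Lemma~\ref{lemma: coreNorm} / Remark~\ref{rem: coreNorm} to freely switch the norm $\Unorm{\cdot}{\anc{u}{k}}$ against $\Unorm{\cdot}{\{u\}\cup\OO{u}{[0,k-1]}}$; the geometric factor $\lame^t\exp(-\eps t)$ accumulates from summing the per-layer contributions, using the gap between $\lame,\tlam$ built into Definition~\ref{def  varepsilon} to keep the series bounded by a constant.

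For statement~(1), write $\CE{\anc{u}{k}}fg = \CE{\anc{u}{k}}[(f-f_{k-1})g] + \CE{\anc{u}{k}}[f_{k-1}g]$ where $f_{k-1}\in\cR{{\cal W}_u}{k-1}\otimes\TT{u}{k-1}$ and $g\in\T{\anc{u}{k}}\subseteq\T{\anc{u}{k-1}}\otimes\TT{u}{k-1}$ (by Lemma~\ref{lem TAinclusion}). For the main term $\CE{\anc{u}{k}}[f_{k-1}g]$, factor the operator: $\CE{\anc{u}{k}} = \CE{\anc{u}{k}}\circ\CE{\{\anc{u}{k-1}\}\cup\OO{u}{k-1}}$ via Lemma~\ref{lem basicEECED}. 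Apply the inductive hypothesis' part~(1) to the $\cR{{\cal W}_u}{k-1}$-factor tensored with the identity on $\TT{u}{k-1}$, using the submultiplicativity Lemma~\ref{lem: mainTensorProduct} to combine it with the trivial bound on the $\TT{u}{k-1}$ slot — this produces the constant $\tC\CW{{\cal W}_u}\lame^{k-1}$. Then the outer application of $\CE{\anc{u}{k}}$ is the one-step Markov contraction: because $\CE{\anc{u}{k-1}}[f_{k-1}g]\in\Fz{\anc{u}{k-1}}$ (Lemma~\ref{lem PK_Orthogonal}, orthogonality of the $\cR{}{k-1}$-part against $\T{\anc{u}{k-1}}$), the decay estimate~\eqref{eq 1variableDecay} with $\tlam$ gives an extra factor $\tlam$, and $\tlam\cdot\lame^{k-1}\le\lame^k$ (even with room to spare, since $\tlam<\lame$). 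The error term $\CE{\anc{u}{k}}[(f-f_{k-1})g]$ is controlled by statement~(2) at $t=k-1$ (giving smallness $\tC R\exp(-\eps\lA{u})\lame^{k-1}\exp(-\eps(k-1))$) together with Lemma~\ref{lem basicDecayAu} to bound the $\CE{\anc{u}{k}}$ operator norm and Cauchy--Schwarz; this is dominated by $\lame^k$ since $\lA{u}$ is large.

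The main obstacle I anticipate is the bookkeeping in statement~(2): one must verify that, at each layer peeled off, the piece that is discarded genuinely carries \emph{both} a $\D{\anc{u}{j}}$-smallness factor (requiring the relevant function to be orthogonal to degree-$\le 2^K$ polynomials at level $\anc{u}{j}$, which is exactly what membership in $\cR{{\cal W}_u}{j-1}\otimes(\cdots)$ buys via Lemma~\ref{lem PK_Orthogonal}) \emph{and} a $\lame^{j}\exp(-\eps j)$ factor from the intervening Markov decay, and that these combine into the clean product form without the constants or the number of layers ($\le k \le \ell$) blowing up — this is where the gap between $\tlam$, $\tlam_\eps$-type constants and $\lame$ from Definition~\ref{def  varepsilon}, and the geometric-series trick it enables, are essential. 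A secondary subtlety is keeping all norm-comparisons ($\Unorm{\cdot}{A}$ for various antichains $A$) under control via Lemma~\ref{lemma: coreNorm}, which requires $\lA{u}\ge\tCR(\log R+1)$ with $\tCR$ chosen large enough at the very end so that all the accumulated $(1+\kappa)$-type factors and the $\tC R\exp(-\eps\lA{u})$ terms stay below the thresholds demanded by the cited lemmas.
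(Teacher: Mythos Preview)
Your overall architecture is close to the paper's, and your handling of statement~(2)---peeling off projections layer by layer and bounding each discarded piece via an analogue of Lemma~\ref{lem RProjectionNorm}---is essentially right. The genuine gap is in your induction for statement~(1), specifically the step ``the outer application of $\CE{\anc{u}{k}}$ is the one-step Markov contraction \dots\ \eqref{eq 1variableDecay} gives an extra factor $\tlam$, and $\tlam\cdot\lame^{k-1}\le\lame^k$.'' The estimate \eqref{eq 1variableDecay} reads $\maxnorm{\CE{\anc{u}{1}}\phi}\le\tCM\tlam\maxnorm{\phi}$ for $\phi\in\Fz{u}$: it contributes $\tCM\tlam$, not $\tlam$. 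Thus each inductive step multiplies the running constant by $\tCM\tlam/\lame$; since $\tCM$ is a fixed constant of the Markov chain with no reason to satisfy $\tCM<\lame/\tlam$, after $k$ steps you carry a factor $\tCM^k$, which is not bounded by any element of $\CC$. The gap between $\tlam$ and $\lame$ in Definition~\ref{def varepsilon} is calibrated to absorb $(1+\kappa)^{O(1)}$ per step, not $\tCM$.

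The paper's fix is to strengthen the induction hypothesis (Induction Hypothesis~\ref{IH: coreR}): rather than inducting on the bound for $\maxnorm{\CE{\anc{u}{k}}fg}$ itself, one maintains a decomposition $\CE{\anc{u}{k}}[fg]=\sum_{t=0}^{k}\CE{\anc{u}{k}}\bL_{k,t}(f,g)$ with each $\bL_{k,t}(f,g)\in\Fz{\anc{u}{t}}$ and with $\maxnorm{\bL_{k,t}(f,g)}$ bounded by a quantity growing only like $(1+\kappa)^{3k}$. Passing from $k$ to $k+1$ is done via the pseudo-inverse ${\bf Q}$ of $\idtt{\anc{u}{k+1}}-\PK{\anc{u}{k+1}}$ (Lemmas~\ref{lem RbLk+1k+1}--\ref{lem RbLk+1t}), and picks up only $(1+\kappa)^3$ from norm conversions, never $\tCM$. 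The Markov decay $\tCM\tlam^{k-t}$ is then applied \emph{once}, at the very end, in Lemma~\ref{lem RIHImplication}, converting $\sum_t\maxnorm{\bL_{k,t}}$ into the bound of statement~(1). This single application of $\tCM$ is what your direct induction cannot reproduce.
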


This proposition demonstrates that for any $f \in \cR{{{\cal W}_u}}{k}$, its correlation with any polynomials $g \in \T{\anc{u}{k}}$ is small with respect to the broadcasting process $Y_{\anc{u}{k}_{\pe}}$ on $T_{\anc{u}{k}}$, regardless of the initial distribution of $Y_{\anc{u}{k}}$. 

To state the result, we need to introduce a paper-wide parameter $\hB$:
\begin{align}
    \label{def  hB}
    \hB = \left\lceil \tCR (\log(R) +1) + \frac{\eps}{10 d} \tCR (\log(R) +1) \right\rceil\,.
\end{align}

As mentioned in remark \ref{rem: RInclusion}, we will consider only two types of ${\cal W}_u$ in this paper: ${\cal W}_u = \TT{u}{-1}$ and ${\cal W}_u = {\cal T}_{K+1}(u)$. Specifically, we consider the first case when $\lA{u} > \hB$ and the second case when $\lA{u} = \hB$. Therefore, we need to bound their corresponding $\CW{{\cal W}_u}$ values.

\begin{prop}
    \label{prop PT MAIN}
    For any $ 1 < \tCB \in \CC$, the following holds when $\tCR$ is sufficiently large:
    For any $u \in V(T)$,  we have
    \begin{align*}
        \CW{\F{u_{\pe}}} \le \tCM\,,
    \end{align*}
    where $\tCM \in \CC$ is the constant defined related to the Markov Chain in Definition \ref{def  tCM}.
    If $ \lA{u} \ge \tCR (\log(R)+1)$, then
    \begin{align*}
        \CW{\TT{u}{-1}} \le \tC R \exp(-\eps \lA{u})\,.
    \end{align*}
    where $\tC \in \CC$ is a constant independent of $\tCB$.
    If $ \lA{u} \ge \hB$, then
    \begin{align*}
        \CW{{\cal T}_{K+1}(u)} \le \frac{1}{\tCB R}\,,
    \end{align*}
\end{prop}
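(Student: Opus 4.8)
These are comparatively direct. For $\CW{\F{u_{\pe}}}\le\tCM$, given $f\in\cR{\F{u_{\pe}}}{0}$ and $g\in\T{u}$, I would simply use that $|\CE{u}(fg)|\le\CE{u}|fg|$ pointwise, that $\CE{u}|fg|$ is a non-negative element of $\F{u}$ with $\EE{u}\CE{u}|fg|=\EE{u}|fg|$, and then apply property (1) of $\tCM$ in Definition~\ref{def  tCM} together with Cauchy--Schwarz with respect to $\EE{u}$:
\[
\maxnorm{\CE{u}(fg)}\ \le\ \maxnorm{\CE{u}|fg|}\ \le\ \tCM\,\EE{u}|fg|\ \le\ \tCM\,\Unorm{f}{u}\Unorm{g}{u};
\]
no orthogonality is needed. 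For $\CW{\TT{u}{-1}}\le\tC R\exp(-\eps\lA{u})$, Lemmas~\ref{lem R Decompose} and~\ref{lem TAinclusion} give $\cR{\TT{u}{-1}}{0}\subseteq\TT{u}{-1}=\T{{\mathfrak c}(u)}$ and $\T{u}\subseteq\T{{\mathfrak c}(u)}$, so any such $f,g$ lie in $\T{{\mathfrak c}(u)}$, and $\EE{u}(fg)=0$ by Lemma~\ref{lem PK_Orthogonal}. Writing $\CE{{\mathfrak c}(u)}(fg)=\EE{{\mathfrak c}(u)}(fg)+\D{{\mathfrak c}(u)}(fg)$ and using $\CE{u}=\CE{u}\circ\CE{{\mathfrak c}(u)}$, $\EE{u}=\EE{u}\circ\CE{{\mathfrak c}(u)}$ (Lemma~\ref{lem basicEECED}, since ${\mathfrak c}(u)\pe\{u\}$) together with $\EE{u}(fg)=0$, the constant $\EE{{\mathfrak c}(u)}(fg)$ cancels and one gets $\CE{u}(fg)=\D{u}\bigl(\D{{\mathfrak c}(u)}(fg)\bigr)$, hence $\maxnorm{\CE{u}(fg)}\le 2\maxnorm{\D{{\mathfrak c}(u)}(fg)}$. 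Then Lemma~\ref{lem basicDecayAu} (base vertex $u$, antichain $A={\mathfrak c}(u)=\OO{u}{-1}\subseteq\OO{u}{}$) bounds $\maxnorm{\D{{\mathfrak c}(u)}(fg)}\lesssim R\exp(-\eps\lA{u})\Unorm{f}{{\mathfrak c}(u)}\Unorm{g}{{\mathfrak c}(u)}$, and for $\tCR$ large the same lemma supplies the hypothesis of Lemma~\ref{lem normComparison} with $A={\mathfrak c}(u)\pe\{u\}$ and $\mathcal W=\TT{u}{-1}$, giving $\Unorm{\phi}{{\mathfrak c}(u)}\le 2\Unorm{\phi}{u}$ on $\TT{u}{-1}$; combining yields the claim with $\tC$ a fixed multiple of the constant of Lemma~\ref{lem basicDecayAu}, in particular independent of $\tCB$.

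\textbf{The third bound: the recursive decomposition.} The bound $\CW{{\cal T}_{K+1}(u)}\le 1/(\tCB R)$ is the crux, since degree $2^{K+1}$ is too high for Lemma~\ref{lem basicDecayAu} to apply directly. The plan is to argue by a recursive decomposition down the tree, the combinatorial input being that a monomial of degree $\le 2^{K+1}$ in $x_{L_u}$ has degree $>2^K$ in $L_v$ for \emph{at most one} child $v$ of $u$. Iterating this and truncating at depth $t_\star:=\hB-\tCR(\log(R)+1)$, I would write each $f\in\cR{{\cal T}_{K+1}(u)}{0}$ as $f=f^{(0)}+\sum_w f^{(w)}$, where $f^{(0)}\in\cR{\TT{u}{-1}}{0}$ and, for every descendant $w$ of $u$ at some depth $t(w)\in[1,t_\star]$ (so $\anc{w}{t(w)}=u$), the piece $f^{(w)}$ lies either in $\cR{\TT{w}{-1}}{t(w)}$ — when the leftover high-degree part has, at $w$, spread out to degree $\le 2^K$ on all children of $w$, i.e.\ the recursion terminates naturally — or in $\cR{{\cal T}_{K+1}(w)}{t_\star}$ when $t(w)=t_\star$ and the recursion is forcibly stopped; moreover the pieces should be mutually almost orthogonal under $\EE{u}$, so that $\sum_w\Unorm{f^{(w)}}{u}^2\lesssim\Unorm{f}{u}^2$. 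Setting up this decomposition rigorously — the membership statements and the approximate orthogonality, leaning on the norm comparisons above and on the orthogonality built into the $\cR$-spaces (Lemma~\ref{lem PK_Orthogonal}, Proposition~\ref{prop: coreR}(2)) — is the substantive and, I expect, hardest step, meriting its own section.

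\textbf{Assembling the third bound.} Granting the decomposition, one bounds $f^{(0)}$ by the second assertion: $\maxnorm{\CE{u}(f^{(0)}g)}\le\tC R\exp(-\eps\lA{u})\Unorm{f}{u}\Unorm{g}{u}\le\tC R\exp(-\eps\hB)\Unorm{f}{u}\Unorm{g}{u}$ since $\lA{u}\ge\hB$. For a naturally-terminating $f^{(w)}$ apply Proposition~\ref{prop: coreR}(1) — valid since $\lA{w}=\lA{u}-t(w)\ge\tCR(\log(R)+1)$ for $\tCR$ large — together with $\CW{\TT{w}{-1}}\le\tC R\exp(-\eps\lA{w})$, to get $\maxnorm{\CE{u}(f^{(w)}g)}\lesssim R\exp\!\bigl(-\eps(\lA{u}-t(w))\bigr)\lame^{t(w)}\Unorm{f^{(w)}}{u}\Unorm{g}{u}$; for a forced $f^{(w)}$ use monotonicity of $\CW{\cdot}$ with the first assertion, $\CW{{\cal T}_{K+1}(w)}\le\CW{\F{w_{\pe}}}\le\tCM$, and Proposition~\ref{prop: coreR}(1), to get $\maxnorm{\CE{u}(f^{(w)}g)}\lesssim\tCM\,\lame^{t_\star}\Unorm{f^{(w)}}{u}\Unorm{g}{u}$. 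Summing over $w$ (at most $Rd^{\,t}$ vertices at depth $t$ below $u$), applying Cauchy--Schwarz against the near-orthogonality, and using $d\lame^2\le\exp(-2.2\eps)$ and $\lame e^{\eps}\le\exp(-1.2\eps)$ from Definition~\ref{def  varepsilon} to sum the two geometric series, gives
\[
\CW{{\cal T}_{K+1}(u)}\ \lesssim\ R^{3/2}\exp(-\eps\hB)\ +\ \tCM\sqrt{R}\,\exp(-1.1\,\eps\,t_\star).
\]
Finally $\hB\ge\tCR(\log(R)+1)$ and $t_\star\ge\frac{\eps}{10d}\tCR(\log(R)+1)$, so (writing $R=e^{\log R}$) each term on the right is at most an arbitrarily small multiple of $1/R$ once $\tCR$ is taken large enough in terms of $\tCB$ (and $M,d$); this gives $\CW{{\cal T}_{K+1}(u)}\le 1/(\tCB R)$. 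Everything beyond the recursive decomposition of the previous paragraph is routine bookkeeping built on Lemmas~\ref{lem basicDecayAu}, \ref{lem normComparison} and Proposition~\ref{prop: coreR}.
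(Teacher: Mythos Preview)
Your arguments for the first two bounds are correct and essentially match the paper's (the paper's proof of the second is terser but equivalent).

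For the third bound your recursive-decomposition approach is genuinely different from the paper's, and there is a real gap at the step you yourself flag as hardest: the near-orthogonality $\sum_w\Unorm{f^{(w)}}{u}^2\lesssim\Unorm{f}{u}^2$ cannot be obtained from Lemma~\ref{lem PK_Orthogonal} and Proposition~\ref{prop: coreR} alone. The problem is the forced pieces at depth $t_\star$. For two such pieces $f^{(w)},f^{(w')}$ with nearest common ancestor at depth $s$, the only route to bounding $|\EE{u}[f^{(w)}f^{(w')}]|$ through the $\cR$-machinery produces factors $\CW{\mathcal{T}_{K+1}(w)}\,\lame^{\,t_\star-s}$ on each side; but the only bound you have on $\CW{\mathcal{T}_{K+1}(w)}$ is the trivial $\tCM$. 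Summing the cross terms over all pairs at depth $t_\star$ then gives $\lesssim\tCM^2\,R\sum_{j\ge1}(d\lame^2)^j$ times the diagonal, which is not small for large $R$. This is precisely why Corollary~\ref{cor layerwise norm} in Part~II needs $\dW{0}=1/(\tCB R)$ rather than $\tCM$ --- but that bound is exactly what you are trying to prove here, so invoking anything like the Part~II near-orthogonality would be circular. Without near-orthogonality there is no control on $\sum_w\Unorm{f^{(w)}}{u}$, and the assembly step collapses.

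The paper sidesteps this by \emph{not decomposing $f$ into pieces at all}. It fixes the single depth $\tm\approx t_\star$ and constructs a projection-like operator $\PDM{u}=\bigotimes_{v\in\Dm{u}}\PT{v}$ that maps $\mathcal{T}_{K+1}(u)$ into $\Deg{\Dm{u}}\otimes\T{\Dm{u}}$ while (i) preserving $\CE{u}[((\idtt{u}-\PK{u})f)g]$ exactly for $g\in\T{u}$, and (ii) being norm-nonincreasing. The whole of $\PDM{u}f$ is then analysed in one shot: its $\Deg{\Dm{u}}$ factor is degree~$1$ in the variables $x_{\Dm{u}}$, so the base case $\h_0$ on the depth-$\tm$ subtree supplies decay $\exp(-\eps(\tm-\h_0))$, while the $\T{\Dm{u}}$ factor supplies decay $\exp(-\tfrac12\eps(\lA{u}-\tm))$ from the level-$K$ induction. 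No piecewise orthogonality is ever invoked.
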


The proof of Proposition \ref{prop: coreR} and the bound on $\CW{{\cal T}_{K+1}(u)}$ stated in Proposition \ref{prop PT MAIN} constitute the main technical results of {\bf Part I}, which will be presented in the next two sections. For the remainder of this section, we will prove the first two bounds in Proposition \ref{prop PT MAIN}, as they are straightforward and will be needed for the proof of Proposition \ref{prop: coreR}.

\begin{proof}[Proof of first two bounds in Proposition \ref{prop PT MAIN}]

    For any given $f\in \cR{{\cal W}_u}{0} \subseteq \F{u_{\pe}}$ and $g \in \T{u}$, we have
    $$
        \maxnorm{\CE{u}fg}
        \stackrel{\eqref{eq 1variableDecay}}{\le}
        \tC_M  \EE{u} |\CE{u}[fg]|
        \le
        \tC_M  \EE{u} |fg|
        \le
        \tC_M \Unorm{f}{u} \Unorm{g}{u}\,,
    $$
    where the last inequality follows from Cauchy-Schwarz inequality.

    Now we consider the case when  ${{\cal W}_u} = \TT{u}{-1}$.
    Note that $\T{u} \subseteq \TT{u}{-1}$.
    For $\tilde f \in \TT{u}{-1}$, we have $\PK{u} \tilde f \in \T{u} \subseteq \TT{u}{-1}$, and thus, $(\idtt{u} - \PK{u}) \tilde f \in \TT{u}{-1}$. In other words, we have ${\cal R}(\TT{u}{-1},0) \subseteq \TT{u}{-1}$.

    For any $f \in \cR{{\cal W}_u}{0}$ and $g \in \T{u}$,
    since both functions lie in $\TT{u}{-1}$,
    we can apply Lemma \ref{lem basicDecayAu} to obtain
    \begin{align*}
        \maxnorm{\D{u}fg}
        \le &
        \tC_{\ref{lem basicDecayAu}} R \exp(-\eps \lA{u})
        \Unorm{f}{\OO{u}{-1}} \Unorm{g}{\OO{u}{-1}} \\
        \le &
        (1+\kappa) \tC_{\ref{lem basicDecayAu}} R \exp(-\eps \lA{u})
        \Unorm{f}{u} \Unorm{g}{u}\,,
    \end{align*}
    where the last inequality follows from Lemma \ref{lemma: coreNorm}.
    Moreover, since $\EE{u}fg = 0$ for $f \in \cR{{\cal W}_u}{0}$ and $g \in \T{u}$, it follows that $\D{u}fg = \CE{u}fg$. This completes the proof of the second statement.
\end{proof}

\section{Decay Properties of \texorpdfstring{$\mathcal{R}$}{R} function spaces and low degree polynomials}

The goal of this section is to establish Proposition \ref{prop: coreR}. Let us restate it here for convenience.

\begin{prop*}
    There exists a constant $\tC \in \CC$ such that the following statement holds for sufficiently large $\tCR$:
    Let $u \in V(T)$ satisfy $\lA{u} \ge \tCR (\log(R)+1)$ and $\anc{u}{k}$ is well-defined.
    Consider a subspace ${{\cal W}_u} \subseteq \F{u_{\pe}}$.
    For any $f \in \cR{{{\cal W}_u}}{k}$, the following holds:
    \begin{enumerate}
        \item For any $g \in \T{\anc{u}{k}}$,
              $$
                  \maxnorm{ \CE{\anc{u}{k}} fg}
                  \le
                  \tC \CW{{\cal W}_u} \lame^k
                  \Unorm{f}{\anc{u}{k}} \Unorm{g}{\anc{u}{k}} \,.
              $$
        \item For any $t \in [0,k-1]$, there exists
              $$
                  f_t \in \cR{{{\cal W}_u}}{t} \otimes \TT{u}{[t,k-1]}
              $$
              such that $f - f_t \in \T{\anc{u}{t}} \otimes \TT{u}{[t,k-1]}$ and
              \begin{align*}
                  \Unorm{f - f_t}{\anc{u}{k}}
                  \le
                  \CW{{\cal W}_u} \cdot
                  \tC R \exp(-\eps \lA{u})
                  \lame^t \exp(- \eps t)
                  \Unorm{f}{\anc{u}{k}}\,.
              \end{align*}
    \end{enumerate}
\end{prop*}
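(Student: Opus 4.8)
The plan is to induct on $k$, using the recursive structure $\cR{{\cal W}_u}{k} = (\idtt{\anc{u}{k}}-\PK{\anc{u}{k}})(\cR{{\cal W}_u}{k-1}\otimes \TT{u}{k-1})$. The base case $k=0$ is essentially the definition of $\CW{{\cal W}_u}$ (for part (1)) together with the triviality of part (2). For the inductive step, fix $f\in\cR{{\cal W}_u}{k}$ and write $f = (\idtt{\anc{u}{k}}-\PK{\anc{u}{k}})\tilde f$ with $\tilde f \in \cR{{\cal W}_u}{k-1}\otimes\TT{u}{k-1}$. The point is that $\CE{\anc{u}{k}}fg$ involves two ``sources'' of decay: one Markov-chain step from $\anc{u}{k-1}$ to $\anc{u}{k}$ (contributing a $\tlam$ or $\lame$ factor), and the accumulated decay on $\cR{{\cal W}_u}{k-1}$ from the inductive hypothesis. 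Concretely, I would first push $g\in\T{\anc{u}{k}}$ down: by Lemma~\ref{lem TAinclusion}, $\T{\anc{u}{k}}\subseteq \T{\anc{u}{k-1}}\otimes\TT{u}{k-1}$, so $g$ decomposes (up to the submultiplicativity machinery of Section~\ref{sec T}) into a part living on $\anc{u}{k-1}_{\pe}$ and a tensor part on $\OO{u}{k-1}$. Then $\CE{\anc{u}{k-1}}$ of $\tilde f \cdot (\text{part of }g)$ lies in $\Fz{\anc{u}{k-1}}$ by Lemma~\ref{lem PK_Orthogonal} (orthogonality), so applying the one-step Markov decay \eqref{eq 1variableDecay} gives the extra $\tlam$ factor when we go up to $\anc{u}{k}$. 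Combining with the inductive bound $\tC\CW{{\cal W}_u}\lame^{k-1}$ and summing the contributions of the tensor factors on $\OO{u}{k-1}$ (an antichain inside ${\mathfrak c}(\anc{u}{k})$, of size $\le Rd$, whose norm-distortion is controlled by Lemma~\ref{lemma: coreNorm} and whose $\D{A}$-decay by Lemma~\ref{lem basicDecayAu}) yields part (1), with the geometric-series absorption handled by the gap between $\tlam$ and $\lame$ fixed in Definition~\ref{def varepsilon}.

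For part (2), I would build $f_t$ by ``stopping'' the recursion at level $t$: the inductive hypothesis at level $k-1$ gives $f_{t}'\in \cR{{\cal W}_u}{t}\otimes\TT{u}{[t,k-2]}$ with $\tilde f - f_{t}'\in \T{\anc{u}{t}}\otimes\TT{u}{[t,k-2]}$ and the stated norm bound, after which I tensor with $\TT{u}{k-1}$ and re-apply $\idtt{\anc{u}{k}}-\PK{\anc{u}{k}}$; the key is that this last operator is a contraction-like map on the relevant norms up to a $(1+\kappa)$ factor (again via Lemma~\ref{lem basicDecayAu} and Lemma~\ref{lemma: coreNorm}), so it does not destroy the accumulated bound, and the norm comparison lets me pass freely between $\Unorm{\cdot}{\anc{u}{k}}$ and $\Unorm{\cdot}{\{u\}\cup\OO{u}{[0,k-1]}}$. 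The $\lame^t\exp(-\eps t)$ factor is exactly what one gets from $t$ levels of the recursion each losing a $\lame$ from the Markov step and an $\exp(-\eps)$ from the $\D{A}$-type decay at heights $\h(u), \h(u)+1,\dots$; the leading $\tC R\exp(-\eps\lA{u})$ comes from the single application of Lemma~\ref{lem basicDecayAu} at vertex $u$ itself (this is where the hypothesis $\lA{u}\ge\tCR(\log R+1)$ is used, to make $\tCR$ large enough that the series converges and the $R$-factors from the antichain sizes are absorbed).

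The main obstacle I anticipate is the bookkeeping of the two intertwined decays while keeping all the $R$-dependence linear (not exponential in $k$): each level $\OO{u}{j}$ contributes up to $Rd$ vertices, and naively the product over $j=0,\dots,k-1$ would give $R^k$, which is fatal. The resolution — and the technical heart of the argument — is that the $\D{u}$-type losses at the successive levels form a convergent geometric series $\sum_j (\sqrt{d\lame^2})^j \exp(-\eps(\lA{u}+j))$ which, thanks to the $\tlam$-$\lame$ gap and $\lA{u}\ge \tCR(\log R+1)$, is bounded by $\tC R^{-?}\exp(-\eps\lA{u})$ times a constant; so one must be careful to only pay the $Rd$ factor additively across levels (inside the exponent, via $1+s\le e^s$ as in the proof of Lemma~\ref{lem basicDecayAu}) rather than multiplicatively. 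A secondary subtlety is that $\PK{\anc{u}{k}}$ is not an honest orthogonal projection when $M$ has zero entries, so all the ``projection'' manipulations must go through properties \eqref{eq PiProperty1}–\eqref{eq PiProperty2} and the orthogonality Lemma~\ref{lem PK_Orthogonal} rather than Hilbert-space intuition; in particular the identity $\CE{\anc{u}{k}}fg = \CE{\anc{u}{k}}[((\idtt{}-\PK{})\tilde f)g]$ must be rewritten using \eqref{eq PiProperty1} before any norm estimate is applied.
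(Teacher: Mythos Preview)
Your overall shape is right --- induct on $k$ via the recursion $\cR{{\cal W}_u}{k}=(\idtt{}-\PK{\anc{u}{k}})(\cR{{\cal W}_u}{k-1}\otimes\TT{u}{k-1})$, push $g$ into $\T{\anc{u}{k-1}}\otimes\TT{u}{k-1}$, use Lemma~\ref{lem PK_Orthogonal} to land in $\Fz{\anc{u}{k-1}}$, then one Markov step plus submultiplicativity on the $\OO{u}{k-1}$ factor --- and you correctly flag the $R$-bookkeeping and the non-orthogonality of $\PK{}$. But the induction for part~(1) as written does not close. If your inductive hypothesis is the final bound $\maxnorm{\CE{\anc{u}{k-1}}\phi_1\phi_2}\le \tC\,\CW{{\cal W}_u}\lame^{k-1}\|\phi_1\|\|\phi_2\|$, then the one step of \eqref{eq 1variableDecay} contributes not $\tlam$ but $\tCM\tlam$, so the recursion yields a constant $\tCM^{k}$ in front; there is no relation between the fixed Markov constant $\tCM\ge1$ and the gap $\lame/\tlam$ (which is determined by $\eps$), so $\tCM\tlam\le\lame$ need not hold and you cannot recover $\lame^k$ with a $k$-independent prefactor.

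The paper repairs this by carrying a \emph{stronger} inductive hypothesis: not a single bound, but a decomposition $\CE{\anc{u}{k}}fg=\sum_{t=0}^{k}\CE{\anc{u}{k}}\bL_{k,t}(f,g)$ with each bilinear piece $\bL_{k,t}(f,g)\in\Fz{\anc{u}{t}}$ and a $\tCM$-free bound on $\maxnorm{\bL_{k,t}(f,g)}$ (Induction Hypothesis~\ref{IH: coreR}). The Markov decay \eqref{eq 1variableDecay} is applied only once at the end, from level $t$ to level $k$, paying $\tCM\tlam^{k-t}$ per piece; summing over $t$ leaves $\tCM$ as a single prefactor (Lemma~\ref{lem RIHImplication}), and the geometric-series absorption you mention then works. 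Propagating this refined IH from $k$ to $k+1$ also requires choosing $\tilde f$ canonically: the paper uses the Moore--Penrose pseudo-inverse ${\bf Q}:\cR{{\cal W}_u}{k+1}\to\cR{{\cal W}_u}{k}\otimes\TT{u}{k}$ and establishes $\Unorm{{\bf Q}f}{}\le(1+\kappa)\Unorm{f}{}$ via Lemma~\ref{lem RProjectionNorm} (this is the precise version of your ``contraction-like'' claim, and it itself needs the IH at level $k$). For part~(2), the paper iterates these pseudo-inverses \emph{downward}, setting $f_t=({\bf Q}_t\otimes\idtt{})f_{t+1}$ and summing the increments $\Unorm{f_{s+1}-f_s}{}\le\RDelta_s(1+\kappa)^2\Unorm{f_s}{}$; your upward variant can be made to work too, but only after the same pseudo-inverse norm control is in place.
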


The proof of Proposition \ref{prop: coreR} is technical and relies on induction.
We start by establishing the induction hypothesis.
\subsection{Induction Hypothesis and its Implication}
\begin{IH}
    \label{IH: coreR}
    For each $k \ge 0$, we can express
    $$
        \CE{\anc{u}{k}} \left[ f \cdot g \right]
        =
        \sum_{t \in [0,k]} \CE{\anc{u}{k}}\bL_{k,t}(f,g) \mbox{ for } f \in \cR{{\cal W}_u}{k}, g \in \T{\anc{u}{k}},
    $$
    where
    $$
        \bL_{k,t} : \cR{{\cal W}_u}{k} \times \T{\anc{u}{k}} \to \Fz{\anc{u}{t}}
    $$
    is a bilinear map such that
    \begin{align*}
        \maxnorm{\bL_{k,t}(f,g)}
        \le &
        \begin{cases}
            \CW{{\cal W}_u} (1+\kappa)^{3k}
            \Unorm{f}{\anc{u}{k}} \Unorm{g}{\anc{u}{k}} & t = 0\,,       \\
            \tlam^{t} \exp(- \eps t)  \cdot \tC_{\ref{IH: coreR}}
            R\exp(-\eps \lA{u})
            \cdot \CW{{\cal W}_u} (1+\kappa)^{3k}
            \Unorm{f}{\anc{u}{k}} \Unorm{g}{\anc{u}{k}} & t \in [1,k]\,,
        \end{cases}
    \end{align*}
    where
    \begin{align*}
        \tC_{\ref{IH: coreR}} =  (1 + \tCM) \frac{2\tCM \tC_{\ref{lem basicDecayAu}}\exp(\eps)}{ \tlam} \in \CC\,.
    \end{align*}
\end{IH}

Let us now consider a straightforward implication of the induction hypothesis.
\begin{lemma}
    \label{lem RIHImplication}
    There exists $\tC \in \CC$ so that when $\tCR$ is large enough,
    the following holds:
    The induction hypothesis holds for $k$, then
    \begin{align*}
        \maxnorm{\CE{\anc{u}{k}}fg} \le
        2 \tCM \CW{{\cal W}_u} \tlam^k (1+\kappa)^{3k} \Unorm{f}{\anc{u}{k}} \Unorm{g}{\anc{u}{k}}\,
    \end{align*}
    for $f \in \cR{{\cal W}_u}{k}$ and $g \in \T{\anc{u}{k}}$.
\end{lemma}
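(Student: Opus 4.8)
The plan is to expand $\CE{\anc{u}{k}}[fg]$ using the decomposition supplied by Induction Hypothesis~\ref{IH: coreR} and then push each summand $\bL_{k,t}(f,g)$, which lives in $\Fz{\anc{u}{t}}$, through the remaining $k-t$ tree levels up to $\anc{u}{k}$ using the one-variable Markov-chain decay~\eqref{eq 1variableDecay} of Definition~\ref{def  tCM}. So first I would write, by the triangle inequality,
$$
\maxnorm{\CE{\anc{u}{k}}fg}\;\le\;\sum_{t=0}^{k}\maxnorm{\CE{\anc{u}{k}}\bL_{k,t}(f,g)}.
$$
Each $\bL_{k,t}(f,g)$ is a mean-zero function of the single variable $x_{\anc{u}{t}}$, and $\anc{u}{k}$ is exactly the $(k-t)$-th ancestor of $\anc{u}{t}$, so applying~\eqref{eq 1variableDecay} with base vertex $\anc{u}{t}$ gives $\maxnorm{\CE{\anc{u}{k}}\bL_{k,t}(f,g)}\le \tCM\,\tlam^{\,k-t}\,\maxnorm{\bL_{k,t}(f,g)}$.

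Next I would substitute the max-norm bounds on $\bL_{k,t}(f,g)$ from the Induction Hypothesis. The $t=0$ term contributes exactly $\tCM\,\tlam^{k}\,\CW{{\cal W}_u}(1+\kappa)^{3k}\,\Unorm{f}{\anc{u}{k}}\Unorm{g}{\anc{u}{k}}$, which is precisely half of the asserted bound. For $t\in[1,k]$, the decay factor $\tlam^{\,k-t}$ combines with the $\tlam^{t}$ already present in the IH estimate to reconstitute $\tlam^{k}$, so each such term contributes the $t=0$ quantity multiplied by $\tC_{\ref{IH: coreR}}\,R\exp(-\eps\lA{u})\exp(-\eps t)$. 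Summing the geometric series, $\sum_{t\ge 1}\exp(-\eps t)\le(\exp(\eps)-1)^{-1}$, so the whole range $t\in[1,k]$ contributes at most $\tfrac{\tC_{\ref{IH: coreR}}}{\exp(\eps)-1}\,R\exp(-\eps\lA{u})$ times the $t=0$ contribution.

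The only thing left to check is that this second contribution is dominated by the first, i.e.\ that $\tC_{\ref{IH: coreR}}\,R\exp(-\eps\lA{u})\le \exp(\eps)-1$. Using the standing hypothesis $\lA{u}\ge\tCR(\log(R)+1)$ we get $R\exp(-\eps\lA{u})\le R^{\,1-\eps\tCR}\exp(-\eps\tCR)$, which tends to $0$ as $\tCR\to\infty$; hence the inequality holds once $\tCR$ is a large enough constant depending only on $\eps$ and $\tC_{\ref{IH: coreR}}$, i.e.\ only on $M$ and $d$. Adding the two contributions gives $\maxnorm{\CE{\anc{u}{k}}fg}\le 2\tCM\,\CW{{\cal W}_u}\,\tlam^{k}(1+\kappa)^{3k}\,\Unorm{f}{\anc{u}{k}}\Unorm{g}{\anc{u}{k}}$, as claimed. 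I do not expect a genuine obstacle here: the argument is bookkeeping on top of the Induction Hypothesis, and the only delicate points are (i) the identification $\anc{(\anc{u}{t})}{k-t}=\anc{u}{k}$, which is what upgrades the per-level decay to the overall $\tlam^{k}$, and (ii) ensuring the final "$\tCR$ large enough" quantifier is uniform in $k$, $\ell$, and the tree.
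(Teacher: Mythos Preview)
Your proposal is correct and follows essentially the same route as the paper's proof: decompose $\CE{\anc{u}{k}}[fg]$ via the Induction Hypothesis, apply the single-variable Markov decay~\eqref{eq 1variableDecay} to each $\bL_{k,t}(f,g)\in\Fz{\anc{u}{t}}$ to pick up the factor $\tCM\tlam^{k-t}$, substitute the IH bounds, sum the geometric series in $t$, and absorb the tail into the factor $2$ by taking $\tCR$ large. Your handling of the $R$ factor via $R\exp(-\eps\lA{u})\le R^{1-\eps\tCR}\exp(-\eps\tCR)$ is in fact slightly more explicit than the paper's display, but the argument is the same.
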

In other words, the first statement of Proposition \ref{prop: coreR} is a direct consequence of the induction hypothesis.

\begin{proof}
    Given that $\bL_{k,t}fg \in \Fz{\anc{u}{t}}$, we use the Markov Chain decay \eqref{eq 1variableDecay} to obtain
    \begin{align*}
        \maxnorm{\CE{\anc{u}{k}}\bL_{k,t}fg}
        \le
        \tCM \tlam^{k-t} \maxnorm{ \bL_{k,t}fg}\,.
    \end{align*}
    From the induction hypothesis, we have
    \begin{align*}
        \maxnorm{\CE{\anc{u}{k}}\bL_{k,t} fg}
        \le
        \begin{cases}
            \tCM \tlam^{k}
            \CW{{\cal W}_u} (1+\kappa)^{3k}
            \Unorm{f}{\anc{u}{k}}\Unorm{g}{\anc{u}{k}}                                & \mbox{ if } t = 0\,,       \\
            \tCM \tlam^{k} \exp(-\eps t) \cdot \tC_{\ref{IH: coreR}}\exp(-\eps \lA{u})
            \CW{{\cal W}_u}(1+\kappa)^{3k} \Unorm{f}{\anc{u}{k}}\Unorm{g}{\anc{u}{k}} & \mbox{ if } t \in [1,k]\,.
        \end{cases}
    \end{align*}
    Therefore, by summing over $t$, we obtain
    \begin{align*}
        \maxnorm{\CE{\anc{u}{k}}fg}
        \le &
        \tCM \CW{{\cal W}_u} \tlam^k (1+\kappa)^{3k} \bigg( 1 +
        \tC_{\ref{IH: coreR}}\exp \Big(-\eps \tCR (\log(R)+1) \Big)
        \sum_{t = 0 }^\infty  \exp(-\eps t) \bigg) \Unorm{f}{\anc{u}{k}} \Unorm{g}{\anc{u}{k}}
        \\
        \le & 2 \tCM \CW{{\cal W}_u} \tlam^k (1+\kappa)^{3k} \Unorm{f}{\anc{u}{k}} \Unorm{g}{\anc{u}{k}}\,,
    \end{align*}
    if $\tCR$ is large enough,
    since $\sum_{t} \exp(-\varepsilon t) \le \frac{1}{1-\exp(-\varepsilon)} \in \CC$.
\end{proof}
\begin{rem}
    We note that in the proof, we are able to capture the decay of a Markov Chain of length $k-t$ for each term $\bL_{k,t}fg$ in the sum. This is one of the reasons why decompose $\CE{\anc{u}{k}}fg$ into a sum of $\bL_{k,t}fg$ in the induction hypothesis.
\end{rem}
As a corollary of Lemma \ref{lem RIHImplication}, we have the following result.
\begin{cor}
    \label{cor: coreR-1}
    When $\tCR$ is large enough, the following holds:
    Assume that $u \in V(T)$ satisfies
    $$\lA{u} \ge \tCR(\log(R)+1)\,,$$
    and the Induction Hypothesis \ref{IH: coreR} holds for $k$. \\
    Then, for $f \in \cR{{\cal W}_u}{k} \otimes \TT{u}{k}$
    and $g \in  \T{\anc{u}{k}}\otimes \TT{u}{k} \supseteq \T{\anc{u}{k+1}}$, we have
    $$
        \maxnorm{ \CE{\anc{u}{k}} \otimes \D{\OO{u}{k}} fg}
        \le
        \RDelta_k
        \Unorm{f}{\anc{u}{k+1}} \Unorm{g}{\anc{u}{k+1}}\,,
    $$
    where
    \begin{align}
        \label{def  RDelta}
        \RDelta_k := &
        \frac{2 \tCM \tC_{\ref{lem basicDecayAu}}\exp(\eps)}{\tlam}
        R \exp( -\eps \lA{u}) \cdot \underbrace{\CW{{\cal W}_u}}_{\le \tCM}
        \underbrace{\tlam^{k+1} (1+\kappa)^{3k+2}}_{\le 1} \exp(- \eps k)\,,
    \end{align}
    where $\tC_{\ref{lem basicDecayAu}} \in \CC$ is the constant introduced in Lemma \ref{lem basicDecayAu}.
\end{cor}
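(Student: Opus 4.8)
The plan is to recognize $\CE{\anc{u}{k}}\otimes\D{\OO{u}{k}}$, applied to the pointwise product $fg$, as a tensor product of two bilinear maps for which $\maxnorm{\cdot}$-bounds are already available, and then to invoke the submultiplicativity Lemma~\ref{lem: mainTensorProduct}. Since $\anc{u}{k}$ and the vertices of $\OO{u}{k}$ are distinct children of $\anc{u}{k+1}$, the variable sets $(\anc{u}{k})_{\pe}$ and $(\OO{u}{k})_{\pe}$ are disjoint. Writing $f = \sum f_1\otimes f_2 \in \cR{{\cal W}_u}{k}\otimes\TT{u}{k}$ and $g = \sum g_1\otimes g_2 \in \T{\anc{u}{k}}\otimes\TT{u}{k}$, we have $fg = \sum (f_1 g_1)\otimes(f_2 g_2)$, so that $\CE{\anc{u}{k}}\otimes\D{\OO{u}{k}}$ applied to $fg$ is precisely $B_1\otimes B_2$, where $B_1(\phi,\psi) := \CE{\anc{u}{k}}(\phi\psi)$ is bilinear on $\cR{{\cal W}_u}{k}\times\T{\anc{u}{k}}$ and $B_2(\phi,\psi) := \D{\OO{u}{k}}(\phi\psi)$ is bilinear on $\TT{u}{k}\times\TT{u}{k}$.

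I would then estimate the two factors. For $B_1$, Lemma~\ref{lem RIHImplication} applies (Induction Hypothesis~\ref{IH: coreR} holds for $k$ and $\tCR$ is large) and gives $\maxnorm{B_1(\phi,\psi)} \le \delta_1\,\Unorm{\phi}{\anc{u}{k}}\Unorm{\psi}{\anc{u}{k}}$ with $\delta_1 := 2\tCM\CW{{\cal W}_u}\tlam^k(1+\kappa)^{3k}$. For $B_2$, I use the identities $\OO{u}{k} = \OO{\anc{u}{k}}{0} \subseteq \OO{\anc{u}{k}}{}$ and $\lA{\anc{u}{k}} = \h(\anc{u}{k}) - \h_K = \lA{u}+k \ge \tCR(\log(R)+1)$, and apply Lemma~\ref{lem basicDecayAu} at the vertex $\anc{u}{k}$ with $A = \OO{u}{k}$ to get $\maxnorm{B_2(\phi,\psi)} \le \delta_2\,\Unorm{\phi}{\OO{u}{k}}\Unorm{\psi}{\OO{u}{k}}$ with $\delta_2 := \tC_{\ref{lem basicDecayAu}}\,R\exp(-\eps(\lA{u}+k))$. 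Since $\EE{\anc{u}{k}}$ and $\EE{\OO{u}{k}}$, paired as $(\phi,\psi)\mapsto\EE{\anc{u}{k}}(\phi\psi)$ resp.\ $(\phi,\psi)\mapsto\EE{\OO{u}{k}}(\phi\psi)$, are symmetric positive semidefinite bilinear forms whose associated norms are $\Unorm{\cdot}{\anc{u}{k}}$ and $\Unorm{\cdot}{\OO{u}{k}}$, Lemma~\ref{lem: mainTensorProduct} gives
\[
  \maxnorm{\bigl(\CE{\anc{u}{k}}\otimes\D{\OO{u}{k}}\bigr)(fg)} \;\le\; \delta_1\delta_2\,\Unorm{f}{\{\anc{u}{k}\}\cup\OO{u}{k}}\,\Unorm{g}{\{\anc{u}{k}\}\cup\OO{u}{k}},
\]
using $\EE{\anc{u}{k}}\otimes\EE{\OO{u}{k}} = \EE{\{\anc{u}{k}\}\cup\OO{u}{k}}$.

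Finally I would convert the $\{\anc{u}{k}\}\cup\OO{u}{k}$-norm into the $\anc{u}{k+1}$-norm. Since $\{\anc{u}{k}\}\cup\OO{u}{k} = \{\anc{u}{k}\}\cup\OO{\anc{u}{k}}{[0,0]}$ (indeed this equals $\fc{\anc{u}{k+1}}$), $\anc{u}{k+1} = \anc{\anc{u}{k}}{1}$, and both $f,g$ lie in $\F{(\anc{u}{k})_{\pe}}\otimes\TT{\anc{u}{k}}{[0,0]}$ (because $\cR{{\cal W}_u}{k},\T{\anc{u}{k}}\subseteq\F{(\anc{u}{k})_{\pe}}$ and $\TT{u}{k} = \TT{\anc{u}{k}}{[0,0]}$), Lemma~\ref{lemma: coreNorm} applied at the vertex $\anc{u}{k}$ with parameter $1$ and antichain $\{\anc{u}{k+1}\}$ yields $\Unorm{\cdot}{\{\anc{u}{k}\}\cup\OO{u}{k}} \le \sqrt{1+\kappa}\,\Unorm{\cdot}{\anc{u}{k+1}}$ on this space. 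Chaining the estimates, the overall constant is $\delta_1\delta_2(1+\kappa) = 2\tCM\,\tC_{\ref{lem basicDecayAu}}\,\CW{{\cal W}_u}\,R\exp(-\eps\lA{u})\,\tlam^k(1+\kappa)^{3k+1}\exp(-\eps k)$, which is $\le \RDelta_k$ because $\RDelta_k$ equals this same quantity multiplied by $\exp(\eps)(1+\kappa)\ge 1$. I expect the only delicate point to be the bookkeeping in the first step — making the tensor-product identification precise enough that $\CE{\anc{u}{k}}\otimes\D{\OO{u}{k}}$ applied to $fg$ is genuinely $B_1\otimes B_2$ — along with the routine structural identities ($\OO{u}{k} = \OO{\anc{u}{k}}{0}$, $\{\anc{u}{k}\}\cup\OO{u}{k} = \fc{\anc{u}{k+1}}$, $\anc{u}{k+1} = \anc{\anc{u}{k}}{1}$) and the membership $f,g\in\F{(\anc{u}{k})_{\pe}}\otimes\TT{\anc{u}{k}}{[0,0]}$ needed to apply Lemma~\ref{lemma: coreNorm}; everything else is a direct chaining of Lemmas~\ref{lem RIHImplication}, \ref{lem basicDecayAu}, \ref{lem: mainTensorProduct}, and~\ref{lemma: coreNorm}.
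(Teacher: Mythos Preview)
Your proposal is correct and follows essentially the same approach as the paper's proof: both apply Lemma~\ref{lem RIHImplication} for the $\CE{\anc{u}{k}}$ factor and Lemma~\ref{lem basicDecayAu} at the vertex $\anc{u}{k}$ for the $\D{\OO{u}{k}}$ factor, tensorize via Lemma~\ref{lem: mainTensorProduct}, and then convert the $\{\anc{u}{k}\}\cup\OO{u}{k}$-norm to the $\anc{u}{k+1}$-norm via Lemma~\ref{lemma: coreNorm}. The only cosmetic difference is that the paper uses the cruder bound $(1+\kappa)$ per function in the norm conversion (yielding $(1+\kappa)^{3k+2}$) whereas you use the sharper $\sqrt{1+\kappa}$ (yielding $(1+\kappa)^{3k+1}$); both fit under $\RDelta_k$ thanks to the slack factor $\exp(\eps)$ in its definition, exactly as you note.
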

We will keep the definition of $\RDelta_k$ for the rest of this section.
\begin{rem}
    \label{rem RDelta}
    Observe that the definition of $\RDelta_k$ satisfies the recursive relation:
    \begin{align*}
        \RDelta_{s+1} = \tlam (1+\kappa)^3 \exp(-\eps) \RDelta_s .
    \end{align*}
    Due to the parameter configuration that
    \begin{align*}
        \tlam (1+\kappa)^2 \exp(-\eps) < \lame \exp(-\eps) < 1\,,
    \end{align*}
    from Definition \ref{def varepsilon}, we know that $k \mapsto \RDelta_{k}$ is monotone decreasing.
    Further,
    \[
        \Delta_0
        \le
        \frac{2 \tCM \tC_{\ref{lem basicDecayAu}}\exp(\eps)}{\tlam}
        R \exp( -\eps \tCR(\log(R)+1)) \cdot \tCM
    \]
    can be as close to $0$ as we want by choosing $\tCR$ large enough.
\end{rem}
\begin{proof}
    \step{Tensorization}
    Given that $\OO{u}{k} = \OO{\anc{u}{k}}{0}$, we first invoke from Lemma \ref{lem basicDecayAu} that, for $\phi_1,\phi_2 \in \TT{u}{k}$,
    \begin{align*}
        \maxnorm{\D{\OO{u}{k}}\phi_1\phi_2}
        =   &
        \maxnorm{\D{\OO{\anc{u}{k}}{0}}\phi_1\phi_2}                                                       \\
        \le &
        \tC_{\ref{lem basicDecayAu}} R \exp(-\eps \lA{\anc{u}{k}}) \Unorm{\phi_1}{A_k} \Unorm{\phi_2}{A_k} \\
        \le &
        \exp(-\eps k)
        \tC_{\ref{lem basicDecayAu}} R \exp(-\eps \lA{u}) \Unorm{\phi_1}{A_k} \Unorm{\phi_2}{A_k}
    \end{align*}
    where $\tC_{\ref{lem basicDecayAu}}$ is the constant introduced in Lemma \ref{lem basicDecayAu}.
    Then, we invoke Lemma \ref{lem: mainTensorProduct} to get
    \begin{align*}
        \maxnorm{\CE{\anc{u}{k}} \otimes \D{\OO{u}{k}} fg}
        \le &
        2 \tCM  \CW{{\cal W}_u} \tlam^k (1+\kappa)^{3k} \cdot                   \\
            & \phantom{AAA}
        \cdot
        \exp(-\eps k)
        \tC_{\ref{lem basicDecayAu}} R \exp(-\eps \lA{u})
        \Unorm{f}{\{\anc{u}{k}\}\cup A_k} \Unorm{g}{\{\anc{u}{k}\}\cup A_k}     \\
        =   &
        \frac{2 \tCM \tC_{\ref{lem basicDecayAu}}}{\tlam}
        R \exp(-\eps \lA{u})
        \CW{{\cal W}_u} \cdot                                                   \\
            & \phantom{AAA AAA} \cdot\tlam^{k+1} (1+\kappa)^{3k} \exp(- \eps k)
        \Unorm{f}{\{\anc{u}{k}\}\cup A_k} \Unorm{g}{\{\anc{u}{k}\}\cup A_k}\,.
    \end{align*}

    \step{Convert the norms}
    Here we apply Lemma \ref{lemma: coreNorm} with $\{\anc{u}{k}\} \cup A_k \preceq \anc{u}{k+1}$
    to get
    \begin{align*}
        \Unorm{f}{\{\anc{u}{k}\}\cup A_k} \le (1+\kappa) \Unorm{f}{\anc{u}{k+1}}\,\, \text{ and } \,\,
        \Unorm{g}{\{\anc{u}{k}\}\cup A_k} \le (1+\kappa) \Unorm{g}{\anc{u}{k+1}}\,.
    \end{align*}
    Substituting this into the above inequality, the lemma follows.
\end{proof}

\subsection{Operator Norm of the projection-like operators $\Pi_v$}

\begin{lemma}
    \label{lem RProjectionNorm}
    When $\tCR$ is large enough, the following holds:
    For any $k \ge 0$, we have
    \begin{align*}
        \Unorm{\PK{\anc{u}{k+1}} \phi}{\anc{u}{k+1}} \le
        \RDelta_k \Unorm{\phi}{\anc{u}{k+1}}\,,
    \end{align*}
    for $\phi \in \cR{{{\cal W}_u}}{k}\otimes \TT{u}{k} \subseteq \F{\anc{u}{k+1}_{\pe}}$.
\end{lemma}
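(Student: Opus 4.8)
I work under the standing hypotheses of this subsection: the Induction Hypothesis~\ref{IH: coreR} holds for $k$, the vertex $u$ satisfies $\lA{u}\ge\tCR(\log(R)+1)$, $\anc{u}{k+1}$ exists, and $\tCR$ is large. Write $v:=\anc{u}{k+1}$ and $w:=\anc{u}{k}$, so that $w$ is a child of $v$ and $\fc{v}=\{w\}\cup\OO{u}{k}$. Set $\psi:=\PK{v}\phi\in\T{v}$; the goal is to show $\Unorm{\psi}{v}\le\RDelta_k\Unorm{\phi}{v}$.

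The first move is to convert the operator estimate into an inner-product estimate. Since $\psi\in\T{v}$, property~\eqref{eq PiProperty1} of $\PK{v}$ with $f=\phi$ and $g=\psi$ gives $\Unorm{\psi}{v}^2=\EE{v}[\psi^2]=\EE{v}[\phi\psi]$, so it suffices to prove $\EE{v}[\phi\psi]\le\RDelta_k\Unorm{\phi}{v}\Unorm{\psi}{v}$ and then divide by $\Unorm{\psi}{v}$ (the case $\Unorm{\psi}{v}=0$ being trivial). Before estimating, I record that since $\{L_x\}_{x\in\fc{v}}$ partitions $L_v$, Lemma~\ref{lem TAinclusion} yields $\T{v}\subseteq\T{\fc{v}}=\T{w}\otimes\TT{u}{k}$; in particular $\psi\in\T{w}\otimes\TT{u}{k}$, which is exactly the admissible shape of the test function ``$g$'' in Corollary~\ref{cor: coreR-1}.

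The second move pushes $\EE{v}$ down one tree level. Since $\fc{v}$ is an antichain with $\fc{v}\preceq\{v\}$ and $\phi\psi\in\F{(\fc{v})_{\pe}}$, Lemma~\ref{lem basicEECED} gives $\EE{v}[\phi\psi]=\EE{v}\bigl[\CE{\fc{v}}(\phi\psi)\bigr]$, and $\CE{\fc{v}}=\CE{w}\otimes\CE{\OO{u}{k}}=\CE{w}\otimes\bigl(\EE{\OO{u}{k}}+\D{\OO{u}{k}}\bigr)$, splitting $\CE{\fc{v}}(\phi\psi)$ into $(\CE{w}\otimes\EE{\OO{u}{k}})(\phi\psi)$ plus $(\CE{w}\otimes\D{\OO{u}{k}})(\phi\psi)$. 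I claim $\EE{v}$ annihilates the first summand: by bilinearity it suffices to test pure tensors $\phi=\phi_1\otimes\phi_2$ with $\phi_1\in\cR{{\cal W}_u}{k}$ and $\psi=\psi_1\otimes\psi_2$ with $\psi_1\in\T{w}$, for which the summand equals $\CE{w}(\phi_1\psi_1)\cdot\EE{\OO{u}{k}}(\phi_2\psi_2)$ and $\CE{w}(\phi_1\psi_1)\in\Fz{w}$ by the orthogonality Lemma~\ref{lem PK_Orthogonal} (eq.~\eqref{eq R_CRortho}); since $\Fz{w}$ is a linear subspace the general first summand lies in $\Fz{w}$, so as $w\prec v$ the ``further'' part of Lemma~\ref{lem basicEECED} gives $\EE{v}$ of it $=\EE{w}$ of it $=0$. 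For the second summand I use $|\EE{v}[\cdot]|\le\maxnorm{\cdot}$ and invoke Corollary~\ref{cor: coreR-1} with $f=\phi\in\cR{{\cal W}_u}{k}\otimes\TT{u}{k}$ and $g=\psi\in\T{w}\otimes\TT{u}{k}$ to get $\maxnorm{(\CE{w}\otimes\D{\OO{u}{k}})(\phi\psi)}\le\RDelta_k\Unorm{\phi}{v}\Unorm{\psi}{v}$. Adding the two contributions yields $\Unorm{\psi}{v}^2=\EE{v}[\phi\psi]\le\RDelta_k\Unorm{\phi}{v}\Unorm{\psi}{v}$, as required.

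The one step that is not pure bookkeeping is the vanishing of the $\EE{\OO{u}{k}}$-part: this is precisely where membership in the $\cR$-space — orthogonality to $\T{w}$ under $\EE{w}$ — is used, and it is what leaves only the $\D{\OO{u}{k}}$-term, which already carries the small factor $\RDelta_k$ through Corollary~\ref{cor: coreR-1}. The remaining care is with the function-space identifications (Definition~\ref{def  IdentificationXi}): checking $\phi\psi\in\F{(\fc{v})_{\pe}}$, that $\psi$ indeed lands in $\T{w}\otimes\TT{u}{k}$ so Corollary~\ref{cor: coreR-1} applies, and that the pure-tensor decompositions used in the bilinearity argument are legitimate.
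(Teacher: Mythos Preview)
Your proof is correct and follows essentially the same approach as the paper: rewrite $\Unorm{\PK{v}\phi}{v}^2=\EE{v}[\phi\,\PK{v}\phi]$ via \eqref{eq PiProperty1}, push down to $\CE{\fc{v}}=\CE{w}\otimes(\EE{\OO{u}{k}}+\D{\OO{u}{k}})$ using Lemma~\ref{lem basicEECED}, kill the $\EE{\OO{u}{k}}$-part by the orthogonality \eqref{eq R_CRortho}, and bound the $\D{\OO{u}{k}}$-part by Corollary~\ref{cor: coreR-1}. The only cosmetic difference is that the paper phrases the vanishing step as a diagram of maps rather than reducing to pure tensors.
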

\begin{proof}
    \step{Representing $\Unorm{\PK{\anc{u}{k+1}} \phi}{\anc{u}{k+1}}$}
    By definition of $\PK{\anc{u}{k+1}}$,
    \begin{align*}
        \Unorm{\PK{\anc{u}{k+1}} \phi}{\anc{u}{k+1}}^2
        =
        \EE{\anc{u}{k+1}}\left[ \left(\PK{\anc{u}{k+1}} \phi\right)^2 \right]
        \stackrel{\eqref{eq PiProperty1}}{=} &
        \EE{\anc{u}{k+1}} \left[\phi (\PK{\anc{u}{k+1}} \phi) \right] \\
                                             & =
        \EE{\anc{u}{k+1}} \left[ \CE{\anc{u}{k}}\otimes \CE{\OO{u}{k}} \left[
                \phi(\PK{\anc{u}{k+1}} \phi)\right]\right]\,,
    \end{align*}
    where the last equality follows from Lemma \ref{lem basicEECED}.

    Consider the simple decomposition
    \begin{align*}
        \CE{\anc{u}{k+1}}\otimes \CE{\OO{u}{k}}
        =
        \CE{\anc{u}{k+1}}\otimes \EE{\OO{u}{k}} + \CE{\anc{u}{k+1}}\otimes \D{\OO{u}{k}}\,.
    \end{align*}

    For the first summand, recall from \eqref{eq R_CRortho} that
    \begin{align*}
        \cR{{{\cal W}_u}}{k} \times \T{\anc{u}{k}}  \xrightarrow{\CE{\anc{u}{k}}}
        \Fz{\anc{u}{k}}\,,
    \end{align*}
    we have
    \begin{align*}
        (\cR{{{\cal W}_u}}{k}\otimes \TT{u}{k}) \times
        (\T{\anc{u}{k}} \otimes \TT{u}{k}) \xrightarrow{\CE{\anc{u}{k}}\otimes \EE{\OO{u}{k}}}
        \Fz{\anc{u}{k}}\otimes \R = \Fz{\anc{u}{k}}
        \xrightarrow{\EE{\anc{u}{k+1}}}
        \{0\}\,.
    \end{align*}
    Hence,
    \begin{align*}
        \Unorm{\PK{\anc{u}{k+1}} \phi}{\anc{u}{k+1}}^2
        =
        \EE{\anc{u}{k+1}} \left[ \CE{\anc{u}{k}}\otimes \CE{\OO{u}{k}}
            \phi(\PK{\anc{u}{k}} \phi)\right]
        = &
        \EE{\anc{u}{k+1}} \left[ \CE{\anc{u}{k}}\otimes \D{\OO{u}{k}}
            \phi(\PK{\anc{u}{k}} \phi)\right]\,.
    \end{align*}

    \step{Invoke Corollary \ref{cor: coreR-1}}
    Since $\PK{\anc{u}{k+1}} \phi \in \T{\anc{u}{k}}\otimes\TT{u}{k}$, we can invoke Corollary \ref{cor: coreR-1} bound the term in the above equality
    \begin{align*}
        \EE{\anc{u}{k+1}} \left[ \CE{\anc{u}{k}}\otimes \D{\OO{u}{k}}
            \phi(\PK{\anc{u}{k}} \phi)\right]
        \le &
        \maxnorm{ \CE{u}\otimes \D{A(u;k)}\phi \PK{\anc{u}{k+1}} \phi}
        \le
        \RDelta_k
        \Unorm{\phi}{\anc{u}{k+1}} \Unorm{\PK{\anc{u}{k+1}} \phi}{\anc{u}{k+1}}\,,
    \end{align*}
    which in turn implies
    \begin{align*}
        \Unorm{\PK{\anc{u}{k+1}} \phi}{\anc{u}{k+1}}
        \le &
        \RDelta_k \Unorm{\phi}{\anc{u}{k+1}}\,.
    \end{align*}
\end{proof}

\subsection{Construction of $\bL_{k+1,t}$}

Since we will work directly with functions in $\cR{{{\cal W}_u}}{k+1}$,
it is convenient to consider the pseudo inverse:
\begin{defi}
    \label{def  RPseudoInverse}
    [Pseudo-Inverse of $\idtt{\anc{u}{k+1}} - \PK{\anc{u}{k+1}}$]
    Recall that
    $$
        \cR{{{\cal W}_u}}{k+1} = (\idtt{\anc{u}{k+1}} - \PK{\anc{u}{k+1}}) \left(\cR{{{\cal W}_u}}{k} \otimes \TT{u}{k}\right)\,.
    $$
    Let
    $$
        {\bf Q}: \cR{{{\cal W}_u}}{k+1} \rightarrow \left(\cR{{{\cal W}_u}}{k} \otimes \TT{u}{k}\right)
    $$
    be the Moore-Penrose pseudo-inverse of $(\idtt{\anc{u}{k+1}} - \PK{\anc{u}{k+1}})$.
    The fact that $\cR{{{\cal W}_u}}{k+1}$ is the image of the map implies
    \begin{align*}
        (\idtt{\anc{u}{k+1}} - \PK{\anc{u}{k+1}})
        {\bf Q} f = f \mbox{ for } f \in \cR{{{\cal W}_u}}{k+1}\,.
    \end{align*}
\end{defi}

Let $f \in \cR{{{\cal W}_u}}{k+1}$ and $g \in \T{\anc{u}{k+1}}$, consider the following decomposition of $\CE{\anc{u}{k+1}}fg$:
\begin{align*}
    \nonumber
    \CE{\anc{u}{k+1}}fg
    = &
    \CE{\anc{u}{k+1}} ({\bf Q} f) g +
    \CE{\anc{u}{k+1}} (f - {\bf Q} f) g \,.
\end{align*}
Notice that
\begin{align*}
    f -{\bf Q}f = (\idtt{\anc{u}{k+1}} - \PK{\anc{u}{t+1}})
    {\bf Q} f - {\bf Q} f = - \PK{\anc{u}{t+1}} {\bf Q} f
    \in \T{\anc{u}{k+1}}\,,
\end{align*}
and thus
\begin{align}
    \nonumber
    \CE{\anc{u}{k+1}}fg
    = &
    \CE{\anc{u}{k+1}} \left[({\bf Q}f) g \right]
    - \CE{\anc{u}{k+1}} \left[ (\PK{\anc{u}{t+1}} {\bf Q}f) g  \right]                  \\
    \nonumber
    = &
    \CE{\anc{u}{k+1}} \left[\CE{\anc{u}{k}} \otimes \EE{\OO{u}{k}} ({\bf Q}f) g \right] \\
      & + \underbrace{
        \CE{\anc{u}{k+1}} \left[\CE{\anc{u}{k}} \otimes \D{\OO{u}{k}} ({\bf Q}f) g \right]
        - \CE{\anc{u}{k+1}} \left[ (\PK{\anc{u}{k+1}} {\bf Q}f) g  \right]
    }_{:=\bL_{k+1,k+1}(f,g)}\,.
    \label{eq RCEfgDecomposition}
\end{align}
Observe that $\bL_{k+1,k+1}$ is a bilinear map from $\cR{{{\cal W}_u}}{k+1} \times \T{\anc{u}{k+1}}$ to $\F{\anc{u}{k+1}}$.
\begin{lemma}
    \label{lem RbLk+1k+1}
    The following holds when $\tCR$ is large enough:
    Suppose the Induction Hypothesis \ref{IH: coreR} holds for $k$. Then, $\bL_{k+1,k+1}$ satisfies the condition described in Induction Hypothesis \ref{IH: coreR} for $k+1$.
\end{lemma}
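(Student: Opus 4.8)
The plan is to verify directly the two requirements that Induction Hypothesis~\ref{IH: coreR} at level $k+1$ imposes on $\bL_{k+1,k+1}$: that $\bL_{k+1,k+1}(f,g)\in\Fz{\anc{u}{k+1}}$ for all $f\in\cR{{\cal W}_u}{k+1}$ and $g\in\T{\anc{u}{k+1}}$, and that $\maxnorm{\bL_{k+1,k+1}(f,g)}$ obeys the $t=k+1$ estimate of the hypothesis. Throughout we use the standing assumption $\lA{u}\ge\tCR(\log(R)+1)$ and the hypothesis that Induction Hypothesis~\ref{IH: coreR} holds at level $k$.

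For the mean-zero property, I would use the alternative form $\bL_{k+1,k+1}(f,g)=\CE{\anc{u}{k+1}}fg-\CE{\anc{u}{k+1}}\big[\CE{\anc{u}{k}}\otimes\EE{\OO{u}{k}}({\bf Q}f)g\big]$ that is visible from \eqref{eq RCEfgDecomposition}, and show each summand is annihilated by $\EE{\anc{u}{k+1}}$. For the first, $\EE{\anc{u}{k+1}}\CE{\anc{u}{k+1}}fg=\EE{\anc{u}{k+1}}fg=0$ by the orthogonality Lemma~\ref{lem PK_Orthogonal} applied at level $k+1$. For the second, expanding ${\bf Q}f\in\cR{{\cal W}_u}{k}\otimes\TT{u}{k}$ and $g\in\T{\anc{u}{k+1}}\subseteq\T{\anc{u}{k}}\otimes\TT{u}{k}$ into simple tensors shows that $\CE{\anc{u}{k}}\otimes\EE{\OO{u}{k}}({\bf Q}f)g$ is a function of $x_{\anc{u}{k}}$ alone which, by Lemma~\ref{lem PK_Orthogonal} applied at level $k$, lies in $\Fz{\anc{u}{k}}$; since it depends only on the single vertex $\anc{u}{k}$, the last clause of Lemma~\ref{lem basicEECED} gives that $\EE{\anc{u}{k+1}}$ of it equals $\EE{\anc{u}{k}}$ of it, which is $0$. (It is essential here not to confuse $\EE{\anc{u}{k+1}}$ with the tensorized expectation over the children of $\anc{u}{k+1}$, since these genuinely differ; the argument only uses single-vertex functions.)

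For the norm bound I would estimate the two terms in the braced definition of $\bL_{k+1,k+1}(f,g)$ separately. Since $\CE{\anc{u}{k+1}}$ is an averaging operator it does not increase $\maxnorm{\cdot}$, so $\maxnorm{\CE{\anc{u}{k+1}}[\CE{\anc{u}{k}}\otimes\D{\OO{u}{k}}({\bf Q}f)g]}\le\maxnorm{\CE{\anc{u}{k}}\otimes\D{\OO{u}{k}}({\bf Q}f)g}\le\RDelta_k\Unorm{{\bf Q}f}{\anc{u}{k+1}}\Unorm{g}{\anc{u}{k+1}}$ by Corollary~\ref{cor: coreR-1} (applicable since the hypothesis holds at $k$, ${\bf Q}f\in\cR{{\cal W}_u}{k}\otimes\TT{u}{k}$ and $g\in\T{\anc{u}{k+1}}$). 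For the second term, $\PK{\anc{u}{k+1}}{\bf Q}f$ and $g$ both lie in $\T{\anc{u}{k+1}}$, so the defining decay property of $\h_K$ together with Cauchy--Schwarz gives $\maxnorm{\CE{\anc{u}{k+1}}[(\PK{\anc{u}{k+1}}{\bf Q}f)g]}\le\big(1+\exp(-\eps\lA{\anc{u}{k+1}})\big)\Unorm{\PK{\anc{u}{k+1}}{\bf Q}f}{\anc{u}{k+1}}\Unorm{g}{\anc{u}{k+1}}$, and Lemma~\ref{lem RProjectionNorm} bounds $\Unorm{\PK{\anc{u}{k+1}}{\bf Q}f}{\anc{u}{k+1}}\le\RDelta_k\Unorm{{\bf Q}f}{\anc{u}{k+1}}$. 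Finally, from ${\bf Q}f=f+\PK{\anc{u}{k+1}}{\bf Q}f$ and Lemma~\ref{lem RProjectionNorm} again, $\Unorm{{\bf Q}f}{\anc{u}{k+1}}\le(1-\RDelta_k)^{-1}\Unorm{f}{\anc{u}{k+1}}$, which is $(1+o(1))\Unorm{f}{\anc{u}{k+1}}$ once $\tCR$ is large (so that $\RDelta_k\le\RDelta_0$ is tiny). Summing the two terms, one gets $\maxnorm{\bL_{k+1,k+1}(f,g)}\le C'\RDelta_k\Unorm{f}{\anc{u}{k+1}}\Unorm{g}{\anc{u}{k+1}}$, where $C'$ can be made arbitrarily close to $2$ by taking $\tCR$ large; in particular $C'\le3$.

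It remains to match this against the $t=k+1$ estimate of Induction Hypothesis~\ref{IH: coreR}. Unfolding the explicit value of $\RDelta_k$ and of $\tC_{\ref{IH: coreR}}=(1+\tCM)\cdot2\tCM\tC_{\ref{lem basicDecayAu}}\exp(\eps)/\tlam$, and cancelling the common factors $\tlam^{k+1}R\exp(-\eps\lA{u})\CW{{\cal W}_u}\Unorm{f}{\anc{u}{k+1}}\Unorm{g}{\anc{u}{k+1}}$, the required inequality collapses to the $\ell$-free numerical statement $C'\le\exp(-\eps)(1+\tCM)(1+\kappa)$: the target carries one extra factor $(1+\kappa)$ (namely $(1+\kappa)^{3(k+1)}$ versus the $(1+\kappa)^{3k+2}$ inside $\RDelta_k$) and one extra factor $\exp(-\eps)$ (from $\exp(-\eps(k+1))$ versus $\exp(-\eps k)$), while the ratio of $\tC_{\ref{IH: coreR}}$ to $2\tCM\tC_{\ref{lem basicDecayAu}}\exp(\eps)/\tlam$ equals $1+\tCM$ and supplies the slack; this holds because $\tCM\in\CC$ is a sufficiently large constant. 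The main obstacle is exactly this last bookkeeping --- checking that every accumulated $O(1)$ loss (the $\maxnorm{\cdot}$-contraction, the $(1-\RDelta_k)^{-1}$ factor, the factor $1+\exp(-\eps\lA{\anc{u}{k+1}})$, and the two-term sum) fits inside the $(1+\tCM)$ slack deliberately built into $\tC_{\ref{IH: coreR}}$, and that the powers of $(1+\kappa)$, of $\tlam$, and the exponential in $k$ line up precisely with what the hypothesis demands; a secondary subtlety is the mean-zero step, where one must avoid replacing $\EE{\anc{u}{k+1}}$ by a tensor product over the children of $\anc{u}{k+1}$.
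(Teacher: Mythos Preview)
Your argument follows the paper's proof closely: the same two-term split of $\bL_{k+1,k+1}$, Corollary~\ref{cor: coreR-1} for the $\CE{\anc{u}{k}}\otimes\D{\OO{u}{k}}$ piece, Lemma~\ref{lem RProjectionNorm} for the $\PK{\anc{u}{k+1}}{\bf Q}f$ piece, and the $(1-\RDelta_k)^{-1}$ conversion from $\Unorm{{\bf Q}f}{\anc{u}{k+1}}$ to $\Unorm{f}{\anc{u}{k+1}}$. Two small deviations are worth noting. First, your mean-zero route (apply Lemma~\ref{lem PK_Orthogonal} directly at level $k+1$ to the full $\CE{\anc{u}{k+1}}fg$ and separately at level $k$ to the tensor term) is a legitimate and slightly shorter alternative to the paper's, which instead rewrites $\EE{\anc{u}{k+1}}[(\PK{\anc{u}{k+1}}{\bf Q}f)g]$ via \eqref{eq PiProperty1} and reduces everything to the single term $-\EE{\anc{u}{k+1}}[\CE{\anc{u}{k}}\otimes\EE{\OO{u}{k}}({\bf Q}f)g]$. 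Second, for the projection term the paper uses the cruder bound $\maxnorm{\CE{\anc{u}{k+1}}[\cdot]}\le\tCM\EE{\anc{u}{k+1}}|\cdot|$ (Definition~\ref{def tCM}) and so lands on $(1+\tCM)\RDelta_k$, whereas you invoke the $\h_K$ decay and get the sharper $(2+o(1))\RDelta_k$.

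The one place to tighten your write-up is the final numerical check: the inequality $C'\le(1+\tCM)(1+\kappa)\exp(-\eps)$ does \emph{not} follow from the two stated properties of $\tCM$ in Definition~\ref{def tCM} for every admissible $(M,d)$ (if $\eps$ is large the right side could be below $2$). It holds only because Definition~\ref{def tCM} allows $\tCM$ to be taken as large as one wishes in $\CC$; you should say explicitly that you are enlarging $\tCM$ (and hence $\tC_{\ref{IH: coreR}}$, which depends on it) so that $(1+\tCM)(1+\kappa)\exp(-\eps)\ge 3$, say. With that made explicit your proof is complete. (The paper's own last display writes an equality that is off by a factor $\exp(\eps)$ at this very spot, so your sharper term-2 constant together with this explicit enlargement is in fact what closes the bookkeeping cleanly.)
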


\begin{proof}
    \step{The image of \(\bL_{k+1,k+1}\) lies in \(\Fz{\anc{u}{k+1}}\).}
    We begin by writing
    \begin{align}
        \label{eq RbLk+1k+100}
        \EE{\anc{u}{k+1}} \bL_{k+1,k+1}(f,g)
        \;=\;
        \EE{\anc{u}{k+1}}\bigl[\CE{\anc{u}{k}} \otimes \D{\OO{u}{k}}\bigl({\bf Q}f\bigr)\,g\bigr]
        \;-\;
        \EE{\anc{u}{k+1}}\bigl[(\PK{\anc{u}{k+1}} {\bf Q}f)\,g\bigr].
    \end{align}
    Since \(g \in \T{\anc{u}{k+1}}\), we have
    \begin{align*}
        \EE{\anc{u}{k+1}}\bigl[(\PK{\anc{u}{k+1}} {\bf Q}f)\,g\bigr]
         & \stackrel{\eqref{eq PiProperty1}}{=}
        \EE{\anc{u}{k+1}}\bigl[({\bf Q}f)\,g\bigr]
        \;=\;
        \EE{\anc{u}{k+1}}\bigl[\CE{\anc{u}{k}} \otimes \CE{\OO{u}{k}}\bigl({\bf Q}f\bigr)\,g\bigr].
    \end{align*}
    Substituting back into \eqref{eq RbLk+1k+100}, we get
    \begin{align*}
        \EE{\anc{u}{k+1}} \bL_{k+1,k+1}(f,g)
        \;=\;
        -\, \EE{\anc{u}{k+1}}\Bigl[\CE{\anc{u}{k}} \otimes \EE{\OO{u}{k}}\bigl({\bf Q}f\bigr)\,g\Bigr].
    \end{align*}
    Recall that \({\bf Q}f \in \cR{{{\cal W}_u}}{k}\otimes \TT{u}{k}\) and \(g \in \T{\anc{u}{k+1}} \subset \T{\anc{u}{k}} \otimes \TT{u}{k}\).
    By the orthogonality property \eqref{eq R_CRortho}, namely
    \[
        \CE{\anc{u}{k}}: \cR{{{\cal W}_u}}{k}\times \T{\anc{u}{k}} \;\to\; \Fz{\anc{u}{k}},
    \]
    we obtain
    \[
        (\cR{{{\cal W}_u}}{k}\otimes \TT{u}{k}) \times (\T{\anc{u}{k}} \otimes \TT{u}{k})
        \;\xrightarrow{\;\CE{\anc{u}{k}} \otimes \EE{\OO{u}{k}}\;}
        \Fz{\anc{u}{k}} \otimes \R \;=\; \Fz{\anc{u}{k}}
        \;\xrightarrow{\;\EE{\anc{u}{k+1}}\;}
        \{0\}.
    \]
    Hence,
    \[
        \EE{\anc{u}{k+1}} \bL_{k+1,k+1}(f,g)
        \;=\;
        -\, \EE{\anc{u}{k+1}}\Bigl[\CE{\anc{u}{k}} \otimes \EE{\OO{u}{k}}\bigl({\bf Q}f\bigr)\,g\Bigr]
        \;=\; 0.
    \]

    \step{Bounding \(\maxnorm{\bL_{k+1,k+1}(f,g)}\) by \(\Unorm{{\bf Q}f}{\anc{u}{k+1}}\Unorm{g}{\anc{u}{k+1}}\).}
    By Corollary~\ref{cor: coreR-1},
    \[
        \maxnorm{\CE{\anc{u}{k}} \otimes \D{\OO{u}{k}} \bigl({\bf Q}f\bigr)\,g}
        \;\le\;
        \RDelta_k \,\Unorm{{\bf Q}f}{\anc{u}{k+1}}\Unorm{g}{\anc{u}{k+1}}.
    \]
    Next,
    \begin{align*}
        \maxnorm{\CE{\anc{u}{k+1}}\bigl[(\PK{\anc{u}{k+1}} {\bf Q}f)\,g\bigr]}
         & \le
        \tCM \,\EE{\anc{u}{k+1}}\Bigl|\CE{\anc{u}{k+1}}\bigl[(\PK{\anc{u}{k+1}} {\bf Q}f)\,g\bigr]\Bigr| \\
         & \le
        \tCM \,\EE{\anc{u}{k+1}}\bigl|(\PK{\anc{u}{k+1}} {\bf Q}f)\,g\bigr|
        \;\le\;
        \tCM \,\Unorm{\PK{\anc{u}{k+1}} {\bf Q}f}{\anc{u}{k+1}} \,\Unorm{g}{\anc{u}{k+1}}.
    \end{align*}
    By Lemma~\ref{lem RProjectionNorm},
    \[
        \Unorm{\PK{\anc{u}{k+1}} {\bf Q}f}{\anc{u}{k+1}}
        \;\le\;
        \RDelta_k \,\Unorm{{\bf Q}f}{\anc{u}{k+1}}.
    \]
    Combining these, we conclude
    \[
        \maxnorm{\bL_{k+1,k+1}(f,g)}
        \;\le\;
        (1 + \tCM)\,\RDelta_k \,\Unorm{{\bf Q}f}{\anc{u}{k+1}} \,\Unorm{g}{\anc{u}{k+1}}.
    \]

    \step{Converting the norms.}
    By Lemma~\ref{lem RProjectionNorm},
    \begin{align*}
        \Unorm{f}{\anc{u}{k+1}}
         & \ge
        \Unorm{{\bf Q}f}{\anc{u}{k+1}} \;-\; \Unorm{f - {\bf Q}f}{\anc{u}{k+1}}              \\
        \nonumber
         & =
        \Unorm{{\bf Q}f}{\anc{u}{k+1}} \;-\; \Unorm{\PK{\anc{u}{t+1}}{\bf Q}f}{\anc{u}{k+1}} \\
        \nonumber
         & \ge
        (1 - \RDelta_k)\,\Unorm{{\bf Q}f}{\anc{u}{k+1}}\,.
    \end{align*}
    From Remark~\ref{rem RDelta},
    \[
        \RDelta_k
        \;\le\;
        \RDelta_0
        \;\le\;
        \frac{2\,\tCM \,\tC_{\ref{lem basicDecayAu}} \,\exp(\eps)}{\tlam}
        \,R \,\exp\bigl(-\eps\,\tCR(\log(R)+1)\bigr)\,\tCM,
    \]
    which can be made arbitrarily small by choosing \(\tCR\) large.  We thus assume
    \[
        (1-\RDelta_k) \;\ge\; \frac{1}{1+\kappa},
    \]
    so that
    \begin{align}
        \label{eq RbLk+1k+101}
        \Unorm{f}{\anc{u}{k+1}}
        \;\ge\; \frac{1}{1+\kappa}\,\Unorm{{\bf Q}f}{\anc{u}{k+1}}.
    \end{align}
    Substituting back into our previous inequality and unwrapping the definitions of \(\RDelta_k\) and \(\tC_{\ref{IH: coreR}}\), we obtain
    \begin{align*}
        \maxnorm{\bL_{k+1,k+1}(f,g)}
        \le &
        (1 + \tCM)\,\RDelta_k \,(1+\kappa)\,\Unorm{f}{\anc{u}{k+1}} \,\Unorm{g}{\anc{u}{k+1}} \\
        =   &
        \underbrace{(1+\tCM) \,\frac{2\,\tCM \,\tC_{\ref{lem basicDecayAu}} \,\exp(\eps)}{\tlam}}_{=\;\tC_{\ref{IH: coreR}}}
        \, R \,\exp\bigl(-\eps\,\lA{u}\bigr)\,\CW{{\cal W}_u}\, \cdot                         \\
            & \cdot
        \tlam^{k+1}\,(1+\kappa)^{3k+3}\,\exp\bigl(- \eps (k+1)\bigr)\,
        \Unorm{f}{\anc{u}{k+1}} \,\Unorm{g}{\anc{u}{k+1}}.
    \end{align*}
    Thus, the constant \(\tC_{\ref{IH: coreR}}\) is chosen to match the one above, completing the proof.
\end{proof}

After introducing the bilinear map \(\bL_{k+1,k+1}\), we now proceed to define the bilinear map \(\bL_{k+1,t}\) for \(t \in [0,k]\). Recall the first summand from \eqref{eq RCEfgDecomposition} and decompose it using the operators
${\bL}_{k,t}$ for $t \in [0,k]$:
\begin{align*}
    \CE{\anc{u}{k+1}} \Bigl[\CE{\anc{u}{k}} \otimes \EE{\OO{u}{k}} \bigl({\bf Q}f\bigr)\,g \Bigr]
     & =
    \CE{\anc{u}{k+1}}\Biggl[\sum_{t=0}^{k}
        \bigl(\CE{\anc{u}{k}}\,{\bL}_{k,t}\bigr)
        \,\otimes\,
        \EE{\OO{u}{k}}\bigl({\bf Q}f\bigr)\,g
        \Biggr]\,.
\end{align*}

Note that this decomposition is well-defined, since
${\bf Q}f \in \cR{{{\cal W}_u}}{k+1} \otimes \TT{u}{k}$ and
$g \in \T{\anc{u}{k+1}} \subseteq \T{\anc{u}{k}} \otimes \TT{u}{k}$.

For each summand, we use the fact that the image of $\EE{\OO{u}{k}}$ lies in $\R$:
\begin{align*}
    \bigl(\CE{\anc{u}{k}}\,{\bL}_{k,t}\bigr) \otimes \EE{\OO{u}{k}} \bigl({\bf Q}f\bigr)\,g
     & =
    \Bigl(\CE{\anc{u}{k}} \otimes \idtt{\R}\Bigr)\,\circ\,
    \Bigl({\bL}_{k,t} \otimes \EE{\OO{u}{k}}\Bigr)\bigl[\bigl({\bf Q}f\bigr)\cdot g\bigr].
\end{align*}
Since
\({\bL}_{k,t} \otimes \EE{\OO{u}{k}}\bigl({\bf Q}f\bigr)\,g \)
lies in \(\Fz{\anc{u}{t}}\otimes \R = \Fz{\anc{u}{t}}\), we obtain
\begin{align*}
    \bigl(\CE{\anc{u}{k}}\,{\bL}_{k,t}\bigr) \otimes \EE{\OO{u}{k}} \bigl({\bf Q}f\bigr)\,g
     & =
    \CE{\anc{u}{k}} \Bigl({\bL}_{k,t} \otimes \EE{\OO{u}{k}}\bigl({\bf Q}f\bigr)\,g\Bigr).
\end{align*}

Applying the identity \eqref{eq EEU'ECU}, we then have
\begin{align*}
    \CE{\anc{u}{k+1}} \Bigl[\CE{\anc{u}{k}} \otimes \EE{\OO{u}{k}} \bigl({\bf Q}f\bigr)\,g \Bigr]
     & =
    \CE{\anc{u}{k+1}} \circ \CE{\anc{u}{k}}
    \sum_{t=0}^{k} {\bL}_{k,t} \,\otimes\, \EE{\OO{u}{k}}\bigl({\bf Q}f\bigr)\,g \\
     & =
    \CE{\anc{u}{k+1}}
    \sum_{t=0}^{k} {\bL}_{k,t} \,\otimes\, \EE{\OO{u}{k}}\bigl({\bf Q}f\bigr)\,g.
\end{align*}

Because
\[
    \Fz{\anc{u}{t}}\otimes \R
    \;=\;
    \Fz{\anc{u}{t}}
    \;\xrightarrow{\EE{\anc{u}{k+1}}}\;
    \{0\},
\]
we can define the bilinear map
\[
    {\bf L}_{k+1,t}(f,g)
    \;:=\;
    {\bL}_{k,t} \otimes \EE{\OO{u}{k}} \bigl({\bf Q}f\bigr)\,g,
\]
which takes \(\cR{{{\cal W}_u}}{k+1} \times \T{\anc{u}{k+1}}\) to \(\Fz{\anc{u}{t}}\). Substituting these definitions of
\({\bf L}_{k+1,t}\) into \eqref{eq RCEfgDecomposition} yields
\begin{align}
    \label{eq RLDecomposition}
    \CE{\anc{u}{k+1}} \bigl[f\,g\bigr]
    \;=\;
    \sum_{t=0}^{k+1} {\bf L}_{k+1,t}(f,g).
\end{align}
Thus, each \({\bf L}_{k+1,t}\) captures a piece of the decomposition corresponding to level \(t\), and the sum recovers the full correlation term
\(\CE{\anc{u}{k+1}}[f\,g].\)

\begin{lemma}\label{lem RbLk+1t}
    Suppose the Induction Hypothesis~\ref{IH: coreR} holds for $k$. Then for each $t \in [0,k]$,
    the operator $\bL_{k+1,t}$ satisfies the corresponding condition of Induction Hypothesis~\ref{IH: coreR}
    for $k+1$.
\end{lemma}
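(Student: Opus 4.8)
The plan is to recognize $\bL_{k+1,t}$ (for $t\in[0,k]$) as a tensor product of two bilinear maps and to bound it via the submultiplicativity Lemma~\ref{lem: mainTensorProduct}, exactly in the spirit of Corollary~\ref{cor: coreR-1}. Recall that by construction
$$
\bL_{k+1,t}(f,g)\;=\;\bigl(\bL_{k,t}\otimes\EE{\OO{u}{k}}\bigr)\bigl(({\bf Q}f)\cdot g\bigr),
$$
where ${\bf Q}f\in\cR{{\cal W}_u}{k}\otimes\TT{u}{k}\subseteq\F{\anc{u}{k}_{\pe}}\otimes\TT{u}{k}$, and --- since $\{\anc{u}{k}\}\cup\OO{u}{k}=\fc{\anc{u}{k+1}}$ --- Lemma~\ref{lem TAinclusion} gives $g\in\T{\anc{u}{k+1}}\subseteq\T{\anc{u}{k}}\otimes\TT{u}{k}$. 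First I would dispose of the two facts that do not use the Induction Hypothesis: $\bL_{k+1,t}$ is bilinear in $(f,g)$ because ${\bf Q}$ is linear, and its image lies in $\Fz{\anc{u}{t}}$ because $\bL_{k,t}$ maps into $\Fz{\anc{u}{t}}$ (Induction Hypothesis at level $k$) while $\EE{\OO{u}{k}}$ maps into $\R$. Hence the whole content of the lemma is the norm inequality at level $k+1$.

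To obtain it I would apply Lemma~\ref{lem: mainTensorProduct} with the two factors $L_1=\bL_{k,t}$ on $\cR{{\cal W}_u}{k}\times\T{\anc{u}{k}}$ --- both spaces equipped with the symmetric semi-positive form $\EE{\anc{u}{k}}$ --- and $L_2=\EE{\OO{u}{k}}$ on $\TT{u}{k}\times\TT{u}{k}$, equipped with $\EE{\OO{u}{k}}$ and valued in $\R$. The Induction Hypothesis at level $k$ supplies the first-factor bound $\maxnorm{\bL_{k,t}(\phi,\psi)}\le\delta_1\Unorm{\phi}{\anc{u}{k}}\Unorm{\psi}{\anc{u}{k}}$ with
$$
\delta_1=\begin{cases}\CW{{\cal W}_u}(1+\kappa)^{3k},&t=0,\\ \tlam^{t}\exp(-\eps t)\,\tC_{\ref{IH: coreR}}\,R\exp(-\eps\lA{u})\,\CW{{\cal W}_u}(1+\kappa)^{3k},&t\in[1,k],\end{cases}
$$
while Cauchy--Schwarz gives $|\EE{\OO{u}{k}}\phi\psi|\le\Unorm{\phi}{\OO{u}{k}}\Unorm{\psi}{\OO{u}{k}}$, i.e. $\delta_2=1$. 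Since $\EE{\anc{u}{k}}\otimes\EE{\OO{u}{k}}=\EE{\{\anc{u}{k}\}\cup\OO{u}{k}}$, applying the lemma to the pair $({\bf Q}f,g)$ yields
$$
\maxnorm{\bL_{k+1,t}(f,g)}\;\le\;\delta_1\,\Unorm{{\bf Q}f}{\{\anc{u}{k}\}\cup\OO{u}{k}}\,\Unorm{g}{\{\anc{u}{k}\}\cup\OO{u}{k}}.
$$

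The last step is a norm conversion. I would apply Lemma~\ref{lemma: coreNorm} --- with base vertex $\anc{u}{k}$, parameter $1$, and target antichain $\{\anc{u}{k+1}\}$; its hypotheses hold because $\lA{\anc{u}{k}}=\lA{u}+k\ge\tCR(\log(R)+1)$ and because ${\bf Q}f$ and $g$ both lie in $\F{\anc{u}{k}_{\pe}}\otimes\TT{u}{k}$ --- to replace each $\{\anc{u}{k}\}\cup\OO{u}{k}$-norm by the corresponding $\anc{u}{k+1}$-norm at the cost of one factor $(1+\kappa)$, and then use inequality~\eqref{eq RbLk+1k+101} (a consequence of Lemma~\ref{lem RProjectionNorm}) to replace $\Unorm{{\bf Q}f}{\anc{u}{k+1}}$ by $(1+\kappa)\Unorm{f}{\anc{u}{k+1}}$. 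Collecting, $\maxnorm{\bL_{k+1,t}(f,g)}\le\delta_1(1+\kappa)^2\Unorm{f}{\anc{u}{k+1}}\Unorm{g}{\anc{u}{k+1}}$; since $\delta_1$ already carries $(1+\kappa)^{3k}$, the exponent becomes at most $3k+2\le 3(k+1)$, which is exactly the budget allotted by the Induction Hypothesis at level $k+1$, and for $t\ge1$ the prefactor $\tlam^{t}\exp(-\eps t)\tC_{\ref{IH: coreR}}R\exp(-\eps\lA{u})$ passes through unchanged. I do not expect a real obstacle: this is the ``easy half'' of the inductive step, since Lemma~\ref{lem RbLk+1k+1} and Corollary~\ref{cor: coreR-1} already carry the genuine estimates; the only care needed is tensor-identification bookkeeping --- checking that $g$ really lands in $\T{\anc{u}{k}}\otimes\TT{u}{k}$, that $\{\anc{u}{k}\}\cup\OO{u}{k}=\fc{\anc{u}{k+1}}$, and that the arguments fed to Lemma~\ref{lemma: coreNorm} truly lie in its stated domain --- together with making sure the accumulated powers of $(1+\kappa)$ stay within $(1+\kappa)^{3(k+1)}$.
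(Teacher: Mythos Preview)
Your proposal is correct and follows essentially the same route as the paper's proof: tensorize via Lemma~\ref{lem: mainTensorProduct} using the Induction Hypothesis bound for $\bL_{k,t}$ and Cauchy--Schwarz for $\EE{\OO{u}{k}}$, then convert norms with Lemma~\ref{lemma: coreNorm} and replace $\Unorm{{\bf Q}f}{\anc{u}{k+1}}$ by $(1+\kappa)\Unorm{f}{\anc{u}{k+1}}$ via \eqref{eq RbLk+1k+101}. The only difference is bookkeeping on the $(1+\kappa)$ powers --- the paper spends $(1+\kappa)^3$ (using the cruder bound $(1+\kappa)$ per norm in Lemma~\ref{lemma: coreNorm}) while you spend $(1+\kappa)^2$ --- but both fit within the $(1+\kappa)^{3(k+1)}$ budget.
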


\begin{proof}
    The result follows directly from the induction hypothesis for $k$, combined with a tensorization argument.

    First, recall that
    \[
        {\bf Q}f\;\in\;\cR{{{\cal W}_u}}{k}\,\otimes\,\TT{u}{k}
        \quad\text{and}\quad
        g \;\in\;\T{\anc{u}{k+1}} \;\subseteq\;\T{\anc{u}{k}}\,\otimes\,\TT{u}{k}.
    \]
    By the induction hypothesis for $k$, we know that for all
    \(\phi_1 \in \cR{{{\cal W}_u}}{k}\) and \(\phi_2 \in \T{\anc{u}{k}}\),
    \[
        \maxnorm{\bL_{k,t}(\phi_1,\phi_2)}
        \;\le\;
        \begin{cases}
            \CW{{\cal W}_u}\,(1+\kappa)^{3k}\,
            \Unorm{\phi_1}{\anc{u}{k}}\;\Unorm{\phi_2}{\anc{u}{k}},
             & t=0,         \\[6pt]
            \tlam^{t}\,\exp\bigl(-\eps t\bigr)\,\tC_{\ref{IH: coreR}}\,
            R\,\exp\bigl(-\eps\,\lA{u}\bigr)\,\CW{{\cal W}_u}\,(1+\kappa)^{3k}\,
            \Unorm{\phi_1}{\anc{u}{k}}\;\Unorm{\phi_2}{\anc{u}{k}},
             & t \in [1,k].
        \end{cases}
    \]
    Additionally, for any \(\phi_1,\phi_2 \in \TT{u}{k}\), we have the trivial bound (via Cauchy-Schwarz inequality):
    \[
        \maxnorm{\EE{\OO{u}{k}}\!\bigl[\phi_1\,\phi_2\bigr]}
        \;\le\;
        \Unorm{\phi_1}{\OO{u}{k}}\;\Unorm{\phi_2}{\OO{u}{k}}.
    \]

    Next, we apply
    Lemma~\ref{lem: mainTensorProduct},
    Lemma~\ref{lemma: coreNorm},
    and
    \eqref{eq RbLk+1k+101} from Lemma~\ref{lem RbLk+1k+1}
    to handle the expression
    \(\maxnorm{ \bL_{k,t} \otimes \EE{\OO{u}{k}}\bigl(({\bf Q}f)\,g\bigr)}\).
    For $t \in [1,k]$, we obtain:
    \[
        \begin{aligned}
               & \phantom{\le\;}\maxnorm{\bL_{k+1,t} (f,\,g)}                    \\
            \; & \le\;
            \maxnorm{\bL_{k,t} \otimes \EE{\OO{u}{k}} \bigl(({\bf Q}f)\,g\bigr)} \\
            \; & \le\;
            \tlam^{t}\,\exp\bigl(-\eps t\bigr)\,\tC_{\ref{IH: coreR}}\,
            R\,\exp\bigl(-\eps\,\lA{u}\bigr)\,\CW{{\cal W}_u}\,(1+\kappa)^{3k}
            \;\Unorm{{\bf Q}f}{\anc{u}{k}\cup\OO{u}{k}}\;\Unorm{g}{\anc{u}{k}\cup\OO{u}{k}}
            \\[6pt]
               & \le\;
            \tlam^{t}\,\exp\bigl(-\eps t\bigr)\,\tC_{\ref{IH: coreR}}\,
            R\,\exp\bigl(-\eps\,\lA{u}\bigr)\,\CW{{\cal W}_u}\,(1+\kappa)^{3k+2}
            \;\Unorm{{\bf Q}f}{\anc{u}{k+1}}\;\Unorm{g}{\anc{u}{k+1}}
            \\[6pt]
               & \le\;
            \tlam^{t}\,\exp\bigl(-\eps t\bigr)\,\tC_{\ref{IH: coreR}}\,
            R\,\exp\bigl(-\eps\,\lA{u}\bigr)\,\CW{{\cal W}_u}\,(1+\kappa)^{3k+3}
            \;\Unorm{f}{\anc{u}{k+1}}\;\Unorm{g}{\anc{u}{k+1}}.
        \end{aligned}
    \]
    A similar argument for $t=0$ shows that
    \[
        \maxnorm{\bL_{k,t} \otimes \EE{\OO{u}{k}}\bigl(({\bf Q}f)\,g\bigr)}
        \;\le\;
        \CW{{\cal W}_u}\,(1+\kappa)^{3k+3}
        \;\Unorm{f}{\anc{u}{k+1}}\;\Unorm{g}{\anc{u}{k+1}}.
    \]

    Thus, in every case (i.e., for all $t \in [0,k]$), the operator
    $\bL_{k+1,t}$ satisfies the required correlation bounds with respect to
    \(\cR{{{\cal W}_u}}{k+1}\) and \(\T{\anc{u}{k+1}}\). This completes the proof.
\end{proof}

\subsection{Conclusion: Proof of Proposition~\ref{prop: coreR}}
\begin{proof}[Proof of Proposition \ref{prop: coreR}]
    \step{Bounds for $\maxnorm{\CE{\anc{u}{k+1}}fg}$ (First Statement).}
    First, observe that if the Induction Hypothesis~\ref{IH: coreR} holds at level $k$,
    then Lemma~\ref{lem RIHImplication} implies the first statement of the proposition.
    Since the base case $k=0$ of the hypothesis is true by definition, and the induction step is verified
    in Lemmas~\ref{lem RbLk+1k+1} and~\ref{lem RbLk+1t}, it follows that the induction hypothesis holds for all $k$.
    Consequently, the first statement of Proposition~\ref{prop: coreR} is established.

    Now we consider the second statement of the proposition.

    \step{Pseudo-inverse and the functions $f_t$}
    Recall the pseudo-inverse introduced in Definition~\ref{def  RPseudoInverse}.
    For each $t \in [0,k-1]$, let
    \[
        {\bf Q}_t \;:\; \cR{{{\cal W}_u}}{t+1} \;\longrightarrow\;
        \cR{{{\cal W}_u}}{t} \otimes \TT{u}{t}
    \]
    be the pseudo-inverse of the operator $(\idtt{\anc{u}{t+1}} - \PK{\anc{u}{t+1}})$.
    For simplicity, define
    \[
        {\bf I}_{t+1} \;=\; \idtt{\OO{u}{[t+1,k-1]}},
    \]
    so that the tensor product \({\bf Q}_t \otimes {\bf I}_{t+1}\) induces a linear map
    \[
        \cR{{{\cal W}_u}}{t+1} \,\otimes\, \TT{u}{[t+1,k-1]}
        \;\xrightarrow{{\bf Q}_t \otimes {\bf I}_{t+1}}\;
        \cR{{{\cal W}_u}}{t} \,\otimes\, \TT{u}{[t,k-1]}.
    \]
    We now define $f_t$ recursively by
    \[
        f_t \;:=\; \bigl({\bf Q}_t \otimes {\bf I}_{t+1}\bigr)\,f_{t+1},
        \quad\text{with the base case } f_k = f.
    \]

    \step{Showing $f_{t+1} - f_t \in \T{\anc{u}{t+1}} \otimes \TT{u}{[t+1,k-1]}$}
    From the definition of $f_t$, we compute
    \[
        f_{t+1} - f_t
        \;=\;
        (\idtt{\anc{u}{t+1}} - \PK{\anc{u}{t+1}})\,\otimes\,{\bf I}_{t+1}\,\circ\,
        \bigl({\bf Q}_t \otimes {\bf I}_{t+1}\bigr)f_{t+1}
        \;-\;
        f_t
        \;=\;
        -\,\PK{\anc{u}{t+1}}\otimes {\bf I}_{t+1}\,f_t.
    \]
    Clearly, this lies in
    \(\T{\anc{u}{t+1}} \otimes \TT{u}{[t+1,k-1]}\).
    Summing over $s$ from $t$ to $k-1$, we find that
    \[
        f_k - f_t
        \;=\;
        \sum_{s=t}^{k-1}\!\bigl(f_{s+1} - f_s\bigr)
        \;\;\in\;\;
        \T{\anc{u}{t+1}} \otimes \TT{u}{[t+1,k-1]},
    \]
    since each difference $(f_{s+1} - f_s)$ also lies in that space.

    \step{Bounding $\Unorm{f_{t+1} - f_t}{\anc{u}{k}}$ by $\Unorm{f_k}{\anc{u}{k}}$}
    Using Lemma~\ref{lem RProjectionNorm} and the submultiplicativity of the tensor-product norm (Lemma~\ref{lem tensorNorm}), we obtain:
    \begin{align*}
        \Unorm{f_{t+1} - f_t}{\anc{u}{t+1} \cup \OO{u}{[t+1,k-1]}}
        \;=\;   &
        \Unorm{-\,\PK{\anc{u}{t+1}} \otimes {\bf I}_{t+1}\,f_t}
        {\anc{u}{t+1} \cup \OO{u}{[t+1,k-1]}} \\
        \;\le\; &
        \RDelta_t\,\Unorm{f_t}{\anc{u}{t+1}\cup \OO{u}{[t+1,k-1]}}.
    \end{align*}

    Since $\anc{u}{t+1} \cup \OO{u}{[t+1,k-1]} \preceq \anc{u}{k}$, we now apply Lemma~\ref{lemma: coreNorm} to convert the norms on both sides:
    \[
        \Unorm{f_{t+1} - f_t}{\anc{u}{k}}
        \;\le\;
        \RDelta_t\,(1+\kappa)^2 \,\Unorm{f_t}{\anc{u}{k}},
    \]
    which in turn implies (by the triangle inequality) that
    \[
        \Unorm{f_t}{\anc{u}{k}}
        \;\le\;
        \bigl(1 + \RDelta_t\,(1+\kappa)^2\bigr)\,\Unorm{f_{t+1}}{\anc{u}{k}}.
    \]
    Iterating these inequalities from $t$ to $k-1$ then gives:
    \begin{align*}
        \Unorm{f_{t+1} - f_t}{\anc{u}{k}}
        \;\le\; &
        \RDelta_t\,(1 + \kappa)^{2}
        \prod_{s=t}^{k-1} \Bigl(1+\RDelta_s\,(1+\kappa)^{2}\Bigr)\,
        \Unorm{f_k}{\anc{u}{k}} \\
        \;\le\; &
        \RDelta_t\,(1 + \kappa)^2
        \exp\!\Bigl(\!\sum_{s=t}^{k-1}\RDelta_s\,(1+\kappa)^{2}\Bigr)\,
        \Unorm{f_k}{\anc{u}{k}}.
    \end{align*}

    \step{Estimating $\sum_{s=t} \RDelta_s$}
    To estimate the above term, we need to estimate $\sum_{s=t} \RDelta_s$.
    The definition of $\RDelta_s$ has a recurrence relation
    \begin{align*}
        \RDelta_{s+1} = \tlam (1+\kappa)^3 \exp(-\eps) \RDelta_s .
    \end{align*}
    With $\tlam (1+\kappa)^2 \exp(-\eps) < \lame \exp(-\eps) <1$, the sum is bounded by a geometric series:
    $$
        \sum_{s=t} \RDelta_s \le \tC \RDelta_t\,
    $$
    for some $\tC \in \CC$.

    Recall that $\RDelta_t \le \RDelta_0$ and the term $\RDelta_0$ can be made arbitrarily small by choosing $\tCR \in \CC$ large enough (see Remark \ref{rem RDelta}). We may assume that
    $$
        \exp\left(\sum_{s=t}^{k-1} \RDelta_s (1+ \kappa)^{2}\right)
        \le
        1+\kappa\,,
    $$
    and we conclude that
    $$
        \Unorm{f_{t+1} - f_t}{\anc{u}{k}}
        \le
        \RDelta_t (1+ \kappa)^{3} \Unorm{f_{k}}{\anc{u}{k}}\,.
    $$

    \step{Bounding $\Unorm{f_{k} - f_t}{\anc{u}{k}}$ by $\Unorm{f}{\anc{u}{k}}$}
    As an immediate consequence of the above inequality, we have
    \begin{align*}
        \Unorm{f_k - f_t}{\anc{u}{k}}
        \le
        \left(\sum_{s=t} \RDelta_s (1+\kappa)^{3}\right) \Unorm{f}{\anc{u}{k}}
        \le
        \tC \RDelta_t (1+\kappa)^3 \Unorm{f}{\anc{u}{k}}\,.
    \end{align*}
    Finally, recall the definition of $\RDelta_t$:
    $$
        \RDelta_t =
        \frac{2 \tCM \tC_{\ref{lem basicDecayAu}}\exp(\eps)}{\tlam}
        R \exp( -\eps \lA{u}) \cdot \CW{{\cal W}_u}
        \tlam^{t+1} (1+\kappa)^{3t+2} \exp(- \eps t)\,.
    $$
    Together with
    $$
        \tlam (1+\kappa)^6 \le \lame
    $$
    from the Definition \ref{def varepsilon},
    we obtain the second statement of the proposition:
    \begin{align*}
        \Unorm{f - f_t}{\anc{u}{k}}
        \le
        \CW{{\cal W}_u} \cdot
        \tC R \exp(-\eps \lA{u})
        \lame^t \exp(- \eps t)
        \Unorm{f}{\anc{u}{k}}\,,
    \end{align*}
    where the term $\tC$ has been increased, while remaining in $\CC$, since $  \tCM$,  $\tC_{\ref{lem basicDecayAu}}$, $\exp(\eps)$, $\tlam$, and $\kappa$ are all in $\CC$.

\end{proof}

\bigskip
\section{\texorpdfstring{\(\mathcal{R}({\mathcal T}_{K+1}(u);0)\)}{R} for $u$ with small \texorpdfstring{${\rm h}_K(u)$}{hK(u)}}
\label{sec: CWTK}
\subsection{Overview}
Recall that $\Tp{u}$ denotes the space of functions in the variables $x_{L_u}$, with degree bounded by $2^{K+1}$, where $L_u$ is the set of leaves that are descendants of $u$. The space
$$
    \cR{\Tp{u}}{0} = (\idtt{u} - \PK{u}) \Tp{u}
$$
is the image of \( \Tp{u} \) under the ``projection" to the orthogonal complement of \( \T{u} \).

In this section, we aim to prove the third statement of Proposition \ref{prop PT MAIN}. For the reader's convenience, we restate it here.

\begin{prop*}
    For any $ 1 < \tCB \in \CC$, the following holds when $\tCR$ is sufficiently large:
    For any $u \in V(T)$ satisfying
    \begin{align*}
        \lA{u} \ge \hB : = \left\lceil \tCR (\log(R) +1) + \frac{\eps}{10 d} \tCR (\log(R) +1) \right\rceil.
    \end{align*}
    we have
    \begin{align*}
        \CW{{\cal T}_{K+1}(u)} \le \frac{1}{\tCB R}\,.
    \end{align*}
\end{prop*}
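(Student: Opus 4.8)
The plan is to bound $\maxnorm{\CE{u}[fg]}$ for an arbitrary $f\in\cR{\Tp{u}}{0}$ and $g\in\T{u}$, exploiting that such an $f$ is genuinely high degree. By Lemma~\ref{lem PK_Orthogonal} (with $k=0$) we have $\EE{u}[fg]=0$, so $\CE{u}[fg]=\D{u}[fg]$, and the whole task becomes: split $f$ into pieces each of which contributes little to $\CE{u}[fg]$.

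The combinatorial core is that a degree-$\le 2^{K+1}$ monomial on $L_w$ has support of size $>2^K$ inside at most one child of $w$; hence $\Tp{w}\subseteq\T{\fc{w}}+\sum_{w'\in\fc{w}}\bigl(\Tp{w'}\otimes\TT{w'}{0}\bigr)$. Combining this with $\T{\fc{w}}=\TT{w}{-1}$, with $\Tp{w'}\subseteq\T{w'}+\cR{\Tp{w'}}{0}$ (from $\idtt{w'}=\PK{w'}+(\idtt{w'}-\PK{w'})$), with the identities $\T{w'}\otimes\TT{w'}{0}=\TT{w}{-1}$ and $(\idtt{w}-\PK{w})\bigl(\cR{\Tp{w'}}{0}\otimes\TT{w'}{0}\bigr)=\cR{\Tp{w'}}{1}$, and then carrying the $\cR{\,\cdot\,}{j}$-construction $j$ further levels up (using Lemma~\ref{lem R Decompose} and Lemma~\ref{lem TAinclusion} for the tensor identifications), I obtain the one-step inclusion $\cR{\Tp{w}}{j}\subseteq\cR{\TT{w}{-1}}{j}+\sum_{w'\in\fc{w}}\cR{\Tp{w'}}{j+1}$, valid for every $j$ and every $w$ with $\anc{w}{j}=u$. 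Iterating it yields, for every $m\ge0$,
\[
  \cR{\Tp{u}}{0}\ \subseteq\ \sum_{j=0}^{m}\ \sum_{\substack{w\preceq u:\ \h(u)-\h(w)=j}}\cR{\TT{w}{-1}}{j}\ +\ \sum_{\substack{w\preceq u:\ \h(u)-\h(w)=m+1}}\cR{\Tp{w}}{m+1}.
\]
I take $m$ maximal with $\lA{u}-(m+1)\ge\tCR(\log(R)+1)$; since $\lA{u}\ge\hB$, the extra term $\frac{\eps}{10d}\tCR(\log(R)+1)$ in the definition of $\hB$ guarantees $m+1\ge\frac{\eps}{20d}\tCR(\log(R)+1)$, while every $w$ appearing above has $\lA{w}=\lA{u}-(\mathrm{depth})\ge\tCR(\log(R)+1)$, exactly the regime in which Proposition~\ref{prop: coreR} and the already-established bound $\CW{\TT{w}{-1}}\le\tC R\exp(-\eps\lA{w})$ of Proposition~\ref{prop PT MAIN} hold.

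Given a decomposition $f=\sum_{j,w}f_{w,j}+\sum_{w}f^{\sharp}_{w}$ subordinate to the inclusion above, I apply Proposition~\ref{prop: coreR}(1) against $g\in\T{u}=\T{\anc{w}{\mathrm{depth}(w)}}$ to each piece. For $f_{w,j}\in\cR{\TT{w}{-1}}{j}$ this gives $\maxnorm{\CE{u}[f_{w,j}\,g]}\le\tC\,\CW{\TT{w}{-1}}\lame^{j}\Unorm{f_{w,j}}{u}\Unorm{g}{u}\le\tC' R\exp(-\eps\hB)\Unorm{f_{w,j}}{u}\Unorm{g}{u}$, where $\CW{\TT{w}{-1}}\le\tC R\exp(-\eps\lA{w})\le\tC R\exp(-\eps(\hB-j))$ and $\lame\le\exp(-1.1\eps)$ together absorb the factor $\exp(\eps j)$; summing over the $\le Rd^{j}$ vertices at each depth $j\le m$ and using $d^{\,m+1}\exp(-\eps\hB)\le\exp(-c_{0}\eps\tCR(\log(R)+1))$ for an absolute $c_{0}>0$ (here one uses $\frac{\log d}{10d}<1$, which is precisely why the slack $\frac{\eps}{10d}$ is taken in $\hB$), this family contributes at most $R^{2}\exp(-c_{0}\eps\tCR(\log(R)+1))\,\Unorm{f}{u}\Unorm{g}{u}$, hence at most $\frac{1}{2\tCB R}\Unorm{f}{u}\Unorm{g}{u}$ once $\tCR$ is large. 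For $f^{\sharp}_{w}\in\cR{\Tp{w}}{m+1}$ I use the trivial bound $\CW{\Tp{w}}\le\CW{\F{w_{\preceq}}}\le\tCM$ (monotonicity of $\CW{\,\cdot\,}$ in the defining subspace, plus Definition~\ref{def  tCM}) to get $\maxnorm{\CE{u}[f^{\sharp}_{w}\,g]}\le\tC\tCM\lame^{\,m+1}\Unorm{f^{\sharp}_{w}}{u}\Unorm{g}{u}$, and sum by Cauchy--Schwarz over the $\le Rd^{\,m+1}$ vertices: the leftover contributes $\lesssim\bigl(d\lame^{2}\bigr)^{(m+1)/2}R^{1/2}\Unorm{f}{u}\Unorm{g}{u}\le\exp(-1.1\eps(m+1))\,R^{1/2}\Unorm{f}{u}\Unorm{g}{u}$, again $\le\frac{1}{2\tCB R}\Unorm{f}{u}\Unorm{g}{u}$ for $\tCR$ large, since $d\lame^{2}\le\exp(-2.2\eps)$ and $m+1\ge\frac{\eps}{20d}\tCR(\log(R)+1)$. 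Adding the two families gives $\maxnorm{\CE{u}[fg]}\le\frac{1}{\tCB R}\Unorm{f}{u}\Unorm{g}{u}$, i.e.\ $\CW{\Tp{u}}\le\frac{1}{\tCB R}$.

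The hard part is producing the decomposition $f=\sum_{j,w}f_{w,j}+\sum_{w}f^{\sharp}_{w}$ with the norm control used above: the Cauchy--Schwarz step for the leftover requires $\sum_{w}\Unorm{f^{\sharp}_{w}}{u}^{2}\lesssim\Unorm{f}{u}^{2}$ (near-orthogonality of the depth-$(m+1)$ pieces), and the first-family estimate needs per-piece bounds $\Unorm{f_{w,j}}{u}\lesssim\Unorm{f}{u}$ with a constant that does \emph{not} degrade with $\tCR$ — so the roughly $m\asymp\tCR$ successive levels of the recursion cannot each cost a fixed factor like $1+\kappa$. The obstruction is that the spaces $\cR{\Tp{w}}{m+1}$ for distinct $w$ overlap heavily in their sideways layers $\OO{w}{[0,m]}$, so their cross terms are not orthogonal a priori; I expect to build the decomposition through iterated $\EE{u}$-orthogonal projections onto the subspaces appearing in the one-step inclusion and to absorb the non-orthogonality into one further application of the decay estimates of Lemma~\ref{lem basicDecayAu} and Proposition~\ref{prop: coreR}, so that each level costs only a factor $1+\exp\!\bigl(-\Omega(\eps\tCR(\log(R)+1))\bigr)$, the product over all levels stays bounded, and the leftover pieces are genuinely almost orthogonal.
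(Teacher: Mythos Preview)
The gap you identify at the end is real, and it is the whole difficulty. Your recursion runs through $m+1\asymp\tCR$ levels, and each level involves non-orthogonal splittings (the spaces $\cR{\Tp{w'}}{1}$ for siblings $w'$ share all their sideways factors $\TT{w'}{0}$, and all of them overlap with $\cR{\TT{w}{-1}}{0}$). Any na\"ive projection scheme will cost a fixed multiplicative constant per level, hence $C^{\tCR}$ overall, which wipes out the $\exp(-c\eps\tCR)$ gain. Your proposed fix---making each level cost only $1+\exp(-\Omega(\eps\tCR))$---would essentially require decay estimates for the pieces $\cR{\Tp{w'}}{0}$ themselves, i.e.\ bounds of the same strength as the statement you are proving, so there is a circularity you would have to break.

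The paper avoids this entirely by a different mechanism. It does \emph{not} decompose $f$; instead it defines a single contractive operator $\PDM{u}=\bigotimes_{v\in\Dm{u}}\PT{v}$, where $\Dm{u}$ is the layer of $\tm$-th descendants of $u$ (here $\tm\approx\frac{\eps}{10d}\tCR(\log(R)+1)$) and each $\PT{v}:\F{v_{\preceq}}\to\F{v}\otimes\T{v}$ is a ``strong'' projection satisfying $\CE{v}[(\phi-\PT{v}\phi)g]\equiv 0$ in $\F{v}$ for every $g\in\T{v}$ and $\Unorm{\PT{v}\phi}{v}\le\Unorm{\phi}{v}$. Two consequences follow immediately: (i) strong orthogonality tensorizes, giving $\CE{u}[((\idtt{u}-\PK{u})f)g]=\CE{u}[((\idtt{u}-\PK{u})\PDM{u}f)g]$ for all $g\in\T{u}$, and (ii) $\Unorm{\PDM{u}f}{\Dm{u}}\le\Unorm{f}{\Dm{u}}$ by submultiplicativity of tensor norms---so the norm control is free, with no factor per level. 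Your own combinatorial observation (high degree in at most one subtree) then shows $\PDM{u}(\Tp{u})\subseteq\Deg{\Dm{u}}\otimes\T{\Dm{u}}$, i.e.\ degree~$1$ in the $x_{\Dm{u}}$ variables tensored with $\T{\Dm{u}}$. The final bound on $\D{u}[((\idtt{u}-\PK{u})f')g]$ for $f'\in\Deg{\Dm{u}}\otimes\T{\Dm{u}}$ uses the base case $K=0$ on the tree between $u$ and $\Dm{u}$ for the degree-$1$ factor, and the induction hypothesis (via Lemma~\ref{lem basicDecayAu}) for the $\T{\Dm{u}}$ factor. This is what the extra $\frac{\eps}{10d}\tCR(\log(R)+1)$ in $\hB$ buys: $\tm$ levels for the degree-$1$ base case to bite, while every $v\in\Dm{u}$ still satisfies $\lA{v}\ge\tCR(\log(R)+1)$.
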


\noindent
For later use, we also define
\begin{align}
    \label{def tm}
    \tm =  \left\lfloor \frac{\eps}{10 d} \tCR (\log(R) +1) \right\rfloor\,.
\end{align}
\begin{rem}\label{rem hBtmTechnical}
    \textbf{Why do we need \(\hB\) and \(\tm\)?}
    Both parameters are mainly technical. Previously, we only required
    \(\lA{u} \ge \tCR(\log(R)+1)\) to derive all the decay estimates used so far.
    However, because \(\Tp{u}\) cannot be decomposed into \(\T{A}\) for some \(A \subseteq \OO{u}{}\),
    we now need an additional “margin” of \(\tm\) layers to leverage some new decay properties
    \emph{and} to keep using the earlier estimates.
    Consequently, \(\hB\) and \(\tm\) ensure that \(u\) is sufficiently tall to satisfy both the original condition
    and the extra decay requirements introduced in this section.
\end{rem}

Recall from the definition of $\CW{\Tp{u}}$, the proposition states that
$f \in \cR{\Tp{u}}{0}$ and $g \in \T{u}$, we have
\begin{align*}
    \maxnorm{ \D{u} [f \cdot g]}
    \le
    \frac{1}{\tCB R} \Unorm{f}{u} \Unorm{g}{u}\,.
\end{align*}

To proceed, let us introduce some necessary notation.
For each $u \in V(T)$ with $\lA{u} \ge \hB$, define
\begin{align}
    \label{def Du}
    \Dm{u} := \{ v' \le u \,:\, \h(u)-\h(v') = \tm \}\,,
\end{align}
the set of $\tm$--th descendants of $u$. By the assumption $\lA{u} \ge \hB$ in the Proposition, it follows that
$$
    \lA{v} \ge \tCR (\log(R) +1)
    \mbox{ for all } v \in \Dm{u}\,,
$$
enabling us to apply the decay properties established in earlier sections to functions in $\T{\Dm{u}}$.

However, we cannot directly handle the functions in $\Tp{u}$ relying on any previous results.
To bridge this gap, we introduce a ``projection-like'' operator $\PDM{u}$ that maps functions in $\Tp{u}$ into a more tractable space.

\begin{prop}
    \label{prop PT PDM}
    For each $u \in V(T)$ satisfying $\lA{u} \ge \hB$, there exists a linear operator
    \begin{align*}
        \PDM{u}: \Tp{u} \to  \Deg{\Dm{u}} \otimes \T{\Dm{u}} ,\,
    \end{align*}
    where
    \begin{align}
        \label{def Deg}
        \Deg{\Dm{u}} := \left\{
        \mbox{ functions of variables $x_{\Dm{u}}$ of Efron-Stein degree $\le 1$}
        \right\}\,,
    \end{align}
    such that the following holds:
    \begin{itemize}
        \item  For any $f \in \mathcal{T}_{K+1}(u)$ and $g \in \T{u}$, we have
              \begin{align}
                  \label{eq PDMSame}
                  \CE{u}\big[((\idtt{u} - \PK{u})f)g\big] = \CE{u}\big[((\idtt{u} - \PK{u}) \PDM{u}f)g\big]\,.
              \end{align}
        \item For any $f \in \mathcal{T}_{K+1}(u)$, we have
              \begin{align}
                  \label{eq PDMnorm}
                  \Unorm{\PDM{u}f}{\Dm{u}} \le  \Unorm{f}{\Dm{u}}\,.
              \end{align}
    \end{itemize}
\end{prop}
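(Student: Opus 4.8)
The plan is to build $\PDM{u}$ by cutting $f$ along the partition $L_u=\bigsqcup_{v'\in\Dm{u}}L_{v'}$ and, on each block on which $f$ has large degree, replacing its dependence on the leaves $x_{L_{v'}}$ by a weak $\EE{v'}$-orthogonal projection onto $\F{v'}\otimes\T{v'}$. The hypothesis $\lA{u}\ge\hB$ will be used only to guarantee that $\Dm{u}$ is well defined and consists of internal (non-leaf) vertices, so that $\Dm{u}$ and $L_u$ are disjoint and all the tensor-product identifications below are legitimate; the slack of $\hB$ over $\tm$ is reserved for the later use of the decay estimates on $\T{\Dm{u}}$ and plays no role here. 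For the splitting: since each $v'\in\Dm{u}$ has $\h(v')=\h(u)-\tm\ge 1$, the blocks $L_{v'}$ partition $L_u$; and as $2^{K+1}=2\cdot 2^{K}$, a summand of $f$ supported on at most $2^{K+1}$ leaves can involve more than $2^{K}$ leaves of a single block $L_{v'}$ for at most one $v'$. Grading $\F{L_u}=\bigotimes_{v'}\F{L_{v'}}$ by the per-block Efron--Stein degrees therefore gives a canonical linear decomposition
\[
  f=f_{0}+\sum_{v'\in\Dm{u}}f_{v'},
  \qquad
  f_{0}\in\T{\Dm{u}},
  \qquad
  f_{v'}\in\F{L_{v'}}\otimes\!\!\bigotimes_{w'\ne v'}\!\!\T{w'},
\]
where $w'$ ranges over $\Dm{u}$, $f_0$ collects the components of degree $\le 2^K$ in every block, and $f_{v'}$ those of degree $>2^K$ in block $v'$ (hence of degree $\le 2^K$ in every other block).

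For each $v'$ I would take $\mathcal O_{v'}\colon\F{\{v'\}\cup L_{v'}}\to\F{v'}\otimes\T{v'}$ to be a weak $\EE{v'}$-orthogonal projection onto $\F{v'}\otimes\T{v'}$: that is, $\mathcal O_{v'}\phi\in\F{v'}\otimes\T{v'}$, $\EE{v'}[(\phi-\mathcal O_{v'}\phi)\,w]=0$ for all $w\in\F{v'}\otimes\T{v'}$, and $\mathcal O_{v'}\phi=0$ whenever $\phi$ is $\EE{v'}$-orthogonal to $\F{v'}\otimes\T{v'}$; its existence is elementary linear algebra, of the same kind used to construct $\PK{u}$. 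I then set $\PDM{u}f:=f_{0}+\sum_{v'\in\Dm{u}}\bigl(\mathcal O_{v'}\otimes\mathrm{id}\bigr)f_{v'}$, where $\mathcal O_{v'}\otimes\mathrm{id}$ applies $\mathcal O_{v'}$ to the $\F{L_{v'}}$-factor of $f_{v'}$. Since $\F{v'}\subseteq\Deg{\Dm{u}}$ and $\T{v'}\otimes\bigotimes_{w'\ne v'}\T{w'}=\T{\Dm{u}}$, each summand, and also $f_0\in\T{\Dm{u}}$, lies in $\Deg{\Dm{u}}\otimes\T{\Dm{u}}$; thus $\PDM{u}$ maps into that space, and it is linear because $f\mapsto f_0$ and each $f\mapsto f_{v'}$ are.

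Both asserted properties will then follow from one orthogonality statement. Writing $f_{v'}=\sum_i\phi_i\otimes\psi_i$ with $\phi_i\in\F{L_{v'}}$ and $\psi_i\in\bigotimes_{w'\ne v'}\T{w'}$, one has $f-\PDM{u}f=\sum_{v'}h_{v'}$ with $h_{v'}=\sum_i(\phi_i-\mathcal O_{v'}\phi_i)\otimes\psi_i$, and each $\phi_i-\mathcal O_{v'}\phi_i$ is $\EE{v'}$-orthogonal to $\F{v'}\otimes\T{v'}$ by construction. Splitting $\EE{\Dm{u}}=\bigotimes_{v'}\EE{v'}$ block by block, this gives $\EE{\Dm{u}}[(f-\PDM{u}f)\,w]=0$ for every $w$ that, for each $v'$, expands (in Efron--Stein bases) into a sum of pure tensors $w^{(1)}\otimes w^{(2)}$ with $w^{(1)}\in\F{v'}\otimes\T{v'}$ a function of $x_{\{v'\}\cup L_{v'}}$ and $w^{(2)}$ a function of the remaining coordinates. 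A short check shows that both $\Deg{\Dm{u}}\otimes\T{\Dm{u}}$ and $\T{u}\otimes\F{\Dm{u}}$ consist of such $w$ (any function of degree $\le 2^{K}$ in a block $L_{v'}$ already lies in $\T{v'}$, while the factors $\Deg{\Dm{u}}$ and $\F{\Dm{u}}$ impose no constraint on the $x_{v'}$-dependence). Hence $f-\PDM{u}f$ is $\EE{\Dm{u}}$-orthogonal to $\Deg{\Dm{u}}\otimes\T{\Dm{u}}$ and to $\T{u}\otimes\F{\Dm{u}}$. The first yields \eqref{eq PDMnorm}: since $\PDM{u}f\in\Deg{\Dm{u}}\otimes\T{\Dm{u}}$, the Pythagorean identity $\Unorm{f}{\Dm{u}}^{2}=\Unorm{f-\PDM{u}f}{\Dm{u}}^{2}+\Unorm{\PDM{u}f}{\Dm{u}}^{2}$ forces $\Unorm{\PDM{u}f}{\Dm{u}}\le\Unorm{f}{\Dm{u}}$. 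The second yields \eqref{eq PDMSame}: expanding $\idtt{u}-\PK{u}$ and using $\CE{u}=\CE{u}\circ\CE{\Dm{u}}$ (Lemma~\ref{lem basicEECED}, valid as $\Dm{u}\pe\{u\}$), the $\EE{\Dm{u}}$-orthogonality to $\T{u}\otimes\F{\Dm{u}}$ together with $\pi>0$ forces $\CE{\Dm{u}}[(f-\PDM{u}f)g]=0$ for all $g\in\T{u}$, whence $\CE{u}[(f-\PDM{u}f)g]=0$; applying $\EE{u}$ then gives $\PK{u}(f-\PDM{u}f)=0$ via \eqref{eq PiProperty2}, and substituting both facts makes $\CE{u}[((\idtt{u}-\PK{u})f)g]-\CE{u}[((\idtt{u}-\PK{u})\PDM{u}f)g]$ vanish.

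The step I expect to be the main obstacle is the choice of target for the block projections $\mathcal O_{v'}$: projecting onto $\T{v'}$ alone (i.e.\ using $\PK{v'}$) is not enough, because $(\idtt{u}-\PK{u})f$ interacts with $\T{u}$ only through products with functions of the ancestor variables $x_{\Dm{u}}$, so one is forced to project onto the larger space $\F{v'}\otimes\T{v'}$; this is exactly why the Efron--Stein degree-$\le 1$ space $\Deg{\Dm{u}}$ must appear in the codomain $\Deg{\Dm{u}}\otimes\T{\Dm{u}}$. Verifying that both $\Deg{\Dm{u}}\otimes\T{\Dm{u}}$ and $\T{u}\otimes\F{\Dm{u}}$ lie in the ``dual'' class of test functions identified above is the crux of the argument; once it is in place, the remaining verifications are block-wise bookkeeping, requiring only the customary care with the degenerate bilinear forms $\EE{v'}$ that arise when $M$ has zero entries.
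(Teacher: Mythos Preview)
Your proposal is correct and lands on essentially the same construction as the paper: block-wise projections onto $\F{v'}\otimes\T{v'}$ for each $v'\in\Dm{u}$, the key orthogonality $\CE{\Dm{u}}[(f-\PDM{u}f)g]=0$ for $g\in\T{u}$, and then the two properties via the tower rule and $\PK{u}$. The differences are presentational rather than substantive. The paper defines $\PDM{u}$ as the full tensor $\bigotimes_{v'}\PT{v'}$, where each $\PT{v'}$ is built to satisfy the \emph{strong} orthogonality $\CE{v'}[(\phi-\PT{v'}\phi)g]=0$ pointwise (by assembling one projection per root state $\theta$), and then checks the range afterwards; you instead decompose $f=f_0+\sum_{v'}f_{v'}$ first and apply your $\mathcal O_{v'}$ only on the heavy block, using weak $\EE{v'}$-orthogonality against $\F{v'}\otimes\T{v'}$ and upgrading via $\pi>0$. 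On $\Tp{u}$ the two operators coincide (since $\PT{v'}$ is the identity on $\T{v'}$), and your weak-orthogonality condition is equivalent to the paper's strong one precisely because testing against the extra $\F{v'}$ factor together with $\pi>0$ recovers pointwise vanishing. For the norm bound \eqref{eq PDMnorm}, the paper appeals to tensor submultiplicativity (Lemma~\ref{lem tensorNorm}) of the per-block contractions $\PT{v'}$, whereas your global Pythagorean argument---$f-\PDM{u}f$ is $\EE{\Dm{u}}$-orthogonal to $\Deg{\Dm{u}}\otimes\T{\Dm{u}}\ni\PDM{u}f$---is a cleaner route to the same inequality.
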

Intuitively,  $\PDM{u}$ can be viewed as a projection operator that send $\mathcal{T}_{K+1}(u)$ into $\Deg{\Dm{u}} \otimes \T{\Dm{u}}$.
From this perspective, the
proposition suggests
$$
    \CW{{\cal T}_{K+1}(u)} \simeq \CW{\Deg{\Dm{u}} \otimes \T{\Dm{u}}}\,,
$$
since working with $\Deg{\Dm{u}} \otimes \T{\Dm{u}}$ allow us to
exploit the decay properties for degree-1 polynomials
as well as functions in $\T{\Dm{u}}$.
This allows us to establish the following proposition.
\begin{prop}
    \label{prop PDM DEG1T}
    With the same assumptions as in Proposition \ref{prop PT MAIN}, the following holds when $\tCR$ is sufficiently large:
    For $f \in \Deg{\Dm{u}} \otimes \T{\Dm{u}}$ and $g \in \T{u}$,
    we have
    \begin{align}
        \label{eq PDM DEG1T}
        \maxnorm{\D{u} [((\idtt{u} - \PK{u})f)g]}
        \le
        \frac{1}{\tCB R} \Unorm{f}{u} \Unorm{g}{u}\,.
    \end{align}
\end{prop}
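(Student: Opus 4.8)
Fix $f\in\Deg{\Dm{u}}\otimes\T{\Dm{u}}$, $g\in\T{u}$ and set $h:=(\idtt{u}-\PK{u})f$. The plan is to reduce the estimate to two decay mechanisms already available: the $\D{\Dm{u}}$-decay for functions in $\T{\Dm{u}}$ (Lemma~\ref{lem TU} applied to the antichain $\Dm{u}$, every vertex of which has $\lA{v}=\lA{u}-\tm\ge\tCR(\log(R)+1)$ by the definition of $\hB$), and the one-variable Markov-chain decay \eqref{eq 1variableDecay} along the length-$\tm$ path from any $v_0\in\Dm{u}$ up to $u=\anc{v_0}{\tm}$. First, by \eqref{eq PiProperty1} we have $\EE{u}[hg']=0$ for every $g'\in\T{u}$, so $\D{u}[hg]=\CE{u}[hg]=\CE{u}\bigl(\CE{\Dm{u}}[hg]\bigr)$, the last equality by Lemma~\ref{lem basicEECED} (as $\Dm{u}\pe\{u\}$ and $hg\in\F{\Dm{u}_{\pe}}$). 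Since $\CE{u}$ is a $\maxnorm{\cdot}$-contraction which, by \eqref{eq 1variableDecay}, contracts mean-zero functions of a single vertex of $\Dm{u}$ by an extra $\tCM\tlam^{\tm}$, it remains to analyze $\CE{\Dm{u}}[hg]$ as a function of $x_{\Dm{u}}$.

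Next I would split $h$ by its Efron--Stein structure in $x_{\Dm{u}}$: $h=h_{\emptyset}+\sum_{v_0\in\Dm{u}}h_{v_0}$ with $h_{\emptyset}\in\T{\Dm{u}}$ and $h_{v_0}\in\Fz{v_0}\otimes\T{\Dm{u}}$ (the degree-$0$ part $\PK{u}f\in\T{u}\subseteq\T{\Dm{u}}$ enters only $h_{\emptyset}$); this split is \emph{almost} $\EE{\Dm{u}}$-orthogonal, its cross terms carrying a factor $e^{-\eps(\lA{u}-\tm)}$ from the $\h_K$-decay of $\D{v}$ on $\T{v}\times\T{v}$ together with the vanishing of the $\pi$-mean of the degree-$1$ factor. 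Expanding $\CE{\Dm{u}}=\bigotimes_{v\in\Dm{u}}(\EE{v}+\D{v})$ and keeping the ``all leaf-slots averaged'' contributions, I expect
\[
  \CE{\Dm{u}}[hg]=\EE{\Dm{u}}[hg]+\sum_{v_0\in\Dm{u}}\psi_{v_0}+\mathrm{Err},
\]
where $\psi_{v_0}:=\bigl(\D{v_0}\otimes\bigotimes_{v\ne v_0}\EE{v}\bigr)[h_{v_0}g]\in\Fz{v_0}$ is mean-zero in $v_0$ alone, and every summand of $\mathrm{Err}$ carries at least one $e^{-\eps\lA{v}}$-factor (a genuine $\D{v}$ with $v\ne v_0$, or an $\EE{v_0}$ applied to the mixed space $\Fz{v_0}\otimes\Tp{v_0}$, which by the $\h_K$-decay already contributes $e^{-\eps\lA{v_0}}$). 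Using $|\Dm{u}|\le Rd^{\tm}$, $\tm\le\tfrac{\eps}{10d}\tCR(\log(R)+1)$, $\tfrac{\log d}{d}\le1$, and Lemmas~\ref{lem TU} and~\ref{lem: mainTensorProduct}, I expect $\maxnorm{\mathrm{Err}}\le(Rd^{\tm})^{O(1)}e^{-\eps(\lA{u}-\tm)}\,\tC\,\Unorm{h}{\Dm{u}}\Unorm{g}{\Dm{u}}\le\tfrac{1}{3\tCB R}\Unorm{h}{\Dm{u}}\Unorm{g}{\Dm{u}}$ for $\tCR$ large, since $(Rd^{\tm})^{O(1)}e^{-\eps(\lA{u}-\tm)}\le R^{O(1)}e^{-(9\eps/10)\tCR(\log(R)+1)}\to0$ beats the polynomial descendant count.

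Then I would apply $\CE{u}$ to the displayed identity: the constant $\EE{\Dm{u}}[hg]$ is unchanged, $\mathrm{Err}$ is contracted, and by \eqref{eq 1variableDecay} with $k=\tm$ together with a bilinear bound $\maxnorm{\psi_{v_0}}\le\tC\,\Unorm{h_{v_0}}{\Dm{u}}\Unorm{g}{\Dm{u}}$ (Lemma~\ref{lem: mainTensorProduct}), Cauchy--Schwarz gives
\[
  \sum_{v_0}\maxnorm{\CE{u}\psi_{v_0}}\le\tCM\tlam^{\tm}\!\sum_{v_0}\maxnorm{\psi_{v_0}}\le\tC\sqrt{|\Dm{u}|}\,\tlam^{\tm}\Unorm{h}{\Dm{u}}\Unorm{g}{\Dm{u}}\le\tC\sqrt{R}\,(\sqrt d\,\tlam)^{\tm}\Unorm{h}{\Dm{u}}\Unorm{g}{\Dm{u}}.
\]
This is exactly where $d\lambda^2<1$ enters: because $\max\{d\tlam^2,\tlam\}<1$ we have $\tlam<1/\sqrt d$, so $\sqrt d\,\tlam<1$, and since $\tm\ge\tfrac{\eps}{10d}\tCR(\log(R)+1)-1$ this forces $\sqrt R\,(\sqrt d\,\tlam)^{\tm}\le\tC\,R^{1/2}R^{-\nu\tCR}e^{-\nu\tCR}\le R^{-1}e^{-\nu\tCR}$ with $\nu=-\tfrac{\eps}{10d}\log(\sqrt d\,\tlam)\in\CC$ for $\tCR$ large, so this sum is $\le\tfrac{1}{3\tCB R}\Unorm{h}{\Dm{u}}\Unorm{g}{\Dm{u}}$. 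Finally $\EE{\Dm{u}}[hg]$ is itself small: since $\EE{u}[hg]=0$ and $\EE{u}$ annihilates each $\psi_{v_0}$ and each single-vertex summand of $\mathrm{Err}$, taking $\EE{u}$ of the displayed identity expresses $\EE{\Dm{u}}[hg]$ as $-\EE{u}$ of a remainder with the same smallness as $\mathrm{Err}$, whence $|\EE{\Dm{u}}[hg]|\le\tfrac{1}{3\tCB R}\Unorm{h}{\Dm{u}}\Unorm{g}{\Dm{u}}$.

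Summing the three bounds, $\maxnorm{\CE{u}[hg]}\le\tfrac{1}{\tCB R}\Unorm{h}{\Dm{u}}\Unorm{g}{\Dm{u}}$. To conclude I would convert norms: $\Unorm{h}{u}\le\Unorm{f}{u}$ since $\idtt{u}-\PK{u}$ is an $\EE{u}$-orthogonal projection complement, while on the relevant subspaces $\Unorm{\cdot}{\Dm{u}}$ and $\Unorm{\cdot}{u}$ agree up to a $(1+o(1))$-factor for $\tCR$ large --- for $g\in\T{u}\subseteq\T{\Dm{u}}$ directly by Lemma~\ref{lem normComparison}, and for $h$ by repeating the mean-zero-cancellation argument (the potentially large terms of $\D{\Dm{u}}h^2$ are single-vertex mean-zero and hence vanish under $\EE{u}$). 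Absorbing these factors into $\tCB$ by enlarging $\tCR$ yields \eqref{eq PDM DEG1T}. The step I expect to be the main obstacle is the chain-decay estimate above: the count $Rd^{\tm}$ of $\tm$-th descendants of $u$ can be beaten only by the one-variable decay $\tlam^{\tm}$ accumulated over the $\tm$ intervening layers, which works precisely because $\sqrt d\,\tlam<1$, i.e.\ strictly below Kesten--Stigum --- the reason $\tm$ and $\hB$ must be calibrated as in \eqref{def tm} and \eqref{def  hB}. The surrounding bookkeeping (verifying that the non-decaying ``mean'' parts cancel against $\EE{u}[hg]=0$, and tracking the bilinear norms of $\D{v}$ on the mixed spaces $\Fz{v}\otimes\T{v}$) is routine but fiddly.
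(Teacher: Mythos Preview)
Your plan is in the right spirit and correctly identifies the two decay mechanisms at play (the $\D_{\Dm{u}}$--decay on $\T{\Dm{u}}$, and a length--$\tm$ Markov decay beating the descendant count via $d\tlam^2<1$), but it differs from the paper's proof and has one genuine soft spot.

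\textbf{How the paper proceeds.} The paper does \emph{not} Efron--Stein decompose $h$ vertex by vertex. Instead it keeps the two tensor slots of $f\in\Deg{\Dm{u}}\otimes\T{\Dm{u}}$ separate by introducing a second norm $\Tnorm{f}{u}:=\sqrt{\EE{u}\otimes\EE{\Dm{u}}[f^2]}$ corresponding to two \emph{independent} broadcasting processes $Y$ (feeding the $\Deg{\Dm{u}}$ slot through $Y_{\Dm{u}}$) and $Z$ (feeding the $\T{\Dm{u}}$ slot through $Z_{L_u}$). It then writes $\CE{\Dm{u}}[fg]=\Xi(\idtt{\Dm{u}}\otimes\CE{\Dm{u}}[fg])$, splits $\CE{\Dm{u}}=\EE{\Dm{u}}+\D_{\Dm{u}}$ on the \emph{second} slot only, and handles the two pieces by (i) Lemma~\ref{lem PT TDmu} for the $\D_{\Dm{u}}$ part, and (ii) the degree--$1$ base case (Proposition~\ref{prop K=0}) on the depth--$\tm$ subtree $T'$ for the $\EE{\Dm{u}}$ part, since for fixed $z_{L_u}$ the map $y_{\Dm{u}}\mapsto f(y_{\Dm{u}},z_{L_u})$ is a degree--$1$ function of the \emph{leaves} $\Dm{u}$ of $T'$. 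A separate norm--comparison lemma (Lemma~\ref{lem PT NormConversion}) shows $\Tnorm{f}{u}\approx\Unorm{f}{u}$. The $-\D{u}[(\PK{u}f)g]$ term is handled directly from the definition of $\h_K$.

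\textbf{The issue in your argument.} Your bilinear bound $\maxnorm{\psi_{v_0}}\le\tC\,\Unorm{h_{v_0}}{\Dm{u}}\Unorm{g}{\Dm{u}}$ via Lemma~\ref{lem: mainTensorProduct} is not justified as stated. That lemma needs the bilinear form on $h_{v_0}\in\Fz{v_0}\otimes\T{\Dm{u}}$ to be a \emph{tensor product} of forms on the two factors; but $\EE{\Dm{u}}$ restricted to this space is not: under $\EE{\Dm{u}}$ the vertex value $x_{v_0}$ and the leaves $x_{L_{v_0}}$ live on the same subtree and are correlated, so $\Unorm{\alpha\otimes a}{\Dm{u}}\neq\Unorm{\alpha}{v_0}\Unorm{a}{\Dm{u}}$. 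The same correlation issue affects your ``almost orthogonality'' of the $h_{v_0}$ and your Err bound. The paper's two--process tensor norm is introduced precisely to sidestep this, and the required norm equivalence (Lemma~\ref{lem PT NormConversion}) is itself a nontrivial lemma, not just a repetition of the mean--zero cancellation you sketch. Once you adopt that norm (or prove the equivalence), your per--vertex route becomes a valid re--derivation of the degree--$1$ base case in situ; without it the tensorization step does not go through.
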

If the two propositions above hold, then we can conclude the main result:

\begin{proof}[Proof of the third statement in Proposition~\ref{prop PT MAIN}]
    Let \(f\) and \(g\) be functions in \(\cR{\Tp{u}}{0}\) and \(\T{u}\), respectively.
    We first claim that \(f = (\idtt{u} - \PK{u})f\), or equivalently \(\PK{u}f = 0\).

    Indeed, for any \(\phi \in \T{u}\), property~\eqref{eq PiProperty1} of \(\PK{u}\) gives
    \[
        \EE{u}\bigl[(\PK{u}f)\,\phi \bigr]
        \;=\;
        \EE{u}\bigl[f\,\phi\bigr].
    \]
    Then, by Lemma~\ref{lem PK_Orthogonal},
    \[
        \EE{u}\bigl[f\,\phi\bigr]
        \;=\; 0.
    \]
    Given that it holds for every $\phi \in \T{u}$,
    applying property~\eqref{eq PiProperty2} of \(\PK{u}\) therefore implies
    \[
        \PK{u}f
        \;=\; 0.
    \]

    Now,
    \begin{align*}
        \maxnorm{\D{u}\,[f\,g]}
        \;=\;
        \maxnorm{\D{u}\,\bigl[\bigl(\idtt{u} - \PK{u}\bigr)f \cdot g\bigr]}
        \;\stackrel{\eqref{eq PDMSame}}{=}\; &
        \maxnorm{\D{u}\,\bigl[\bigl(\idtt{u} - \PK{u}\bigr)\,\PDM{u}f \cdot g\bigr]}    \\
                                             & \;\stackrel{\eqref{eq PDM DEG1T}}{\le}\;
        \frac{1}{\tCB\,R}\,
        \Unorm{\PDM{u}f}{u}\,\Unorm{g}{u}
        \;\stackrel{\eqref{eq PDMnorm}}{\le}\;
        \frac{1}{\tCB\,R}\,
        \Unorm{f}{u}\,\Unorm{g}{u}.
    \end{align*}
\end{proof}

In the next two subsections, we will prove Proposition \ref{prop PT PDM} and Proposition \ref{prop PDM DEG1T}, respectively.

\subsection{The Operator \texorpdfstring{${\bf \Gamma}_u$}{P} and Proof of Proposition \ref{prop PT PDM}}
We begin by constructing some operators that will lead to the definition of $\PDM{u}$.
\begin{defi}\label{def PT}
    For each \(u \in V(T)\), we define a linear operator
    \[
        \PT{u} : \F{u_{\pe}} \;\longrightarrow\; \F{u} \,\otimes\, \T{u}
    \]
    as follows.

    First, recall that for \(f, g \in \F{u_{\pe}}\), the expression \(\CE{u}[f\,g]\) is a function in \(\F{u}\), i.e., a function of \(x_u\).
    For each \(\theta \in [q]\), consider the map
    \[
        (f,g) \;\mapsto\; (\CE{u}[f\,g])(\theta),
    \]
    where we evaluate \(\CE{u}[f\,g]\) at \(x_u = \theta\).  This induces a semi-positive definite bilinear form on \(\F{u_{\pe}}\).

    By Lemma~\ref{lem LA_Pi}, there exists a projection operator
    \[
        \Gamma_{u,\theta}: \F{u_{\pe}} \;\longrightarrow\; \T{u}
    \]
    such that
    \begin{align}\label{eq PT GammauTheta00}
        (\CE{u}[f\,g])(\theta)
        \;=\;
        \bigl(\CE{u}\bigl[\Gamma_{u,\theta}f \cdot g\bigr]\bigr)(\theta)
        \quad
        \text{for all } f \in \F{u_{\pe}} \text{ and } g \in \T{u}.
    \end{align}

    We then define
    \[
        \PT{u}f
        \;:=\;
        \sum_{\theta \in [q]}
        \mathbf{1}_\theta(x_u)\,\Gamma_{u,x_u}f
        \;\in\;
        \F{u}\,\otimes\,\T{u},
    \]
    where \(\mathbf{1}_\theta(x_u)\) is the indicator function for \(x_u = \theta\).
\end{defi}

\begin{lemma}\label{lem:PTprops}
    For each \(u \in V(T)\), the operator \(\PT{u}\) has the following properties:
    \begin{itemize}
        \item \textbf{(Norm Reduction)}
              For all \(f \in \F{u_{\pe}}\),
              \begin{align}\label{eq PT Ortho}
                  \Unorm{\PT{u}f}{u} \;\le\; \Unorm{f}{u}.
              \end{align}
        \item \textbf{(Identity)}
              For all \(g \in \T{u}\),
              \begin{align}\label{eq PTTu}
                  \PT{u}\,g \;=\; g.
              \end{align}
        \item \textbf{(Strong Orthogonality)}
              For any \(f \in \F{u_{\pe}}\) and \(g \in \T{u}\),
              \begin{align}\label{eq PT StrongOrtho}
                  \CE{u}\bigl[(f - \PT{u}f)\,g\bigr] \;=\; 0 \;\in\; \F{u},
              \end{align}
              meaning that \(\CE{u}[(f - \PT{u}f)\,g]\) is identically zero in \(\F{u}\) (i.e., it vanishes for every \(x_u \in [q]\)).
              We refer to this property as \emph{strong orthogonality} because it is strictly stronger than the usual condition
              \(\EE{u}\bigl[(f - \PT{u}f)\,g\bigr] = 0\), requiring pointwise vanishing rather than just a zero mean.
    \end{itemize}
\end{lemma}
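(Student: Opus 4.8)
The plan is to reduce each of the three assertions to a statement that holds pointwise in the state $\theta\in[q]$, exploiting that $\PT{u}f$ is assembled from the operators $\Gamma_{u,\theta}$ glued along the value of $x_u$. Write $B_\theta(\phi,\psi):=(\CE{u}[\phi\psi])(\theta)$ for the semi-positive-definite symmetric bilinear form on $\F{u_{\pe}}$ from Definition~\ref{def PT}. First I would record the ``diagonal collapse'' bookkeeping fact: on the event $X_u=\theta$ the indicator sum defining $\PT{u}f$ reduces to the single term $\Gamma_{u,\theta}f$, which is a function of $x_{L_u}$ only and hence a genuine element of $\T{u}$; therefore, for all $f\in\F{u_{\pe}}$ and $g\in\T{u}$,
\[
  (\CE{u}[(\PT{u}f)\,g])(\theta)=(\CE{u}[(\Gamma_{u,\theta}f)\,g])(\theta)=B_\theta(\Gamma_{u,\theta}f,g)\stackrel{\eqref{eq PT GammauTheta00}}{=}B_\theta(f,g)=(\CE{u}[f\,g])(\theta).
\]
From this the strong orthogonality \eqref{eq PT StrongOrtho} is immediate: subtracting the two ends gives $(\CE{u}[(f-\PT{u}f)\,g])(\theta)=0$ for every $\theta\in[q]$, i.e.\ $\CE{u}[(f-\PT{u}f)\,g]$ is the zero element of $\F{u}$. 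For the identity \eqref{eq PTTu}, the operator $\Gamma_{u,\theta}$ furnished by Lemma~\ref{lem LA_Pi} is a projection with image $\T{u}$, so it restricts to the identity on $\T{u}$; hence for $g\in\T{u}$ one has $\PT{u}g=\sum_{\theta}\mathbf{1}_\theta(x_u)\,\Gamma_{u,\theta}g=\big(\sum_{\theta}\mathbf{1}_\theta(x_u)\big)g=g$.

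For the norm bound \eqref{eq PT Ortho}, I would use that $\EE{u}h=\sum_{\theta\in[q]}\pi(\theta)\,(\CE{u}h)(\theta)$ for $h\in\F{u_{\pe}}$, together with $(\PT{u}f)^2=\sum_{\theta}\mathbf{1}_\theta(x_u)\,(\Gamma_{u,\theta}f)^2$, to get
\[
  \Unorm{\PT{u}f}{u}^2=\EE{u}\big[(\PT{u}f)^2\big]=\sum_{\theta\in[q]}\pi(\theta)\,B_\theta(\Gamma_{u,\theta}f,\Gamma_{u,\theta}f).
\]
For each fixed $\theta$, applying \eqref{eq PT GammauTheta00} with the test function $g=\Gamma_{u,\theta}f\in\T{u}$ gives $B_\theta(\Gamma_{u,\theta}f,\Gamma_{u,\theta}f)=B_\theta(f,\Gamma_{u,\theta}f)$, and Cauchy--Schwarz for the semi-positive-definite form $B_\theta$ then yields $B_\theta(\Gamma_{u,\theta}f,\Gamma_{u,\theta}f)\le B_\theta(f,f)$. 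Summing against $\pi(\theta)$ gives $\Unorm{\PT{u}f}{u}^2\le\sum_{\theta}\pi(\theta)\,B_\theta(f,f)=\EE{u}[f^2]=\Unorm{f}{u}^2$. Equivalently, one can expand $\EE{u}[f^2]=\EE{u}[(\PT{u}f)^2]+2\,\EE{u}[(\PT{u}f)(f-\PT{u}f)]+\EE{u}[(f-\PT{u}f)^2]$ and note that the cross term vanishes, since conditioned on $X_u=\theta$ the function $\PT{u}f$ equals the fixed test function $\Gamma_{u,\theta}f\in\T{u}$ and \eqref{eq PT StrongOrtho} then kills it.

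The only delicate point — the ``main obstacle'', such as it is — is the diagonal bookkeeping itself: correctly identifying the value, on $\{X_u=\theta\}$, of the function $\PT{u}f\in\F{u_{\pe}}$ with the \emph{fixed} function $\Gamma_{u,\theta}f\in\T{u}$, so that the defining relation \eqref{eq PT GammauTheta00} and the projection properties of $\Gamma_{u,\theta}$ coming from Lemma~\ref{lem LA_Pi}, both stated only for genuine test functions in $\T{u}$, may legitimately be invoked with $\Gamma_{u,\theta}f$ placed in that slot. Once this is set up, all three properties are routine consequences of the displayed identity and of Cauchy--Schwarz for semi-positive-definite forms, with no further input.
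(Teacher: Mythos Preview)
Your proof is correct and follows essentially the same approach as the paper's. The paper establishes strong orthogonality and the identity exactly as you do, via the diagonal collapse $(\CE{u}[(\PT{u}f)g])(\theta)=(\CE{u}[(\Gamma_{u,\theta}f)g])(\theta)$ and the projection property of $\Gamma_{u,\theta}$; for norm reduction the paper proves $\EE{u}[f\,(\PT{u}f)]=\Unorm{\PT{u}f}{u}^2$ and then applies Cauchy--Schwarz once at the $\EE{u}$ level, which is exactly your ``equivalently'' argument (your first, pointwise Cauchy--Schwarz variant is a harmless rephrasing).
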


\begin{proof}
    \step{Norm Reduction \eqref{eq PT Ortho}}
    By definition,
    \[
        \PT{u}f \;=\; \sum_{\theta \in [q]} \mathbf{1}_\theta(x_u)\,\Gamma_{u,\theta}\,f.
    \]
    Then,
    \[
        \CE{u}\bigl[f\,(\PT{u}f)\bigr](\theta)
        \;=\;
        \CE{u}\bigl[f\,\Gamma_{u,\theta}f\bigr](\theta)
        \;\stackrel{\eqref{eq PT GammauTheta00}}{=}\;
        \CE{u}\bigl[(\Gamma_{u,\theta}f)^2\bigr](\theta).
    \]
    Consequently,
    \[
        \EE{u}\bigl[f\,(\PT{u}f)\bigr]
        \;=\;
        \Unorm{\PT{u}f}{u}^2.
    \]
    By the Cauchy–Schwarz inequality,
    \[
        \Unorm{\PT{u}f}{u}^2
        \;=\;
        \EE{u}\bigl[f\,(\PT{u}f)\bigr]
        \;\le\;
        \Unorm{f}{u}\,\Unorm{\PT{u}f}{u}.
    \]
    If \(\Unorm{\PT{u}f}{u}\), then \eqref{eq PT Ortho} follows immediately.
    Otherwise, dividing both sides by \(\Unorm{\PT{u}f}{u}\) yields \eqref{eq PT Ortho}.

    \step{Identity \eqref{eq PTTu}}
    Since \(\Gamma_{u,\theta}\) is a projection, we have \(\Gamma_{u,\theta}g = g\) for \(g \in \T{u}\). Hence,
    \[
        \PT{u}g
        \;=\;
        \sum_{\theta \in [q]} \mathbf{1}_\theta(x_u)\,\Gamma_{u,\theta}g
        \;=\;
        g.
    \]

    \step{Strong Orthogonality \eqref{eq PT StrongOrtho}}
    For each \(\theta \in [q]\),
    \[
        \CE{u}\bigl[(f - \PT{u}f)\,g\bigr](\theta)
        \;=\;
        \CE{u}[f\,g](\theta)
        \;-\;
        \CE{u}[(\PT{u}f)\,g](\theta)
        \;=\;
        \CE{u}[f\,g](\theta)
        \;-\;
        \CE{u}\bigl[\Gamma_{u,\theta}f\,g\bigr](\theta).
    \]
    By \eqref{eq PT GammauTheta00} from the assumption of $\Gamma_{u,\theta}$,
    \[
        \CE{u}\bigl[\Gamma_{u,\theta}f\,g\bigr](\theta)
        \;=\;
        \CE{u}[f\,g](\theta),
    \]
    so the difference is zero. Since this holds for every \(\theta \in [q]\), it follows that
    \(\CE{u}[(f - \PT{u}f)\,g]\) is the zero function in \(\F{u}\).
\end{proof}

Now we are ready to define the operator \(\PDM{u}\).

\begin{defi}\label{def PDM}
    For each \(u \in V(T)\) with \(\lA{u} \ge \hB\), we define
    \begin{align*}
        \PDM{u} \;:=\; \bigotimes_{v \in \Dm{u}}\;\PT{v}.
    \end{align*}
\end{defi}

Our first goal is to verify that \(\PDM{u}\) maps \(\mathcal{T}_{K+1}(u)\) into
\(\Deg{\Dm{u}} \otimes \T{\Dm{u}}\), as claimed in Proposition~\ref{prop PT PDM}.

\begin{lemma}\label{lem PT PDMrange}
    For each \(u \in V(T)\) with \(\lA{u} \ge \hB\),
    \begin{align}\label{eq PDM}
        \PDM{u}\bigl(\Tp{u}\bigr)
        \;\subseteq\;
        \Deg{\Dm{u}} \;\otimes\; \T{\Dm{u}}.
    \end{align}
\end{lemma}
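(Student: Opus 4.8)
The plan is to reduce everything to single "monomials" and then invoke one elementary degree‑counting observation. Since the $\tm$‑th descendants of $u$ partition the leaves below $u$, i.e.\ $L_u = \bigsqcup_{v \in \Dm{u}} L_v$, the space $\Tp{u}$ is spanned by the subspaces $\F{S}$ with $S \subseteq L_u$ and $|S| \le 2^{K+1}$; as $\PDM{u} = \bigotimes_{v \in \Dm{u}} \PT{v}$ (Definition~\ref{def PDM}) is linear, it suffices to prove $\PDM{u}\,\F{S} \subseteq \Deg{\Dm{u}} \otimes \T{\Dm{u}}$ for each such $S$. Write $S_v := S \cap L_v$, so that $S = \bigsqcup_{v \in \Dm{u}} S_v$ and $\sum_{v \in \Dm{u}} |S_v| = |S| \le 2^{K+1}$. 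Expanding an arbitrary $\psi \in \F{S}$ into a sum of products of indicator functions exactly as in the proof of Lemma~\ref{lem TAinclusion} exhibits $\psi$ as an element of $\bigotimes_{v \in \Dm{u}} \F{S_v}$, viewed inside $\bigotimes_{v \in \Dm{u}} \F{v_{\pe}}$ via the identification of Definition~\ref{def  IdentificationXi}; this identification is legitimate here because the sets $v_{\pe}$, $v \in \Dm{u}$, are pairwise disjoint, as are the sets $\{v\} \cup L_v$ into which the images $\F{v} \otimes \T{v}$ of the $\PT{v}$ embed (each $v \in \Dm{u}$ has height $\h(u) - \tm \ge 1$ by the hypothesis $\lA{u} \ge \hB$ together with the definitions of $\hB$ and $\tm$, hence is not a leaf).

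The one substantive point is the observation that \emph{at most one} index $v_0 \in \Dm{u}$ can satisfy $|S_{v_0}| > 2^K$: two such indices would force $|S| \ge (2^K + 1) + (2^K + 1) > 2^{K+1}$. For every other $v \in \Dm{u}$ we have $|S_v| \le 2^K$, hence $\F{S_v} \subseteq \T{v}$, so by the identity property \eqref{eq PTTu} of Lemma~\ref{lem:PTprops} the operator $\PT{v}$ fixes the $v$-th tensor factor of $\psi$ and in particular contributes only to its $\T{v}$ slot; while $\PT{v_0}$ maps its factor into $\F{v_0} \otimes \T{v_0}$ by construction. Applying $\PDM{u}$ factor by factor therefore gives
\[
  \PDM{u}\psi \;\in\; \bigl(\F{v_0} \otimes \T{v_0}\bigr) \otimes \bigotimes_{v \ne v_0} \T{v} \;=\; \F{v_0} \otimes \T{\Dm{u}},
\]
and $\PDM{u}\psi \in \T{\Dm{u}}$ if no such $v_0$ exists. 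Since any function of the single variable $x_{v_0}$ has Efron--Stein degree at most $1$ in the variables $x_{\Dm{u}}$, we have $\F{v_0} \subseteq \Deg{\Dm{u}}$, so in either case $\PDM{u}\psi \in \Deg{\Dm{u}} \otimes \T{\Dm{u}}$. Summing over a spanning set of $\Tp{u}$ yields \eqref{eq PDM}.

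I do not expect a genuine obstacle: the lemma is essentially bookkeeping, and its whole point is the one‑line counting step forcing at most one $\tm$‑descendant to carry degree exceeding $2^K$, which is exactly why the image lands in the Efron--Stein degree $\le 1$ piece $\Deg{\Dm{u}}$ rather than in all of $\F{\Dm{u}}$. The only place needing care is keeping the nested tensor‑product identifications consistent when composing the factorwise operators $\PT{v}$ — the subtlety flagged after Definition~\ref{def  IdentificationXi} — but here all the relevant variable sets ($\Dm{u}$ and the $L_v$'s, and the $v_{\pe}$'s) are pairwise disjoint, so no ambiguity arises.
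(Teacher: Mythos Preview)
Your proof is correct and follows essentially the same approach as the paper: both reduce to monomials $\phi \in \F{S}$ with $|S| \le 2^{K+1}$, use the pigeonhole observation that at most one $v_0 \in \Dm{u}$ can have $|S \cap L_{v_0}| > 2^K$, invoke the identity property \eqref{eq PTTu} to leave the other tensor factors in $\T{v}$, and conclude that the image lands in $\F{v_0} \otimes \T{\Dm{u}} \subseteq \Deg{\Dm{u}} \otimes \T{\Dm{u}}$. The paper phrases the intermediate step as the inclusion $\Tp{u} \subseteq \sum_{v \in \Dm{u}} \F{v_{\pe}} \otimes \T{\Dm{u}\setminus\{v\}}$ before applying $\PDM{u}$, while you apply $\PDM{u}$ directly to each monomial, but this is only a cosmetic difference.
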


\begin{proof}
    We claim that
    \begin{align}\label{eq TPDecompose}
        \Tp{u}
        \;\subseteq\;
        \sum_{v \in \Dm{u}} \F{v_{\pe}} \,\otimes\, \T{\Dm{u}\setminus \{v\}}.
    \end{align}
    Proving this for monomials suffices.

    Suppose \(\phi \in \Tp{u}\) is a monomial in variables \(x_S\), where
    \(S \subseteq L_u\) and \(|S|\le 2^{K+1}\).
    By definition, \(\phi\) may have degree higher than $2^K$ in the coordinates
    \(x_{L_v}\) for at most one node \(v\in \Dm{u}\).
    - If there is such a \(v \in \Dm{u}\) with \(\deg_{x_{L_v}}(\phi) > 2^K\), then
    \(\phi \in \F{v_{\pe}} \otimes \T{\Dm{u}\setminus\{v\}}\).
    - Otherwise, \(\phi\) already belongs to \(\T{\Dm{u}}\), which is contained in every summand on the right-hand side of~\eqref{eq TPDecompose}.

    Since any \(f \in \Tp{u}\) is a linear combination of such monomials, the decomposition~\eqref{eq TPDecompose} holds.

    Next, for each \(v \in \Dm{u}\),
    \begin{align*}
        \PDM{u}\Bigl(\F{v_{\pe}} \otimes \T{\Dm{u}\setminus \{v\}}\Bigr)
         & \;=\;
        \Bigl(\!\!\bigotimes_{w\in \Dm{u}} \PT{w}\Bigr)
        \Bigl(\F{v_{\pe}} \otimes \T{\Dm{u}\setminus \{v\}}\Bigr) \\
         & \;\subseteq\;
        \bigl(\F{v}\otimes \T{v}\bigr) \;\otimes\; \T\!\bigl(\Dm{u}\setminus\{v\}\bigr)
        \quad\text{(by \eqref{eq PTTu})}                          \\[6pt]
         & \;=\;
        \F{v}\,\otimes\,\T{\Dm{u}}.
    \end{align*}
    Summing over \(v\in \Dm{u}\) gives
    \begin{align*}
        \sum_{v\in \Dm{u}} \F{v}\,\otimes\,\T{\Dm{u}}
        \;=\;
        \Deg{\Dm{u}} \;\otimes\; \T{\Dm{u}},
    \end{align*}
    completing the proof.
\end{proof}

\begin{proof}[Proof of Proposition \ref{prop PT PDM}]
    By definition of \(\PDM{u}\) and Lemma~\ref{lem PT PDMrange}, we may view 
    \(\PDM{u}\) as a linear operator from \(\Tp{u}\) into \(\Deg{\Dm{u}} \otimes \T{\Dm{u}}\).
    We now proceed to prove \eqref{eq PDMSame}.

    \step{Strong orthogonality for \(\idtt{u} - \PDM{u}\).}
    Let \(f \in \mathcal{T}_{K+1}(u)\). Label the vertices in \(\Dm{u}\) as 
    \(v_1, v_2, \dots, v_{|\Dm{u}|}\) in any fixed order. We can write 
    \(\idtt{u} f - \PDM{u}f\) in a telescoping sum:
    \begin{align*}
      \idtt{u} f \;-\; \PDM{u}f
      &= 
      \biggl(\!\!\bigotimes_{i \in [|\Dm{u}|]}\! \idtt{v_i}\biggr)f
      \;-\;
      \biggl(\!\!\bigotimes_{i \in [|\Dm{u}|]}\! \PT{v_i}\biggr)f \\[6pt]
      &=
      \sum_{i \in [|\Dm{u}|]}
        \biggl(\!\!\bigotimes_{j \in [i-1]} \PT{v_j}\biggr)
        \;\otimes\;
        \bigl(\idtt{v_i} - \PT{v_i}\bigr)
        \;\otimes\;
        \biggl(\!\!\bigotimes_{j \in [i+1,\,|\Dm{u}|]} \idtt{v_j}\biggr)f.
    \end{align*}
  
    Now fix any \(g \in \T{u} \subseteq \T{\Dm{u}}\). For each summand above, 
    strong orthogonality \eqref{eq PT StrongOrtho} on the factor $\idtt{v_i} - \PT{v_i}$
    implies
    \[
      \CE{\Dm{u}}\Bigl[
        \Bigl(\!\!\bigotimes_{j \in [i-1]} \PT{v_j}\Bigr)\,\otimes\,
        (\idtt{v_i} - \PT{v_i}) \,\otimes\,
        \Bigl(\!\!\bigotimes_{j \in [i+1,\,|\Dm{u}|]} \idtt{v_j}\Bigr)\,f
        \;\cdot\; g
      \Bigr]
      \;=\; 0.
    \]
    Consequently,
    \begin{align}\label{eq PT PDM 00}
      \CE{u} \Bigl[\bigl(\idtt{u} f - \PDM{u}f\bigr) g \Bigr]
      \;\stackrel{\eqref{eq EEU'ECU}}{=}\;
      \CE{u}\circ \CE{\Dm{u}} \Bigl[\bigl(\idtt{u} f - \PDM{u}f\bigr) g \Bigr]
      \;=\;
      0.
    \end{align}

    \step{Establishing \eqref{eq PDMSame}.}
    Recall property \eqref{eq PiProperty2} of \(\PK{u}\): for \(\phi_1 \in \F{u_{\pe}}\),
    \[
      \CE{u}\bigl[\phi_1\,\phi_2\bigr] = 0 \text{ for all } \phi_2 \in \T{u}^\perp
      \quad\implies\quad
      \PK{u}\,\phi_1 = 0.
    \]
    From \eqref{eq PT PDM 00}, we see that for every \(g \in \T{u}\),
    \begin{align}\label{eq PT PDM 01}
      \CE{u}\Bigl[(\idtt{u}f - \PDM{u}f)\,g\Bigr] 
      \;=\; 0
      \quad\stackrel{\eqref{eq PiProperty2}}{\Longrightarrow}\quad
      \PK{u}\,\bigl(\idtt{u} f - \PDM{u}f\bigr) 
      \;=\;
      0.
    \end{align}
    Using \eqref{eq PT PDM 00} again, for \(g \in \T{u}\) we get
    \[
      \CE{u}\Bigl[\bigl((\idtt{u} - \PK{u})f\bigr)\,g\Bigr]
      \;=\;
      \CE{u}\Bigl[\bigl((\idtt{u} - \PK{u})\circ(\idtt{u}-\PDM{u})\,f\bigr)\,g\Bigr]
      \;+\;
      \CE{u}\Bigl[\bigl((\idtt{u} - \PK{u})\,\PDM{u}f\bigr)\,g\Bigr].
    \]
    By \eqref{eq PT PDM 01}, the first term vanishes:
    \[
      \CE{u}\Bigl[\bigl((\idtt{u} - \PK{u})\circ(\idtt{u}-\PDM{u})\,f\bigr)\,g\Bigr]
      \;=\; 
      \CE{u}\Bigl[\bigl(\idtt{u}-\PDM{u}\bigr)f\,g\Bigr]
      \;=\; 0
      \quad\text{(from \eqref{eq PT PDM 00})}.
    \]
    Hence,
    \[
      \CE{u}\Bigl[\bigl((\idtt{u} - \PK{u})f\bigr)\,g\Bigr]
      \;=\;
      \CE{u}\Bigl[\bigl((\idtt{u} - \PK{u})\,\PDM{u}f\bigr)\,g\Bigr],
    \]
    which completes the proof of \eqref{eq PDMSame}.

    \step{Norm reduction via submultiplicativity.}
    It remains to show \eqref{eq PDMnorm}. First, observe that for each \(v\in\Dm{u}\), 
    \(\PT{v}: \F{v_{\pe}} \to \F{v_{\pe}}\) satisfies
    \[
      \EE{v}\bigl[(\PT{v}\,\phi)^2\bigr]
      \;\le\;
      \EE{v}\bigl[\phi^2\bigr]
      \quad\text{for all }\phi\in\F{v_{\pe}},
    \]
    where the inequality follows from \eqref{eq PT Ortho} (the norm reduction property).
  
    By taking \(V_v = \F{v_{\pe}}\), \(E_v = \EE{v}\), \(L_v = \PT{v}\), and \(\delta_v=1\), we can apply Lemma~\ref{lem tensorNorm} to obtain
    \[
      \Unorm{\PDM{u}f}{\Dm{u}}
      \;\le\;
      \Unorm{f}{\Dm{u}},
    \]
    proving \eqref{eq PDMnorm}.
  \end{proof}

\subsection{Properties of ${\cal D}_1(D_{\tt m}(u)) \otimes {\cal T}_K(D_{\tt m}(u))$}

\noindent
As a prerequisite for proving Proposition~\ref{prop PDM DEG1T}, we need to establish some properties of the space \(\T{\Dm{u}}\). 
These properties are essentially \emph{variations} of the results from Section~\ref{sec T}, 
but specialized to the collection of vertices \(\Dm{u}\):

\begin{lemma}\label{lem PT TDmu}
  The following results hold for sufficiently large \(\tCR\). 
  Suppose \(u \in V(T)\) satisfies \(\lA{u} \ge \hB\). 
  Then, for any two functions \(f, g \in \T{\Dm{u}}\):
  \begin{itemize}
    \item First, it is a decay property:  
    \begin{align}
        \label{eq PT TDmu}
        \maxnorm{\D{\Dm{u}} \bigl[f\,g\bigr]}
        \;\le\;
        2 \,\exp\!\Bigl(-\,\tfrac{1}{2}\,\eps\,\bigl(\lA{u} - \tm\bigr)\Bigr)\,
        \Unorm{f}{\Dm{u}}\;\Unorm{g}{\Dm{u}}.
    \end{align}
    \item Second, we also have the norm comparion: 
      \begin{align}
        \label{eq PT TDmu2}
        \frac{1}{1+\kappa}\,\Unorm{f}{u}
        \;\;\le\;\;
        \Unorm{f}{\Dm{u}}
        \;\;\le\;\;
        (1+\kappa)\,\Unorm{f}{u}.
      \end{align}
  \end{itemize}
\end{lemma}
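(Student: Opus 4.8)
The plan is to recognize $\Dm{u}$ as an antichain all of whose vertices $v$ satisfy $\lA{v} = \lA{u} - \tm \ge \tCR(\log R+1)$ (this is exactly the point of the definition of $\hB$ and $\tm$), and then to re-derive the two statements by the same arguments used in Section~\ref{sec T}, but now treating the degree-$1$ structure across the $\tm$-th descendants carefully. For the decay statement \eqref{eq PT TDmu}: since $\Dm{u}$ consists of the $\tm$-th descendants of $u$ and the tree has degree dominated by $d$ with parameter $R$, we have $|\Dm{u}| \le R d^{\tm}$, and each $v \in \Dm{u}$ lies at height $\h(v) = \h(u) - \tm$. I would first apply Lemma~\ref{lem TU} directly to the antichain $A = \Dm{u}$: it gives $\maxnorm{\D{\Dm{u}} fg} \le \big(\sum_{\emptyset \ne A' \subseteq \Dm{u}} \prod_{v \in A'} \exp(-\eps \lA{v})\big)\Unorm{f}{\Dm{u}}\Unorm{g}{\Dm{u}}$. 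Since every $v \in \Dm{u}$ has $\lA{v} = \lA{u}-\tm$, the bracketed sum equals $(1+\exp(-\eps(\lA{u}-\tm)))^{|\Dm{u}|} - 1 \le \exp\big(|\Dm{u}| \exp(-\eps(\lA{u}-\tm))\big) - 1$. Now the key numerical point: $|\Dm{u}| \le R d^{\tm}$ with $\tm = \lfloor \frac{\eps}{10 d}\tCR(\log R + 1)\rfloor$, so $d^{\tm} \le \exp(\frac{\eps}{10}\tCR(\log R+1))$, and $\lA{u} - \tm \ge \tCR(\log R+1)$; one checks that for $\tCR$ large enough, $R d^{\tm} \exp(-\eps(\lA{u}-\tm)) \le \exp(-\tfrac12 \eps(\lA{u}-\tm))$ (the factor $d$ in the denominator of $\tm$ gives the slack needed to absorb the $d^{\tm}$), and then $e^x - 1 \le 2x$ for small $x$ yields the stated bound $2\exp(-\tfrac12\eps(\lA{u}-\tm))$.

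For the norm comparison \eqref{eq PT TDmu2}: I would apply the machinery of Lemma~\ref{lem normComparison} (or directly Lemma~\ref{lemma: coreNorm}) with the antichain $A = \Dm{u}$ and the coarser ``antichain'' $A' = \{u\}$, noting $\Dm{u} \preceq \{u\}$ and that $\T{\Dm{u}}$ sits inside $\F{u_{\pe}}$. From the decay bound just established applied with $g = f$, we get $\maxnorm{\D{\Dm{u}} f^2} \le 2\exp(-\tfrac12\eps(\lA{u}-\tm))\Unorm{f}{\Dm{u}}^2$, and for $\tCR$ large enough $2\exp(-\tfrac12\eps(\lA{u}-\tm)) \le \kappa$ (since $\lA{u} - \tm \ge \tCR(\log R+1)$). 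Then the proof of Lemma~\ref{lem normComparison} — using $\EE{u}\circ\CE{\Dm{u}} = \EE{u}$ from Lemma~\ref{lem basicEECED}, hence $|\Unorm{f}{u}^2 - \Unorm{f}{\Dm{u}}^2| = |\EE{u}\D{\Dm{u}}f^2| \le \maxnorm{\D{\Dm{u}}f^2} \le \kappa\Unorm{f}{\Dm{u}}^2$ — gives $(1-\kappa)\Unorm{f}{\Dm{u}}^2 \le \Unorm{f}{u}^2 \le (1+\kappa)\Unorm{f}{\Dm{u}}^2$, and taking square roots with $\sqrt{1-\kappa} \ge \frac{1}{1+\kappa}$ and $\sqrt{1+\kappa}\le 1+\kappa$ for small $\kappa$ produces \eqref{eq PT TDmu2} after relabeling $\kappa$ (or, if one wants exactly $\kappa$, choose $\tCR$ so that the decay bound is $\le$ the value $\kappa'$ with $(1+\kappa')\le(1+\kappa)$, $\frac{1}{1+\kappa'}\ge\frac{1}{1+\kappa}$).

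The main obstacle — really the only nontrivial point — is the numerical bookkeeping in the first part: one must verify that the choice $\tm = \lfloor \frac{\eps}{10 d}\tCR(\log R+1)\rfloor$ is simultaneously (i) small enough that $R d^{\tm} \exp(-\eps(\lA{u}-\tm))$ still decays like $\exp(-\tfrac12\eps(\lA{u}-\tm))$, which is where the $\frac{1}{10 d}$ factor is used to beat $d^{\tm}$, and (ii) large enough to be useful later (it gives the ``margin'' referenced in Remark~\ref{rem hBtmTechnical}, since every $v \in \Dm{u}$ still satisfies $\lA{v}\ge\tCR(\log R+1)$ by the definition of $\hB$). Everything else is a direct transcription of Lemmas~\ref{lem TU}, \ref{lem normComparison}, and~\ref{lem basicEECED} to the antichain $\Dm{u}$, so I would keep that part brief and cite those lemmas rather than reprove them.
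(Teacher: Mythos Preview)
Your proposal is correct and follows essentially the same approach as the paper: both invoke Lemma~\ref{lem TU} on the antichain $\Dm{u}$, exploit that every $v\in\Dm{u}$ has $\lA{v}=\lA{u}-\tm$ together with $|\Dm{u}|\le Rd^{\tm}$, and then verify the same numerical inequality $\log R+\tm\log d\le\tfrac12\eps(\lA{u}-\tm)$ to collapse the combinatorial sum to $2\exp(-\tfrac12\eps(\lA{u}-\tm))$; the paper bounds the sum via $\binom{n}{s}\le n^s$ and a geometric series while you use $(1+x)^n-1\le e^{nx}-1\le 2nx$, but these are equivalent. The norm comparison is likewise identical in both, going through $|\Unorm{f}{u}^2-\Unorm{f}{\Dm{u}}^2|\le\maxnorm{\D{\Dm{u}}f^2}$ via Lemma~\ref{lem basicEECED} and then taking square roots.
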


\noindent
While the proof of Lemma~\ref{lem PT TDmu} introduces no fundamentally new ideas,
it does illuminate the reason for setting
\[
    \hB = \left\lceil \tCR (\log(R) +1) + \frac{\eps}{10 d} \tCR (\log(R) +1) \right\rceil\,.
\]
For brevity, we postpone this proof until the end of the section. 
We now turn to the main argument for Proposition~\ref{prop PDM DEG1T}.

\smallskip

Consider \(f \in \Deg{\Dm{u}} \,\otimes\, \T{\Dm{u}}\). 
By the tensor product structure, we define a “tensor-product norm”:
\begin{align}
  \label{eq Tnorm}
  \Tnorm{f}{u} 
  \;:=\; 
  \sqrt{\EE{u}\,\otimes\,\EE{\Dm{u}}\bigl[f^2\bigr]}.
\end{align}
\begin{figure}[h]
    \centering
    \includegraphics[width = 0.9\textwidth]{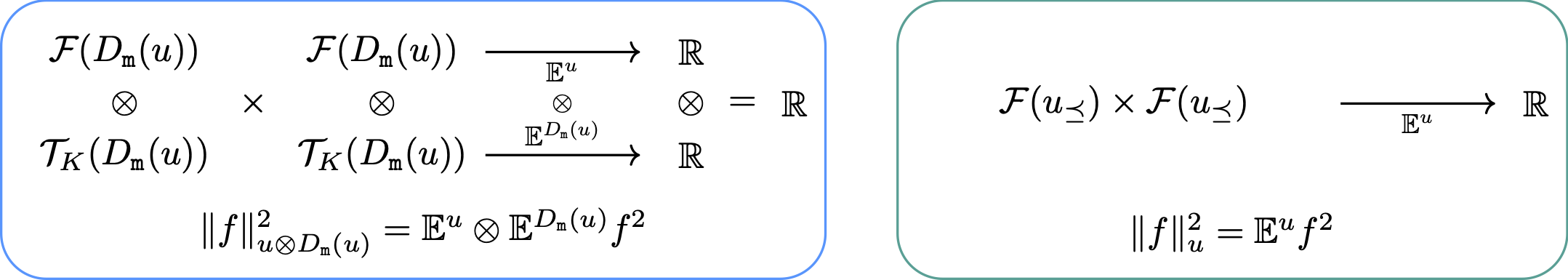}
    \caption{Two norms of $f$}
    \label{fig: Dmu2}
\end{figure}

\noindent
\textbf{Interpretation via two broadcasting processes.}
Consider two \emph{independent} broadcasting processes:
\begin{itemize}
  \item \(Y = (Y_w : w \preceq u)\), a broadcasting process over the tree \(T_u\), initialized with \(Y_u \sim \pi\).
  \item For each \(v \in \Dm{u}\), let \(Z_{v_{\pe}}\) be a broadcasting process over the subtree \(T_v\), initialized with \(Z_v \sim \pi\).
\end{itemize}
We assume all these processes \(\{\,Y\}\) and \(\{\,Z_{v_\pe}: v \in \Dm{u}\}\) are jointly independent.

\smallskip

Recall:
\begin{itemize}
  \item Functions in \(\F{\Dm{u}}\) are functions of the variables \(y_{\Dm{u}} \in [q]^{\Dm{u}}\).
  \item Functions in \(\T{\Dm{u}}\) are functions of  the variables \(z_{L_u} \in [q]^{L_u}\). (The usage of different letters $y$ and $z$ are intentional, but that are just different labels.)
\end{itemize}
Hence, the space 
\(\Deg{\Dm{u}} \,\otimes\, \T{\Dm{u}}\)
can be seen as finite sums of terms \(\phi_1\bigl(y_{\Dm{u}}\bigr)\,\phi_2\bigl(z_{L_u}\bigr)\),
where \(\phi_1 \in \Deg{\Dm{u}}\) and \(\phi_2 \in \F{\Dm{u}}\).
Under the above independence assumptions, the norm \(\Tnorm{f}{u}\) from~\eqref{eq Tnorm} 
can be expressed as 
\[
  \Tnorm{f}{u}
  \;=\;
  \sqrt{\mathbb{E}\Bigl[f\bigl(Y_{\Dm{u}_{\pe}},\,Z_{L_u}\bigr)^2\Bigr]},
\]
where the expectation is over both \(Y\) and \(\{Z_{v_\pe}\}_{v\in \Dm{u}}\).

\smallskip

\noindent
\textbf{Comparison with the \(\Unorm{\cdot}{\Dm{u}}\) norm.}
If we identify 
\(\Deg{\Dm{u}} \otimes \T{\Dm{u}}\)
as a subspace of \(\F{u_{\pe}}\) 
and use the norm \(\Unorm{\cdot}{u}\), we get
\[
  \Unorm{f}{\Dm{u}}
  \;=\;
  \sqrt{\E\Bigl[f\bigl(Y_{\Dm{u}_{\pe}},\,Y_{L_u}\bigr)^2\Bigr]},
\]
where now \emph{a single} broadcasting process \(Y\) (over \(T_u\)) 
generates both \(Y_{\Dm{u}_{\pe}}\) and \(Y_{L_u}\). 

\smallskip

We shall use these two different random-process constructions, 
\(\Tnorm{\cdot}{u}\) and \(\Unorm{\cdot}{\Dm{u}}\),
throughout this subsection to facilitate the proof of Proposition~\ref{prop PDM DEG1T}.
Intuitively, the independence in the \(\Tnorm{\cdot}{u}\) setting 
allows for certain factorized estimates, 
while \(\Unorm{\cdot}{\Dm{u}}\) uses a global broadcasting process on \(T_u\). 
Each perspective plays a role in capturing the decay and norm-comparison properties we need. 
Our next goal is to show that working with \(\Tnorm{f}{u}\) 
instead of \(\Unorm{f}{\Dm{u}}\) only costs 
a small multiplicative factor.  

\begin{lemma}\label{lem PT NormConversion}
  The following holds for sufficiently large \(\tCR\):
  suppose \(u \in V(T)\) satisfies \(\lA{u} \ge \hB\). 
  Then, for every \(f \in \F{\Dm{u}} \otimes \T{\Dm{u}}\),
  \begin{align}\label{eq PT NormConversion}
    (1-\kappa)\,\Tnorm{f}{u}
    \;\;\le\;\;
    \Unorm{f}{u}
    \;\;\le\;\;
    (1+\kappa)\,\Tnorm{f}{u}.
  \end{align}
\end{lemma}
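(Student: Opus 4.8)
The plan is to compare the two norms through the single operator that distinguishes them: passing from the ``coupled'' process $Y$ on $T_u$ (which defines $\Unorm{\cdot}{\Dm{u}}$) to the ``decoupled'' family $(Y_{\Dm{u}}, \{Z_{v_{\pe}}\})$ (which defines $\Tnorm{\cdot}{u}$) is exactly the content of applying $\CE{\Dm{u}}$ versus $\EE{\Dm{u}}$ on the $\T{\Dm{u}}$-factor, while leaving the $\Deg{\Dm{u}}$-factor in the variables $x_{\Dm{u}}$ untouched. Concretely, for $f \in \F{\Dm{u}}\otimes \T{\Dm{u}}$, I would write
\[
  \Unorm{f}{u}^2 = \EE{u}\bigl[f^2\bigr] = \EE{u}\!\circ\!\CE{\Dm{u}}\bigl[f^2\bigr],
  \qquad
  \Tnorm{f}{u}^2 = \EE{u}\otimes\EE{\Dm{u}}\bigl[f^2\bigr],
\]
using Lemma~\ref{lem basicEECED} (the identity $\EE{u}\phi = \EE{u}\circ\CE{\Dm{u}}\phi$, valid since $\Dm{u}\preceq u$) for the first line. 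Subtracting, $\bigl|\Unorm{f}{u}^2 - \Tnorm{f}{u}^2\bigr| \le \EE{u}\bigl|\D{\Dm{u}}[f^2]\bigr| \le \maxnorm{\D{\Dm{u}}[f^2]}$, so everything reduces to bounding $\maxnorm{\D{\Dm{u}}[f^2]}$ by a small multiple of $\Tnorm{f}{u}^2$ (or $\Unorm{f}{u}^2$).

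The main step is this decay bound on $\D{\Dm{u}}[f^2]$. Here $f^2 \in \F{\Dm{u}}\otimes \T{2^{K+1}}(\Dm{u})$ roughly speaking, but the key point is that $\D{\Dm{u}}$ only sees the $\T{\Dm{u}}$-type variables $x_{L_u}$, and on that factor we have the decay estimate~\eqref{eq PT TDmu} from Lemma~\ref{lem PT TDmu}: for $\psi_1,\psi_2 \in \T{\Dm{u}}$,
\[
  \maxnorm{\D{\Dm{u}}[\psi_1\psi_2]}
  \le 2\exp\!\Bigl(-\tfrac12\eps(\lA{u}-\tm)\Bigr)\Unorm{\psi_1}{\Dm{u}}\Unorm{\psi_2}{\Dm{u}}.
\]
Writing $f = \sum_i \phi_{1,i}\otimes \phi_{2,i}$ with $\phi_{1,i}\in\F{\Dm{u}}$ (in the $x_{\Dm{u}}$ variables) and $\phi_{2,i}\in\F{\Dm{u}}$ (i.e.\ in $\T{\Dm{u}}$, variables $x_{L_u}$), the operator $\D{\Dm{u}}$ acts as $\idtt{}\otimes \D{\Dm{u}}$ and a tensorization/Cauchy--Schwarz argument — treating the bilinear form $\EE{\Dm{u}}$ on the $\F{\Dm{u}}$-factor as the ``identity'' side and $\D{\Dm{u}}$ on the $\T{\Dm{u}}$-factor as the side with the small $\delta$ — should give
\[
  \maxnorm{\D{\Dm{u}}[f^2]} \lesssim \exp\!\Bigl(-\tfrac12\eps(\lA{u}-\tm)\Bigr)\,\Tnorm{f}{u}^2,
\]
via Lemma~\ref{lem: mainTensorProduct} applied with the appropriate choice of finite-dimensional spaces $H_i^{\pm}$, bilinear forms, and target sets $U_i$. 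One must be a little careful that $f^2$ has doubled degree, but since Lemma~\ref{lem PT TDmu} is stated for all of $\T{\Dm{u}}$ and the relevant quadratic expansion stays within tensor products of such spaces, this goes through.

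The final step is bookkeeping with the parameters. Since $\lA{u}\ge \hB = \lceil \tCR(\log(R)+1) + \frac{\eps}{10d}\tCR(\log(R)+1)\rceil$ and $\tm = \lfloor \frac{\eps}{10d}\tCR(\log(R)+1)\rfloor$, we get $\lA{u}-\tm \ge \tCR(\log(R)+1)$, which can be made as large as we wish by taking $\tCR\in\CC$ large. Hence $\exp(-\tfrac12\eps(\lA{u}-\tm))$ is at most some prescribed small $\delta'$; choosing $\tCR$ so that $\delta'$ is small enough that $\bigl|\Unorm{f}{u}^2 - \Tnorm{f}{u}^2\bigr| \le (\text{const})\,\delta'\,\Tnorm{f}{u}^2 \le \bigl((1+\kappa)^2-1\bigr)\Tnorm{f}{u}^2$, and then taking square roots (using $\sqrt{1-s}\ge 1-\kappa$ and $\sqrt{1+s}\le 1+\kappa$ for $s$ in the appropriate range), yields~\eqref{eq PT NormConversion}.

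I expect the main obstacle to be the tensorization bookkeeping in the middle step: correctly identifying which spaces play the role of $H_i^+$ and $H_i^-$ in Lemma~\ref{lem: mainTensorProduct} so that the $\Deg{\Dm{u}}$-factor contributes a factor of $1$ (its own $\EE{}$-norm) and the $\T{\Dm{u}}$-factor contributes the small exponential, all while handling the fact that $f^2$ mixes the two tensor legs. A clean way to sidestep some of this is to first reduce to pure tensors $f = \phi_1\otimes\phi_2$ and expand $f^2 = \phi_1^2\otimes\phi_2^2$ directly, applying~\eqref{eq PT TDmu} to $\psi_1=\psi_2=\phi_2$ and absorbing $\maxnorm{\phi_1^2}$ against $\Tnorm{f}{u}^2$ via the norm equivalence in Definition~\ref{def  tCM}; the general case then follows by bilinear expansion and the triangle inequality, at the cost of a harmless constant that can be absorbed into the choice of large $\tCR$.
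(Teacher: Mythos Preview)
There is a genuine gap in the first step. When you apply $\D{\Dm{u}}$ to the $\Xi$-merged function $f^2\in\F{(\Dm{u})_{\pe}}$, the operator $\EE{\Dm{u}}$ integrates \emph{both} the $x_{\Dm{u}}$-variables (carrying the $\F{\Dm{u}}$-factor) and the $x_{L_u}$-variables (carrying the $\T{\Dm{u}}$-factor) under the \emph{same} independent-root process. Hence $\EE{\Dm{u}}[f^2]=\Unorm{f}{\Dm{u}}^2$, not $\Tnorm{f}{u}^2$, and your subtraction gives $|\Unorm{f}{u}^2-\Unorm{f}{\Dm{u}}^2|$ rather than $|\Unorm{f}{u}^2-\Tnorm{f}{u}^2|$. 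Relatedly, the claim that ``$\D{\Dm{u}}$ acts as $\idtt{}\otimes\D{\Dm{u}}$'' on the $\Xi$-merged $f$ is false: on a simple tensor $\phi_1\otimes\phi_2$ one has $\D{\Dm{u}}[\phi_1\phi_2]=\phi_1\cdot\CE{\Dm{u}}[\phi_2]-\EE{\Dm{u}}[\phi_1\phi_2]$, and the second term does \emph{not} factor as $\phi_1\cdot\EE{\Dm{u}}[\phi_2]$.

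The correct identity is $\Unorm{f}{u}^2-\Tnorm{f}{u}^2=\EE{u}\bigl[\Xi\bigl(\idtt{\Dm{u}}\otimes\D{\Dm{u}}[f^2]\bigr)\bigr]$, where the operator keeps the two copies $(y_{\Dm{u}},z_{\Dm{u}})$ separate before collapsing to the diagonal via $\Xi$. Your proposed route through Lemma~\ref{lem: mainTensorProduct} then fails because on the $\F{\Dm{u}}$-side the relevant bilinear map is $(\phi,\psi)\mapsto\phi\psi\in\R^{[q]^{\Dm{u}}}$, whose $\maxnorm{\cdot}$-to-$L^2$ bound carries a constant of order $\tCM^{|\Dm{u}|}$ that blows up; the same defect kills the ``pure tensor plus triangle inequality'' alternative, since the bound becomes $\sum_{i,j}\maxnorm{\phi_{1,i}\phi_{1,j}}\,\Unorm{\phi_{2,i}}{\Dm{u}}\Unorm{\phi_{2,j}}{\Dm{u}}$, which is not controlled by $\Tnorm{f}{u}^2$. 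The paper avoids this by \emph{not} taking a max over $y_{\Dm{u}}$: for each fixed $y$ it applies~\eqref{eq PT TDmu} to $z_{L_u}\mapsto f(y,z_{L_u})\in\T{\Dm{u}}$ to get a bound by $\mathbb{E}_Z[f^2(y,Z_{L_u})]$, and then \emph{integrates} over $y$ with the $T_u$-process so that the right-hand side becomes exactly $\Tnorm{f}{u}^2$. That pointwise-then-integrate step is the missing idea.
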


\begin{proof}
  \step{The bilinear map \(\idtt{\Dm{u}} \otimes \D{\Dm{u}}\).}
  For any \(f,g \in \F{\Dm{u}} \otimes \T{\Dm{u}}\), 
  consider the map
  \[
    \bigl(\idtt{\Dm{u}} \,\otimes\, \D{\Dm{u}}\bigr)\,[f\,g],
  \]
  which acts as a bilinear form from \(\bigl(\F{\Dm{u}} \otimes \T{\Dm{u}}\bigr)^2\)
  to \(\F{\Dm{u}} \otimes \F{\Dm{u}}\). Figure~\ref{fig: Dmu3} illustrates this map.
  \begin{figure}[h]
    \centering
    \includegraphics[width = 0.5\textwidth]{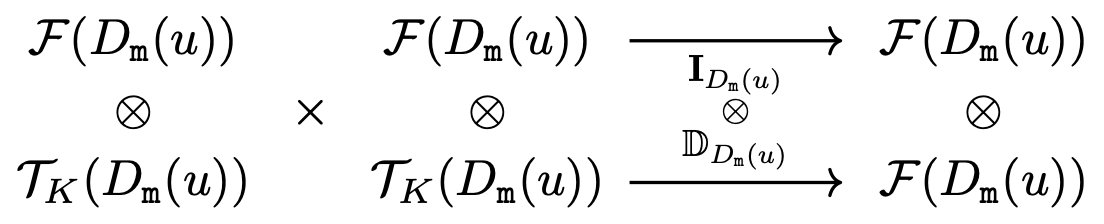}
    \caption{The bilinear map}
    \label{fig: Dmu3}
\end{figure}

  \step{\(\F{\Dm{u}} \otimes \F{\Dm{u}}\) as functions of 
         \(y_{\Dm{u}_{\pe}}, z_{\Dm{u}_{\pe}} \in [q]^{\Dm{u}_{\pe}}\).}
  Note that \(\F{\Dm{u}} \otimes \F{\Dm{u}}\) 
  \emph{cannot} be identified with
  \[
    \Xi\bigl(\F{\Dm{u}} \otimes \F{\Dm{u}}\bigr) 
    \;=\; 
    \F{\Dm{u}},
  \]
  because both “tensor factors” involve the same underlying variables. 
  Instead, we may view elements in 
  \(\F{\Dm{u}} \otimes \F{\Dm{u}}\)
  as functions of two distinct copies of those variables. 
  Concretely, let \(y_{\Dm{u}_{\pe}}\) and \(z_{\Dm{u}_{\pe}}\) 
  be two disjoint sets of variables in \([q]^{\Dm{u}_{\pe}}\). 
  Then each element of 
  \(\F{\Dm{u}} \otimes \F{\Dm{u}}\)
  can be seen as a finite sum of terms 
  \(\phi_1\bigl(y_{\Dm{u}}\bigr)\,\phi_2\bigl(z_{\Dm{u}}\bigr)\),
  where \(\phi_1, \phi_2 \in \F{\Dm{u}}\).

  \step{Applying \eqref{eq PT TDmu} after fixing one variable.}
  Fix \(y_{\Dm{u}}\in[q]^{\Dm{u}}\). 
  Then \(z_{L_u}\mapsto f\bigl(y_{\Dm{u}}, z_{L_u}\bigr)\)
  is a function in \(\T{\Dm{u}}\). From \eqref{eq PT TDmu}, 
  \begin{align*}
    &\max_{z_{\Dm{u}},\,z'_{\Dm{u}}\in[q]^{\Dm{u}}}\!
    \Bigl|\,
      \mathbb{E}_Z\bigl[f^2\!\bigl(y_{\Dm{u}},Z_{L_u}\bigr) \,\big|\,
                       Z_{\Dm{u}}=z_{\Dm{u}}\bigr]
      \;-\;
      \mathbb{E}_Z\bigl[f^2\!\bigl(y_{\Dm{u}},Z_{L_u}\bigr) \,\big|\,
                       Z_{\Dm{u}}=z'_{\Dm{u}}\bigr]
    \Bigr| \\
    \;\le\; &
    2\,\max_{z_{\Dm{u}}\in[q]^{\Dm{u}}}\!
    \Bigl|\,
      \mathbb{E}_Z\bigl[f^2\!\bigl(y_{\Dm{u}},Z_{L_u}\bigr) \,\big|\,
                       Z_{\Dm{u}}=z_{\Dm{u}}\bigr]
      - 
      \mathbb{E}_Z\bigl[f^2\!\bigl(y_{\Dm{u}},Z_{L_u}\bigr)\bigr]
    \Bigr| \\
    \;\le\; &
    4\,\exp\!\Bigl(-\tfrac12\,\eps\,(\lA{u}-\tm)\Bigr)\,
    \mathbb{E}_Z\!\bigl[f^2\bigl(y_{\Dm{u}},Z_{L_u}\bigr)\bigr].
  \end{align*}

  Moreover, for any given \(y_{\Dm{u}}\in[q]^{\Dm{u}}\), 
  the conditional distribution of \(Y_{L_u}\) given \(Y_{\Dm{u}}=y_{\Dm{u}}\) 
  coincides with that of \(Z_{L_u}\) given \(Z_{\Dm{u}}=y_{\Dm{u}}\). 
  Hence,
  \begin{align*}
  & \Bigl|\,
      \mathbb{E}\bigl[f^2\!\bigl(Y_{\Dm{u}},\,Z_{L_u}\bigr)\,\big|\,
                    Y_{\Dm{u}},\,Z_{\Dm{u}}\bigr]
      \;-\;
      \mathbb{E}\bigl[f^2\!\bigl(Y_{\Dm{u}},\,Y_{L_u}\bigr)\,\big|\,
                    Y_{\Dm{u}}\bigr]
    \Bigr| \\
    \;\le\;&
    4\,\exp\!\Bigl(-\tfrac12\,\eps\,(\lA{u}-\tm)\Bigr)\,
    \mathbb{E}\bigl[f^2\!\bigl(Y_{\Dm{u}},\,Z_{L_u}\bigr)\,\big|\,
                   Y_{\Dm{u}}\bigr].
  \end{align*}

  \step{Norm comparison.}
  Using the above inequality, we compare the squared norms 
  \(\Unorm{f}{u}^2\) and \(\Tnorm{f}{u}^2\):
  \begin{align*}
    & \Bigl|\,
      \mathbb{E}\bigl[f^2\!\bigl(Y_{\Dm{u}},Z_{L_u}\bigr)\bigr]
      \;-\;
      \mathbb{E}\bigl[f^2\!\bigl(Y_{\Dm{u}},Y_{L_u}\bigr)\bigr]
    \Bigr| \\
    \;=\; &
    \Bigl|\,
      \mathbb{E}_Y\Bigl(
        \mathbb{E}\bigl[f^2\!\bigl(Y_{\Dm{u}},\,Z_{L_u}\bigr)
                       - f^2\!\bigl(Y_{\Dm{u}},\,Y_{L_u}\bigr)
                       \,\big|\,
                       Y_{\Dm{u}},\,Z_{\Dm{u}}\bigr]
      \Bigr)\Bigr| \\
    \;\le\; &
    \mathbb{E}_Y\!\Bigl[
      \Bigl|\,
        \mathbb{E}\bigl[
          f^2(Y_{\Dm{u}},Z_{L_u}) 
          - 
          f^2(Y_{\Dm{u}},Y_{L_u})
          \,\big|\,
          Y_{\Dm{u}},\,Z_{\Dm{u}}
        \bigr]
      \Bigr| 
    \Bigr] \\
    \;\le\; &
    4\,\exp\!\Bigl(-\tfrac12\,\eps\,(\lA{u}-\tm)\Bigr)\,
    \mathbb{E}\bigl[f^2\!\bigl(Y_{\Dm{u}},Z_{L_u}\bigr)\bigr]\,.
  \end{align*}
  Expressing this in terms of the norms we have 
  \[
        \Bigl|
            \Tnorm{f}{u}^2 - \Unorm{f}{u}^2 
        \Bigr| 
    \le 
    4\,\exp\!\Bigl(-\tfrac12\,\eps\,(\lA{u}-\tm)\Bigr)\,
    \Tnorm{f}{u}^2\,.
  \]

  Since \(\lA{u}-\tm \ge \tCR\,(\log(R)+1)\) under \(\lA{u}\ge \hB\),
  and \(\kappa>0\) was chosen a priori, we can make \(\tCR\) large enough so that
  \eqref{eq PT NormConversion}, 
  \[
        (1-\kappa) \Tnorm{f}{u}
    \le
        \Unorm{f}{u}
    \le
        (1+\kappa) \Tnorm{f}{u}\,,
  \]
  follows from the above bound.
\end{proof}

\begin{proof}[Proof of Proposition \ref{prop PDM DEG1T}]
    \step{Representing \(\CE{u}\bigl[((\idtt{u} - \PK{u})f)\,g\bigr]\).}
    \begin{align*}
      \CE{u}\bigl[((\idtt{u} - \PK{u})f)\,g\bigr]
      \;=\;
      \CE{u}[f\,g]
      \;-\;
      \CE{u}\bigl[(\PK{u}f)\,g\bigr]
      \;=\;
      \D{u}[f\,g]
      \;+\;
      \EE{u}[f\,g]
      \;-\;
      \CE{u}\bigl[(\PK{u}f)\,g\bigr].
    \end{align*}
    
    Since \(g \in \T{u}\), the projection property of \(\PK{u}\) 
    (see \eqref{eq PiProperty1}) yields
    \begin{align}
    \nonumber
      (*)
      \;=\;
      \D{u}[f\,g]
      \;+\;
      \EE{u}\bigl[(\PK{u}f)\,g\bigr]
      \;-\;
      \CE{u}\bigl[(\PK{u}f)\,g\bigr]
      \;=\;&
      \D{u}[f\,g] \;-\; \D{u}\bigl[(\PK{u}f)\,g\bigr]\\
    \label{eq PDM DEG1T decompose00}
      &\; \stackrel{\eqref{eq EEU'ECU}}{=}\;
      \D{u} \,\circ\, \CE{\Dm{u}}[f\,g]
      \;-\; \D{u}\bigl[(\PK{u}f)\,g\bigr]\,.
    \end{align}

    \step{Decomposing \(\CE{\Dm{u}}[f\,g]\)}
    Here, we plan to leverage the decay in the “\(\T{\Dm{u}}\)-part” of \(f\).  
    To that end, notice \(g\) can also be viewed as an element of 
    \(\Deg{\Dm{u}} \otimes \T{\Dm{u}}\), since
    \[
      \T{u} \;\subseteq\; \T{\Dm{u}}
      \;\subseteq\; \Deg{\Dm{u}} \,\otimes\, \T{\Dm{u}}.
    \]
    We claim there is a commutative diagram ensuring
    \[
      \CE{\Dm{u}}[f\,g]
      \;=\;
      \Xi\Bigl(\idtt{\Dm{u}} \otimes \CE{\Dm{u}}[f\,g]\Bigr),
    \]
    namely,
    \[
      \begin{tikzcd}[column sep=huge]
        \bigl(\F{\Dm{u}} \otimes \T{\Dm{u}}\bigr) 
          \times 
          \bigl(\F{\Dm{u}} \otimes \T{\Dm{u}}\bigr) 
          \ar[r, 
            "\,\idtt{\Dm{u}}\,\otimes\,\D{\Dm{u}}", 
            shift left] 
          \ar[dr, "\D{\Dm{u}}"']
        &
        \F{\Dm{u}} \otimes \F{\Dm{u}}
        \ar[d, "\,\Xi", shift right]
        \\[-4pt]
        & \F{\Dm{u}}
      \end{tikzcd}
    \]
      
    \noindent
    \textbf{Verifying the claim.}  
    First observe:
    \begin{align}
      \bigl(\idtt{\Dm{u}} \otimes \CE{\Dm{u}}[f\,g]\bigr)(y_{\Dm{u}},\,z_{\Dm{u}})
      \;=\;
      \mathbb{E}_Z\!\Bigl[
        f\bigl(y_{\Dm{u}},\,Z_{L_u}\bigr)\,
        g\bigl(y_{\Dm{u}},\,Z_{L_u}\bigr)
        \,\Big|\,
        Z_{\Dm{u}} = z_{\Dm{u}}
      \Bigr],
      \label{eq PT decomposeCDfg00}
    \end{align}
    Since \(\Xi\) is the identification map that merges the two sets of variables, we have  
    \begin{align*}
      \Xi\Bigl(\idtt{\Dm{u}} \otimes \CE{\Dm{u}}[f\,g]\Bigr)\!(y_{\Dm{u}})
      &= 
        \bigl(\idtt{\Dm{u}} \otimes \CE{\Dm{u}}[f\,g]\bigr)\bigl(y_{\Dm{u}},\,y_{\Dm{u}}\bigr)
        \\[-1pt]
      &\stackrel{\eqref{eq PT decomposeCDfg00}}{=}
        \mathbb{E}_Z\!\Bigl[
          f\bigl(y_{\Dm{u}},\,Z_{L_u}\bigr)\,
          g\bigl(y_{\Dm{u}},\,Z_{L_u}\bigr)
          \,\Big|\,
          Z_{\Dm{u}}=y_{\Dm{u}}
        \Bigr] 
        \\[-1pt]
      &=
        \mathbb{E}_Y\!\Bigl[
          f\bigl(y_{\Dm{u}},\,Y_{L_u}\bigr)\,
          g\bigl(y_{\Dm{u}},\,Y_{L_u}\bigr)
          \,\Big|\,
          Y_{\Dm{u}}=y_{\Dm{u}}
        \Bigr] 
        \\[-1pt]
      &=
        \CE{\Dm{u}}[f\,g](y_{\Dm{u}}),
    \end{align*}
    where in the second-to-last line we used that the conditional distributions of 
    \((Y_{L_u}\mid Y_{\Dm{u}}=y_{\Dm{u}})\) 
    and 
    \((Z_{L_u}\mid Z_{\Dm{u}}=y_{\Dm{u}})\)
    are identical. The claim thus follows. 
    
    \smallskip
    
    Next, consider the split
    \[
      \idtt{u}\,\otimes\,\CE{\Dm{u}}[f\,g]
      \;=\;
      \underbrace{\idtt{u}\,\otimes\,\D{\Dm{u}}[f\,g]}_{\in \,\F{\Dm{u}}\otimes \F{\Dm{u}}}
      \;+\;
      \underbrace{\idtt{u}\,\otimes\,\EE{\Dm{u}}[f\,g]}_{\in \,\F{\Dm{u}}\otimes\R}.
    \]
    Now \eqref{eq PDM DEG1T decompose00} can be expressed as 
    \begin{align}
      \label{eq PT decomposeCDfg01}
      \CE{u}\bigl[((\idtt{u} - \PK{u})f)\,g\bigr]
      \;=\;
      \D{u}\Bigl[
        \Xi\bigl(\idtt{\Dm{u}} \otimes \D{\Dm{u}}[f\,g]\bigr)
      \Bigr]
      \;+\;
      \D{u}\Bigl[
        \Xi\bigl(\idtt{\Dm{u}} \otimes \EE{\Dm{u}}[f\,g]\bigr)
      \Bigr]
      \;-\;
      \D{u}\bigl[(\PK{u}f)\,g\bigr].
    \end{align}
    We will bound each term on the right-hand side (in the \(\maxnorm{\cdot}\) sense) individually.

    \step{Bounding second term of \eqref{eq PT decomposeCDfg01}:
    \(\displaystyle \maxnorm{\D{u}\!\Bigl[\Xi\bigl(\idtt{\Dm{u}} \otimes \EE{\Dm{u}}[f\,g]\bigr)\Bigr]}\)}
  To estimate 
  \(\D{u}\!\Bigl[\Xi\bigl(\idtt{\Dm{u}} \otimes \EE{\Dm{u}}[f\,g]\bigr)\Bigr],\)
  it is helpful to restate it in terms of the random processes \(Y\) and \(Z\). Note that
  \[
    \Xi\bigl(\idtt{\Dm{u}} \otimes \EE{\Dm{u}}[f\,g]\bigr)\!(y_{\Dm{u}})
    \;=\;
    \bigl(\idtt{\Dm{u}} \otimes \EE{\Dm{u}}[f\,g]\bigr)\!(y_{\Dm{u}})
    \;=\;
    \mathbb{E}_Z\Bigl[f\bigl(y_{\Dm{u}},\,Z_{L_u}\bigr)\,g\bigl(Z_{L_u}\bigr)\Bigr].
  \]
  Hence,
  \begin{align*}
    &\Bigl| \D{u}\Bigl[\Xi\bigl(\idtt{\Dm{u}} \otimes \EE{\Dm{u}}[f\,g]\bigr)\Bigr]\!(y_u) \Bigr|
    \\
    =\;&
    \Bigl|
    \mathbb{E}_{Y,Z}\Bigl[
      f\bigl(Y_{\Dm{u}},\,Z_{L_u}\bigr)\,g\bigl(Z_{L_u}\bigr)
      \,\big|\,
      Y_u = y_u
    \Bigr]
    \;-\;
    \mathbb{E}_{Y,Z}\Bigl[f\bigl(Y_{\Dm{u}},\,Z_{L_u}\bigr)\,g\bigl(Z_{L_u}\bigr)\Bigr]
    \Bigr|
    \\[2pt]
    =\;&
    \Bigl|
    \mathbb{E}_{Y,Z}\Bigl[
      \Bigl(f\bigl(Y_{\Dm{u}},\,Z_{L_u}\bigr)
            - 
            \mathbb{E}_Y\,f\bigl(Y_{\Dm{u}},\,Z_{L_u}\bigr)\Bigr)\,
      g\bigl(Z_{L_u}\bigr)
      \,\Big|\,
      Y_u=y_u
    \Bigr]
    \Bigr|
    \\[4pt]
    \le\;&
    \mathbb{E}_Z\,\Bigl[
      \mathbb{E}_Y\Bigl[
        \Bigl|\,
          f\bigl(Y_{\Dm{u}},\,Z_{L_u}\bigr)
          \;-\;
          \mathbb{E}_Y\,f\bigl(Y_{\Dm{u}},\,Z_{L_u}\bigr)
        \Bigr|
        \,\Big|\,
        Y_u=y_u
      \Bigr]
      \;\cdot\;
      \bigl|\,g\bigl(Z_{L_u}\bigr)\bigr|
    \Bigr].
  \end{align*}
  
  Fix \(z_{L_u} \in [q]^{L_u}\). Then the map 
  \(\;y_{\Dm{u}}\;\mapsto\;f\bigl(y_{\Dm{u}},\,z_{L_u}\bigr)\)
  is a degree-\(1\) function in \(\F{\Dm{u}}\). Define
  \[
    T' \;=\; \bigl\{\,
       v \in V(T)\,\colon\, v \preceq u,\;\h(v)\,\ge\,\h(u)-\tm
    \bigr\},
  \]
  which is the subtree of \(T\) with root \(u\), leaves \(\Dm{u}\), and depth~\(\tm\).
  Applying the base case of our induction on \(T'\) (or an analogous argument for this smaller subtree), we obtain
  \[
    \maxnorm{
      \mathbb{E}_{Y}\Bigl[
        f\bigl(Y_{\Dm{u}},\,z_{L_u}\bigr)
        \;-\;
        \mathbb{E}_Y\,f\bigl(Y_{\Dm{u}},\,z_{L_u}\bigr)
        \,\Big|\,
        Y_u
      \Bigr]
    }
    \;\le\;
    \exp\!\bigl(-\,\eps\,(\tm - \h_0)\bigr)\,
    \sqrt{
      \mathbb{E}_Y\Bigl[f^2\bigl(Y_{\Dm{u}},\,z_{L_u}\bigr)\Bigr]
    },
  \]
  provided \(\tm \ge \h_0\), where \(\h_0\) is a base-level decay parameter. By Proposition~\ref{prop K=0}, we have
  \(\,\h_0 \le \tC_0\,(\log R + 1)\), 
  and thus choosing \(\tCR\) sufficiently large ensures \(\tm\ge\h_0\).
  
  Consequently, for each \(y_u \in [q]\),
  \begin{align*}
    \Bigl|\,
      \D{u}\Bigl[\Xi\bigl(\idtt{\Dm{u}} \otimes \EE{\Dm{u}}[f\,g]\bigr)\Bigr]\!(y_u)
    \Bigr|
    &\;\le\;
    \mathbb{E}_Z\,\Bigl[
      \exp\!\bigl(-\,\eps\,(\tm - \h_0)\bigr)\,
      \sqrt{\mathbb{E}_Y\Bigl[f^2\bigl(Y_{\Dm{u}},\,Z_{L_u}\bigr)\Bigr]}
      \;\cdot\;
      \bigl|\,g\bigl(Z_{L_u}\bigr)\bigr|
    \Bigr]
    \\
    &\;\le\;
    \exp\!\bigl(-\,\eps\,(\tm - \h_0)\bigr)\,
    \sqrt{
      \mathbb{E}_{Y,Z}\Bigl[f^2\bigl(Y_{\Dm{u}},\,Z_{L_u}\bigr)\Bigr]
    }
    \;\cdot\;
    \sqrt{\mathbb{E}_Z\Bigl[g^2\bigl(Z_{L_u}\bigr)\Bigr]}
    \\[-4pt]
    &\;=\;
    \exp\!\bigl(-\,\eps\,(\tm - \h_0)\bigr)\,\Tnorm{f}{u}\,\Unorm{g}{\Dm{u}},
  \end{align*}
  where in the last two steps we apply Cauchy–Schwarz and recall the definition
  \(\Tnorm{f}{u}^2 = \EE{Y,Z}[f^2(Y_{\Dm{u}},\,Z_{L_u})]\).  
  Hence,
  \begin{align}
  \nonumber 
    \maxnorm{
      \D{u}\Bigl[\Xi\bigl(\idtt{\Dm{u}} \otimes \EE{\Dm{u}}[f\,g]\bigr)\Bigr]
    }
    \;\le\; \;\;\;\,&
    \exp\!\bigl(-\,\eps\,(\tm - \h_0)\bigr)\,\Tnorm{f}{u}\,\Unorm{g}{\Dm{u}} \\
    \;\stackrel{\eqref{eq PT NormConversion},\,\eqref{eq PT TDmu2}}{\le}\; &
    \exp\!\bigl(-\,\eps\,(\tm - \h_0)\bigr)\,(1+\kappa)\,\Unorm{f}{u}\,\Unorm{g}{u}.
    \label{eq PT decomposeCDfg03}
  \end{align}

  \step{Bounding third term of \eqref{eq PT decomposeCDfg01} : 
    \(\displaystyle \maxnorm{\D{u}\bigl[(\PK{u}f)\,g\bigr]}\).}
  For \(\D{u}[(\PK{u}f)\,g]\), note that both \(\PK{u}f\) and \(g\) lie in \(\T{u}\). Therefore,
  \[
    \maxnorm{\D{u}\bigl[(\PK{u}f)\,g\bigr]}
    \;\le\;
    \exp\!\bigl(-\,\eps\,\lA{u}\bigr)\,\Unorm{\PK{u}f}{u}\,\Unorm{g}{u}.
  \]
  Using the “orthogonal projection” property of \(\PK{u}\), one obtains
  \[
    \Unorm{\PK{u}f}{u}^2
    \;\stackrel{\eqref{eq PiProperty1}}{=}\;
    \EE{u}\bigl[f\,(\PK{u}f)\bigr]
    \;\le\;
    \Unorm{f}{u}\,\Unorm{\PK{u}f}{u}
    \quad\Longrightarrow\quad
    \Unorm{\PK{u}f}{u}
    \;\le\;
    \Unorm{f}{u}.
  \]
  Hence,
  \begin{align}
    \maxnorm{\D{u}\bigl[(\PK{u}f)\,g\bigr]}
    \;\le\;
    \exp\!\bigl(-\,\eps\,\lA{u}\bigr)\,\Unorm{f}{u}\,\Unorm{g}{u},
    \label{eq PT decomposeCDfg04}
  \end{align}
  completing the bound for this term.

  \step{Bounding the first term in \eqref{eq PT decomposeCDfg01} :
  \(\displaystyle \maxnorm{\D{u}\Bigl[\Xi\bigl(\idtt{\Dm{u}} \otimes \D{\Dm{u}}[f\,g]\bigr)\Bigr]}\)}
First, note that
\[
  \idtt{\Dm{u}} \otimes \D{\Dm{u}}[f\,g]
  \;=\;
  \E_Z\!\Bigl[f\bigl(y_{\Dm{u}},\,Z_{L_u}\bigr)\,g\bigl(Z_{L_u}\bigr)
  \,\Big|\,
  Z_{\Dm{u}}=z_{\Dm{u}}\Bigr]
  \;-\;
  \E_Z\!\Bigl[f\bigl(y_{\Dm{u}},\,Z_{L_u}\bigr)\,g\bigl(Z_{L_u}\bigr)\Bigr].
\]
When \(y_{\Dm{u}}\) is fixed, the map
\[
  z_{L_u}
  \;\mapsto\;
  f\bigl(y_{\Dm{u}},\,z_{L_u}\bigr)
\]
is a function in \(\T{\Dm{u}}\), so Lemma~\ref{lem PT TDmu} applies. Therefore, for every \(\,y_{\Dm{u}}, z_{\Dm{u}}\in[q]^{\Dm{u}}\),
\[
  \Bigl|\bigl(\idtt{\Dm{u}}\otimes\D{\Dm{u}}[f\,g]\bigr)(y_{\Dm{u}},\,z_{\Dm{u}})\Bigr|
  \;\le\;
  2\,\exp\!\Bigl(-\tfrac12\,\eps\,\bigl(\lA{u}-\tm\bigr)\Bigr)\,
  \sqrt{\E_Z\!\Bigl[f^2\bigl(y_{\Dm{u}},\,Z_{L_u}\bigr)\Bigr]}\,
  \sqrt{\E_Z\!\Bigl[g^2\bigl(Z_{L_u}\bigr)\Bigr]}.
\]
In particular, 
\begin{align*}
  \Bigl|\Xi\bigl(\idtt{\Dm{u}}\otimes\D{\Dm{u}}[f\,g]\bigr)(y_{\Dm{u}})\Bigr|
  \;=\; &
  \Bigl|\bigl(\idtt{\Dm{u}}\otimes\D{\Dm{u}}[f\,g]\bigr)\bigl(y_{\Dm{u}},\,y_{\Dm{u}}\bigr)\Bigr| \\
  \;\le\; &
  2\,\exp\!\Bigl(-\tfrac12\,\eps\,\bigl(\lA{u}-\tm\bigr)\Bigr)\,
  \sqrt{\E_Z\!\Bigl[f^2\bigl(y_{\Dm{u}},\,Z_{L_u}\bigr)\Bigr]}\,
  \sqrt{\E_Z\!\Bigl[g^2\bigl(Z_{L_u}\bigr)\Bigr]}.
\end{align*}

\noindent
Consequently,
\begin{align}
\label{eq PT decomposeCDfg02}
  &\Bigl|\D{u}\Bigl[\Xi\bigl(\idtt{\Dm{u}}\otimes \D{\Dm{u}}[f\,g]\bigr)\Bigr](y_u)\Bigr|
  \\[4pt]
  \nonumber
  \;\le\; &
  \mathbb{E}_Y\!\Bigl[
    2\,\exp\!\Bigl(-\tfrac12\,\eps\,\bigl(\lA{u}-\tm\bigr)\Bigr)\,
    \sqrt{\E_Z\!\Bigl[f^2\bigl(Y_{\Dm{u}},\,Z_{L_u}\bigr)\Bigr]}\,
    \sqrt{\E_Z\!\Bigl[g^2\bigl(Z_{L_u}\bigr)\Bigr]}
  \Bigr] \\
  \nonumber
  &\;+\;
  \mathbb{E}_Y\!\Bigl[
    2\,\exp\!\Bigl(-\tfrac12\,\eps\,\bigl(\lA{u}-\tm\bigr)\Bigr)\,
    \sqrt{\E_Z\!\Bigl[f^2\bigl(Y_{\Dm{u}},\,Z_{L_u}\bigr)\Bigr]}\,
    \sqrt{\E_Z\!\Bigl[g^2\bigl(Z_{L_u}\bigr)\Bigr]}
    \,\Big|\,
    Y_u = y_u
  \Bigr]
  \\[4pt]
  \nonumber
  \;\le\;&
  2\,\exp\!\Bigl(-\tfrac12\,\eps\,(\lA{u}-\tm)\Bigr)\,
  \sqrt{\E_{Y,Z}\!\Bigl[f^2\bigl(Y_{\Dm{u}},\,Z_{L_u}\bigr)\Bigr]}\,
  \sqrt{\E_Z\!\Bigl[g^2\bigl(Z_{L_u}\bigr)\Bigr]}\\
  \nonumber 
  & \;+\;
  2\,\exp\!\Bigl(-\tfrac12\,\eps\,(\lA{u}-\tm)\Bigr)\,
  \sqrt{\E_{Y,Z}\!\Bigl[
    f^2\bigl(Y_{\Dm{u}},\,Z_{L_u}\bigr)
    \,\Big|\,
    Y_u=y_u
  \Bigr]}\,
  \sqrt{\E_Z\!\Bigl[g^2\bigl(Z_{L_u}\bigr)\Bigr]},
  \notag
\end{align}
where we use Jensen’s inequality in the last step.

\smallskip

Define
\[
  \phi(y_u)
  \;:=\;
  \mathbb{E}_{Y,Z}\!\Bigl[
    f^2\bigl(Y_{\Dm{u}},\,Z_{L_u}\bigr)
    \,\Big|\,
    Y_u=y_u
  \Bigr]
  \quad\Longrightarrow\quad
  \E_{Y,Z}\!\Bigl[f^2\bigl(Y_{\Dm{u}},\,Z_{L_u}\bigr)\Bigr]
  \;=\;
  \mathbb{E}_{Y_u}\bigl[\phi(Y_u)\bigr].
\]
Since \(\phi\) is nonnegative and \(\pi(\theta)>0\) for every \(\theta\in[q]\),
\[
  \phi(y_u)
  \;=\;
  \frac{1}{\pi(y_u)}\,\pi\bigl(y_u\bigr)\,\phi\bigl(y_u\bigr)
  \;\le\;
  \frac{1}{\pi(y_u)}\,
  \mathbb{E}_{Y_u}\bigl[\phi(Y_u)\bigr]
  \;\;\;\mbox{ for all }y_u\in[q].
\]
Because \(0<\min_{\theta}\pi(\theta)\in\CC\), there exists \(\tC\in\CC\) such that
\[
  \E_{Y,Z}\!\Bigl[
    f^2\bigl(Y_{\Dm{u}},\,Z_{L_u}\bigr)
    \,\Big|\,
    Y_u=y_u
  \Bigr]
  \;\le\;
  \tC\,
  \E_{Y,Z}\!\Bigl[
    f^2\bigl(Y_{\Dm{u}},\,Z_{L_u}\bigr)
  \Bigr].
\]
Hence, we can combine the two terms in \eqref{eq PT decomposeCDfg02} to obtain
\[
  \eqref{eq PT decomposeCDfg02}
  \;\le\;
  \tC\,\exp\!\Bigl(-\tfrac12\,\eps\,(\lA{u}-\tm)\Bigr)\,
  \Tnorm{f}{u}\,\Unorm{g}{\Dm{u}},
\]
where we may absorb the sum into the constant \(\tC\) if necessary.

\smallskip

In other words,
\begin{align}
    \nonumber
  \maxnorm{
    \D{u}\Bigl[\Xi\bigl(\idtt{\Dm{u}} \otimes \D{\Dm{u}}[f\,g]\bigr)
  \Bigr]}
  \;\le\;&
  \tC\,\exp\!\Bigl(-\tfrac12\,\eps\,(\lA{u}-\tm)\Bigr)\,
  \Tnorm{f}{u}\,\Unorm{g}{\Dm{u}} \\
  \;\stackrel{\eqref{eq PT NormConversion},\eqref{eq PT TDmu2}}{\le}\;&
  \tC\,\exp\!\Bigl(-\tfrac12\,\eps\,(\lA{u}-\tm)\Bigr)\,\bigl(1+\kappa\bigr)^2\,
  \Unorm{f}{u}\,\Unorm{g}{u}.
 \label{eq PT decomposeCDfg05}
\end{align}

\step{Combining the bounds.}
Putting together \eqref{eq PT decomposeCDfg03}, \eqref{eq PT decomposeCDfg04}, and 
\eqref{eq PT decomposeCDfg05} yields
\[
  \maxnorm{
    \CE{u}\Bigl[
      \bigl((\idtt{u}-\PK{u})f\bigr)\,g
    \Bigr]
  }
  \;\le\;
  \Bigl[
    \tC\,\exp\!\Bigl(-\tfrac12\,\eps\,(\lA{u}-\tm)\Bigr)
    \;+\;
    \exp\bigl(-\eps\,(\tm-\h_0)\bigr)
  \Bigr]\,
  (1+\kappa)^2\,\Unorm{f}{u}\,\Unorm{g}{u}.
\]
Recall we set 
\[
  \tm 
  = 
  \left\lfloor 
    \frac{1}{10\,d}\,\tCR\,(\log R + 1) 
  \right\rfloor
  \quad\text{and}\quad
  \h_0 \;\le\; \tC_0\,(\log R+1).
\]
Moreover, \(\lA{u}\ge \hB\) implies 
\(\lA{u}-\tm \;\ge\;\tCR\,(\log R+1).\)
By choosing \(\tCR\) large enough, we ensure
\[
  \bigl[
    \tC\,\exp\!\bigl(-\tfrac12\,\eps\,(\lA{u}-\tm)\bigr)
    \;+\;
    \exp\!\bigl(-\eps\,(\tm-\h_0)\bigr)
  \bigr]\,
  (1+\kappa)^2
  \;\le\;
  \frac{1}{\tCB\,R},\,
\]
and the Proposition follows. 
\end{proof}

It remains to prove Lemma~\ref{lem PT TDmu}.

\begin{proof}[Proof of Lemma~\ref{lem PT TDmu}]
  \step{Bounding 
    \(\displaystyle \maxnorm{\D{\Dm{u}}\bigl[f\,g\bigr]}\).}
  We invoke Lemma~\ref{lem TU}, which gives
  \[
    \maxnorm{\D{\Dm{u}}\bigl[f\,g\bigr]}
    \;\le\;
    \Bigl(
      \sum_{\varnothing \neq A' \subseteq \Dm{u}}
      \prod_{v \in A'} \exp\!\bigl(-\,\eps\,\lA{v}\bigr)
    \Bigr)\,
    \Unorm{f}{\Dm{u}}\;\Unorm{g}{\Dm{u}}.
  \]
  Since every \(v\in \Dm{u}\) satisfies \(\lA{v} = \lA{u}-\tm\),
  \[
    \prod_{v \in A'} \exp\!\bigl(-\,\eps\,\lA{v}\bigr)
    \;\le\;
    \exp\bigl(-\,|A'|\,\eps\,(\lA{u}-\tm)\bigr).
  \]
  Hence,
  \[
    \sum_{\varnothing \neq A'\subseteq \Dm{u}}
      \exp\bigl(-\,|A'|\,\eps\,(\lA{u}-\tm)\bigr)
    \;\le\;
    \sum_{s=1}^{|\Dm{u}|}
      \binom{|\Dm{u}|}{s}\,\exp\!\Bigl(-\,s\,\eps\,(\lA{u}-\tm)\Bigr).
  \]
  From the tree assumption, we have \(|\Dm{u}| \le \,R\,d^\tm\). Thus, 
  \[
    \sum_{\varnothing \neq A' \subseteq \Dm{u}}
    \exp\!\bigl(-\,|A'|\,\eps\,(\lA{u}-\tm)\bigr)
    \;\le\;
    \sum_{s=1}^{R\,d^\tm}
      \bigl(R\,d^\tm\bigr)^s\,
      \exp\bigl(-\,s\,\eps\,(\lA{u}-\tm)\bigr).
  \]
  We rewrite
  \[
    \bigl(R\,d^\tm\bigr)^s
    \;=\;
    \exp\!\Bigl(s\,\log(R) + s\,\tm\,\log(d)\Bigr),
  \]
  so
  \[
    \sum_{s=1}^{R\,d^\tm}
      \exp\bigl(-\,s\,\eps\,(\lA{u}-\tm)\bigr)
      \,\exp\!\bigl(s\,\log(R) + s\,\tm\,\log(d)\bigr)
    \;\le\;
    \sum_{s=1}^{\infty}
      \exp\!\Bigl(
        -\,s\,\bigl[-\,\log(R) - \tm\,\log(d)
                   + \eps\,(\lA{u}-\tm)\bigr]
      \Bigr).
  \]
  By our choice 
  \[
    \tm 
    \;=\; 
    \left\lfloor 
      \frac{\eps}{10\,d}\,\tCR\,\bigl(\log(R)+1\bigr)
    \right\rfloor
  \]
  and the assumption 
  \[
    \lA{u} \ge  \hB \Rightarrow \lA{u} - \tm \ge \tCR(\log(R)+1)\,,
  \]
  we have
  \[
    -\,\log(R)\;-\;\tm\,\log(d)\;+\;
    \eps\,\bigl(\lA{u}-\tm\bigr)
    \;\ge\;
    \tfrac12\,\eps\,\bigl(\lA{u}-\tm\bigr)
    \;\ge\;
    2,
  \]
  whenever \(\tCR\) is chosen sufficiently large. In that case, the above infinite series decays at least as fast as a geometric series with ratio \(\tfrac12\) and initial term \(\exp\!\bigl(-\tfrac12\,\eps\,(\lA{u}-\tm)\bigr)\). Hence, it is bounded by
  \[
    2\,\exp\!\Bigl(-\tfrac12\,\eps\,(\lA{u}-\tm)\Bigr).
  \]
  Combining all factors, we obtain
  \[
    \maxnorm{\D{\Dm{u}}\bigl[f\,g\bigr]}
    \;\le\;
    2\,\exp\!\Bigl(-\tfrac12\,\eps\,(\lA{u}-\tm)\Bigr)\,
    \Unorm{f}{\Dm{u}}\;\Unorm{g}{\Dm{u}},
  \]
  which establishes \eqref{eq PT TDmu}.

  \step{Proof of the norm-comparison statement \eqref{eq PT TDmu2}.}
  We compare \(\Unorm{f}{u}^2\) and \(\Unorm{f}{\Dm{u}}^2\). Observe:
  \[
    \Unorm{f}{u}^2
    \;-\;
    \Unorm{f}{\Dm{u}}^2
    \;=\;
    \EE{u}\bigl[f^2\bigr]
    \;-\;
    \EE{\Dm{u}}\bigl[f^2\bigr]
    \;\stackrel{\eqref{eq EEU'ECU}}{=}\;
    \EE{u}\Bigl[\CE{\Dm{u}}[f^2]\Bigr]
    \;-\;
    \underbrace{\EE{\Dm{u}}\bigl[f^2\bigr]}_{\text{constant}},
  \]
  so
  \[
    \Unorm{f}{u}^2
    \;-\;
    \Unorm{f}{\Dm{u}}^2
    \;=\;
    \EE{u}\Bigl[
      \CE{\Dm{u}}[f^2] \;-\; \EE{\Dm{u}}[f^2]
    \Bigr].
  \]
  Applying \eqref{eq PT TDmu} (i.e., the previous result with \(g=f\)) yields
  \[
    \Bigl|\CE{\Dm{u}}\bigl[f^2\bigr](y_{\Dm{u}}) 
           - 
           \EE{\Dm{u}}\bigl[f^2\bigr]\Bigr|
    \;\le\;
    2\,\exp\!\Bigl(-\tfrac12\,\eps\,(\lA{u}-\tm)\Bigr)\,
    \Unorm{f}{\Dm{u}}^2
    \quad
    \forall\,y_{\Dm{u}}\in[q]^{\Dm{u}}.
  \]
  Hence
  \[
    \Bigl|\,
      \Unorm{f}{u}^2
      \;-\;
      \Unorm{f}{\Dm{u}}^2
    \Bigr|
    \;\le\;
    2\,\exp\!\Bigl(-\tfrac12\,\eps\,(\lA{u}-\tm)\Bigr)\,
    \Unorm{f}{\Dm{u}}^2,
  \]
  which implies
  \[
    \sqrt{
      1
      \;-\;
      2\,\exp\!\Bigl(-\tfrac12\,\eps\,(\lA{u}-\tm)\Bigr)
    }\,
    \Unorm{f}{\Dm{u}}
    \;\le\;
    \Unorm{f}{u}
    \;\le\;
    \sqrt{
      1
      \;+\;
      2\,\exp\!\Bigl(-\tfrac12\,\eps\,(\lA{u}-\tm)\Bigr)
    }\,
    \Unorm{f}{\Dm{u}}.
  \]
  Since \(\lA{u}-\tm \ge \tCR\,(\log(R)+1)\) and \(\kappa\) is fixed in advance, we can choose \(\tCR\) large enough such that \eqref{eq PT TDmu2} holds. This completes the proof of Lemma~\ref{lem PT TDmu}.
\end{proof}

\section{\bf Part II: Global Structure and the Proof of the Inductive Step}

\subsection{Overview of Part II}
In this \textbf{Part II}, our main objective is to decompose any polynomial of degree 
\(\le 2^{K+1}\) into a sum of polynomials lying in the \(\mathcal{R}\)-spaces introduced previously, 
and then use this decomposition to derive the main theorem.

Recall from \eqref{def hB} that we define
\[
  \hB
  \;=\;
  \left\lceil 
    \tCR\,\bigl(\log R +1\bigr)
    \;+\;
    \frac{\eps}{10\,d}\,\tCR\,\bigl(\log R +1\bigr)
  \right\rceil.
\]
Throughout this part, we focus on polynomials
\(\,f \in \mathcal{T}_{K+1}(\rp)\,\)
where \(\rp \in V(T)\) satisfies
\[
  \h(\rp) \;\ge\; \h_K + \hB
  \quad\Longleftrightarrow\quad
  \lA{\rp} \;\ge\; \hB.
\]
In other words, we restrict to those vertices \(\rp\) that are “tall enough” above level \(\h_K\) by at least~\(\hB\).

Given such \(\rp\), every polynomial \(f \in \mathcal{T}_{K+1}(\rp)\) will be written as
\[
  f
  \;=\;
  \sum_{u}\,f_u,
  \quad
  \text{where the sum is indexed by }
  \bigl\{
    u \preceq \rp 
    : 
    \h(u)\,\ge\,\h_K + \hB
  \bigr\}.
\]
Each piece \(f_u\) lies in one of the \(\mathcal{R}(\mathcal{W}_u)\)-type spaces introduced in \textbf{Part~I}, 
apart from one small exception that we discuss later.

To facilitate our analysis, we introduce a new \emph{relative height} \(\rk{u}\). 
For each \(u \in V(T)\),
\begin{align}\label{def rk}
  \rk{u} 
  \;=\;
  \rk{u,K}
  \;:=\;
  \lA{u} \;-\; \hB
  \;=\;
  \h(u) 
  \;-\; 
  \h_K
  \;-\; 
  \hB.
\end{align}
This notation is especially convenient when we get into more technical details, 
and placing \(u\) in the subscript (i.e.\ \(\rk{u}\)) helps keep lengthy formulas more manageable. See Figure~\ref{fig relativeHeight} for a visual illustration of \(\rk{u}\) and a chosen \(\rho'\).

\begin{figure}[h]
    \centering
    \includegraphics[width=\textwidth]{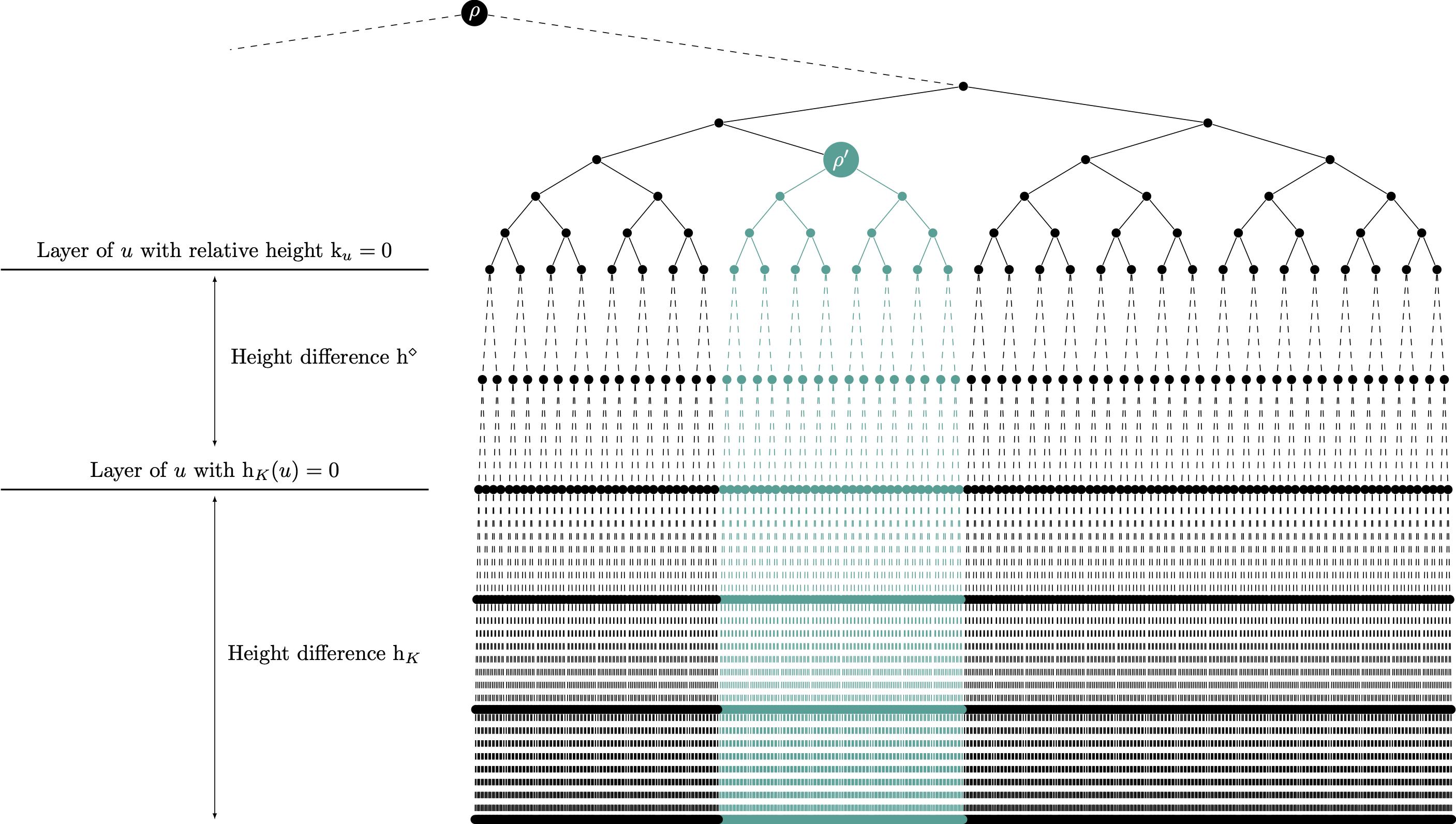}
    \caption{A visual illustration of ${\rm k}_u$ and  $\rho'$ must satisfies ${\rm k}_{\rho'}\ge 0$.}
    \label{fig relativeHeight}
\end{figure}

\begin{figure}[h]
    \centering
    \includegraphics[trim=0cm 4cm 0cm 4cm, clip,width=\textwidth]{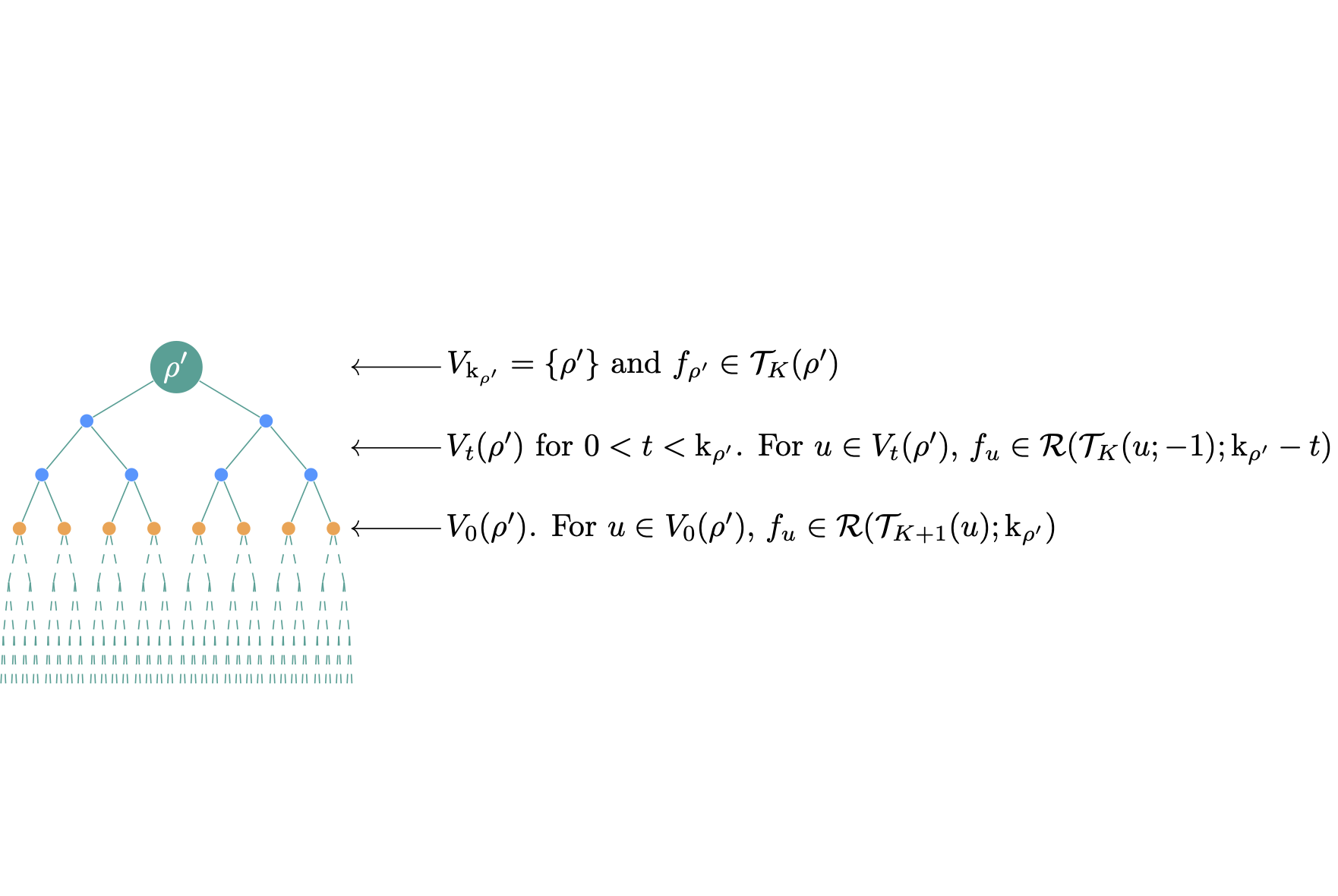}
    \caption{When $f$ is chosen in ${\mathcal T}_{K+1}(\rho')$, we decompose $f$ into sum of $f_u$'s, where the index vertex $u$ ranges over all vertices $\preceq \rho'$ with ${\rm k}_u \ge 0$\,.}
    \label{fig frho'Decompose}
\end{figure}

\smallskip

Moreover, note that for any \(u\preceq w\),
\[
  \rk{w} \;-\;\rk{u}
  \;=\;
  \h(w)\;-\;\h(u)
  \;\;\Longrightarrow\;\;
  w \;=\;
  \anc{u}{\,\rk{w} - \rk{u}}.
\]

\noindent
\textbf{Fixing a Base Vertex $\rho'$ and associated terms.}
Let us now fix a vertex \(\rp\in V(T)\) for which 
\(\rkr \;=\;\rk{\rp}\;\ge 0\).
This choice of \(\rp\) (together with its relative height \(\rkr\)) 
will form the basis for our subsequent constructions and proofs in Part II. Let us now define several key objects tied to our fixed vertex \(\rp\) with relative height \(\rkr\ge0\).

\noindent
\textbf{Layer Sets \(\V{t}\).}
For each integer \(t\in[0,\rkr]\), we define
\begin{align}\label{def Vk}
  \V{t} 
  \;=\;
  \bigl\{\,
    u \preceq \rp : \rk{u} = t
  \bigr\}
  \;\subseteq\;
  V(T).
\end{align}
We also set
\[
  \V{[a,b]}
  \;=\;
  \bigcup_{t=a}^{b}\,\V{t},
  \qquad
  \V{}
  \;=\;\V{[0,\rkr]}.
\]
Hence, \(\V{t}\) consists of all nodes \(u\) whose relative height \(\rk{u}\) equals \(t\), and \(\V{}\) is the full “vertical chain” from height~0 up to~\(\rkr\).

\noindent
\textbf{Spaces \(\dR{u}\).}
Recall (Definition~\ref{def  CR}) the definition of the spaces \(\cR{{\cal W}_u}{k}\).  
For each \(u\in\V{t}\) with \(t\in[0,\rkr-1]\) and each integer \(k\in[0,\rkr-\rk{u}]\), define
\begin{align}\label{def dR}
  \dR{u;k}
  \;:=\;
  \begin{cases}
    \cR{\mathcal{T}_{K+1}(u)}{k},  & \text{if }u\in \V{0},\\
    \cR{\TT{u}{-1}}{k},           & \text{if }u\in \V{t}\,\text{with }t>0.
  \end{cases}
\end{align}
We then set 
\[
  \dR{u} \;=\; \dR{u;\,\rkr-\rk{u}-1}.
\]

As a sanity check, one always has 
\(\dR{u;k} \subseteq \F{\anc{u}{k}_{\pe}}\) and \(\dR{u}\subseteq \F{\rp_{\pe}}\).  

Observe that if \(t=\rkr-\rk{u}\), then \(\V{t}=\{\rp\}\).  
In this case, by a slight abuse of notation we define
\begin{align*}
  \dR{\rp}
  \;:=\;
  \TT{\rp}{-1}.
\end{align*}
This choice, rather than \(\cR{\TT{\rp}{-1}}{0}\), ensures that \(\dR{\rp}=\TT{\rp}{-1}\) includes \emph{all} polynomials of degree \(\le2^K\).  
Hence any “low-degree remainder” of \(f\) can be absorbed into \(f_{\rp}\).

\begin{prop}[Decomposition into \(\dR{u}\)-Spaces]
  \label{prop PT 2K+1}
  Fix \(\rp\) such that \(\rkr\ge0\). 
  Then for every polynomial \(f\in\mathcal{T}_{K+1}(\rp)\), one can write
  \[
    f
    \;=\;
    \sum_{u\in \V{}}\,f_u,
    \qquad\text{with}\quad
    f_u\;\in\;\dR{u}.
  \]
\end{prop}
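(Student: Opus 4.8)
## Proof Plan for Proposition~\ref{prop PT 2K+1}

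\textbf{Overall strategy: induction on the number of layers, peeling off one level at a time from the bottom.} The plan is to prove the decomposition by downward induction on the relative height, building the pieces $f_u$ layer by layer. Concretely, I would show by induction on $s \in [0,\rkr]$ that any polynomial $f \in \mathcal{T}_{K+1}(\rp)$ can be written as
\[
  f \;=\; \sum_{u \in \V{[s,\rkr]}} f_u \;+\; r_s,
\]
where each already-constructed $f_u$ lies in $\dR{u}$, and the remainder $r_s$ lies in the tensor-product space $\bigotimes_{v \in \V{s}} \mathcal{W}_v \otimes (\text{low-degree spaces below level } s)$ — more precisely $r_s \in \T{\V{s}} \otimes \TT{}{}$-type space supported on the descendants at level $s$, with degree $\le 2^{K+1}$ ``localized'' appropriately. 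The base case $s = \rkr$ is trivial: $\V{\rkr} = \{\rp\}$, $\dR{\rp} = \TT{\rp}{-1}$ contains all degree $\le 2^K$ polynomials, and $r_{\rkr}$ is $f$ itself viewed in $\mathcal{T}_{K+1}(\rp) = {\cal W}_{\rp}$ (recall ${\cal W}_{\rp} = \mathcal{T}_{K+1}(\rp)$ for $\rp \in \V{0}$ — wait, here $\rp$ is the top, so ${\cal W}_{\rp}$ in the $t>0$ convention would be $\TT{\rp}{-1}$; I need to be careful that the top vertex uses the $\mathcal{T}_{K+1}$ space since $f$ has degree up to $2^{K+1}$). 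Actually the cleaner setup: start at the \emph{top}, $f \in \mathcal{T}_{K+1}(\rp)$, and push degree-excess downward.

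\textbf{The inductive step — splitting a polynomial at one level.} Given $r_{s}$ supported (in the relevant sense) at level $s$, for each vertex $v \in \V{s}$ I would apply Lemma~\ref{lem R Decompose}: any $\phi \in {\cal W}_v \otimes \TT{v}{[0,k-1]}$ decomposes as $\psi + (\text{something in } \T{v} \otimes \TT{v}{[0,k-1]})$ with $\psi \in \cR{{\cal W}_v}{k} = \dR{v;k}$. Taking $k = \rkr - \rk{v} - 1$ (so $\anc{v}{k} = \anc{v}{\rkr - \rk{v}-1}$, one below $\rp$) gives $\psi \in \dR{v}$, and the leftover term lies in $\T{v} \otimes \TT{v}{[0,\rkr-\rk{v}-2]} = \TT{\anc{v}{1}}{[-1,\rkr-\rk{v}-2]}$ (using the identity in Definition~\ref{def  Tu}), i.e.\ it has been ``pushed up'' to the parent $\anc{v}{1} \in \V{s+1}$... no — \emph{down}: I have the tree orientation backwards. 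Let me restate: the excess degree at $v$ gets replaced by a piece $f_v \in \dR{v}$ plus a genuinely-low-degree ($\le 2^K$ in the $x_{L_v}$ block) remainder, which can then be absorbed into the $\TT{}{-1}$-type space attached to an \emph{ancestor} of $v$, one level higher. Iterating this over all of $\V{s}$ (and using that distinct $v \in \V{s}$ have disjoint leaf sets $L_v$, so the tensor structure is genuine and the operations on different blocks commute), I collect $\{f_v\}_{v \in \V{s}}$ and a new remainder $r_{s+1}$ which by construction has degree $\le 2^K$ on each $x_{L_v}$-block for $v \in \V{s}$, hence sits in $\TT{}{}$-space one level up — completing the induction.

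\textbf{Bookkeeping the space memberships.} The main care is tracking \emph{which} tensor space the remainder lives in at each stage, and verifying the two algebraic identities from Definition~\ref{def  Tu} — namely $\OO{u}{[a,b]} = \OO{\anc{u}{a}}{[0,b]}$ and $\{u\} \cup \OO{u}{[0,b]} = \OO{\anc{u}{1}}{[-1,b-1]}$ — are being applied correctly so that ``degree $\le 2^K$ on block $L_v$ for every $v$ in layer $\V{s}$'' translates into a clean statement about $\TT{w}{[\,\cdot\,,\,\cdot\,]}$ for ancestors $w$. I also need the inclusion $\T{A} \subseteq \T{A'}$ from Lemma~\ref{lem TAinclusion} (valid because $\{L_v\}_{v\in \V{s}}$ refines $\{L_w\}_{w \in \V{s+1}}$) to make the ``absorb into the next layer'' step legitimate. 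The special top-vertex convention $\dR{\rp} := \TT{\rp}{-1}$ rather than $\cR{\TT{\rp}{-1}}{0}$ is exactly what lets the final low-degree remainder $r_{\text{last}}$ be swallowed into $f_{\rp}$ with no leftover.

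\textbf{Expected main obstacle.} The routine-looking but genuinely delicate point is the \emph{commutativity / independence of the per-vertex operations within a single layer}: when I apply Lemma~\ref{lem R Decompose} to block $L_v$, the operator $\idtt{\anc{v}{k}} - \PK{\anc{v}{k}}$ acts on variables $x_{\anc{v}{k}_{\pe}}$, and for two siblings-in-layer $v, v'$ these ancestor vertices $\anc{v}{k}, \anc{v'}{k'}$ may coincide or be comparable, so the ``projections'' do not obviously act on disjoint variable sets and need not commute. I would handle this by processing $\V{s}$ in a fixed order and using a telescoping argument (as in the proof of Lemma~\ref{lem PT PDMrange} / Proposition~\ref{prop PT PDM}, Step on telescoping sums), or — cleaner — by first doing \emph{all} the degree-reduction on the $x_{L_v}$-blocks simultaneously (which only requires rewriting each monomial, purely combinatorially, with no operator applied), thereby reducing to the statement ``a degree $\le 2^{K+1}$ monomial in $x_{L_{\rp}}$ has degree $> 2^K$ on at most one block $L_v$ per layer,'' and only \emph{then} applying $\idtt{} - \PK{}$ blockwise where the blocks are genuinely disjoint. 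This monomial-counting observation (each monomial involves $\le 2^{K+1}$ leaves, so across the partition $\{L_v\}_{v \in \V{s}}$ it can have ``high'' degree $>2^K$ in at most one part) is the real engine and mirrors equation~\eqref{eq TPDecompose} in the proof of Lemma~\ref{lem PT PDMrange}; the rest is careful tensor-space accounting.
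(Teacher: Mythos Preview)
Your proposal is essentially the paper's argument, and all the load-bearing ingredients are present: the layer-by-layer peeling via Lemma~\ref{lem R Decompose}, the monomial observation that a degree $\le 2^{K+1}$ function can be ``high'' ($>2^K$) on at most one block $L_v$ per layer, the inclusion from Lemma~\ref{lem TAinclusion}, and the special convention $\dR{\rp}=\TT{\rp}{-1}$ to absorb the final low-degree remainder.

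Two points where the paper is cleaner than your write-up. First, your indexing is tangled: you start with ``$f=\sum_{u\in\V{[s,\rkr]}}f_u+r_s$, base case $s=\rkr$'' (top-down) but then describe passing from $r_s$ to $r_{s+1}$ via ancestors (bottom-up). The paper runs unambiguously bottom-up: handle $\V{0}$ first, obtain $f-\sum_{u\in\V{[0,k]}}f_u=\sum_{v\in\V{k+1}}\phi_v$ with $\phi_v\in\dT{v}$, and induct on $k$. This is exactly what you mean; just commit to it.

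Second, your ``expected main obstacle'' about commutativity of the $\idtt{}-\PK{}$ operators across a layer is a phantom. Once you have already-separated summands $\phi_v\in\dT{v}$ for $v\in\V{k+1}$, you apply Lemma~\ref{lem R Decompose} to each $\phi_v$ \emph{individually}; there is no simultaneous application and hence no commutativity issue. The monomial observation you correctly identified is needed only once, at the very bottom layer $\V{0}$, to get the initial separation. The paper packages this as the \emph{pivot vertex} $\pvt{S}$ (Definition~\ref{defi pivot} and Lemma~\ref{lem Decompose pivot to TT}): for each monomial $\phi_S$, $\pvt{S}$ is the deepest vertex on the path from $\rp$ where the high-degree mass concentrates, and grouping monomials by which $u\in\V{0}$ satisfies $\pvt{S}\preceq u$ (versus $\pvt{S}\in\V{[1,\rkr]}$) gives the bottom-layer split directly. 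After that, the inductive step is purely mechanical with no telescoping needed.
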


Before proving our main proposition, we gather the results from Part~I into one consolidated statement for easy reference, so we can cite it later without jumping between multiple individual lemmas. In order to do that, we introduce two parameters that encapsulate the constants from Part~I and control decay in our decomposition:

\begin{defi}[Global and Local Decay Parameters \(\Dt\) and \(\dW{u}\)]
    \label{def:Dt-and-dWu}
    Let $\hB$ be defined by \eqref{def hB}, ensuring $\hB \ge \tCR\bigl(\log(R)+1\bigr)$. 
    We introduce two parameters that work in tandem to control decay properties in our decomposition:
  
    \begin{enumerate}
      \item \textbf{Global smallness factor \(\Dt\).} 
        Define
        \begin{align}
        \label{def Dt}
          \Dt 
          \;:=\; 
          \max\!\Bigl\{\,
            \tC_{\ref{lem basicDecayAu}},\,
            \tC_{\ref{prop: coreR}},\,
            \tC_{\ref{prop PT MAIN}}
          \Bigr\}
          \times
          R\,
          \exp\bigl(-\eps\,\hB\bigr),
        \end{align}
        where the constants \(\tC_{\ref{lem basicDecayAu}}, \tC_{\ref{prop: coreR}}, 
        \tC_{\ref{prop PT MAIN}}\) are from 
        Lemma~\ref{lem basicDecayAu}, 
        Proposition~\ref{prop: coreR}, 
        and Proposition~\ref{prop PT MAIN}, respectively. 
        Because $\hB \ge \tCR(\log(R)+1)$, we can make \(\Dt\) arbitrarily small by taking \(\tCR\) large.
  
      \item \textbf{Local budget \(\dW{u}\).}
  For each node \(u\) with relative height \(\rk{u} = t \in [0,\rkr]\), we define
  \begin{align}
    \label{def dW}
    \dW{u} 
    \;=\; 
    \dW{t} 
    \;:=\;
    \begin{cases}
      \dfrac{1}{\tCB\,R}, 
         & \text{if } u \in \V{0} \;\bigl(\text{i.e.\ }t=0\bigr), \\[6pt]
      \Dt\,\exp\!\bigl(-\eps\,t\bigr), 
         & \text{if } u \in \V{t}\,\text{with }t>0.
    \end{cases}
\end{align}
  Here, \(\tCB \ge 1\) is the parameter that appears in 
  Proposition~\ref{prop PT MAIN}.

  \smallskip
  We emphasize that the main purpose of \(\dW{u}\) is to unify or “absorb” the two bounds that appear in Proposition~\ref{prop PT MAIN}.  
  Concretely, for each such node \(u\), the statement of Proposition~\ref{prop PT MAIN} 
  simplifies to saying 
  \begin{align}
    \label{eq: role of dWu}
    \dW{u} 
    \;\ge\;
    \begin{cases}
      \CW{\Tp{u}}, 
         & \text{if } u \in \V{0} \,(\text{i.e.\ } t=0), \\[5pt]
      \CW{\TT{u}{-1}},
         & \text{if } u \in \V{t}\,\text{with }t>0.
    \end{cases}
  \end{align}
  Thus, \(\dW{u}\) provides a single notation to handle both bounds,
  depending on whether \(u\) is in the bottom layer (\(t=0\)) or higher up (\(t>0\)).
    \end{enumerate}
  
    Together, $\Dt$ provides a global smallness factor (made arbitrarily small by choosing large $\tCR$), 
    while $\dW{u}$ (or $\dW{t}$) encodes the layer-by-layer decay at node $u$. 
    Both will be used to control norm bounds and correlation estimates 
    when we decompose polynomials in the next sections.
  \end{defi}

\noindent
Recall that a frequently appearing factor is
\[
  \exp\bigl(-\eps\,\lA{u}\bigr)
  \;=\;
  \exp\bigl(-\eps\,\hB\bigr)
  \,\cdot\,
  \exp\bigl(-\eps\,\rk{u}\bigr).
\]
Hence, the term \(\exp\bigl(-\eps\,\hB\bigr)\) will be absorbed into our newly defined parameters \(\Dt\) ( and consequently for \(\dW{u}\) as well).

We now restate Lemma~\ref{lem basicDecayAu} in terms of \(\rk{u}\) and \(\Dt\):

\begin{lemma}\label{lem basicDecayAu2}
  The following holds for sufficiently large \(\tCR\):
  let \(u \in V(T)\) satisfy \(\rk{u} \ge 0\), and let \(A \subseteq \OO{u}{}\).  
  Then for any functions \(f,g \in \T{A}\), we have
  \[
    \maxnorm{\D{A}\bigl[f\,g\bigr]}
    \;\le\;
    \Dt\,\exp\!\bigl(-\eps\,\rk{u}\bigr)\,\Unorm{f}{A}\,\Unorm{g}{A},
  \]
  and
  \[
    \maxnorm{\CE{A}\bigl[f\,g\bigr]}
    \;\le\;
    \Bigl(1 + \Dt\,\exp\!\bigl(-\eps\,\rk{u}\bigr)\Bigr)\,
    \Unorm{f}{A}\,\Unorm{g}{A}.
  \]
\end{lemma}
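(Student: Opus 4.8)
The plan is to obtain Lemma~\ref{lem basicDecayAu2} as a purely notational restatement of Lemma~\ref{lem basicDecayAu}, so the argument will be short. First I would check that the hypotheses line up: by definition \eqref{def rk} the condition $\rk{u} \ge 0$ is equivalent to $\lA{u} \ge \hB$, and since $\hB = \lceil \tCR(\log(R)+1) + \frac{\eps}{10 d}\tCR(\log(R)+1)\rceil \ge \tCR(\log(R)+1)$, this forces $\lA{u} \ge \tCR(\log(R)+1)$. Hence, for the same $u$ and any antichain $A \subseteq \OO{u}{}$, Lemma~\ref{lem basicDecayAu} applies (with $\tCR$ sufficiently large, which is exactly the standing assumption here), giving
\[
  \maxnorm{\D{A}fg} \le \tC_{\ref{lem basicDecayAu}}\, R\, \exp(-\eps \lA{u})\, \Unorm{f}{A}\Unorm{g}{A}
\]
for all $f,g \in \T{A}$, together with the matching bound on $\maxnorm{\CE{A}fg}$.

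Next I would rewrite the decay factor. Since $\lA{u} = \hB + \rk{u}$, we have $\exp(-\eps\lA{u}) = \exp(-\eps\hB)\exp(-\eps\rk{u})$, so the prefactor above equals $\bigl(\tC_{\ref{lem basicDecayAu}}\, R\, \exp(-\eps\hB)\bigr)\exp(-\eps\rk{u})$. By the definition \eqref{def Dt}, $\Dt = \max\{\tC_{\ref{lem basicDecayAu}},\,\tC_{\ref{prop: coreR}},\,\tC_{\ref{prop PT MAIN}}\}\, R\, \exp(-\eps\hB)$, which dominates $\tC_{\ref{lem basicDecayAu}}\, R\, \exp(-\eps\hB)$; substituting yields the first inequality $\maxnorm{\D{A}fg} \le \Dt\,\exp(-\eps\rk{u})\,\Unorm{f}{A}\Unorm{g}{A}$. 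For the second inequality I would reuse the step from the end of the proof of Lemma~\ref{lem basicDecayAu}: by the triangle inequality and Cauchy--Schwarz,
\[
  \maxnorm{\CE{A}fg} \le \maxnorm{\EE{A}fg} + \maxnorm{\D{A}fg} \le \Unorm{f}{A}\Unorm{g}{A} + \Dt\,\exp(-\eps\rk{u})\,\Unorm{f}{A}\Unorm{g}{A},
\]
which is exactly the claimed bound.

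There is no real obstacle; the only care needed is to take $\tCR$ large enough for Lemma~\ref{lem basicDecayAu} to hold and to have defined $\Dt$ as the maximum of the three named constants so that it absorbs $\tC_{\ref{lem basicDecayAu}}$. The purpose of this lemma is organizational: later arguments in Part~II can then cite a single clean decay estimate phrased in terms of the relative height $\rk{u}$ and the globally small parameter $\Dt$ (which, as noted after \eqref{def Dt}, can be made arbitrarily small by enlarging $\tCR$), rather than re-deriving the split $\lA{u} = \hB + \rk{u}$ and unwinding the constants each time.
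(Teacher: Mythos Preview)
Your proposal is correct and is precisely the intended argument: the paper does not even supply a proof, introducing Lemma~\ref{lem basicDecayAu2} with the phrase ``We now restate Lemma~\ref{lem basicDecayAu} in terms of \(\rk{u}\) and \(\Dt\),'' and your verification that $\rk{u}\ge 0 \Rightarrow \lA{u}\ge \hB \ge \tCR(\log(R)+1)$ together with the factorization $\exp(-\eps\lA{u}) = \exp(-\eps\hB)\exp(-\eps\rk{u})$ and the absorption $\tC_{\ref{lem basicDecayAu}} R \exp(-\eps\hB) \le \Dt$ is exactly what is needed to justify that restatement.
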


\noindent
In this formulation, the role of \(\dW{u}\) is to replace the original 
\(\CW{{\cal W}_u}\) and incorporate additional decay factors. 
We now gather the essential conditions from \textbf{Part~I} into a single unified assumption for the vertex \(\rp\). 

\begin{assumption}\label{Assume rp}
  Suppose \(\rp\in V(T)\) comes with parameters 
  \begin{align}
        \label{def tCB2}
        1 \le \tCB \in \CC \quad \text{and} \quad \tCR  \in \CC\,.
  \end{align}
  We say “\(\rp\) satisfies this assumption” if:

  \begin{enumerate}
    \item[\textit{(i)}] \(\rkr \ge 0.\)

    \smallskip
    
    \item[\textit{(ii)}] For each \(u\in \V{t}\) with $0 \le t < \rkr$ and every $w$ satisfying \(\,u\preceq w\preceq \rp\). We require:
      \begin{enumerate}
        \item For each \(f_u\in \dR{u}\), there exists a decomposition
          \[
            f_u \;=\; f_{u,\mathcal{R}} \;+\; f_{u,\mathcal{T}},
          \]
          such that
          \[
            f_{u,\mathcal{R}}
            \;\in\;
            \underbrace{\dR{u;\,\rk{w}-\rk{u}} }_{\subseteq \F{w_{\pe}}}
            \,\otimes\, 
            \TT{w}{[0,\,\rkr-\rk{w}-1]} 
            \;\subseteq\;\F{\rp_{\pe}},
          \]
          \[
            f_{u,\mathcal{T}}
            \;\in\;
            \T{w} \,\otimes\, \TT{w}{[0,\,\rkr-\rk{w}-1]} 
            \;\subseteq\;\F{\rp_{\pe}},
          \]
          and
          \[
            \Unorm{f_{u,\mathcal{T}}}{\rp}
            \;\le\;
            \dW{t}\,\Dt\,\exp\!\bigl(-\eps\,\rk{w}\bigr)\,
            \lame^{\rk{w}-\rk{u}}
            \,\Unorm{f}{\rp}.
          \]

        \item For every \(\phi_1\in \dR{u;\,\rk{w}-\rk{u}} \subseteq \F{w_{\pe}}\) 
          and \(\phi_2\in \T{w}\),
          \[
            \maxnorm{\CE{w}\bigl[\phi_1\,\phi_2\bigr]}
            \;\le\;
            \tC_{\ref{prop: coreR}}\,
            \dW{t}\,\lame^{\rk{w}-\rk{u}}
            \,\Unorm{\phi_1}{w}\,\Unorm{\phi_2}{w}.
          \]
      \end{enumerate}

    \smallskip

    \item[\textit{(iii)}] For \(u \in \V{\rkr}\) (i.e., \(u=\rp\)), and for any \(f,g\in \dR{\rp}\),
      \[
        \maxnorm{\D{\rp}[f\,g]}
        \;\le\;
        \dW{\rkr}\,\Unorm{f}{\rp}\,\Unorm{g}{\rp}.
      \]
  \end{enumerate}
\end{assumption}

By combining Proposition~\ref{prop: coreR} and Proposition~\ref{prop PT MAIN}, we obtain the following corollary:

\begin{cor}
    \label{cor PartII group}
  For any fixed \(1 \le \tCB \in \CC\), if \(\tCR\) is chosen large enough, then every vertex \(\rp\) with \(\rkr\ge0\) satisfies Assumption~\ref{Assume rp}.
\end{cor}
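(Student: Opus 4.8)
The plan is to read off the three clauses of Assumption~\ref{Assume rp} directly from the two main results of Part~I, namely Proposition~\ref{prop: coreR} and Proposition~\ref{prop PT MAIN}, together with Lemma~\ref{lem basicDecayAu2}, after two routine preliminaries. First, given $\tCB\in\CC$, I would fix $\tCR\in\CC$ large enough that every ``sufficiently large $\tCR$'' hypothesis invoked below holds and that $\Dt$ (hence every $\RDelta$-type quantity that appears) is below whatever small threshold is needed. Second, I record the dictionary between the two relative heights: for $\rp$ with $\rkr\ge0$, every $u\preceq\rp$ with $\rk{u}=t\ge0$ satisfies $\lA{u}=\hB+t$, with equality $\lA{u}=\hB$ exactly when $u\in\V{0}$, and $\exp(-\eps\lA{u})=\exp(-\eps\hB)\exp(-\eps t)$. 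Since $\hB\ge\tCR(\log R+1)$, Proposition~\ref{prop PT MAIN} applies to every such $u$ and, by~\eqref{eq: role of dWu}, gives $\CW{{\cal W}_u}\le\dW{t}$, where ${\cal W}_u$ is $\Tp{u}$ if $t=0$ and $\TT{u}{-1}$ if $t>0$ — precisely the subspace used to define $\dR{u}$. Finally, by the definition of $\Dt$, each of $\tC_{\ref{lem basicDecayAu}}R\exp(-\eps\hB)$, $\tC_{\ref{prop: coreR}}R\exp(-\eps\hB)$, $\tC_{\ref{prop PT MAIN}}R\exp(-\eps\hB)$ is $\le\Dt$.

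Clause (i) is the hypothesis. For (ii)(b), given $u\in\V{t}$ with $0\le t<\rkr$ and $u\preceq w\preceq\rp$, set $k=\rk{w}-\rk{u}$, so $w=\anc{u}{k}$ and $\dR{u;k}=\cR{{\cal W}_u}{k}$; Proposition~\ref{prop: coreR}(1) then gives, for $\phi_1\in\dR{u;k}$ and $\phi_2\in\T{w}$, $\maxnorm{\CE{w}[\phi_1\phi_2]}\le\tC_{\ref{prop: coreR}}\CW{{\cal W}_u}\lame^{k}\Unorm{\phi_1}{w}\Unorm{\phi_2}{w}\le\tC_{\ref{prop: coreR}}\dW{t}\lame^{\rk{w}-\rk{u}}\Unorm{\phi_1}{w}\Unorm{\phi_2}{w}$, which is (ii)(b) verbatim (no change of norm is needed here).

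For (ii)(a), with the same data, set $m=\rkr-\rk{u}-1$, so $\dR{u}=\cR{{\cal W}_u}{m}$, $\rp=\anc{u}{m+1}$, and $k=\rk{w}-\rk{u}$ runs over $[0,m+1]$. When $k\le m-1$, Proposition~\ref{prop: coreR}(2) with its internal parameter set to $k$ produces $f_{u,\mathcal R}\in\cR{{\cal W}_u}{k}\otimes\TT{u}{[k,m-1]}\subseteq\dR{u;k}\otimes\TT{w}{[0,\rkr-\rk{w}-1]}$ and $f_{u,\mathcal T}:=f_u-f_{u,\mathcal R}\in\T{w}\otimes\TT{w}{[0,\rkr-\rk{w}-1]}$ with $\Unorm{f_{u,\mathcal T}}{\anc{u}{m}}\le\CW{{\cal W}_u}\tC_{\ref{prop: coreR}}R\exp(-\eps\lA{u})\lame^{k}\exp(-\eps k)\Unorm{f_u}{\anc{u}{m}}$; using the cancellation $\exp(-\eps\lA{u})\exp(-\eps k)=\exp(-\eps\hB)\exp(-\eps\rk{w})$ (valid since $\rk{w}=\rk{u}+k$), the bounds $\CW{{\cal W}_u}\le\dW{t}$ and $\tC_{\ref{prop: coreR}}R\exp(-\eps\hB)\le\Dt$, and the norm comparison $\Unorm{\cdot}{\anc{u}{m}}\simeq\Unorm{\cdot}{\rp}$ from Lemma~\ref{lemma: coreNorm}/Remark~\ref{rem: coreNorm} (both $f_u$ and $f_{u,\mathcal T}$ lie in $\F{u_{\pe}}\otimes\TT{u}{[0,m-1]}$), this gives the required estimate. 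The two boundary values of $k$ are treated by hand: for $k=m$ take $f_{u,\mathcal R}=f_u$, $f_{u,\mathcal T}=0$ (membership via $1\in\TT{w}{0}$); for $k=m+1$, i.e.\ $w=\rp$ and $\TT{\rp}{[0,-1]}=\R$, take $f_{u,\mathcal R}=(\idtt{\rp}-\PK{\rp})f_u\in\dR{u;m+1}$ and $f_{u,\mathcal T}=\PK{\rp}f_u\in\T{\rp}$, and bound $\Unorm{\PK{\rp}f_u}{\rp}\le\RDelta_m\Unorm{f_u}{\rp}$ by Lemma~\ref{lem RProjectionNorm}; expanding $\RDelta_m$ and using $\exp(-\eps\lA{u})\exp(-\eps m)=\exp(\eps)\exp(-\eps\hB)\exp(-\eps\rkr)$ together with $\tlam^{m+1}(1+\kappa)^{3m+2}\le\lame^{m+1}$ (from $\tlam(1+\kappa)^6\le\lame$) yields $\RDelta_m\le\dW{t}\Dt\exp(-\eps\rkr)\lame^{m+1}$, which is the claim since $\rk{w}=\rkr$.

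For (iii), with $u=\rp$ and $f,g\in\dR{\rp}=\TT{\rp}{-1}=\T{\fc{\rp}}$: since $\fc{\rp}=\OO{\rp}{-1}\pe\{\rp\}$, Lemma~\ref{lem basicEECED} gives $\D{\rp}[f g]=\D{\rp}\big[\D{\fc{\rp}}[f g]\big]$ (because $\D{\rp}$ annihilates constants and $\D{\fc{\rp}}[f g]$ differs from $\CE{\fc{\rp}}[f g]$ by a constant), whence $\maxnorm{\D{\rp}[f g]}\le2\maxnorm{\D{\fc{\rp}}[f g]}$; Lemma~\ref{lem basicDecayAu2} with $A=\fc{\rp}\subseteq\OO{\rp}{}$ bounds the latter by $\Dt\exp(-\eps\rkr)\Unorm{f}{\fc{\rp}}\Unorm{g}{\fc{\rp}}$, and the same decay estimate makes the hypothesis of Lemma~\ref{lem normComparison} hold, converting $\Unorm{\cdot}{\fc{\rp}}\simeq\Unorm{\cdot}{\rp}$; this gives (iii) after absorbing the bounded factor. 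The only genuine work is this constant bookkeeping: the norm conversions in (ii)(a) and (iii), and the factor $2$ in (iii), contribute a universal $(1+\kappa)^{O(1)}$, which I would absorb by taking the (freely chosen) constant $\tC_{\ref{prop: coreR}}$ — hence $\Dt$, and hence $\dW{t}$ for $t>0$ — correspondingly larger within $\CC$; enlarging these constants only weakens Part~I's estimates and relaxes the targets in Assumption~\ref{Assume rp}, so nothing is lost. I expect this bookkeeping, rather than any conceptual point, to be the main obstacle, with a minor subtlety in correctly recognizing the degenerate tensor factors ($1\in\TT{w}{0}$, $\TT{\rp}{[0,-1]}=\R$) in the boundary cases of (ii)(a).
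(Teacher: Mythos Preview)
Your proposal is correct and takes essentially the same approach as the paper: read off clauses (i)--(iii) of Assumption~\ref{Assume rp} directly from Proposition~\ref{prop: coreR}, Proposition~\ref{prop PT MAIN}, and Lemma~\ref{lem basicDecayAu2} after translating $\lA{u}$ into $\rk{u}$ and absorbing $\exp(-\eps\hB)$ into $\Dt$. You are in fact more careful than the paper's own write-up on two points it glosses over: the boundary values $w=\anc{u}{m}$ and $w=\rp$ in (ii)(a), where Proposition~\ref{prop: coreR}(2) does not literally apply and you correctly supply the trivial decomposition and the $\PK{\rp}$-based one respectively, and the norm conversion $\Unorm{\cdot}{\anc{u}{m}}\simeq\Unorm{\cdot}{\rp}$, which the paper writes as if automatic.
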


    \begin{proof}[Proof of the Corollary]
        We claim that once $\tCR$ is sufficiently large, every vertex \(\rp \in V(T)\) with \(\rkr \ge 0\) satisfies Assumption~\ref{Assume rp}. We verify each item in that assumption:
        The first \textbf{(i)} is assumed.  

        \noindent
        \textbf{(ii)}
        Consider any $u \in \V{t}$ with $0 \le t < \rkr$, and let $u \preceq w \preceq \rp$. 
        We need to check parts (a) and (b):
      
        \begin{enumerate}
          \item \emph{Existence of the decomposition} $f_u = f_{u,\mathcal{R}} + f_{u,\mathcal{T}}$.  
          First, we have $w = \anc{u}{\rk{w} - \rk{u}}$. We apply item~(2) in Proposition~\ref{prop: coreR} with the parameter $t$ in the Proposition setting to  $\rk{w}-\rk{u}$ to get a decomposition 
          \[
            f_u \;=\; f_{u,\mathcal{R}} \;+\; f_{u,\mathcal{T}}
          \] 
          with 
          \begin{align*}
                f_{u,\mathcal{R}} \in &  
                \dR{u;\,\rk{w}-\rk{u}} 
                \,\otimes\, 
                \TT{u}{[\rk{w}-\rk{u},\,\rkr - \rk{u}-1]}  \\
                =&
                \dR{u;\,\rk{w}-\rk{u}} 
                 \,\otimes\, 
                \TT{w}{[0,\,\rkr - \rk{w}-1]} \,.
          \end{align*}
          where the last equality follows from the fact that 
          $$\TT{u}{[\rk{w}-\rk{u},\,\rkr - \rk{u}-1]} = \TT{w}{[0,\,\rkr - \rk{w}-1]}\,.$$
          And the same holds for 
          \begin{align*}
            f_{u,\mathcal{T}} \in &  
            \T{w} 
            \,\otimes\, 
            \TT{w}{[0,\,\rkr - \rk{w}-1]} \,.
          \end{align*}
          Further, from item~(2) in Proposition~\ref{prop: coreR}, we have
          \begin{align*}
            \Unorm{f_{u,\mathcal{T}}}{\rp}
            \;\le\;
             \CW{\Tp{u}}
            \cdot
                  \tC_{\ref{prop: coreR}} R \exp(-\eps \lA{u})
                  \lame^{\rk{w}-\rk{u}} \exp(- \eps (\rk{w} - \rk{u})) 
                  \Unorm{f}{\rp}\,.
            \end{align*}
            With $\exp(-\eps \lA{u}) = \exp(-\eps \rk{u}) \exp(-\eps \hB)$ and \eqref{eq: role of dWu}, the above term can be bounded by
            \begin{align*}
                (*) 
            \le & 
                \dW{u}
                \cdot
                \Dt 
                  \lame^{\rk{w} - \rk{u}} \exp(- \eps \rk{w} ) 
                  \Unorm{f}{\rp}\,,
            \end{align*}

          \item \emph{Correlation bound for $\phi_1 \in \dR{u;\,\rk{w}-\rk{u}}$ and $\phi_2 \in \T{w}$.}
          From item~(1) of Proposition~\ref{prop: coreR} and \eqref{eq: role of dWu}, we have a uniform correlation bound of the form
          \[
            \maxnorm{\CE{w}\bigl[\phi_1\,\phi_2\bigr]}
            \;\le\;
            \tC_{\ref{prop: coreR}}
            \dW{t}
            \,\lame^{\rk{w} - \rk{u}} \,\Unorm{\phi_1}{w}\,\Unorm{\phi_2}{w},
          \]
          Thus, condition~(ii)(b) is also satisfied if $\tCR$ is large enough.
        \end{enumerate}

        \noindent
        \textbf{(iii)}
        Lastly, for $u \in \V{\rkr}$ we have $u = \rp$. Then the statement claims 
        \[
          \maxnorm{\D{\rp}\bigl[f\,g\bigr]} 
          \;\le\; 
          \dW{\rkr}\,\Unorm{f}{\rp}\,\Unorm{g}{\rp}
          \quad
          \text{for any } f,g\in \dR{\rp}.
        \]
        This simply follows from Lemma \ref{lem basicDecayAu2}, definition of $\dW{\rkr}$ and the norm comparison Lemma \ref{lemma: coreNorm}. 
               
     \end{proof}

\paragraph{Structure of \textbf{Part~II}}
The remainder of this section focuses on proving Proposition~\ref{prop PT 2K+1}, 
where we decompose a polynomial \(f\in \TT{\rp}{K+1}\) into polynomials in the spaces \(\dR{u}\).  
In Section~\ref{sec: RuRv}, we derive bounds on 
\(\maxnorm{\D{\rp}[f_u\,g_v]}\) 
and 
\(\bigl|\EE{\rp}[f_u\,g_v]\bigr|\) 
for \(f_u\in\dR{u}\) and \(g_v\in\dR{v}\).  
In Section~\ref{sec: layerwise}, we extend these to 
\(\maxnorm{\D{\rp}[f_t\,g_r]}\) 
and 
\(\bigl|\EE{\rp}[f_t\,g_r]\bigr|\), 
where 
\(f_t=\sum_{u\in\V{t}}f_u\) and \(g_r=\sum_{v\in\V{r}}g_v\), 
for \(t,r\in[\,0,\rkr\,]\).  
Finally, in Section~\ref{sec: proof main}, we combine these results to establish Theorem~\ref{theor main}.

\subsection{Decomposition of $\le 2^{K+1}$-degree polynomial}
Before we proceed to prove Proposition \ref{prop PT 2K+1},
we define a pivot vertex $\pvt{S}$ for each $S \subseteq L_{\rp}$ with $|S| \le 2^{K+1}$, and show that any function $\phi_S$ with variables $x_S$ is contained in $\TT{\pvt{S}}{-1}\otimes \Tr{\pvt{S}}$.
\begin{defi}
    \label{defi pivot}
    Consider a fixed $\rp \in V(T)$.
    For each $S \subseteq L_{\rp}$ with $|S| \le 2^{K+1}$, we define a pivot vertex
    \begin{align}
        \label{def pivot}
        \pvt{S} = \pvt{S, \rp} \in V(T_{\rp})
    \end{align}
    associated with $S$, constructed through the following process:
    \begin{enumerate}
        \item Initialize a pointer $\bf p$ at $\rp$.
        \item If theres exists a child vertex $v$ of the vertex to which $\bf p$ is currently pointing
              such that $|S \cap L_{v'}| > 2^K$, then update $\bf p$ to point to $v$ (note there is at most one such vertex $v$ given $|S| < 2^{K+1}$) and repeat this step . If no such vertex exists, then set $\pvt{S}$ to be the vertex where $\bf p$ is currently pointing to and terminate the process.
    \end{enumerate}
\end{defi}
\begin{rem}
    Note that if $S \subseteq L_{\rp}$ with $|S| \le 2^{K}$, then $\pvt{S} = \rp$.
\end{rem}
\begin{lemma}\label{lem Decompose pivot to TT}
    Suppose $S \subseteq L_{\rp}$ has $|S| \le 2^{K+1}$, and let $\phi_S$ be a function with variables $x_S$. 
    Then:
    \begin{enumerate}
      \item $\phi_S \in \dT{\pvt{S}}.$
      \item For any $v$ with $\pvt{S} \preceq v \preceq \rp$, we have 
      \[
        \phi_S \;\in\; \Tp{v}\;\otimes\;\TT{v}{[0,\;\rkr - \rk{v}-1]}.
      \]
      \item For any $v$ satisfying $v \preceq \pvt{S}$, we have $\phi_S \in \dT{v}.$
    \end{enumerate}
  \end{lemma}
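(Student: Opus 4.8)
\textbf{Proof plan for Lemma~\ref{lem Decompose pivot to TT}.}

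The plan is to analyze how the intersections $S \cap L_v$ change as $v$ moves along the path from $\rp$ down to $\pvt{S}$ and below, and to repeatedly invoke Lemma~\ref{lem TAinclusion} (the $\T{A}$-inclusion lemma) to slot $\phi_S$ into the various tensor product spaces. The key structural facts come straight from the construction of $\pvt{S}$ in Definition~\ref{defi pivot}: along the path from $\rp$ to $\pvt{S}$ the pointer always moves into the unique child $v$ with $|S \cap L_v| > 2^K$, so for every strict ancestor $w$ of $\pvt{S}$ with $\pvt{S} \preceq w \preceq \rp$, exactly one child $c(w)$ of $w$ has $|S \cap L_{c(w)}| > 2^K$ (and $\pvt{S} \preceq c(w)$), while every other child $c'$ of $w$ satisfies $|S \cap L_{c'}| \le 2^K$. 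At $\pvt{S}$ itself, the process terminated, so \emph{every} child $v'$ of $\pvt{S}$ has $|S \cap L_{v'}| \le 2^K$.

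For part (1): I would decompose $S$ according to the children $\fc{\pvt{S}}$ of $\pvt{S}$, writing $S = \bigsqcup_{v' \in \fc{\pvt{S}}} (S \cap L_{v'})$, each block of size $\le 2^K$. Then $\phi_S$, as a function of $x_S$, lies in $\bigotimes_{v' \in \fc{\pvt{S}}} \T{v'} = \T{\fc{\pvt{S}}} = \TT{\pvt{S}}{-1}$ by definition of $\OO{\pvt{S}}{-1} = \fc{\pvt{S}}$; writing $\phi_S$ as a finite sum of products of indicator functions on each block (exactly as in the base case of the proof of Lemma~\ref{lem TAinclusion}) makes this explicit. Now I need $\phi_S \in \dT{\pvt{S}} = \TT{\pvt{S}}{[-1,\rkr-\rk{\pvt{S}}-1]}$; since $\TT{\pvt{S}}{-1} \subseteq \TT{\pvt{S}}{[-1,\rkr-\rk{\pvt{S}}-1]}$ trivially (more factors, each $\T{\cdot}$ containing the constants), this is immediate. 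One should double-check the edge case $\rkr - \rk{\pvt{S}} - 1 < -1$, i.e. whether $\pvt{S}$ can be so deep that the interval $[-1, \rkr - \rk{\pvt{S}} - 1]$ is degenerate; since we work under $\rkr \ge 0$ and the decomposition in Part~II only indexes vertices with $\rk{u} \ge 0$, and $\pvt{S} \preceq \rp$, this is controlled by the standing hypotheses.

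For part (2): fix $v$ with $\pvt{S} \preceq v \preceq \rp$. The block $S \cap L_v$ may have size up to $2^{K+1}$ (it certainly does when $v = \rp$ and $\deg$ is large), so $\phi_S$ restricted to these variables is a priori only in $\Tp{v} = {\cal T}_{K+1}(v)$. Meanwhile, $S$ also has mass outside $L_v$: these leaves are distributed among $\OO{v}{[0,\rkr-\rk{v}-1]}$, the "siblings on the way up to $\rp$". The point is that for each such sibling subtree rooted at some $w' \in \OO{v}{k}$ with $0 \le k \le \rkr - \rk{v} - 1$, we have $|S \cap L_{w'}| \le 2^K$: indeed $w'$ is a child of $\anc{v}{k+1}$ other than $\anc{v}{k}$, and $\anc{v}{k+1}$ is a (weak) ancestor of $\pvt{S}$, so by the construction of the pivot all children of $\anc{v}{k+1}$ except the one leading toward $\pvt{S}$ carry $\le 2^K$ leaves of $S$ — and $w' \ne \anc{v}{k}$ is precisely such a child (since $\anc{v}{k} \succeq \pvt{S}$ when $k \le \rkr - \rk{v}-1$... actually when $\anc{v}{k}$ is still on the pivot path; one must verify $\anc{v}{k}$ for $k$ in this range is $\succeq \pvt{S}$, which holds because $v \succeq \pvt{S}$ forces $\anc{v}{k} \succeq \pvt{S}$ only up to $k = \rk{\pvt{S}} - \rk{v}$, so more care is needed here — see below). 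Granting the size bounds, $\phi_S$ factors as a sum of products with one factor in $\Tp{v}$ (the $x_{S \cap L_v}$ part) and the rest in $\bigotimes_{w'} \T{w'}$ over the relevant siblings, i.e. in $\Tp{v} \otimes \TT{v}{[0,\rkr-\rk{v}-1]}$, again via the indicator-decomposition trick of Lemma~\ref{lem TAinclusion}.

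For part (3): for $v \preceq \pvt{S}$, the pivot of $S$ relative to $\rp$ having passed through $v$ means $|S \cap L_v| > 2^K$ (or $v = \pvt{S}$), but more importantly $v$ has a unique child on the pivot path and all other children carry $\le 2^K$ leaves of $S$; iterating down to $\pvt{S}$, one sees $S$ splits into the block inside $L_{\pvt{S}}$ (size $\le 2^{K+1}$, handled by part~(1) applied with $\pvt{S}$ as root, giving membership in $\TT{\pvt{S}}{-1}$) together with blocks of size $\le 2^K$ in each $\OO{v}{[0, \rk{\pvt{S}} - \rk{v} - 1]}$ sibling subtree. Reassembling and using $\{\pvt{S}\} \cup \OO{v}{[0,\rk{\pvt{S}}-\rk{v}-1]}$ together with the identity $\{u\} \cup \OO{u}{[0,b]} = \OO{\anc{u}{1}}{[-1,b-1]}$ from Definition~\ref{def  TuOu} (applied repeatedly, or directly noting $\TT{\pvt{S}}{-1} \otimes \TT{v}{[0,\rk{\pvt{S}}-\rk{v}-1]} \subseteq \TT{v}{[-1, \rkr - \rk{v}-1]} = \dT{v}$), one concludes $\phi_S \in \dT{v}$.

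\textbf{Main obstacle.} The real bookkeeping difficulty — and the step I expect to be the crux — is part (2), specifically verifying the claim that every sibling subtree rooted at a vertex of $\OO{v}{[0,\rkr-\rk{v}-1]}$ carries at most $2^K$ leaves of $S$. This requires carefully tracking \emph{which} ancestors of $v$ lie on the pivot path (namely $\anc{v}{k}$ for $0 \le k \le \rk{\pvt{S}} - \rk{v}$, since $v \succeq \pvt{S}$) versus which lie strictly above $\pvt{S}$ (for $k > \rk{\pvt{S}} - \rk{v}$, where $\anc{v}{k}$ is a strict ancestor of $\pvt{S}$), and in \emph{both} regimes arguing that the relevant off-path children carry $\le 2^K$ leaves — in the first regime because the pivot construction only descends into the unique heavy child, and in the second regime because above $\pvt{S}$ the pivot path still picked out a unique heavy child at each step. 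Getting the index ranges exactly right (the roles of $\rk{v}$, $\rk{\pvt{S}}$, $\rkr$, and the $-1$'s in the $\TT{}{}$ intervals) is where an error would most likely creep in, so I would set up the notation $\pvt{S} = \anc{v}{j}$ with $j = \rk{\pvt{S}} - \rk{v}$ explicitly and split the argument at $k = j$. Once the combinatorics of the leaf-mass distribution is pinned down, each of the three membership statements follows mechanically from Lemma~\ref{lem TAinclusion} and the tensor-product identities in Definition~\ref{def  TuOu}.
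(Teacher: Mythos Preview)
There is a genuine gap in your Part~(1), and it propagates. You write $S = \bigsqcup_{v' \in \fc{\pvt{S}}}(S \cap L_{v'})$, i.e.\ you are assuming $S \subseteq L_{\pvt{S}}$. This is false in general: as the pointer descends from $\rp$ into the unique ``heavy'' child at each step, $S$ may still have up to $2^K - 1$ leaves in each of the \emph{other} children along the way. Concretely, if at $\rp$ one child $c_1$ carries $2^K+1$ leaves of $S$ and another child $c_2$ carries $2^K-1$, the pivot enters $c_1$ but $S$ retains mass in $L_{c_2} \subseteq L_{\rp} \setminus L_{\pvt{S}}$. So $\phi_S \notin \TT{\pvt{S}}{-1}$ in general, and your argument that $\phi_S \in \TT{\pvt{S}}{-1} \subseteq \dT{\pvt{S}}$ breaks down.

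The fix, which is what the paper does, is to work with the \emph{full} antichain $A = \OO{\pvt{S}}{[-1,\,\rkr - \rk{\pvt{S}}-1]}$ rather than just $\fc{\pvt{S}} = \OO{\pvt{S}}{-1}$. The leaves $\{L_w\}_{w \in A}$ partition all of $L_{\rp}$, and the key observation is that \emph{every} $w \in A$ was at some stage a child of the current pointer position but was not entered---hence $|S \cap L_w| \le 2^K$. This immediately gives $\phi_S \in \T{A} = \dT{\pvt{S}}$, proving Part~(1). Part~(2) then falls out by noting that $\OO{v}{[0,\rkr-\rk{v}-1]} \subseteq A$ for any $v$ with $\pvt{S} \preceq v \preceq \rp$, so each such sibling carries $\le 2^K$ leaves of $S$ (your own Part~(2) argument was essentially right on this point; your self-doubt about index ranges was unfounded since $\pvt{S} \preceq v$ forces $\anc{v}{k} \succeq \pvt{S}$ for \emph{all} $k \ge 0$). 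For Part~(3) you have the direction of $\preceq$ backwards in your reasoning: $v \preceq \pvt{S}$ means $v$ is a \emph{descendant} of $\pvt{S}$, so the pivot process never passed through $v$. The paper's argument here is a one-liner: for $v \preceq \pvt{S}$ the antichain $\OO{v}{[-1,\rkr-\rk{v}-1]}$ refines $\OO{\pvt{S}}{[-1,\rkr-\rk{\pvt{S}}-1]}$, so $\dT{\pvt{S}} \subseteq \dT{v}$ by Lemma~\ref{lem TAinclusion}, and Part~(1) finishes it.
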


\begin{proof}

    \step{Proof of first statement}
    Now we fix $S$ as described in the lemma, and let $\phi_S$ be a function in the variables $x_S$. For simplicity, let $A$ denote the set
    \[
      \OO{\pvt{S}}{[-1, \rkr - \rk{\pvt{S}} - 1]}.
    \]
    From the procedure used to find the pivot vertex $\pvt{S}$ for a set $S$, it follows directly that each $w \in A$ was once a child vertex of the vertex to which $\mathbf{p}$ was pointing at some stage, but was never itself visited by the pointer $\mathbf{p}$. Consequently, for every $w \in A$, we have
    \[
      S_w \;:=\; |S \cap L_w| \;\le\; 2^K.
    \]
    Observe that $\bigcup_{w \in A} L_w$ is a partition of $L_{\rp}$.
    Hence, we can view $\phi_S$ as a function of the variables $\bigl(x_{S_w}\bigr)_{w \in A}$. From this observation, we conclude that $\phi_S$ is a function of
    \[
      \T{A} \;=\; \TT{\pvt{S}}{[-1, \rkr - \rk{u_S}]}\,.
    \]
    
    \step{Proof of second statement} 
    Next, consider any $\pvt{S} \pe v \pe \rp$. Since 
    \[
      \OO{v}{[0, \rkr - \rk{v}-1]} \;\subseteq\; A,
    \]
    it follows that 
    \[
      |S_w| \;\le\; 2^K \quad\text{for all } w \in \OO{v}{[0, \rkr - \rk{v}-1]}.
    \]
    Let $A' := \OO{v}{[0, \rkr - \rk{v} - 1]}$. Because
    \[
      L_v \;\sqcup\; \bigsqcup_{w \in A'} L_w \;=\; L_{\rp}
    \]
    is a partition of $L_{\rp}$, we can express $\phi_S$ as a function of the variables
    \[
      \bigl(x_{S_w}\bigr)_{w \in \{v\} \cup A'}\,,
    \]
    where 
    \[
      S_v \;:=\; |S \cap L_v|\quad\text{and}\quad |S_v|\;\le\;|S|\;\le\;2^{K+1}.
    \]

    Finally, we write $\phi_S$ as a sum of indicator functions over all realizations of $x_S$. Specifically,
    \begin{align*}
      \phi\bigl((x_{S_w})_{w \in \{v\}\cup A'}\bigr)
      \;=\;&
      \sum_{(x'_{S_w})_{w \in \{v\} \cup A'}}
        \underbrace{\phi\bigl((x'_{S_w})_{w \in \{v\} \cup A'}\bigr)}_{\text{constant}}
        \;\underbrace{\mathbbm{1}_{x'_{S_v}}\bigl(x_{S_v}\bigr)}_{\in \Tp{v}}
        \;\prod_{w \in A'}\;
        \underbrace{\mathbbm{1}_{\{x'_{S_w}\}}\bigl(x_{S_w}\bigr)}_{\in \T{w}}
      \\[6pt]
      \in\;&
      \Tp{v} \;\otimes\; \TT{v}{[0, \rkr - \rk{v}-1]}\,.
    \end{align*}

    \step{Proof of third statement}
    This follows immediately from the first statement of the lemma together with Lemma \ref{lem TAinclusion}. 
\end{proof}

\begin{proof} [Proof of Proposition \ref{prop PT 2K+1}]
        \phantom{.}
        
        \step{Decomposition of $f$, bottom layer}
        We construct the decomposition layer by layer, starting with the bottom layer $f_u$ for $u \in \V{0}$.
        
        First, since $f$ is a sum of monomials $\phi_S$ with $S \subseteq L_{\rp}$ and $|S| \le 2^{K+1}$, each $\phi_S$ depends on the variables $x_S$. Moreover, each $\phi_S$ belongs to $\dT{\pvt{S}}$ by the first statement of Lemma~\ref{lem Decompose pivot to TT}. 
        Hence, we can write
        \begin{align*}
          f 
          \;=\;
            \,\sum_{\substack{S\,:\,\pvt{S} \pe \rp \\|S|\le 2^{K+1}}}\!\phi_S 
          \;=\;
          \sum_{u \in \V{0}}
            \,\sum_{\substack{S\,:\,\pvt{S}\,\preceq u\\|S|\le 2^{K+1}}}\!\phi_S
          \;+\;
          \sum_{\substack{S\,:\,\pvt{S}\,\in\,\V{[1,\rkr]}}}\!\phi_S.
        \end{align*}
        
        Now, fix $u \in \V{0}$. For each $S$ with $\pvt{S} \preceq u$, we have $\phi_S \in \Tp{u}\,\otimes\,\TT{u}{[0,\rkr-\rk{u}-1]}$ by the second statement of Lemma~\ref{lem Decompose pivot to TT} with $v = u$.  
        Hence,
        \[
          \sum_{\substack{S\,:\,\pvt{S}\,\preceq u\\|S|\le 2^{K+1}}}\!\phi_S
          \;\;\in\;\;
          \mathcal{T}_{K+1}(u)\,\otimes\,\TT{u}{[0,\rkr-\rk{u}-1]},
        \]
        and by Lemma~\ref{lem R Decompose} together with the definition of $\dR{u}$ in \eqref{def dR}, there exists 
        \[
          f_u \;\in\;\dR{u}
        \]
        such that
        \begin{align*}
          f_u 
          \;-\;
          \sum_{\substack{S\,:\,\pvt{S}\,\preceq u\\|S|\le 2^{K+1}}}\!\phi_S 
          &\;\;\in\;\;
          \T{u}\,\otimes\,\TT{u}{[0,\rkr-\rk{u}-1]}
          \;=\;
          \dT{\anc{u}{1}}
        \end{align*}
        From this and third statement of Lemma~\ref{lem Decompose pivot to TT}, we obtain a decomposition
        \[
          f \;-\; \sum_{u \in \V{0}} f_u
          \;=\;
          \sum_{v \in \V{1}}\! \phi_v,
        \]
        where each $\phi_v \in \dT{v}$.

        \step{Decomposition of $f$, intermediate layers}
We claim inductively that for each \(k \in [0,\rkr-1]\), we can write
\begin{align}\label{eq PT 2K+1 00}
  f \;-\; \sum_{u \in \V{[0,k]}} f_u
  \;=\;
  \sum_{v \in \V{k+1}}\! \phi_v,
\end{align}
where \(f_u \in \dR{u}\) for each \(u \in \V{[0,k]}\) and \(\phi_v \in \dT{v}\) for each \(v \in \V{k+1}\).

The base case \(k=0\) was established above.

Assume the statement is true for some \(k < \rkr-1\). Consider any \(v \in \V{k+1}\). Note that
\[
  \phi_v \;\in\;\dT{v}
  \;=\;
  \TT{v}{-1}\,\otimes\,\TT{v}{[0,\rkr-\rk{v}-1]}.
\]
By Lemma~\ref{lem R Decompose} and the definition of \(\dR{v}\) in \eqref{def dR}, there is an element \(f_v \in \dR{v}\) such that
\[
  f_v \;-\;\phi_v
  \;\in\;
  \T{v}\,\otimes\,\TT{v}{[0,\rkr-\rk{v}-1]} 
  \;=\;
  \dT{\anc{v}{1}}.
\]
Hence, by the induction hypothesis for \(k\), we have
\[
  f \;-\;\sum_{u \in \V{[0,k+1]}}\! f_u
  \;=\;
  \sum_{u \in \V{k}} \bigl(\phi_u - f_u\bigr)
  \;=\;
  \sum_{v \in \V{k+1}}
    \Bigl(\,\sum_{u \in \frak{c}(v)}(\phi_u - f_u)\Bigr).
\]
Since each sum \(\sum_{u \in \frak{c}(v)}(\phi_u - f_u)\) is contained in \(\dT{v}\), this establishes the claim for \(k+1\). 

Therefore, \eqref{eq PT 2K+1 00} holds for all \(k \in [0,\rkr-1]\).

\step{Decomposition of $f$, top layer}
From the previous step, we have 
\[
  f
  \;-\;
  \sum_{u \in \V{[0,\rkr-1]}} f_u
  \;=\;
  \phi_{\rp},
\]
where \(\phi_{\rp} \in \dT{\rp} = \TT{\rp}{-1}\). Hence, setting \(f_{\rp} = \phi_{\rp}\) completes the proof of the proposition.
    \end{proof}
\bigskip

\section{Pairwise Analysis}
\label{sec: RuRv}
The goal of this section is dedicated to proving the following Proposition.
\begin{prop}
    \label{prop RuRv}
    There exists a constant $\tC \in \CC$ such that
    the following statement holds for sufficiently large $\tCR$:\\
    Fix $\rp$ satisfying Assumption \ref{Assume rp}.
    For a pair of vertices  $u,v \in \V{}$, consider
    $f_u \in \dR{u}$ and $g_v \in \dR{v}$.
    \begin{itemize}
        \item

              If $u \neq v$, let $w$ be the nearest common ancestor of $u$ and $v$. Then,
              \begin{align}
                  \label{eq RuRv D}
                  \maxnorm{\D{\rp} f_u g_v}
                  \le &
                  \tC \cdot \dW{u}\lame^{\rk{w}-\rk{u}} \cdot
                  \dW{v} \lame^{\rk{w} - \rk{v}} \cdot
                  \exp \big(-\eps(\rkr- \rk{w}) \big)
                  \Unorm{f_u}{\rp} \Unorm{g_v}{\rp}\,, \quad \mbox{ and } \\
                  \label{eq RuRv E}
                  \left| \EE{\rp} f_u g_v \right|
                  \le &
                  \tC \cdot \dW{u}\lame^{\rk{w}-\rk{u}} \cdot
                  \dW{v} \lame^{\rk{w} - \rk{v}}
                  \Unorm{f_u}{\rp} \Unorm{g_v}{\rp}\,,
              \end{align}
        \item
              If $u = v$, we then
              \begin{align*}
                  \maxnorm{\D{\rp} f_u g_v}
                  \le &
                  \begin{cases}
                      \tC \left( \tlam^{\rkr} + \Dt \exp( -\eps \rkr)
                      \right) \Unorm{f_u}{\rp} \Unorm{g_v}{\rp}                  & \mbox{ if } u \in \V{0},                        \\
                      \tC \Dt \exp(-\eps \rkr) \Unorm{f_u}{\rp} \Unorm{g_v}{\rp} & \mbox{ if } u \in \V{k} \mbox{ for } k \ge 1\,. \\
                  \end{cases}  \mbox{ and } \\
                  \left| \EE{\rp} f_u g_v \right|
                  \le &
                  \Unorm{f_u}{\rp} \Unorm{g_v}{\rp}\,.
              \end{align*}
    \end{itemize}
\end{prop}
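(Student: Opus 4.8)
\textbf{Proof plan for Proposition~\ref{prop RuRv}.}
The plan is to reduce every case to the submultiplicativity lemmas of Section~\ref{sec T} combined with the ``strong orthogonality'' packaged in Assumption~\ref{Assume rp}. The central observation is that, for $u \ne v$ with nearest common ancestor $w$, the functions $f_u \in \dR{u}$ and $g_v \in \dR{v}$ both live in tensor-product spaces whose ``leaves'' are antichains contained in $\OO{w}{}$, but $f_u$ has its ``$\mathcal{R}$-part'' sitting on the branch through $u$ and $g_v$ has its ``$\mathcal{R}$-part'' on the branch through $v$; these branches diverge exactly at $w$. So I would first use Assumption~\ref{Assume rp}(ii)(a) applied with the ancestor $w$ to split
$f_u = f_{u,\mathcal{R}} + f_{u,\mathcal{T}}$ and $g_v = g_{v,\mathcal{R}} + g_{v,\mathcal{T}}$, where the $\mathcal{R}$-parts lie in $\dR{u;\rk{w}-\rk{u}} \otimes \TT{w}{[0,\rkr-\rk{w}-1]}$ (resp. for $v$) and the $\mathcal{T}$-parts have $\Unorm{\cdot}{\rp}$-norm already of order $\dW{u}\Dt\lame^{\rk{w}-\rk{u}}\exp(-\eps\rk{w})$ times $\Unorm{f_u}{\rp}$. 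Cross terms involving a $\mathcal{T}$-part are handled crudely: bound $\maxnorm{\D{\rp}(\cdot)}$ and $|\EE{\rp}(\cdot)|$ by the product of $\Unorm{\cdot}{\rp}$-norms using Cauchy--Schwarz and Lemma~\ref{lem basicDecayAu2} (or the trivial $\maxnorm{\EE{\rp}\psi} \le \Unorm{\psi}{\rp}$), which is more than good enough since one factor is already tiny. So the work concentrates on the ``$\mathcal{R}\times\mathcal{R}$'' term.

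For that main term, I would set $A_u := \{\text{the branch from } w \text{ down to } u\text{'s side}\}$ more precisely exploit the identity $\OO{w}{} \supseteq A(u) \sqcup A(v) \sqcup (\text{stuff at or below }w)$; the key point is that $\EE{v'} f_{u,\mathcal{R}}$ and $\EE{u'} g_{v,\mathcal{R}}$ produce orthogonality as in the toy computation of the proof overview. Concretely, write $\D{\rp} = \CE{\rp} - \EE{\rp}$ and factor $\CE{\rp}$ through the antichain $A$ consisting of the children of $w$ together with $\OO{w}{}$-vertices strictly below, then through $\anc{w}{1},\dots,\rp$; using Lemma~\ref{lem basicEECED} repeatedly, $\CE{\rp}[f_{u,\mathcal{R}} g_{v,\mathcal{R}}] = \CE{\rp}\circ\CE{w}[\cdots]$. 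The function $\CE{w}[f_{u,\mathcal{R}} g_{v,\mathcal{R}}]$ lies in $\Fz{w}$ up to the cross-contribution (the genuinely $\mathcal{R}$ pieces on the two divergent children force mean zero by Lemma~\ref{lem PK_Orthogonal}/the defining property of $\dR{\cdot}$), and then Assumption~\ref{Assume rp}(ii)(b) applied once on the $u$-side antichain and once on the $v$-side antichain gives the two factors $\dW{u}\lame^{\rk{w}-\rk{u}}$ and $\dW{v}\lame^{\rk{w}-\rk{v}}$; finally the Markov-chain decay \eqref{eq 1variableDecay} along $w \preceq \anc{w}{1} \preceq \cdots \preceq \rp$ contributes $\tlam^{\rkr-\rk{w}} \le \exp(-\eps(\rkr-\rk{w}))$ (via Definition~\ref{def varepsilon}), and for $\EE{\rp}$ that last factor is simply absent since $\EE{\rp}$ of a mean-zero function at $w$ vanishes — which is exactly why \eqref{eq RuRv E} has no $\exp(-\eps(\rkr-\rk{w}))$. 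Norm conversions between $\Unorm{\cdot}{A}$ for the various antichains $A \preceq \rp$ are absorbed into the constant $\tC$ using Lemma~\ref{lemma: coreNorm} and Remark~\ref{rem: coreNorm}.

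For the diagonal case $u=v$: when $u \in \V{k}$ with $k \ge 1$, $f_u, g_v \in \dR{u} = \cR{\TT{u}{-1}}{\rkr-\rk{u}-1}$, so Proposition~\ref{prop: coreR}(1) (equivalently Lemma~\ref{lem RIHImplication}) applied with $\anc{u}{\rkr-\rk{u}} = \rp$ and $\CW{\TT{u}{-1}} \le \dW{u}$ (by \eqref{eq: role of dWu}) directly yields $\maxnorm{\CE{\rp} f_u g_v} \le \tC \dW{u}\lame^{\rkr-\rk{u}}\Unorm{f_u}{\rp}\Unorm{g_v}{\rp}$; combining $\dW{u} = \Dt\exp(-\eps\rk{u})$ with $\lame^{\rkr-\rk{u}} \le \exp(-\eps(\rkr-\rk{u}))$ and $\exp(-\eps\rk{u}) \cdot \exp(-\eps(\rkr-\rk{u})) = \exp(-\eps\rkr)$ gives the stated bound, and subtracting $\EE{\rp}$ only changes constants. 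When $u \in \V{0}$, $f_u,g_v \in \dR{\rp}$ only if $k=0=\rkr$, otherwise $f_u \in \cR{\Tp{u}}{\rkr-1}$; here I would apply Proposition~\ref{prop: coreR}(1) with $\CW{\Tp{u}} \le 1/(\tCB R) \le \dW{u}$, and then handle the ``degree-$\le 2^K$ remainder'' that was absorbed into $\dR{\rp}$ at the top layer — this contributes the extra $\tlam^{\rkr}$ term (the bare Markov-chain decay of a degree-$\le 2^K$ function, via \eqref{eq 1variableDecay} and the inductive hypothesis $\h_K$). The $\EE{\rp}$ bound is just Cauchy--Schwarz. The main obstacle I anticipate is bookkeeping in the $\mathcal{R}\times\mathcal{R}$ case: correctly identifying the two antichains on the $u$-side and $v$-side so that Assumption~\ref{Assume rp}(ii)(b) applies with the right exponents, and ensuring the mean-zero property at $w$ genuinely survives after the $\mathcal{T}$-part splitting — i.e., checking that the piece of $f_{u,\mathcal{R}}$ (resp. $g_{v,\mathcal{R}}$) supported on the child of $w$ towards $u$ (resp. $v$) is still in the relevant $\cR{\cdot}{\cdot}$ space and hence orthogonal to $\T{\cdot}$ under $\EE{w}$.
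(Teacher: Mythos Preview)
Your decomposition $f_u=f_{u,\mathcal R}+f_{u,\mathcal T}$ and the treatment of the $\mathcal T$-cross-terms are exactly what the paper does. But there are two genuine gaps.

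\textbf{The $u\neq v$ main term.} You write $\CE{\rp}[f_{u,\mathcal R}g_{v,\mathcal R}]=\CE{\rp}\circ\CE{w}[\cdots]$ and then invoke Markov-chain decay from $w$ to $\rp$. This factorisation is not available: after the split at the children $w_1,w_2$ of $w$, the functions $f_{u,\mathcal R},g_{v,\mathcal R}$ still depend on the side branches $\OO{w}{[0,\rkr-\rk{w}-1]}$, which are \emph{not} $\preceq w$, so $\CE{w}$ cannot be applied to their product. The paper handles this via Lemma~\ref{lem PT Correlation}: one factors through the full antichain $\{w_1,w_2\}\cup B$ with $B=\OO{w}{[-1,\rkr-\rk{w}-1]}\setminus\{w_1,w_2\}$, then telescopes $\CE{B}=\sum_t \CE{B_{[-1,t-1]}}\otimes\D{B_t}\otimes\EE{B_{[t+1,k-1]}}$ so that each summand $\bL_t fg$ lies in $\F{w^{t+1}_{\pe}}$ and \emph{only then} does the single-variable decay \eqref{eq 1variableDecay} apply, layer by layer. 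Summing the resulting geometric series in $t$ is what produces the clean $\exp(-\eps(\rkr-\rk{w}))$. Your plan skips this mechanism.

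\textbf{The diagonal $u=v$.} You propose to apply Proposition~\ref{prop: coreR}(1) with $g=g_v$. That proposition requires $g\in\T{\anc{u}{k}}$, but here $g_v\in\dR{u}$, which for $k\ge1$ sits in $\TT{u}{[-1,\rkr-\rk{u}-1]}$ and is \emph{not} contained in any $\T{\cdot}$. So Proposition~\ref{prop: coreR}(1) simply does not apply. The paper instead uses Lemma~\ref{lem fugv self}, which takes as input only a bound on $\maxnorm{\D{w}\phi_1\phi_2}$ for $\phi_1,\phi_2\in\mathcal W$ and again runs the telescoping decomposition over the side branches $\OO{w}{[0,k-1]}$. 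For $\rk{u}\ge1$ one takes $\mathcal W=\TT{u}{-1}$ with $\delta\asymp\Dt\exp(-\eps\rk{u})$; for $\rk{u}=0$ one can only take $\mathcal W=\F{u_{\pe}}$ with $\delta\asymp\tCM$ (Cauchy--Schwarz), and this cruder $\delta$ is precisely the source of the extra $\tlam^{\rkr}$ term --- it is not a ``remainder absorbed into $\dR{\rp}$''.
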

The last statement of the Proposition is a direct consequence of the Cauchy-Schwarz inequality; it is included here for completeness.

\subsection{Decay generalization}
To prove Proposition \ref{prop RuRv}, we require a generalization of the decay properties. In this subsection, we establish two lemmas that capture these extended decay properties.

\begin{lemma}
    \label{lem PT Correlation}
    There exists a constant $\tC \in \CC$ such that the following statement holds for sufficiently large $\tCR$:\\
    Let $w \in \V{}$ and let $w_1$ and $w_2$ be two children of $w$.
    For $i \in [2]$, suppose ${\cal W}_i$ is a subspace of $\F{w_{i \pe}}$ associated with a value $\delta_i >0$ such that
    \begin{align*}
        \maxnorm{\CE{w_i}\phi_1\phi_2} \le \delta_i \Unorm{\phi_1}{w_i} \Unorm{\phi_2}{w_i} \mbox{ for } \phi_1 \in {\cal W}_i \mbox{ and } \phi_2 \in \T{w_i}\,.
    \end{align*}

    Let $k = \rkr-\rk{w}$, and set 
    $$
        B = \OO{w}{[-1,k-1]} \setminus \{w_1,w_2\}\,.
    $$
    Then for
    \begin{align*}
        f \in & \phantom{A,} {\cal W}_1 \phantom{A,} \otimes \T{w_2} \otimes \T{B}  \subseteq \F{\rp_{\pe}} \quad \mbox{ and } \\
        g \in & \T{w_1} \otimes \phantom{A,} {\cal W}_2 \phantom{A,} \otimes \T{B} \subseteq \F{\rp_{\pe}} \,,
    \end{align*}
    we have
    \begin{align*}
        \maxnorm{\CE{\rp}fg} \le & 2 \delta_1 \delta_2 \Unorm{f}{\rp} \Unorm{g}{\rp}
        \phantom{AAA AAA AAA AAA AAA AAA} \mbox{and}
        \\
        \maxnorm{\D{\rp}fg} \le  &
        \tC \delta_1 \delta_2 \exp( - \eps (\rkr - \rk{w})) \Unorm{f}{\rp} \Unorm{g}{\rp}\,.
    \end{align*}
\end{lemma}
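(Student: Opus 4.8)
\textbf{Proof plan for Lemma~\ref{lem PT Correlation}.}
The plan is to reduce the statement to the submultiplicativity machinery of Section~\ref{sec T} (Lemma~\ref{lem: mainTensorProduct}) applied to a carefully chosen factorization of the vertex set $\{\rp\}\cup (\rp_\pe \setminus \rp)$ into three ``blocks'': the subtree below $w_1$, the subtree below $w_2$, and the remaining antichain $B$ (together with the Markov chain from $w$ up to $\rp$). First I would recall the identity $\{w\}\cup\OO{w}{[0,k-1]} = \OO{\anc{w}{1}}{[-1,k-1]}$ and, more to the point, that $\CE{\rp} = \CE{\rp}\circ\CE{\{w\}\cup\OO{w}{[0,k-1]}}$ by Lemma~\ref{lem basicEECED}, so it suffices to first control $\CE{\{w\}\cup\OO{w}{[0,k-1]}}(fg)$ and then absorb the Markov chain decay from $w$ to $\rp$ (a chain of length $\rkr-\rk{w}=k$) via \eqref{eq 1variableDecay}, exactly as in the proof of Lemma~\ref{lem RIHImplication}. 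Since $\{w_1,w_2,B\}$ together with $\{w\}\cup\OO{w}{[0,k-1]}$ refine appropriately, I would write
\[
\CE{\{w\}\cup \OO{w}{[0,k-1]}} = \CE{w}\otimes \CE{w_1}\otimes\CE{w_2}\otimes \CE{B}\big|_{\text{suitably interpreted}},
\]
more precisely decompose $\CE{\OO{w}{[-1,k-1]}} = \CE{w_1}\otimes \CE{w_2}\otimes \CE{B}$ and apply $\CE{w}$ on the outside.

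The key computation is the tensorization. On the $w_1$-block I have the hypothesis $\maxnorm{\CE{w_1}\phi_1\phi_2}\le \delta_1\Unorm{\phi_1}{w_1}\Unorm{\phi_2}{w_1}$ for $\phi_1\in{\cal W}_1,\phi_2\in\T{w_1}$; symmetrically on the $w_2$-block with $\delta_2$; and on the $B$-block I have $\maxnorm{\CE{B}\psi_1\psi_2}\le (1+\tC_{\ref{lem basicDecayAu}}R\exp(-\eps\lA{w}))\Unorm{\psi_1}{B}\Unorm{\psi_2}{B}$ from Lemma~\ref{lem basicDecayAu} (here using $\lA{w} \ge \tCR(\log R+1)$, which holds since $\rk{w}\ge 0$), which is $\le 1+\kappa$ once $\tCR$ is large. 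Applying Lemma~\ref{lem: mainTensorProduct} with $k=3$ and these three bilinear maps (with $H_i^+, H_i^-$ the relevant function spaces equipped with the $\EE{\cdot}$-forms), I get
\[
\maxnorm{\CE{\OO{w}{[-1,k-1]}}fg} \le \delta_1\delta_2(1+\kappa)\, \Unorm{f}{\OO{w}{[-1,k-1]}}\Unorm{g}{\OO{w}{[-1,k-1]}}.
\]
Then applying $\CE{w}$ (which has operator norm $\le \tCM$ in $\maxnorm{\cdot}$ after one more Cauchy--Schwarz/$\EE{}$ step, cf. item~1 of Definition~\ref{def  tCM}) and converting $\Unorm{\cdot}{\OO{w}{[-1,k-1]}}$ to $\Unorm{\cdot}{\rp}$ via Lemma~\ref{lemma: coreNorm} (since $\OO{w}{[-1,k-1]}\preceq \rp$, up to a $(1+\kappa)$ factor), I obtain the first bound $\maxnorm{\CE{\rp}fg}\le 2\delta_1\delta_2\Unorm{f}{\rp}\Unorm{g}{\rp}$, absorbing the $(1+\kappa)^{O(1)}\tCM$ constants into the factor $2$ — but wait, that does not work since $\tCM$ need not be small. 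I need to be more careful: the $\CE{w}$ step cannot contribute a $\tCM$ to a bound with leading constant $2$. The resolution is that $fg$, restricted to $\OO{w}{[-1,k-1]}$, actually has the structure needed so that after $\CE{\OO{w}{[-1,k-1]}}$ the result is \emph{already a function on $w$ obtained as $\CE{w_1}\otimes\cdots$}, and then $\CE{w}$ acts on a function of $x_w$ alone — but a function of $x_w$ is in $\T{w}$ (constants, or degree $\le 1$), so $\CE{w}$ of it, composed with the decay, is handled by the \emph{Markov chain} estimate, not a generic $\tCM$. Concretely, one writes the outer $\CE{w}$ as part of the length-$k$ chain $\CE{\anc{w}{k}}=\CE{\rp}$ acting on $\Fz{w}$ and $\F{w}$ pieces; the mean-zero piece decays like $\tlam^k$ and the mean piece is exactly $\EE{\OO{w}{[-1,k-1]}}fg$ which is bounded by $\delta_1\delta_2$ (this requires the orthogonality-type input that ${\cal W}_1\perp\T{w_1}$ is \emph{not} assumed here, so instead the mean piece is bounded using Cauchy--Schwarz by $\Unorm{f}{}\Unorm{g}{}$ and the full $\CE{w}$ piece reorganized).

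Given the subtlety above, the cleanest route — and the one I expect the authors take — is: (i) use the hypotheses to get the $w_1,w_2$-decays, (ii) use Lemma~\ref{lem basicDecayAu} on $B$, (iii) tensorize via Lemma~\ref{lem: mainTensorProduct} to control $\CE{\OO{w}{[-1,k-1]}}fg$, (iv) for the $\D{\rp}$ bound, split $\CE{\rp} = \CE{\rp}\circ(\EE{} + \D{})_{\text{on }w\text{-level}}$ and note that the term producing the mean is killed/bounded by $\delta_1\delta_2$ directly, while the fluctuation term is a function in $\Fz{\cdot}$ along the chain from $w$ to $\rp$, hence decays by $\tlam^{k}\le \exp(-\eps k)$ (up to constants in $\CC$) using \eqref{eq 1variableDecay} — this is where the factor $\exp(-\eps(\rkr-\rk{w}))$ comes from. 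Finally convert all $\Unorm{\cdot}{\text{antichain}}$ norms to $\Unorm{\cdot}{\rp}$ by Lemma~\ref{lemma: coreNorm}, collecting the finitely many $(1+\kappa)$ and $\CC$-constants into the single constant $\tC\in\CC$ (and into the ``$2$'' for the first inequality, which is fine because $\delta_1,\delta_2$ are the only non-constant factors there and the genuine constants can be made $\le 2$ by taking $\tCR$ large). The main obstacle is precisely this bookkeeping around the outer $\CE{w}$: ensuring that the generic Markov-chain constant $\tCM$ does \emph{not} inflate the leading constant in the first inequality, which forces one to treat the $w$-to-$\rp$ chain as a genuine decay (operator norm $\le 1$ in the appropriate sense after projecting off the mean) rather than a crude $\maxnorm{\CE{w}\cdot}\le\tCM\maxnorm{\cdot}$ bound; handling the fluctuation/mean split correctly is the technical heart of the proof.
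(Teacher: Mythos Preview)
Your overall structure—tensorize over the three blocks $w_1$, $w_2$, $B$ via Lemma~\ref{lem: mainTensorProduct}, then push through $\CE{\rp}$—matches the paper. But you mishandle both inequalities in characteristic ways.

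\textbf{First inequality.} The ``subtlety'' you worry about is not there. After tensorizing you have a bound on $\maxnorm{\CE{\{w_1,w_2\}\cup B}fg}$, and then the paper simply uses the trivial inequality
\[
\maxnorm{\CE{\rp}\phi}\le \maxnorm{\phi}
\]
(conditional expectation is an average, so it cannot exceed the pointwise maximum). No $\tCM$, no mean/fluctuation split, no orthogonality is needed. Your long detour trying to avoid a $\tCM$ factor is unnecessary; the constant $2$ comes just from $(1+\kappa)(1+\Dt\exp(-\eps\rk{w}))$, which is $\le 2$ for large $\tCR$.

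\textbf{Second inequality.} Here there is a real gap in your plan. Your step (iv) proposes to split at the $w$-level and claim the fluctuation part ``is a function in $\Fz{\cdot}$ along the chain from $w$ to $\rp$, hence decays by $\tlam^k$''. But $\CE{\{w_1,w_2\}\cup B}fg$ is a function on the whole antichain $\{w_1,w_2\}\cup B$, and $B$ contains vertices at every height from $\h(w)-1$ up to $\h(\rp)-1$. The portion of this function depending on $B_{k-1}=\OO{w}{k-1}$ sits one step below $\rp$ and gets essentially \emph{no} Markov-chain decay under $\D{\rp}$. A single $\tlam^k$ estimate does not apply.

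The paper resolves this with a layer-by-layer telescoping decomposition (Lemma~\ref{lem:TensorArithmeticIdentity}):
\[
\CE{\{w_1,w_2\}\cup B}
=\sum_{t=-2}^{k-1}\bL_t,
\qquad
\bL_t=\CE{w_1}\otimes\CE{w_2}\otimes\CE{B_{[-1,t-1]}}\otimes\D{B_t}\otimes\EE{B_{[t+1,k-1]}}.
\]
Each $\bL_t fg$ lives in $\F{\{w_1,w_2\}\cup B_{[-1,t]}}\subseteq\F{\anc{w}{t+1}_\pe}$, so $\D{\rp}\bL_t fg$ picks up Markov decay $\tlam^{k-1-t}$, while $\bL_t$ itself carries an extra $\Dt\exp(-\eps(\rk{w}+t))$ from the $\D{B_t}$ factor (Lemma~\ref{lem basicDecayAu2}). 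The term $\bL_{-2}$ (no $\D{B}$ factor) lives entirely on $\{w_1,w_2\}$ and does get the full $\tlam^k$. Summing the resulting geometric series in $t$ produces the $\exp(-\eps k)$ factor. This level-wise decomposition is the actual technical heart of the second bound, and it is missing from your plan.
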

See Figure \ref{fig: PT Correlation} for an illustration of the vertices and sets described in the lemma. 

\begin{figure}[h]
    \centering
    \label{fig: PT Correlation}
    \includegraphics[width=0.6\textwidth]{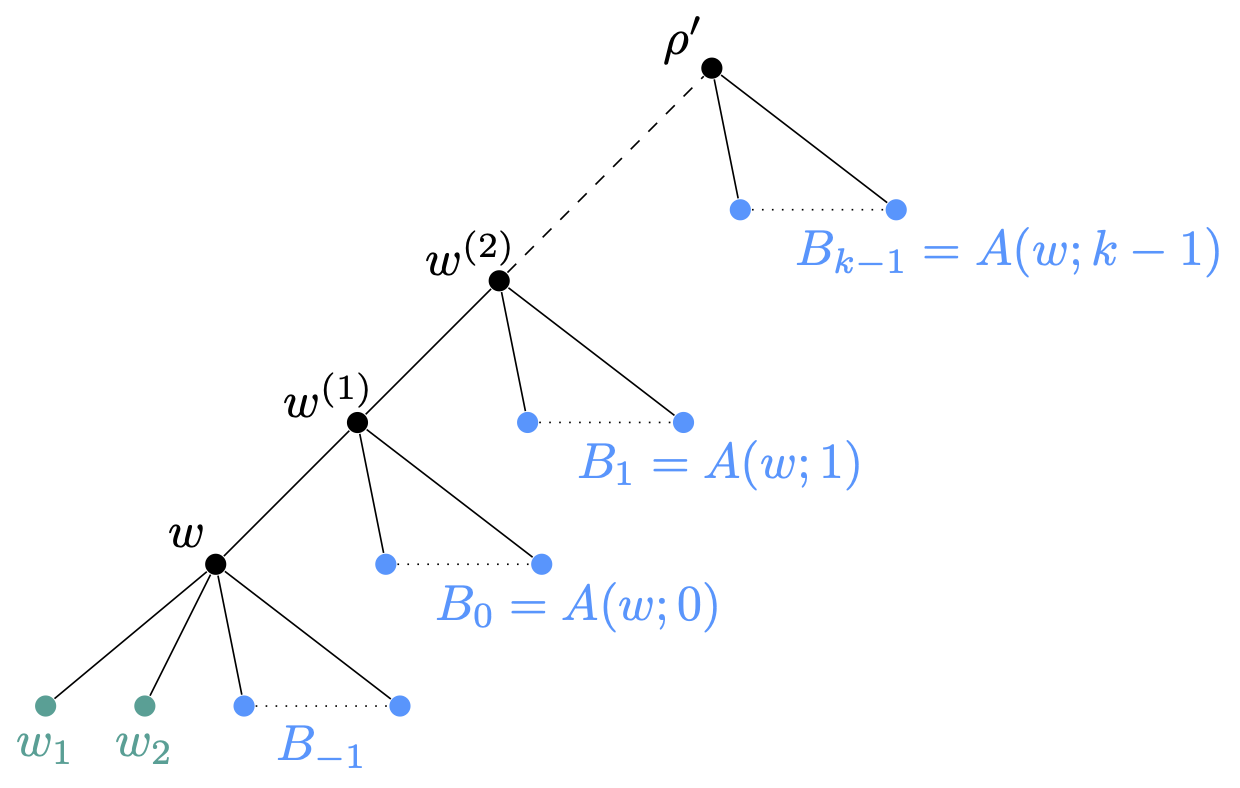}
    \caption{An illustrations of the vertices and sets described in the lemma.}
\end{figure}

\begin{lemma}
    \label{lem fugv self}
    There exists a constant $\tC \in \CC$ such that the following statement holds for sufficiently large $\tCR$:\\
    Let $w \in \V{}$ and suppose $\cW$ is a subspace of $\F{w_{\pe}}$ such that
    \begin{align*}
        \maxnorm{\D{w}fg} \le \delta \Unorm{f}{w} \Unorm{g}{w} \mbox{ for } f,g \in \cW
    \end{align*}
    for some $\delta >0$. Let $k = \rkr-\rk{w}$, and let
    $$
        B = \OO{w}{[0,k-1]}\,.
    $$
    For
    \begin{align*}
        f,g \in & {\cal W} \otimes \T{B}\,,
    \end{align*}
    we have
    \begin{align*}
        \maxnorm{\D{\rp}fg} \le &
        \tC \left( \delta \tlam^k +  \Dt \exp( -\eps \rkr)
        \right)
        \Unorm{f}{\rp} \Unorm{g}{\rp}\,.
    \end{align*}
\end{lemma}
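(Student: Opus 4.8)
\textbf{Proof plan for Lemma~\ref{lem fugv self}.}
Write $v=\rp$, and note $v=\anc{w}{k}$ since $k=\rkr-\rk{w}$, so that $\rk{v}=\rkr$ and the two quantities in the target bound combine via $\exp(-\eps\rkr)=\exp(-\eps\rk{w})\exp(-\eps k)$. The set $\{w\}\cup B=\{w\}\cup\OO{w}{[0,k-1]}$ is an antichain with $\{w\}\cup B\pe\rp$, $\T{B}=\TT{w}{[0,k-1]}$, and $\cW\otimes\T{B}\subseteq\F{w_{\pe}}\otimes\TT{w}{[0,k-1]}\subseteq\F{\rp_{\pe}}$. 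By Lemma~\ref{lem basicEECED}, $\D{\rp}fg=\D{\rp}\circ\CE{\{w\}\cup B}fg$, and since $\D{\rp}$ annihilates constants we may replace $\CE{\{w\}\cup B}fg$ by its non-constant part. The plan is then to decompose
\[
  \CE{\{w\}\cup B}=\CE{w}\otimes\bigotimes_{j=0}^{k-1}\CE{\OO{w}{j}}
  =\bigl(\EE{w}+\D{w}\bigr)\otimes\bigotimes_{j=0}^{k-1}\bigl(\EE{\OO{w}{j}}+\D{\OO{w}{j}}\bigr),
\]
expand into $2^{k+1}$ terms indexed by the set $S$ of factors carrying a difference operator, discard the $S=\emptyset$ term (a constant), and regroup the remaining terms according to the topmost spine level at which a fluctuation is created. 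Concretely, for each $t\in[0,k]$ I will build a bilinear map $\bL_{k,t}:(\cW\otimes\T{B})^2\to\F{\anc{w}{t}}$, obtained by applying $\CE{\anc{w}{t}}$ to the sum of all expansion terms whose topmost $\D$-factor sits at height $\h(w)+t-1$ (with the bucket $t=0$ collecting the term with a $\D{w}$ and no side-branch $\D$), so that $\D{\rp}fg=\sum_{t=0}^{k}\CE{\rp}\,\bL_{k,t}(f,g)$. This mirrors the decomposition in Induction Hypothesis~\ref{IH: coreR} and Corollary~\ref{cor: coreR-1}.

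For the norm bounds on $\bL_{k,t}(f,g)$ I will use submultiplicativity of tensor operator norms (Lemma~\ref{lem: mainTensorProduct}): a $\D{w}$-factor contributes the factor $\delta$ via the hypothesis on $\cW$; each $\D{\OO{w}{j}}$-factor contributes $\tC R\exp(-\eps\lA{\OO{w}{j}})=\tC R\exp(-\eps\hB)\exp(-\eps(\rk{w}+j))\asymp\Dt\exp(-\eps(\rk{w}+j))$ via Lemma~\ref{lem basicDecayAu2} applied to the antichain $\OO{w}{j}\subseteq\OO{w}{}$; an $\EE{\cdot}$-factor contributes $1$ via Cauchy--Schwarz; and absorbing a topmost fluctuation from $\mathfrak{c}(\anc{w}{t})$ into $\anc{w}{t}$ costs one step of Markov decay $\tlam$ from \eqref{eq 1variableDecay}, while the remaining lower fluctuations are propagated up the spine, each step costing a further $\tlam$. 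Throughout I convert $\Unorm{\cdot}{\{w\}\cup B}$ (the natural norm for the tensor estimates) into $\Unorm{\cdot}{\rp}$ using Lemma~\ref{lemma: coreNorm}, valid since $\rk{w}\ge0$. Then $\CE{\rp}\bL_{k,t}(f,g)=\CE{\anc{w}{k}}\bL_{k,t}(f,g)$ is controlled by $\tCM\tlam^{k-t}\maxnorm{\bL_{k,t}(f,g)}$, and summing over $t$ I get two geometric series of exactly the type handled by the $\eps$-gap in Definition~\ref{def  varepsilon} (one with ratio $\tlam e^{\eps}<1$). The sum is dominated by the bucket $t=0$, which yields the $\delta\tlam^{k}$ term, and by the bucket $t=k$ (topmost side branches $\OO{w}{k-1}$, closest to $\rp$), which yields $\Dt\exp(-\eps(\rk{w}+k-1))\asymp\Dt\exp(-\eps\rkr)$; the intermediate buckets and the mixed terms (those carrying both a $\D{w}$ and some $\D{\OO{w}{j}}$) come with an extra $\tlam$- or $\exp(-\eps\cdot)$-factor and are absorbed into these two after enlarging $\tC\in\CC$, using that $\Dt$ can be made $\le1$ by taking $\tCR$ large.

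The main obstacle is the regrouping step and, in particular, verifying that each $\bL_{k,t}(f,g)$ (after the appropriate internal subtraction) lies in $\Fz{\anc{w}{t}}$, since the Markov-decay estimate \eqref{eq 1variableDecay} only applies to mean-zero functions; unlike in Part~I, here $\cW$ is a general subspace with no built-in orthogonality to $\T{\cdot}$, so the mean-zero property is not automatic and must be arranged by hand (as was done for $\bL_{k+1,k+1}$ in Lemma~\ref{lem RbLk+1k+1}), and doing this consistently while the side-branch fluctuations $\D{\OO{w}{j}}$ spread over the $k$ distinct layers of $B$ requires a careful telescoping of the operator identity $\CE{\rp}=\CE{\rp}\circ\CE{\{\anc{w}{1}\}\cup\OO{\anc{w}{1}}{[0,k-2]}}$ down the spine. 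Once the bookkeeping is set up, the individual estimates are routine applications of Lemmas~\ref{lem: mainTensorProduct}, \ref{lem basicDecayAu2}, \ref{lemma: coreNorm} and \eqref{eq 1variableDecay}, and the geometric summation is exactly as in the proof of Lemma~\ref{lem RIHImplication}.
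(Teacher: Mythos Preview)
Your plan is correct and coincides with the paper's proof; the two differ only in bookkeeping, and the difficulty you flag as the ``main obstacle'' is not real.

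Concretely, your buckets are exactly the paper's terms. Once you regroup the full $2^{k+1}$-expansion by the topmost $\D$-level, bucket $t\ge1$ collapses to
\[
  \CE{w}\otimes\CE{B_{[0,t-2]}}\otimes\D{B_{t-1}}\otimes\EE{B_{[t,k-1]}}
  \;=\;
  \CE{\anc{w}{t-1}}\otimes\D{B_{t-1}}\otimes\EE{B_{[t,k-1]}},
\]
the second equality by the tower property (Lemma~\ref{lem basicEECED}). The paper obtains this directly via the telescoping identity of Lemma~\ref{lem:TensorArithmeticIdentity}, which produces the $k+1$ summands in one step without ever writing down $2^{k+1}$ terms. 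So no combinatorics or ``mixed-term'' absorption is needed: each bucket is a single clean product, and the $\CE_{w^{t-1}}$ factor is bounded by the generic Cauchy--Schwarz estimate $\tCM$ rather than anything involving $\delta$.

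More importantly, the mean-zero step you worry about requires no arranging by hand. For any $\phi\in\F{\anc{w}{t}_{\pe}}$ one has the identity
\[
  \D{\rp}\phi=\CE{\rp}\phi-\EE{\rp}\phi=\CE{\rp}\,\CE{\anc{w}{t}}\phi-\EE{\anc{w}{t}}\phi=\CE{\rp}\bigl(\D{\anc{w}{t}}\phi\bigr),
\]
and $\D{\anc{w}{t}}\phi\in\Fz{\anc{w}{t}}$ automatically, so \eqref{eq 1variableDecay} applies with no further work and costs only the harmless factor $\maxnorm{\D{\anc{w}{t}}\phi}\le 2\maxnorm{\phi}$. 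This is exactly how the paper handles every bucket (see \eqref{eq fugv self 00} and \eqref{eq fugv self 01}); the elaborate mean-zero bookkeeping from Lemma~\ref{lem RbLk+1k+1} is specific to the inductive structure of Section~5 and is not needed here. With this simplification the rest of your outline---submultiplicativity via Lemma~\ref{lem: mainTensorProduct}, the $\Dt\exp(-\eps(\rk{w}+t))$ bound from Lemma~\ref{lem basicDecayAu2}, norm conversion via Lemma~\ref{lemma: coreNorm}, and the geometric summation using $\tlam\exp(\eps)<1$---matches the paper line by line.
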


For the proof of above two lemmas, we need the following simple arithmetic expansion identity.

\begin{lemma}
\label{lem:TensorArithmeticIdentity}
Let $t_1 \le t_2$ be integers. For each integer $t \in [t_1,t_2]$, write
\[
  c_t \;=\; a_t \;+\; b_t,
\]
where $a_t$ and $b_t$ lie in some vector space (or module) $H_t$. Define 
\[
  b_t \;=\; \mathbf{1}
  \quad\text{and}\quad
  c_t \;=\; \mathbf{1}
  \quad
  \text{for } t < t_1,
\]
where $\mathbf{1}$ is the identity element for the tensor product in your ambient algebraic setting. Then
\begin{equation}\label{eq:TensorArithmeticIdentity}
  \bigotimes_{t=t_1}^{t_2} c_t
  \;=\;
  \sum_{t = t_1 - 1}^{t_2}
    \Bigl(\,\bigotimes_{s = t_1}^{\,t-1} c_s\Bigr)
    \;\otimes\;
    b_t
    \;\otimes\;
    \Bigl(\,\bigotimes_{s = t+1}^{\,t_2} a_s\Bigr),
\end{equation}
where, by convention, an empty tensor product is set to $\mathbf{1}$.
\end{lemma}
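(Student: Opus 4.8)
\textbf{Proof plan for Lemma~\ref{lem:TensorArithmeticIdentity}.} The statement is a purely formal "telescoping" identity for tensor products, so the plan is to prove it by induction on the number of factors, i.e.\ on $t_2 - t_1$. The base case $t_2 = t_1$ amounts to checking that $c_{t_1} = b_{t_1} \otimes (\text{empty product}) + (\text{the }t = t_1 - 1\text{ term})$, which unpacks to $c_{t_1} = b_{t_1} + a_{t_1}$ after using the conventions $b_t = c_t = \mathbf{1}$ for $t < t_1$ and that the empty tensor product is $\mathbf{1}$; this is exactly the hypothesis $c_{t_1} = a_{t_1} + b_{t_1}$.

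For the inductive step, I would single out the last factor: write $\bigotimes_{t=t_1}^{t_2} c_t = \bigl(\bigotimes_{t=t_1}^{t_2 - 1} c_t\bigr) \otimes c_{t_2}$, substitute $c_{t_2} = a_{t_2} + b_{t_2}$, and distribute. The $b_{t_2}$ piece directly produces the $t = t_2$ summand of \eqref{eq:TensorArithmeticIdentity} (whose trailing factor $\bigotimes_{s = t_2+1}^{t_2} a_s$ is the empty product $\mathbf{1}$). For the $a_{t_2}$ piece, apply the induction hypothesis to $\bigotimes_{t=t_1}^{t_2-1} c_t$, expanding it as $\sum_{t = t_1-1}^{t_2-1} \bigl(\bigotimes_{s=t_1}^{t-1} c_s\bigr) \otimes b_t \otimes \bigl(\bigotimes_{s=t+1}^{t_2-1} a_s\bigr)$, and then tensor each summand on the right with $a_{t_2}$; this turns the trailing product $\bigotimes_{s=t+1}^{t_2-1} a_s$ into $\bigotimes_{s=t+1}^{t_2} a_s$, giving precisely the summands of \eqref{eq:TensorArithmeticIdentity} for $t \in [t_1-1, t_2-1]$. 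Adding the two contributions recovers the full sum over $t \in [t_1-1, t_2]$.

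Alternatively — and this may be cleaner to write — one can avoid induction entirely by observing that the right-hand side is a standard "difference of products" telescoping sum: setting $P_t = \bigotimes_{s=t_1}^{t} c_s$ and $Q_t = \bigotimes_{s=t_1}^{t} a_s$ (with $P_{t_1 - 1} = Q_{t_1-1} = \mathbf{1}$), each summand equals $\bigl(\bigotimes_{s=t_1}^{t-1} c_s\bigr)\otimes(c_t - a_t)\otimes\bigl(\bigotimes_{s=t+1}^{t_2} a_s\bigr)$ since $b_t = c_t - a_t$, and the sum telescopes to $\bigl(\bigotimes_{s=t_1}^{t_2} c_s\bigr) - \bigl(\bigotimes_{s=t_1}^{t_2} a_s\bigr) \otimes(\text{nothing}) = P_{t_2} - Q_{t_2}\cdot\mathbf{1}$... wait, one must be careful: the telescoping here is $\sum_t \bigl(\text{stuff}_{t-1} - \text{stuff}_t\bigr)$ only after noting $\bigl(\bigotimes_{s<t}c_s\bigr)\otimes c_t\otimes\bigl(\bigotimes_{s>t}a_s\bigr) - \bigl(\bigotimes_{s<t}c_s\bigr)\otimes a_t\otimes\bigl(\bigotimes_{s>t}a_s\bigr) = \bigl(\bigotimes_{s\le t}c_s\bigr)\otimes\bigl(\bigotimes_{s>t}a_s\bigr) - \bigl(\bigotimes_{s< t}c_s\bigr)\otimes\bigl(\bigotimes_{s\ge t}a_s\bigr)$, which is consecutive terms of a telescoping sequence; summing over $t$ from $t_1-1$ to $t_2$ collapses to $\bigotimes_{s=t_1}^{t_2} c_s$ minus the "all $a$'s shifted off the bottom" term, which vanishes by the convention. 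I expect no real obstacle here — the only thing requiring care is bookkeeping the edge conventions (empty products, the $t = t_1 - 1$ index, and $b_t = c_t = \mathbf{1}$ below $t_1$) so that the boundary summands match up; the telescoping formulation makes that transparent, so I would present that one.
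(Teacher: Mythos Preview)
Your inductive plan is correct and is exactly the argument the paper gives: induct on $t_2 - t_1$, peel off the last factor $c_{t_2} = a_{t_2} + b_{t_2}$, let the $b_{t_2}$ piece be the top summand, and apply the hypothesis to the $a_{t_2}$ piece. Your alternative telescoping argument also works, but note that the $t = t_1 - 1$ summand must be treated separately (there $b_{t_1-1} = \mathbf{1}$ is not $c_{t_1-1} - a_{t_1-1}$ since $a_{t_1-1}$ is undefined); once you isolate that boundary term as $\bigotimes_{s=t_1}^{t_2} a_s$, the remaining sum $\sum_{t=t_1}^{t_2}$ telescopes cleanly to $\bigotimes c_s - \bigotimes a_s$ and the two pieces add up.
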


\begin{proof}
\textbf{Base Case ($t_2 = t_1$).}  
In this case,
\[
  \bigotimes_{t = t_1}^{t_1} c_t
  \;=\;
  c_{t_1}
  \;=\;
  a_{t_1} + b_{t_1}.
\]
On the right-hand side of \eqref{eq:TensorArithmeticIdentity}, the sum ranges over $t = t_1 - 1$ to $t_1$.  
\begin{itemize}
\item 
For $t = t_1 - 1$, both $\displaystyle \bigotimes_{s = t_1}^{t-1} c_s$ and $\displaystyle \bigotimes_{s = t+1}^{t_1} a_s$ are empty tensor products, hence each equals $\mathbf{1}$. Also, $b_{t_1 - 1} = \mathbf{1}$ by definition. Thus the summand is
\[
  (\mathbf{1}) \;\otimes\; \mathbf{1} \;\otimes\; (\mathbf{1})
  \;=\;
  \mathbf{1}.
\]
\item
For $t = t_1$, the factors outside $b_{t_1}$ are again empty tensors, so the summand is
\[
  \mathbf{1} \;\otimes\; b_{t_1} \;\otimes\; \mathbf{1}
  \;=\;
  b_{t_1}.
\]
\end{itemize}
Depending on the precise convention for ``$c_t = \mathbf{1}$ for $t < t_1$,'' one obtains $a_{t_1} + b_{t_1}$ overall. Hence the identity holds for $t_2 = t_1$.

\noindent
\textbf{Inductive Step.}  
Assume \eqref{eq:TensorArithmeticIdentity} holds for all pairs $(t_1, r)$ with $r - t_1 < N$. Let $t_2 = r+1$. We show it holds for $\displaystyle \bigotimes_{t = t_1}^{r+1} c_t$.  
By definition,
\[
  \bigotimes_{t = t_1}^{r+1} c_t
  \;=\;
  \biggl(\bigotimes_{t = t_1}^{r} c_t\biggr)
  \;\otimes\;
  \bigl(a_{r+1} \;+\; b_{r+1}\bigr).
\]
Applying the inductive hypothesis to $\displaystyle \bigotimes_{t = t_1}^{r} c_t$ yields
\[
  \bigotimes_{t = t_1}^{r} c_t
  \;=\;
  \sum_{\,t = t_1 - 1}^{r}
    \Bigl(\!\!\!\bigotimes_{s = t_1}^{\,t-1} c_s\Bigr)
    \;\otimes\;
    b_t
    \;\otimes\;
    \Bigl(\!\!\bigotimes_{s = t+1}^{\,r} a_s\Bigr).
\]
Hence
\[
  \biggl(\bigotimes_{t = t_1}^{r} c_t\biggr)
  \;\otimes\;
  \bigl(a_{r+1} + b_{r+1}\bigr)
  \;=\;
  \sum_{\,t = t_1 - 1}^{\,r}
    \Bigl(\!\!\bigotimes_{s = t_1}^{\,t-1} c_s\Bigr)
    \;\otimes\;
    b_t
    \;\otimes\;
    \Bigl(\!\!\bigotimes_{s = t+1}^{\,r} a_s\Bigr)
    \;\otimes\;
    \bigl(a_{r+1} + b_{r+1}\bigr).
\]
We now distribute $a_{r+1} + b_{r+1}$ via the bilinearity of $\otimes$:
\[
  X \;\otimes\; (\,a_{r+1} + b_{r+1}\,)
  \;=\;
  (\,X \;\otimes\; a_{r+1}\,)
  \;+\;
  (\,X \;\otimes\; b_{r+1}\,).
\]
So each summand splits into two:
\begin{itemize}
\item One summand picks up $a_{r+1}$, yielding
\[
  \Bigl(\!\!\bigotimes_{s = t_1}^{\,t-1} c_s\Bigr)
  \;\otimes\;
  b_t
  \;\otimes\;
  \Bigl(\!\!\bigotimes_{s = t+1}^{\,r+1} a_s\Bigr),
\]
where we have extended the range of $a_s$ to include $s = r+1$.
\item The other summand adds a new term for $t = r+1$ itself, namely
\[
  \bigl(\!\!\bigotimes_{t = t_1}^{r} c_t\bigr)
  \;\otimes\;
  b_{r+1}.
\]
\end{itemize}
Reindexing, we combine these to form
\[
  \sum_{\,t = t_1 - 1}^{\,r+1}
    \Bigl(\!\!\bigotimes_{s = t_1}^{\,t-1} c_s\Bigr)
    \;\otimes\;
    b_t
    \;\otimes\;
    \Bigl(\!\!\bigotimes_{s = t+1}^{\,r+1} a_s\Bigr),
\]
exactly matching the right-hand side of \eqref{eq:TensorArithmeticIdentity} for $t_2 = r+1$.  
Thus, by induction, the identity holds for all $t_2 \ge t_1$.
\end{proof}

\begin{proof}[Proof of Lemma \ref{lem PT Correlation}]

    \step{Notations}
    For simplicity, let 
    \[
      B_{-1} 
      \;=\; 
      \OO{w}{-1} \,\setminus\, \{w_1,w_2\}
      \quad\text{and}\quad
      B_{t} 
      \;=\; 
      \OO{w}{t}
      \quad\text{for } t \ge 0.
    \]
    Then define
    \[
      B_{[a,b]} 
      \;=\;
      \bigcup_{t \in [a,b]} B_t.
    \]
    For \(-1 \le a \le b \le k-1\), we have
    \[
      \T{B_{[a,b]}} 
      \;\subseteq\;
      \TT{w}{[a,b]}.
    \]
    With
    \(
      \{w_1,w_2\} \,\cup\, B_{[-1,k-1]}
      \;\pe\;
      \{\rp\}
    \),
    we use the identity from Lemma~\ref{lem basicEECED} to write
    \[
      \CE{\rp} \bigl(fg\bigr)
      \;=\;
      \CE{\rp} \Bigl[\CE{\{w_1,w_2\} \cup B_{[-1,k-1]}} \bigl(fg\bigr)\Bigr]
      \quad\text{and}\quad
      \D{\rp} \bigl(fg\bigr)
      \;=\;
      \D{\rp} \Bigl[\CE{\{w_1,w_2\} \cup B_{[-1,k-1]}} \bigl(fg\bigr)\Bigr].
    \]
    
    \step{Deriving the bound for 
    \(\maxnorm{\CE{\{w_1,w_2\} \cup B_{[-1,k-1]}} fg}\)}
    For \(\phi_1,\phi_2 \in \T{B_{[-1,k-1]}}\), Lemma~\ref{lem basicDecayAu2} gives
    \[
      \maxnorm{\CE{B_{[-1,k-1]}} \phi_1 \,\phi_2}
      \;\le\;
      \Bigl(1 + \Delta \,\exp\bigl(-\eps\,\rk{w}\bigr)\Bigr)\,
      \Unorm{\phi_1}{B_{[-1,k-1]}}\,
      \Unorm{\phi_2}{B_{[-1,k-1]}}\,,
    \]
    provided \(\tCR\) is sufficiently large. 
    Combining this with the assumptions on 
    \({\cal W}_1\) and \({\cal W}_2\), we can tensorize the norm via Lemma~\ref{lem: mainTensorProduct} to obtain
    \[
      \maxnorm{\CE{\{w_1,w_2\} \cup B_{[-1,k-1]}} \bigl(fg\bigr)}
      \;\le\;
      \delta_1 \,\delta_2\,
      \Bigl(1 + \Dt \,\exp\bigl(-\eps\,\rk{w}\bigr)\Bigr)\,
      \Unorm{f}{\{w_1,w_2\}\cup B_{[-1,k-1]}}
      \,\Unorm{g}{\{w_1,w_2\}\cup B_{[-1,k-1]}}.
    \]
    Using Lemma~\ref{lemma: coreNorm}, which implies 
    \[
      \Unorm{f}{\{w_1,w_2\}\cup B_{[-1,k-1]}}
      \,\le\,
      (1+\kappa)\,\Unorm{f}{\rp}
      \quad\text{and}\quad
      \Unorm{g}{\{w_1,w_2\}\cup B_{[-1,k-1]}}
      \,\le\,
      (1+\kappa)\,\Unorm{g}{\rp},
    \]
    we conclude
    \[
      \maxnorm{\CE{\{w_1,w_2\} \cup B_{[-1,k-1]}} \bigl(fg\bigr)}
      \;\le\;
      (1+\kappa)\,\delta_1\,\delta_2\,
      \Bigl(1 + \Dt \,\exp\bigl(-\eps\,\rk{w}\bigr)\Bigr)\,
      \Unorm{f}{\rp}\,\Unorm{g}{\rp}.
    \]
    Finally, noting that
    \[
      \maxnorm{\CE{\rp} \Bigl[\CE{\{w_1,w_2\} \cup B_{[-1,k-1]}} \bigl(fg\bigr)\Bigr]}
      \;\le\;
      \maxnorm{\CE{\{w_1,w_2\} \cup B_{[-1,k-1]}} \bigl(fg\bigr)},
    \]
    the first statement of the lemma follows by choosing \(\tCR\) sufficiently large.

    \step{Decomposition of $\CE{\{w_1,w_2\} \cup B_{[-1,k-1]} }$}
    To estimate 
    \[
      \maxnorm{\D{\rp} fg}
      \;=\;
      \maxnorm{\D{\rp} \bigl(\CE{\{w_1,w_2\} \cup B_{[-1,k-1]}}\,fg\bigr)},
    \]
    we first decompose the operator 
    \(\CE{\{w_1,w_2\} \cup B_{[-1,k-1]}}\)
    in the same way as ~\eqref{eq:TensorArithmeticIdentity} in Lemma~\ref{lem:TensorArithmeticIdentity}:
    \begin{align}
    \label{eq arithemetic tensor decomposition}
      \CE{\{w_1,w_2\} \cup B_{[-1,k-1]}} 
      \;=\;&
      \CE{w_1}\,\otimes\,\CE{w_2}
      \,\otimes\,
      \bigotimes_{t=-1}^{k-1} \CE{B_t}
      \\
      =\;&
      \CE{w_1}\,\otimes\,\CE{w_2}
      \,\otimes\,
      \bigotimes_{t=-1}^{k-1}
        \bigl(\EE{B_t} \;+\; \D{B_t}\bigr)
      \nonumber
      \\
      =\;&
      \sum_{t = -2}^{k-1}
        \CE{w_1}\,\otimes\,\CE{w_2}
        \,\otimes\,
        \bigotimes_{s \in [-1,t-1]} \CE{B_s}
        \,\otimes\,
        \D{B_t}
        \,\otimes\,
        \bigotimes_{s \in [t+1,k-1]} \EE{B_s}
      \nonumber
      \\
      =\;&
      \sum_{t = -2}^{k-1}
        \underbrace{
          \CE{w_1} \,\otimes\, \CE{w_2}
          \,\otimes\, \CE{B_{[-1,t-1]}}
          \,\otimes\, \D{B_t}
          \,\otimes\, \EE{B_{[t+1,k-1]}}
        }_{:= \bL_t}.
      \nonumber
    \end{align}
    In particular, the first two terms in this sum are:
    \[
      \bL_{-2}
      \;=\;
      \CE{w_1} \,\otimes\, \CE{w_2}
      \,\otimes\, \EE{B_{[-1,k-1]}}
      \quad\text{and}\quad
      \bL_{-1}
      \;=\;
      \CE{w_1} \,\otimes\, \CE{w_2}
      \,\otimes\, \D{B_{-1}}
      \,\otimes\, \EE{B_{[0,k-1]}}.
    \]
    Hence,
    \[
      \D{\rp} \bigl(fg\bigr)
      \;=\;
      \sum_{t=-2}^{k-1}
        \D{\rp}\bigl[\bL_t \,(fg)\bigr].
    \]
    
    \step{Estimate of $\maxnorm{\bL_t fg}$ for $t \in [-1,k-1]$}
    We now establish a bound for each summand \(\bL_t fg\). Observe:
    
    \begin{enumerate}
    \item
      For \(\phi_1, \phi_2 \in \T{B_{[-1,t-1]}}\), 
      Lemma~\ref{lem basicDecayAu2} implies
      \[
        \maxnorm{\CE{B_{[-1,t-1]}} \,\phi_1 \,\phi_2}
        \;\le\;
        \Bigl(1 + \Dt \exp\bigl(-\eps\,\rk{w}\bigr)\Bigr)\,
        \Unorm{\phi_1}{B_{[-1,t-1]}}
        \,\Unorm{\phi_2}{B_{[-1,t-1]}}
        \;\le\;
        2\,
        \Unorm{\phi_1}{B_{[-1,t-1]}}
        \,\Unorm{\phi_2}{B_{[-1,t-1]}},
      \]
      when \(\tCR\) is large enough.
    
    \item
      For \(\phi_1,\phi_2 \in \T{B_t} = \TT{w}{t} = \TT{w^t}{0}\), again by Lemma~\ref{lem basicDecayAu2},
      \[
        \maxnorm{\D{B_t}\,\phi_1\,\phi_2}
        \;\le\;
        \Dt\,
        \exp\bigl(-\,\eps\,\rk{w} \;-\;\eps\,t\bigr)\,
        \Unorm{\phi_1}{B_t}\,\Unorm{\phi_2}{B_t}.
      \]
    
    \item
      For \(\phi_1,\phi_2 \in \T{B_{[t+1,k-1]}}\),
      \[
        \maxnorm{\EE{B_{[t+1,k-1]}} \,\phi_1 \,\phi_2}
        \;=\;
        \bigl|\EE{B_{[t+1,k-1]}} \,\phi_1 \,\phi_2\bigr|
        \;\le\;
        \Unorm{\phi_1}{B_{[t+1,k-1]}}\,\Unorm{\phi_2}{B_{[t+1,k-1]}}.
      \]
    \end{enumerate}
    
    Combining these bounds with the assumptions on \({\cal W}_1\) and \({\cal W}_2\), and applying norm ternsorization Lemma~\ref{lem: mainTensorProduct}, we obtain
    \begin{align}
    \label{eq coreDecay0-0}
      \maxnorm{\bL_t \,f\,g}
      &\;\le\;
      \delta_1\,\delta_2
      \;\cdot\;
      2
      \;\cdot\;
      \Dt \,\exp\bigl(-\eps\,\rk{w} \;-\;\eps\,t\bigr)
      \,\Unorm{f}{\OO{w}{[-1,k-1]}}
      \,\Unorm{g}{\OO{w}{[-1,k-1]}}
      \\[6pt]
      &\;\le\;
      (1+\kappa)
      \,\delta_1\,\delta_2\,2\,\Dt\,
      \exp\bigl(-\eps\,\rk{w} \;-\;\eps\,t\bigr)
      \,\Unorm{f}{\rp}\,\Unorm{g}{\rp},
      \nonumber
    \end{align}
    where we have used Lemma~\ref{lemma: coreNorm} in the final inequality. To clarify, in the case $t=-1$, we don't need the factor $2$ in the bound.

    \step{Compare \(\maxnorm{\D{\rp}\bL_tfg}\) with \(\maxnorm{\bL_tfg}\) for \(t \in [-1,k-1]\)}
By the definition of \(\bL_t\), we have 
\[
  \bL_t\,fg 
  \;\in\;
  \F{\OO{w}{[-1,t]}}
  \;\subseteq\;
  \F{w^{t+1}_{\pe}}.
\]
Hence, applying both parts of Lemma~\ref{lem basicEECED}, we get
\begin{align}
\label{eq coreDecay0-05}
  \D{\rp} \bigl(\bL_t\,fg\bigr)
  &= 
  \CE{\rp} \bigl(\bL_t\,fg\bigr)
  \;-\;
  \EE{\rp} \bigl(\bL_t\,fg\bigr)
  \\[5pt]
  &=
  \CE{\rp}\,\CE{w^{t+1}}\bigl(\bL_t\,fg\bigr)
  \;-\;
  \EE{w^{t+1}}\bigl(\bL_t\,fg\bigr)
  \;=\;
  \CE{\rp}
  \underbrace{\D{w^{t+1}}\bigl(\bL_t\,fg\bigr)}_{\in \Fz{w^{t+1}}}.
  \nonumber
\end{align}
Therefore,
\begin{align}
\label{eq coreDecay0-00}
  \maxnorm{\D{\rp}\bigl(\bL_t\,fg\bigr)}
  &\;\stackrel{\eqref{eq 1variableDecay}}{\le}\;
  \tCM\,\tlam^{\,k-1-t}\,\maxnorm{\D{w^{t+1}}\bigl(\bL_t\,fg\bigr)}
  \\[6pt]
  &=
  \tCM\,\tlam^{\,k-1-t}\,\maxnorm{\CE{w^{t+1}}\bigl(\bL_t\,fg\bigr)
    \;-\;\EE{w^{t+1}}\bigl(\bL_t\,fg\bigr)}
  \nonumber
  \\[6pt]
  &\le
  2\,\tCM\,\tlam^{\,k-1-t}\,\maxnorm{\CE{w^{t+1}}\bigl(\bL_t\,fg\bigr)}
  \nonumber
  \\[6pt]
  &\le
  2\,\tCM\,\tlam^{\,k-1-t}\,\maxnorm{\bL_t\,fg}
  \nonumber
  \\[6pt]
  &\;\stackrel{\eqref{eq coreDecay0-0}}{\le}\;
  \tC\,
  \tlam^{k-1-t}\,\delta_1\,\delta_2\,
  \Dt\,\exp\!\bigl(-\,\eps\,\rk{w} \;-\;\eps\,t\bigr)\,
  \Unorm{f}{\rp}\,\Unorm{g}{\rp},
  \nonumber
\end{align}
for some \(\tC \in \CC\).

\step{Estimating \(\maxnorm{\D{\rp}\bL_{-2}fg}\)}
Recall
\[
  \bL_{-2}
  \;=\;
  \CE{w_1}\,\otimes\,\CE{w_2}
  \,\otimes\,\EE{B_{[-1,k-1]}}.
\]
The difference here is that \(\bL_{-2}\) does not contain any \(\D{}\)-type operator; otherwise, the argument is similar. We sketch the proof:

By an argument analogous to \eqref{eq coreDecay0-0}, one obtains
\begin{align}
    \label{eq coreDecay0-0x}
  \maxnorm{\bL_{-2}\,f\,g}
  \;\le\;
  \delta_1\,\delta_2\,(1+\kappa)\,\Unorm{f}{\rp}\,\Unorm{g}{\rp}.
\end{align}
Since \(\bL_{-2}\,f\,g\) depends only on the variables \(x_{w_1}\) and \(x_{w_2}\), we treat it as a function of $\F{w_{\pe}}$.    
Then, by Lemma \ref{lem basicEECED}, we have
\begin{align*}
    \D{\rp}\,\bigl(\bL_{-2}\,f\,g\bigr)
= &
    \CE{\rp} \,\bigl(\bL_{-2}\,f\,g\bigr)
-
    \EE{\rp} \,\bigl(\bL_{-2}\,f\,g\bigr) \\
= &
    \CE{\rp} \CE{w} \,\bigl(\bL_{-2}\,f\,g\bigr)
-
    \EE{w} \,\bigl(\bL_{-2}\,f\,g\bigr) \\
= &
  \CE{\rp} \CE{w} \,\bigl(\bL_{-2}\,f\,g\bigr)
-
   \CE{\rp} \EE{w} \,\bigl(\bL_{-2}\,f\,g\bigr) \\
= &
    \CE{\rp} \D{w}  \,\bigl(\bL_{-2}\,f\,g\bigr) \,.
\end{align*}

We write
\begin{align}
\label{eq coreDecay0-01}
  \maxnorm{\D{\rp}\,\bigl(\bL_{-2}\,f\,g\bigr)}
  &= 
  \maxnorm{\CE{\rp}\,\underbrace{\D{w}\bigl(\bL_{-2}\,f\,g\bigr)}_{\in\,\Fz{w}}}
  \;\le\;
  \tCM\,\tlam^{k}\,\maxnorm{\D{w}\bigl(\bL_{-2}\,f\,g\bigr)}.
\end{align}
Furthermore,
\[
  \maxnorm{\D{w}\bigl(\bL_{-2}\,f\,g\bigr)}
  \;\le\;
  \maxnorm{\CE{w}\bigl(\bL_{-2}\,f\,g\bigr)}
  \;+\;
  \maxnorm{\EE{w}\bigl(\bL_{-2}\,f\,g\bigr)}
  \;\stackrel{\eqref{eq coreDecay0-0x}}{=}\;
  2\,\delta_1\,\delta_2\,(1+\kappa)\,\Unorm{f}{\rp}\,\Unorm{g}{\rp}.
\]
Consequently,
\[
  \maxnorm{\D{\rp}\bigl(\bL_{-2}\,f\,g\bigr)}
  \;\le\;
  2\,\delta_1\,\delta_2\,(1+\kappa)\,\tCM\,\tlam^{k}\,
  \Unorm{f}{\rp}\,\Unorm{g}{\rp}.
\]

\step{Summation over \(t\)}
Finally, summing over \(t\) yields
\begin{align*}
  \maxnorm{\D{\rp}\bigl(f\,g\bigr)}
  \;\le\; &
  \sum_{t = -2}^{k-1}
    \maxnorm{\D{\rp}\,\bigl(\bL_t\,f\,g\bigr)} \\
  \;\le\; &
  \tC\,\delta_1\,\delta_2
  \Bigl(
    \tlam^k
    \;+\;
    \Dt\,\exp\!\bigl(-\,\eps\,\rk{w}\bigr)
    \sum_{t=-1}^{k-1}
      \tlam^{k-1-t}\,\exp\!\bigl(-\,\eps\,t\bigr)
  \Bigr)
  \,\Unorm{f}{\rp}\,\Unorm{g}{\rp},
\end{align*}
for some \(\tC \in \CC\). Since \(\tlam\,\exp(\eps) < 1\),
\[
  \sum_{t=-1}^{k-1}
    \tlam^{k-1-t}\,\exp\!\bigl(-\,\eps\,t\bigr)
  \;\le\;
  \exp\bigl(-\,\eps\,(k-1)\bigr)\,
  \frac{1}{1-\tlam\,\exp(\eps)}
  \;\le\;
  \tC'\,\exp\!\bigl(-\,\eps\,k\bigr)
  \quad
  \text{for some }\tC' \in \CC.
\]
Therefore,
\[
  \tlam^k
  \;+\;
  \Dt\,\exp\!\bigl(-\,\eps\,\rk{w}\bigr)
  \sum_{t=-1}^{k-1}
    \tlam^{k-1-t}\,\exp\!\bigl(-\,\eps\,t\bigr)
  \;\le\;
  \tlam^k
  \;+\;
  \tC'\,\Dt\,\exp\!\bigl(-\,\eps\,\rk{w}\bigr)\,\exp\!\bigl(-\,\eps\,k\bigr)
  \;\le\;
  \tC''\,\exp\!\bigl(-\,\eps\,k\bigr),
\]
for some \(\tC'' \in \CC\), provided \(\tCR\) is sufficiently large. Substituting back into the previous bound for \(\maxnorm{\D{\rp}f\,g}\) completes the proof.
\end{proof}

\begin{proof}[Proof of Lemma \ref{lem fugv self}]

    \step{Overview}

The overall proof structure is similar to that of Lemma~\ref{lem PT Correlation}, but is actually simpler. Here, we outline the main steps and highlight the key differences. In this case, define
\[
  B_t \;:=\; \OO{w}{t}
  \quad
  \text{for } t \ge 0,
\]
and set
\[
  B \;=\; \bigotimes_{t=0}^{k-1} B_t.
\]
First, we express
\begin{align*}
  \D{\rp}fg \;=\; \D{\rp}\,\bigl(\CE{w} \otimes \CE{B}\bigr)\,fg.
\end{align*}
We then decompose \(\CE{w} \otimes \CE{B}\) into a sum of terms, much like in \eqref{eq arithemetic tensor decomposition}, obtaining
\begin{align*}
  \CE{w} \otimes \CE{B}
  \;=\;&
  \sum_{t=-1}^{k-1} \CE{w} \otimes 
  \CE{B_{[0,t-1]}} \otimes \D{B_t} \otimes \EE{B_{[t+1,k-1]}}
\end{align*}
For each \(t \in [0,k-1]\), we note that
\[
  \CE{w} \otimes \CE{B_{[0,t-1]}} \otimes \D{B_t} \otimes \EE{B_{[t+1,k-1]}}\,fg
  \;\in\;
  \F{w}\,\otimes\,\F{B_{[0,t-1]}}\otimes \F{B_t}.
\]
With the path
\[
  \{w\} \cup B_{[0,t-1]} \cup B_t
  \;\pe\;
  w^t \cup B_t
  \;\pe\;
  \rp,
\]
we apply Lemma~\ref{lem basicEECED} to get
\begin{align*}
  & \D{\rp}\,
    \Bigl[\CE{w} \otimes \CE{B_{[0,t-1]}} \otimes \D{B_t} \otimes \EE{B_{[t+1,k-1]}}\,fg\Bigr]
  \\[6pt]
  =\;&
    \D{\rp}\,\circ\,\bigl(\CE{w^t} \otimes \CE{B_t}\bigr)\circ
    \Bigl[\CE{w} \otimes \CE{B_{[0,t-1]}} \otimes \D{B_t} \otimes \EE{B_{[t+1,k-1]}}\,fg\Bigr]
  \\
  =\;&
    \D{\rp}\,
    \Bigl[
      \underbrace{\CE{w^t}\,\otimes\,\D{B_t}\,\otimes\,\EE{B_{[t+1,k-1]}}}_{:=\,\bL_t}
      fg
    \Bigr].
\end{align*}
Putting everything together, we obtain
\begin{align*}
  \D{\rp}fg
  \;=\;
  \D{\rp}\,\bigl[\CE{w} \otimes \EE{B}\,fg\bigr]
  \;+\;
  \sum_{t=0}^{k-1}
    \D{\rp}\,\bigl[\bL_t\,fg\bigr].
\end{align*}

\step{Estimate \(\maxnorm{\D{\rp}\,\bL_t\,fg}\) for \(t \in [0,k-1]\)}

Similarly to \eqref{eq coreDecay0-05}, we have
\[
  \D{\rp}\,\bigl[\bL_t\,fg\bigr]
  \;=\;
  \CE{\rp}\,\Bigl[
    \underbrace{\D{w^{t+1}}\,\bL_t\,fg}_{\in\,\Fz{w^{t+1}}}
  \Bigr],
\]
and hence
\begin{align}
  \label{eq fugv self 00}
  \maxnorm{\D{\rp}\,\bL_t\,fg}
  \;\le\;
  2\,\tCM\,\tlam^{\,k-t-1}\,\maxnorm{\D{w^{t+1}}\,\bL_t\,fg}
  \;\le\;
  2\,\tCM\,\tlam^{\,k-t-1}\,\maxnorm{\bL_t\,fg}.
\end{align}

Next, we apply Lemma~\ref{lem: mainTensorProduct} to 
\[
  \bL_t
  \;=\;
  \CE{w^t}\,\otimes\,\D{B_t}\,\otimes\,\EE{B_{[t+1,k-1]}},
\]
treating \(f,g\) as elements of \(\F{w^t_{\pe}} \otimes \F{B_t} \otimes \F{B_{[t+1,k-1]}}\). 
We examine each component of \(\bL_t\):

\begin{itemize}
  \item 
  For \(\phi_1,\phi_2 \in \F{w^t_{\pe}}\), we apply Lemma~\ref{lem basicEECED} and the Cauchy--Schwarz inequality to obtain
  \[
    \maxnorm{\CE{w^t}\,\phi_1\,\phi_2}
    \;\le\;
    \tCM\,\EE{w^t}\bigl[\bigl|\CE{w^t}\,\phi_1\,\phi_2\bigr|\bigr]
    \;\le\;
    \tCM\,\EE{w^t}\Bigl[
      \sqrt{\CE{w^t}\,\phi_1^2}\,\sqrt{\CE{w^t}\,\phi_2^2}
    \Bigr]
    \;\le\;
    \tCM\,\Unorm{\phi_1}{w^t}\,\Unorm{\phi_2}{w^t}.
  \]

  \item 
  For \(\phi_1,\phi_2 \in \T{B_t} \subseteq \TT{w}{t}=\TT{w^t}{0}\), we apply Lemma~\ref{lem basicDecayAu2} to obtain
  \[
    \maxnorm{\D{B_t}\,\phi_1\,\phi_2}
    \;\le\;
    \Dt\,\exp(-\eps\,\rk{w} \;-\;\eps\,t)\,
    \Unorm{\phi_1}{B_t}\,\Unorm{\phi_2}{B_t}.
  \]

  \item 
  For \(\phi_1,\phi_2 \in \T{B_{[t+1,k-1]}}\), applying Cauchy--Schwarz directly yields
  \[
    \maxnorm{\EE{B_{[t+1,k-1]}}\,\phi_1\,\phi_2}
    \;=\;
    \bigl|\EE{B_{[t+1,k-1]}}\,\phi_1\,\phi_2\bigr|
    \;\le\;
    \Unorm{\phi_1}{B_{[t+1,k-1]}}\,
    \Unorm{\phi_2}{B_{[t+1,k-1]}}.
  \]
\end{itemize}

By Lemma~\ref{lem: mainTensorProduct}, we conclude
\begin{align*}
  \maxnorm{\bL_t\,fg}
  \;\le\; &
  \tCM
  \;\cdot\;
  \Dt\,\exp\bigl(-\eps\,\rk{w} \;-\;\eps\,t\bigr)
  \;\Unorm{f}{\{w\}\cup B}\,\Unorm{g}{\{w\}\cup B} \\
  \;\le\; &
  (1+\kappa)\,\tCM
  \;\cdot\;
  \Dt\,\exp\bigl(-\eps\,\rk{w} \;-\;\eps\,t\bigr)
  \;\Unorm{f}{\rp}\,\Unorm{g}{\rp}.
\end{align*}
Combining this with \eqref{eq fugv self 00}, we obtain
\[
  \maxnorm{\D{\rp}\,\bL_t\,fg}
  \;\le\;
  \tC\,\tlam^{\,k-t-1}\,
  \Dt\,\exp\bigl(-\eps\,\rk{w} \;-\;\eps\,t\bigr)\,
  \Unorm{f}{\rp}\,\Unorm{g}{\rp}
  \quad
  \text{for some } \tC \in \CC.
\]

\step{Estimate \(\maxnorm{\D{\rp}(\CE{w}\otimes \EE{B}\,fg)}\)}

Because \(\CE{w}\otimes \EE{B}\,fg \,\in\, \F{w}\), we have
\begin{align*}
  \D{\rp}\,\bigl(\CE{w}\otimes \EE{B}\,fg\bigr)
  \;&=\;
  \CE{\rp}\,\bigl(\CE{w}\otimes \EE{B}\,fg\bigr)
  \;-\;
  \EE{\rp}\,\bigl(\CE{w}\otimes \EE{B}\,fg\bigr)
  \\[4pt]
  &=\;
  \CE{\rp}\,\bigl(\CE{w}\otimes \EE{B}\,fg\bigr)
  \;-\;
  \bigl(\EE{w}\otimes \EE{B}\,fg\bigr)
  \\[4pt]
  &=\;
  \CE{\rp}\,\bigl(\D{w} \otimes \EE{B}\,fg\bigr).
\end{align*}

Since \(\D{w} \otimes \EE{B}\,fg \,\in\, \Fz{w} \otimes \R \,=\, \Fz{w}\), 
Lemma~\ref{lem basicEECED} implies 
\begin{align}
  \label{eq fugv self 01}
  \maxnorm{\CE{\rp}\,\bigl(\D{w} \otimes \EE{B}\,fg\bigr)}
  \;\le\;
  \tCM\,\tlam^k\;\maxnorm{\D{w}\otimes \EE{B}\,fg}.
\end{align}
Next, by our assumption on \(\cal W\) and Lemma~\ref{lem: mainTensorProduct}, we obtain
\[
  \maxnorm{\D{w}\otimes \EE{B}\,fg}
  \;\le\;
  \delta\;\Unorm{f}{\{w\}\cup B}\,\Unorm{g}{\{w\}\cup B}
  \;\le\;
  (1+\kappa)\,\delta\;\Unorm{f}{\rp}\,\Unorm{g}{\rp},
\]
where we also used Lemma~\ref{lemma: coreNorm}. Combining this with \eqref{eq fugv self 01} yields
\[
  \maxnorm{\D{\rp}\,\bigl(\CE{w}\otimes \EE{B}\,fg\bigr)}
  \;\le\;
  \tC\,\tlam^k\,\delta\;\Unorm{f}{\rp}\,\Unorm{g}{\rp}.
\]

\step{Summation over \(t\)}

Summing over \(t\), we conclude
\begin{align*}
  \maxnorm{\D{\rp}\,fg}
  &\;\le\;
  \tC\;\Bigl(
    \delta\,\tlam^k
    \;+\;
    \Dt\,\exp\bigl(-\eps\,\rk{w}\bigr)\,
    \sum_{t=0}^{k-1} \tlam^{k-t-1}\,\exp\bigl(-\eps\,t\bigr)
  \Bigr)
  \\
  &\;\le\;
  \tC\;\Bigl(
    \delta\,\tlam^k
    \;+\;
    \Dt\,\exp\bigl(-\eps\,\rk{w} \;-\;\eps\,k\bigr)
  \Bigr)
  \;\Unorm{f}{\rp}\,\Unorm{g}{\rp},
\end{align*}
where we may increase \(\tC\) if necessary to absorb the constant arising from the sum.

    \step{Estimate $\maxnorm{\D{\rp}(\CE{w}\otimes \EE{B}fg)}$}
    Given that $\CE{w}\otimes \EE{B}fg \in \F{w}$, we have
    \begin{align*}
        \D{\rp} (\CE{w}\otimes \EE{B}fg)
        \le &
        \CE{\rp} (\CE{w}\otimes \EE{B}fg)
        -
        \EE{\rp} (\CE{w}\otimes \EE{B}fg) \\
        =   &
        \CE{\rp}  (\CE{w}\otimes \EE{B}fg)
        -
        (\EE{w}\otimes \EE{B} fg)         \\
        =   &
        \CE{\rp} (\D{w} \otimes \EE{B}fg)\,.
    \end{align*}

    Given that $\D{w} \otimes \EE{B}fg \in \Fz{w} \otimes \R = \Fz{w}$, by Lemma \ref{lem basicEECED}, we have
    \begin{align}
        \label{eq fugv self 02}
        \maxnorm{\CE{\rp} (\D{w}\otimes \EE{B}fg)}
        \le &
        \tCM \tlam^k \maxnorm{\D{w}\otimes \EE{B}fg}\,.
    \end{align}

    Now, we rely on our assumption on ${\cal W}$ together with Lemma \ref{lem: mainTensorProduct} to get
    \begin{align*}
        \maxnorm{\D{w}\otimes \EE{B}fg}
        \le
        \delta \Unorm{f}{\{w\}\cup B} \Unorm{g}{\{w\}\cup B}
        \le
        (1+\kappa) \delta \Unorm{f}{\rp} \Unorm{g}{\rp}\,,
    \end{align*}
    by invoking Lemma \ref{lemma: coreNorm}. Together with \eqref{eq fugv self 02}, we conclude that
    \begin{align*}
        \maxnorm{\D{\rp} (\CE{w}\otimes \EE{B}fg)}
        \le
        \tC \tlam^k \delta \Unorm{f}{\rp} \Unorm{g}{\rp}\,.
    \end{align*}

    \step{Summation over $t$}
    Finally, we sum over $t$ to get
    \begin{align*}
        \maxnorm{\D{\rp}fg}
        \le &
        \tC \left(
        \delta \tlam^k +
        \Dt \exp( -\eps \rk{w})
        \sum_{t=0}^{k-1} \tlam^{k-t-1} \exp(-\eps t)
        \right) \\
        \le &
        \tC \left( \delta \tlam^k + \Dt \exp( -\eps \rk{w} -\eps k) \right)  \Unorm{f}{\rp} \Unorm{g}{\rp}\,,
    \end{align*}
    where we increase $\tC$ if necessary to absorb the constant in the sum.
\end{proof}

\subsection{Proof of Proposition \ref{prop RuRv}}
\begin{proof}
    \step{Setup}
We first consider the case \(u \neq v\) and assume \(u \prec w\) and \(v \prec w\). 
Let \(w_1\) and \(w_2\) be the two children of \(w\) such that \(u \pe w_1\) and \(v \pe w_2\). 
Set
\[
  B \;:=\; 
  \bigl(\fc{w}\setminus\{w_1,w_2\}\bigr)\,\cup\,\OO{w}{\bigl[0,\rkr-\rk{w}-1\bigr]}.
\]
Since neither \(u\) nor \(v\) is \(\rp\), we may invoke the second statement of Assumption~\ref{Assume rp},  
together with the observation that 
$\{w_2\} \cup B = \OO{w_1}{\bigl[0,\rkr-\rk{w_1}-1\bigr]}$ and $\{w_1\} \cup B = \OO{w_2}{\bigl[0,\rkr-\rk{w_2}-1\bigr]}$, to obtain the decompositions
\[
  f_u \;=\; f_{u,{\cal R}} + f_{u,{\cal T}}
  \quad\text{and}\quad
  g_v \;=\; g_{v,{\cal R}} + g_{v,{\cal T}},
\]
with
\begin{align*}
  f_{u,{\cal R}}
  \;&\in\;
  \underbrace{
  \dR{u\,;\,\rk{w_1}-\rk{u}}}_{\subseteq\,\F{(w_1)_{\pe}}}\;
  \;\otimes\;\T{w_2}\;\otimes\;\T{B},
  \\[2pt]
  f_{u,{\cal T}}
  \;&\in\;
  \T{w_1}\;\otimes\;\T{w_2}\;\otimes\;\T{B},
  \\[2pt]
  g_{v,{\cal R}}
  \;&\in\;
  \T{w_1}\;\otimes\;\dR{v\,;\,\rk{w_2}-\rk{v}}\;\otimes\;\T{B},
  \\[2pt]
  g_{v,{\cal T}}
  \;&\in\;
  \T{w_1}\;\otimes\;\T{w_2}\;\otimes\;\T{B}.
\end{align*}
Furthermore, from the assumption and $\rk{w_1} = \rk{w_2} = \rk{w}-1$, we have 
\begin{align}
    \label{eq RuRv 00}
  \Unorm{f_{u,{\cal T}}}{\rp}
  \;\le\; &
  \dW{u}
  \;\Dt\;\exp\bigl(-\eps\,(\rk{w}-1)\bigr)\;\lame^{\rk{w}-\rk{u}-1}
  \;\Unorm{f_{u,{\cal R}}}{\rp}, \\
  \nonumber
  \Unorm{g_{v,{\cal T}}}{\rp}
  \;\le\; &
  \dW{v}
  \;\Dt\;\exp\bigl(-\eps\,(\rk{w}-1)\bigr)\;\lame^{\rk{w}-\rk{v}-1}
  \;\Unorm{g_{v,{\cal R}}}{\rp}.
\end{align}
By the definitions of \(\dW{u}\) and \(\dW{v}\), we know
\[
  \max\{\dW{u},\,\dW{v}\}
  \;\le\;
  \max\Bigl\{\tfrac{1}{\tCB\,R},\,\Delta\Bigr\},
\]
which is strictly less than \(1\) whenever \(\tCR\) is large enough, noting also that \(\tCB \ge 1\) by \eqref{def tCB2}. 
Thus, if \(\tCR\) is sufficiently large, 
\[
    \Unorm{f_u}{\rp}  \ge  \Unorm{f_{u,{\cal R}}}{\rp} - \Unorm{f_{u,{\cal T}}}{\rp} \stackrel{\eqref{eq RuRv 00}}{\ge}
    \frac{1}{2} \Unorm{f_{u,{\cal R}}}{\rp} \,.
    \]
Also, we have an analogous inequality for \(g_v\) as well. Here we conclude that
\begin{align}
    \label{eq RuRv 01}
\Unorm{f_{u,{\cal R}}}{\rp}\;\le\; 2\,\Unorm{f_u}{\rp}
\quad \text{and} \quad
  \Unorm{g_{v,{\cal R}}}{\rp}
  \;\le\; 2\,\Unorm{g_v}{\rp}.
\end{align}

\step{Invoke Lemma \ref{lem PT Correlation}}
We set \({\cal W}_1 = \dR{u\,;\,\rk{w_1}-\rk{u}}\) and \({\cal W}_2 = \dR{v\,;\,\rk{w_2}-\rk{v}}\). 
From (ii)(b) of Assumption~\ref{Assume rp}, set 
\[
  \delta_1 
  \;=\;
  \tC_{\ref{prop: coreR}}\;\dW{u}\;\lame^{\rk{w_1}-\rk{u}},
  \qquad
  \delta_2 
  \;=\;
  \tC_{\ref{prop: coreR}}\;\dW{v}\;\lame^{\rk{w_2}-\rk{v}}.
\]
Applying Lemma~\ref{lem PT Correlation}, we obtain
\begin{align*}
  \maxnorm{\D{\rp}\,f_{u,{\cal R}}\,g_{v,{\cal R}}}
  &\;\le\;
  \tC
  \;\dW{u}\;\lame^{\rk{w_1}-\rk{u}}
  \;\dW{v}\;\lame^{\rk{w_2}-\rk{v}}
  \;\exp\bigl(-\eps\,(\rkr-\rk{w})\bigr)
  \;\Unorm{f_{u,{\cal R}}}{\rp}
  \;\Unorm{g_{v,{\cal R}}}{\rp}
  \\[4pt]
  &\;\le\;
  \tC
  \;\dW{u}\;\lame^{\rk{w_1}-\rk{u}}
  \;\dW{v}\;\lame^{\rk{w_2}-\rk{v}}
  \;\exp\bigl(-\eps\,(\rkr-\rk{w})\bigr)
  \;\Unorm{f_{u}}{\rp}
  \;\Unorm{g_{v}}{\rp}
  \\[4pt]
  &\;\le\;
  \tC
  \;\dW{u}\;\lame^{\rk{w}-\rk{u}}
  \;\dW{v}\;\lame^{\rk{w}-\rk{v}}
  \;\exp\bigl(-\eps\,(\rkr-\rk{w})\bigr)
  \;\Unorm{f_{u}}{\rp}
  \;\Unorm{g_{v}}{\rp},
\end{align*}
where \(\tC \in \CC\), and we allow \(\tC\) to absorb constant factors from line to line. 

To bound \(\maxnorm{\D{\rp}\,f_{u,{\cal R}}\,g_{v,{\cal T}}}\), we again apply Lemma~\ref{lem PT Correlation}, this time taking 
\({\cal W}_1 = \dR{u\,;\,\rk{w_1}-\rk{u}}\) 
and 
\({\cal W}'_2 = \T{w_2}\). 
By Lemma~\ref{lem basicDecayAu2}, we may 
assume \(\tCR\) is sufficiently large so that \(\Dt \le 1\)
which allows us to choose \(\delta_2' = 2\).
Applying Lemma~\ref{lem PT Correlation} gives
\begin{align*}
  \maxnorm{\D{\rp}\,f_{u,{\cal R}}\,g_{v,{\cal T}}}
  &\;\le\;
  \tC
  \;\dW{u}\;\lame^{\rk{w}-\rk{u}}
  \;\exp\bigl(-\eps\,(\rkr-\rk{w})\bigr)
  \;\Unorm{f_{u,{\cal R}}}{\rp}
  \;\Unorm{g_{v,{\cal T}}}{\rp}
  \\[4pt]
  & \stackrel{\eqref{eq RuRv 00}}{\;\le\;}
  \tC
  \;\dW{u}\;\lame^{\rk{w}-\rk{u}}
  \;\dW{v}\;\lame^{\rk{w}-\rk{v}}
  \;\exp\bigl(-\eps\,(\rkr-\rk{w})\bigr)
  \;\Unorm{f_{u,{\cal R}}}{\rp}
  \;\Unorm{g_{v,{\cal R}}}{\rp}
  \\[4pt]
  &\stackrel{\eqref{eq RuRv 00},\eqref{eq RuRv 01}}{\;\le\;}
  \tC
  \;\dW{u}\;\lame^{\rk{w}-\rk{u}}
  \;\dW{v}\;\lame^{\rk{w}-\rk{v}}
  \;\exp\bigl(-\eps\,(\rkr-\rk{w})\bigr)
  \;\Unorm{f_{u}}{\rp}
  \;\Unorm{g_{v}}{\rp},
\end{align*}
where we again allow \(\tC\) to increase from line to line to absorb various constants. 
By symmetry, the same bound applies to \(\maxnorm{\D{\rp}\,f_{u,{\cal T}}\,g_{v,{\cal R}}}\).

Finally, for \(\maxnorm{\D{\rp}\,f_{u,{\cal T}}\,g_{v,{\cal T}}}\), we set \({\cal W}'_1 = \T{w_1}\), \({\cal W}'_2 = \T{w_2}\), and \(\delta_1' = \delta_2' = 2\).  
Using Lemma~\ref{lem PT Correlation} once more, along with the aforementioned norm bounds, we obtain the same tail bound. 
Summing all four terms, we conclude
\[
  \maxnorm{\D{\rp}\,f_u\,g_v}
  \;\le\;
  \tC
  \;\dW{u}\;\lame^{\rk{w}-\rk{u}}
  \;\dW{v}\;\lame^{\rk{w}-\rk{v}}
  \;\exp\bigl(-\eps\,(\rkr-\rk{w})\bigr)
  \;\Unorm{f_u}{\rp}
  \;\Unorm{g_v}{\rp}.
\]

    \step{Estimate of \(\bigl|\EE{\rp} f_u\,g_v\bigr|\)}

We can also repeat the above argument by invoking the first statement of Lemma~\ref{lem PT Correlation}. In particular, we obtain
\begin{align*}
  &\max \Bigl\{
      \maxnorm{\CE{\rp} f_{u,{\cal R}}\,g_{v,{\cal R}}},
      \maxnorm{\CE{\rp} f_{u,{\cal R}}\,g_{v,{\cal T}}},
      \maxnorm{\CE{\rp} f_{u,{\cal T}}\,g_{v,{\cal R}}},
      \maxnorm{\CE{\rp} f_{u,{\cal T}}\,g_{v,{\cal T}}}
  \Bigr\} \\
  \;\le\; &
  \tC\,\dW{u}\,\lame^{\rk{w}-\rk{u}}
  \;\dW{v}\;\lame^{\rk{w}-\rk{v}}
  \;\Unorm{f_u}{\rp}\,\Unorm{g_v}{\rp},
\end{align*}
which in turn implies
\[
  \bigl|\EE{\rp}\,f_u\,g_v\bigr|
  \;\le\;
  \maxnorm{\CE{\rp}\,f_u\,g_v}
  \;\le\;
  \tC\,\dW{u}\,\lame^{\rk{w}-\rk{u}}
  \;\dW{v}\;\lame^{\rk{w}-\rk{v}}
  \;\Unorm{f_u}{\rp}\,\Unorm{g_v}{\rp}.
\]

We have now completed the proof of the proposition for the case where \(u \neq v\) and none of \(u\) or \(v\) is \(w\).

\step{Case 2: \(u \neq v\) and \(w \in \{u,v\}\)}

Without loss of generality, assume \(v = w\). We still let \(w_1\) be the child of \(w\) such that \(u \pe w_1\), and let \(w_2\) be any other child of \(w\).

In this situation,
\[
  u \prec v \;\Longrightarrow\; v \in \V{k}
  \quad\text{for some }k\ge1.
\]
For \(v \in \V{k}\) with \(k \in [1,\rkr-1]\), we have \(\dR{v} = \cR{\TT{v}{-1}}{\rkr-\rk{v}}\), which implies
\[
  \dR{v}
  \;\subseteq\;
  \TT{v}{[-1,\rkr-\rk{v}]}
  \;=\;
  \T{w_1}\,\otimes\,\T{w_2}\,\otimes\,\T{B}
\]
from Lemma \ref{lem R Decompose}.
When \(v \in \V{\rkr}\) (i.e.\ \(v = \rp\)), we have
\[
  \dR{v}
  \;=\;
  \TT{\rp}{-1}
  \;=\;
  \T{w_1}\,\otimes\,\T{w_2}\,\otimes\,\T{B}.
\]
We decompose \(f_u = f_{u,{\cal R}} + f_{u,{\cal T}}\) as before while keeping \(g_v\) unchanged. Using Lemma~\ref{lem PT Correlation} with \({\cal W}_1 = \dR{u;0}\), \({\cal W}_1' = \T{w_1}\), and \({\cal W}'_2 = \T{w_2}\), we can derive the same bounds as in the previous case. This covers both statements of the proposition in the scenario \(u \neq v\) and \(w \in \{u,v\}\).

\step{Case 3: \(u = v = w\) and \(\rk{w}>0\)}

When \(u=v\), we use Lemma~\ref{lem fugv self} instead of Lemma~\ref{lem PT Correlation}. Let
\[
  B \;=\; \OO{w}{[-1,\,\rkr - \rk{w}-1]}
  \quad\text{and}\quad
  k \;=\;\rkr-\rk{w}.
\]
If \(\rk{w}>0\), then from Lemma \ref{lem R Decompose}, we have
\[
  f_u,\;g_v
  \;\in\;
  \dR{w}
  \;\subseteq\;
  \TT{w}{-1}
  \;\otimes\;
  \T{B}.
\]
Thus, we set \(\cW = \TT{w}{-1}\). Before applying our lemmas, we must estimate
\(\maxnorm{\D{w}\,\phi_1\,\phi_2}\) for \(\phi_1,\phi_2 \in \TT{w}{-1}\).

\smallskip

\noindent
\textbf{Claim.} For \(\phi_1,\phi_2 \in \TT{w}{-1}\),
\[
  \maxnorm{\D{w}\,\phi_1\,\phi_2}
  \;\le\;
  2\,\Dt\,\exp\bigl(-\eps\,\rk{w}\bigr)
  \,\Unorm{\phi_1}{w}
  \,\Unorm{\phi_2}{w}.
\]

\noindent
\emph{Sketch of the argument.} For \(\phi_1,\phi_2 \in \TT{w}{-1}\), we note
\[
  \D{w}\,\phi_1\,\phi_2
  \;=\;
  \CE{w}\,\phi_1\,\phi_2
  \;-\;
  \EE{w}\,\phi_1\,\phi_2,
\]
so
\[
  \maxnorm{\D{w}\,\phi_1\,\phi_2}
  \;\le\;
  \max_{x_w}
    \Bigl[\CE{w}\,\phi_1\,\phi_2\Bigr](x_w)
  \;-\;
  \min_{x_w'}
    \Bigl[\CE{w}\,\phi_1\,\phi_2\Bigr](x_w').
\]
We can view 
\(\CE{w}\,\phi_1\,\phi_2 = \CE{w}\,\CE{\OO{w}{-1}}\,\phi_1\,\phi_2\) as the conditional expectation of  a function, $\CE{\OO{w}{-1}}\,\phi_1\,\phi_2$,  in variables \(x_{\OO{w}{-1}}\), we see
\begin{align*}
  (\ast)
  \;\le\; &
  \max_{x_{\OO{w}{-1}}}
    \Bigl[\CE{w}\,\phi_1\,\phi_2\Bigr]
  \;-\;
  \min_{x_{\OO{w}{-1}}'}
    \Bigl[\CE{w}\,\phi_1\,\phi_2\Bigr] \\
\;=\; &
\max_{x_{\OO{w}{-1}}}
    \Bigl[\CE{w}\,\phi_1\,\phi_2\Bigr] - \EE{w}\,\phi_1\,\phi_2
  \;-\; \left(
  \min_{x_{\OO{w}{-1}}'}
    \Bigl[\CE{w}\,\phi_1\,\phi_2\Bigr]
- \EE{w}\,\phi_1\,\phi_2
\right)
  \;\le\;
  2\,\maxnorm{\D{\OO{w}{-1}}\,\phi_1\,\phi_2}.
\end{align*}
Hence, applying Lemma~\ref{lem basicDecayAu2} and then convert the norm using Lemma~\ref{lemma: coreNorm} yields 
\[
  (\ast)
  \;\le\;
  2\,\Dt\,\exp\bigl(-\eps\,\rk{w}\bigr)
  \,\Unorm{\phi_1}{w}
  \,\Unorm{\phi_2}{w}.
\]
This completes the proof of the claim.

\smallskip

Thus, we may now apply Lemma~\ref{lem fugv self} with 
\(\delta = 2\,\Dt\,\exp\bigl(-\eps\,\rk{w}\bigr)\), obtaining
\begin{align*}
  \maxnorm{\D{\rp}\,f_u\,g_v}
  &\;\le\;
  \tC\,\Dt
  \Bigl[\exp\bigl(-\eps\,\rk{w}\bigr)\,\tlam^{\rkr-\rk{w}}\,+
        \,\exp\bigl(-\eps\,\rkr\bigr)\Bigr]
  \,\Unorm{f_u}{\rp}\,\Unorm{g_v}{\rp}
  \\[4pt]
  &\;\le\;
  \tC\,\Dt\,\exp\bigl(-\eps\,\rkr\bigr)
  \,\Unorm{f_u}{\rp}\,\Unorm{g_v}{\rp},
\end{align*}
where the final inequality follows from \(\tlam < \exp(-\eps)\) and 
the constant \(\tC\) may increase from line to line to absorb various constants.

\step{Case 4: \(u = v = w\) and \(\rk{w} = 0\)}

Finally, consider \(w \in \V{0}\). 
Again, set 
\[
  B \;=\; \OO{w}{[-1,\,\rkr - \rk{w}-1]}
  \quad\text{and}\quad
  k \;=\;\rkr-\rk{w}.
\]

Here, we simply treat
\[
  f_u,\;g_v
  \;\in\;
  \dR{w}
  \;\subseteq\;
  \F{w_{\pe}}
  \;\otimes\;
  \T{B}.
\]
In this case, we can rely on Cauchy-Schwarz inequality to obtain
\begin{align*}
  \maxnorm{\D{w}\,\phi_1\,\phi_2}
  \;\le\;
  2\,\maxnorm{\CE{w}\,\phi_1\,\phi_2}
  \;\le\; &
  2\,\tCM
  \,\EE{w}\bigl|\CE{w}\,\phi_1\,\phi_2\bigr|\\
  & \;\le\;
  2\,\tCM
  \,\EE{w}\Bigl[\sqrt{\CE{w}\,\phi_1^2}\,\sqrt{\CE{w}\,\phi_2^2}\Bigr]
  \;\le\;
  2\,\tCM
  \,\Unorm{\phi_1}{w}
  \,\Unorm{\phi_2}{w},
\end{align*}
for all \(\phi_1,\phi_2 \in \F{w_{\pe}}\). Therefore, we may set \(\delta = 2\,\tCM\) and apply Lemma~\ref{lem fugv self}, yielding
\[
  \maxnorm{\D{\rp}\,f_u\,g_v}
  \;\le\;
  \tC\,
  \bigl[\tlam^{\rkr} + \Dt\,\exp\bigl(-\eps\,\rkr\bigr)\bigr]
  \;\Unorm{f_u}{\rp}\,\Unorm{g_v}{\rp}.
\]
\end{proof}

\bigskip

\section{Layerwise Analysis}
\label{sec: layerwise}

In this section, we establish the following proposition.
\begin{prop}\label{prop layerwise main}
    There exists a constant $\tC \in \CC$ such that if
    \[
        \tCB \;\ge\; \sqrt{\frac{2}{\kappa}\,\tC},
    \]
    then for sufficiently large $\tCR$, the following statement holds:
    Consider $\rp$ satisfying Assumption~\ref{Assume rp}. 
    For $t,r \in [0, \rkr]$, let
    \[
        f_u \in \dR{u} \quad \bigl(\forall u \in \V{t}\bigr),
        \quad\text{and}\quad
        g_v \in \dR{v} \quad \bigl(\forall v \in \V{r}\bigr).
    \]
    Define
    \[
        f_t = \sum_{u \in \V{t}} f_u,
        \quad\text{and}\quad
        g_r = \sum_{v \in \V{r}} g_v.
    \]
    Then,
    \begin{align*}
        \maxnorm{\D{\rp}\bigl(f_t \,g_r\bigr)}
        &\le
        \frac{\tC}{(\tCB)^2}
        \exp\bigl(-\eps \,\rkr\bigr)\,
        \exp\bigl(-1.1\,\eps\,|t-r| \;-\;\eps\,\min\{t,r\}\bigr)\,
        \Unorm{f_t}{\rp}\,\Unorm{g_r}{\rp} \\
        &\quad
        +\,{\bf 1}(r=t)\,\tC
        \Bigl(
            \tlam^{\rkr} \;+\; \Dt\,\exp\bigl(-\eps \,\rkr\bigr)
        \Bigr)\,\Unorm{f_t}{\rp}\,\Unorm{g_r}{\rp}\,.
    \end{align*}
    Furthermore, for $t \neq r$, we have
    \begin{align*}
        \bigl|\EE{\rp}\bigl(f_t\,g_r\bigr)\bigr|
        &\le
        \tC\,\frac{\Dt}{\tCB}\,
        \exp\bigl(-\eps\,(t+r)\bigr)\,
        \exp\bigl(-1.1\,\eps\,|t-r|\bigr)\,
        \Unorm{f_t}{\rp}\,\Unorm{g_r}{\rp}\,.
    \end{align*}
\end{prop}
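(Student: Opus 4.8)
\textbf{Proof strategy for Proposition~\ref{prop layerwise main}.}
The plan is to reduce the layerwise bound to the pairwise bound from Proposition~\ref{prop RuRv} by expanding $\D{\rp}(f_t g_r) = \sum_{u \in \V{t}} \sum_{v \in \V{r}} \D{\rp}(f_u g_v)$ and $\EE{\rp}(f_t g_r) = \sum_{u,v} \EE{\rp}(f_u g_v)$, then controlling the resulting double sum by carefully grouping the pairs $(u,v)$ according to their nearest common ancestor $w$. The key geometric observation is that for $u \in \V{t}$ and $v \in \V{r}$ with nearest common ancestor $w$, we have $\rk{w} \ge \max\{t,r\}$, and moreover $\rk{w} - \rk{u} = \rk{w} - t$ and $\rk{w} - \rk{v} = \rk{w} - r$, so the exponent $\lame^{\rk{w}-\rk{u}}\lame^{\rk{w}-\rk{v}} \exp(-\eps(\rkr - \rk{w}))$ in \eqref{eq RuRv D} becomes a function of $\rk{w}$ alone once $t,r$ are fixed. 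The idea is to sum over $w$ layer by layer: for each admissible height $s = \rk{w} \in [\max\{t,r\}, \rkr]$, the vertices $w \in \V{s}$ partition the relevant pairs, and the subtree counting bound (degree dominated by $d$ with parameter $R$) lets us bound the number of descendants of $w$ at levels $t$ and $r$.

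The main technical device will be a Cauchy--Schwarz step to pass from $\sum_{u,v} (\cdots) \Unorm{f_u}{\rp}\Unorm{g_v}{\rp}$ to a bound in terms of $\Unorm{f_t}{\rp} = \bigl(\sum_u \Unorm{f_u}{\rp}^2\bigr)^{1/2}$ and $\Unorm{g_r}{\rp} = \bigl(\sum_v \Unorm{g_v}{\rp}^2\bigr)^{1/2}$. Here I would use the orthogonality-type structure: the functions $f_u$ for distinct $u \in \V{t}$ have disjoint leaf-supports $L_u$, hence $\EE{\rp}$ splits and $\Unorm{f_t}{\rp}^2 = \sum_u \Unorm{f_u}{\rp}^2$ up to the norm-comparison factors $(1+\kappa)^{\pm}$ from Lemma~\ref{lemma: coreNorm} (since $\V{t}$ is an antichain $\pe \rp$). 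Then for the weighted double sum, fixing $w$ at height $s$, I would apply Cauchy--Schwarz over the descendants of $w$ at levels $t,r$ separately, picking up a factor of $\sqrt{\#\{\text{level-}t\text{ desc}\}}\cdot\sqrt{\#\{\text{level-}r\text{ desc}\}} \le \sqrt{R d^{s-t}}\sqrt{R d^{s-r}} = R d^{s - (t+r)/2}$. Combined with the decay $\lame^{2s - t - r}\exp(-\eps(\rkr - s))$ and using $\sqrt{d}\lame < \exp(-1.1\eps)$ (from Definition~\ref{def  varepsilon}, since $d\lame^2 = \exp(-2.2\eps)$), the factor $d^{s-(t+r)/2}\lame^{2s-t-r} = (\sqrt d \lame)^{2s - t - r} \le \exp(-1.1\eps(2s - t - r))$, and summing the geometric series over $s$ from $\max\{t,r\}$ to $\rkr$ produces $\exp(-1.1\eps(2\max\{t,r\} - t - r))\exp(-\eps(\rkr - \max\{t,r\})) \cdot \tC = \tC \exp(-1.1\eps|t-r|)\exp(-\eps\min\{t,r\})\exp(-\eps(\rkr - \max\{t,r\}))$; rewriting $\rkr - \max\{t,r\} = \rkr - \min\{t,r\} - |t-r|$ and absorbing $\exp(-\eps|t-r|)$ into the $\exp(-1.1\eps|t-r|)$ slack gives the claimed $\exp(-\eps\rkr)\exp(-1.1\eps|t-r| - \eps\min\{t,r\})$ form. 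The $\dW{u}\dW{v}$ prefactors contribute $\le \tfrac{1}{(\tCB R)^2}$ when both $u,v \in \V{0}$, or $\Dt^2 \exp(-\eps(t+r))$ otherwise, and the two extra factors of $R$ from the counting cancel the two factors of $R^{-1}$ — this is exactly why the statement carries $1/(\tCB)^2$ rather than $1/(\tCB R)^2$; in the mixed and higher-layer cases $\Dt$ (which can be made small) plays the role of one $1/\tCB$, and one keeps a spare $\Dt/\tCB$ or better, so one simply takes the worst case $\tC/(\tCB)^2$.

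For the diagonal contribution $t = r$, the pairs with $u = v$ are handled by the $u=v$ cases of Proposition~\ref{prop RuRv}, which directly give the $\tC(\tlam^{\rkr} + \Dt\exp(-\eps\rkr))$ term after summing $\sum_u \Unorm{f_u}{\rp}\Unorm{g_u}{\rp} \le \Unorm{f_t}{\rp}\Unorm{g_r}{\rp}$ by Cauchy--Schwarz; the off-diagonal pairs $u \ne v$ with $u,v \in \V{t}$ fall under the generic estimate above with $t = r$, contributing to the first term. For the $\EE{\rp}$ bound with $t \ne r$, the same grouping applies but now using \eqref{eq RuRv E}, which lacks the $\exp(-\eps(\rkr - \rk{w}))$ factor; summing $\lame^{2s-t-r}$ times the counting factor $Rd^{s-(t+r)/2}$ over $s \ge \max\{t,r\}$ still converges (ratio $\sqrt d\lame \cdot$ something $< 1$) and the geometric series is dominated by its first term at $s = \max\{t,r\}$, yielding $\tC R \exp(-1.1\eps|t-r|)$ times the prefactor; when $t \ne r$ at least one layer is positive so the prefactor is at most $\Dt\exp(-\eps(t+r))/(\tCB R) \cdot \tCM$ or similar, and one factor of $R$ cancels leaving $\tC \Dt \exp(-\eps(t+r))\exp(-1.1\eps|t-r|)/\tCB$.

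The main obstacle I anticipate is the bookkeeping for the Cauchy--Schwarz step when grouping by $w$: one must verify that fixing $w$ and summing over its descendants at levels $t$ and $r$ does not double-count, and that the Cauchy--Schwarz is applied so that each $\Unorm{f_u}{\rp}^2$ and $\Unorm{g_v}{\rp}^2$ is charged only once across all choices of $w$ — this works because for each pair $(u,v)$ the nearest common ancestor $w$ is unique, so the partition of pairs by $w$ is genuine, but one needs to be careful that $\sum_{s}\sum_{w \in \V{s}}\bigl(\sum_{u \preceq w, u \in \V{t}}\Unorm{f_u}{\rp}^2\bigr)^{1/2}\bigl(\sum_{v\preceq w, v\in\V{r}}\Unorm{g_v}{\rp}^2\bigr)^{1/2}$ is bounded by $\sum_s \Unorm{f_t}{\rp}\Unorm{g_r}{\rp}$ (using that for fixed $s$ the $w$'s have disjoint descendant sets), which then needs the geometric decay in $s$ to kill the sum over $s$. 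A secondary subtlety is ensuring the constant $\tCB$ appearing in $\dW{u}$ for $u \in \V{0}$ is consistent with the $1/(\tCB)^2$ in the conclusion and with the hypothesis $\tCB \ge \sqrt{2\tC/\kappa}$ — the latter threshold is presumably what is needed downstream (in Section~\ref{sec: proof main}) to close the induction on $\h_{K+1} - \h_K$, and here we just need to track that all prefactors are $\le \tC/(\tCB)^2$ uniformly.
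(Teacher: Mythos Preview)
Your overall architecture---expand into $\sum_{u,v}\D{\rp}(f_ug_v)$, group by the nearest common ancestor $w$ at height $s$, apply Cauchy--Schwarz against the subtree count $Rd^{s-t}$, then sum the geometric series in $s$ using $\sqrt{d}\lame=\exp(-1.1\eps)$---matches the paper's Lemma~\ref{lem: layerwise} essentially line for line, including the diagonal treatment via the $u=v$ cases of Proposition~\ref{prop RuRv}.

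There is, however, a real gap in the step where you pass from $\bigl(\sum_{u\in\V{t}}\Unorm{f_u}{\rp}^2\bigr)^{1/2}$ to $\Unorm{f_t}{\rp}$. You claim that the $f_u$ for distinct $u\in\V{t}$ have disjoint leaf-supports and then invoke Lemma~\ref{lemma: coreNorm}. This is false: $f_u\in\dR{u}\subseteq\TT{u}{[-1,\rkr-\rk{u}-1]}$ (see Remark~\ref{rem: RInclusion}), and for $k\ge 0$ the sets $\OO{u}{k}$ consist of siblings of ancestors of $u$, whose leaf-sets lie \emph{outside} $L_u$. Two siblings $u,v\in\V{t}$ therefore share all of these ``outside'' variables, and $\EE{\rp}(f_uf_v)$ has no reason to vanish. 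Lemma~\ref{lemma: coreNorm} does not help here either, since $f_u\notin\F{u_\preceq}$.

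The paper closes this gap with Corollary~\ref{cor layerwise norm}: it applies the very same off-diagonal estimate (Lemma~\ref{lem: layerwise} with $r=t$ and $g_u=f_u$) to bound $\sum_{u\ne v}|\EE{\rp}(f_uf_v)|\le \tC_{\ref{lem: layerwise}}\,\dW{t}^2\,R\,\sum_u\Unorm{f_u}{\rp}^2$, and then makes this at most $\tfrac{\kappa}{2}\sum_u\Unorm{f_u}{\rp}^2$ by choosing $\tCR$ large (for $t\ge 1$) or by the hypothesis $\tCB\ge\sqrt{2\tC/\kappa}$ (for $t=0$, where $\dW{0}=1/(\tCB R)$). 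So the threshold on $\tCB$ is not ``downstream''---it is needed precisely here, inside the proof of the proposition, to obtain the near-orthogonality that you assumed for free.
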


The following lemma is a key technical result that will be used in the proof of Proposition~\ref{prop layerwise main}. In some sense, the statement of that proposition is a weaker version of this lemma, included for the sake of readability.

\begin{lemma}\label{lem: layerwise}
    There exists a constant $\tC \in \CC$ such that the following holds.

    Fix $\rp$ satisfying Assumption~\ref{Assume rp}. For $t,r \in [0, \rkr]$, consider
    a collection of functions
    \[
        f_u \in \dR{u} \quad \forall\,u \in \V{t}, 
        \quad\mbox{and}\quad 
        g_v \in \dR{v} \quad \forall\,v \in \V{r}.
    \]
    Then,
    \begin{align*}
        \sum_{\substack{u \in \V{t},\,v \in \V{r}\\u \neq v}}
        \maxnorm{\D{\rp}\bigl(f_u\,g_v\bigr)}
        &\;\le\;
        \tC\,\dW{t}\,\dW{r}\,R\,
        \bigl(d\,\lame^2\bigr)^{\tfrac{|t-r|}{2}}\,
        \exp\Bigl(-\eps\bigl(\rkr - \max\{t,r\}\bigr)\Bigr)
        \\
        &\qquad{}\cdot
        \sqrt{\sum_{u \in \V{t}} \Unorm{f_u}{\rp}^2 }
        \,\sqrt{\sum_{v \in \V{r}} \Unorm{g_v}{\rp}^2 },
    \end{align*}
    and
    \begin{align*}
        \sum_{\substack{u \in \V{t},\,v \in \V{r}\\u \neq v}}
        \bigl|\EE{\rp}\bigl(f_u\,g_v\bigr)\bigr|
        &\;\le\;
        \tC\,\dW{t}\,\dW{r}\,R\,
        \bigl(d\,\lame^2\bigr)^{\tfrac{|t-r|}{2}}
        \,\sqrt{\sum_{u \in \V{t}} \Unorm{f_u}{\rp}^2 }
        \,\sqrt{\sum_{v \in \V{r}} \Unorm{g_v}{\rp}^2 }.
    \end{align*}
\end{lemma}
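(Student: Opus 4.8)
\textbf{Proof plan for Lemma~\ref{lem: layerwise}.}
The strategy is to apply Proposition~\ref{prop RuRv} pairwise and then carry out the double sum over $u\in\V{t}$, $v\in\V{r}$ by organizing the pairs according to their nearest common ancestor $w$. Write $k_t = \rk{\cdot}$ throughout; for a pair $u\neq v$ with nearest common ancestor $w$ (necessarily satisfying $\rk{w}\ge\max\{t,r\}$), Proposition~\ref{prop RuRv} gives
\[
  \maxnorm{\D{\rp} f_u g_v}
  \;\le\;
  \tC\,\dW{t}\,\dW{r}\,
  \lame^{\rk{w}-t}\,\lame^{\rk{w}-r}\,
  \exp\!\bigl(-\eps(\rkr-\rk{w})\bigr)\,
  \Unorm{f_u}{\rp}\,\Unorm{g_v}{\rp},
\]
and the analogous bound without the $\exp(-\eps(\rkr-\rk{w}))$ factor for $|\EE{\rp}f_ug_v|$. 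The first main step is to fix a value $s\ge\max\{t,r\}$, sum over all pairs whose common ancestor $w$ has $\rk{w}=s$, and bound this partial sum; then sum over $s$ from $\max\{t,r\}$ up to $\rkr$.

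For the partial sum at a fixed ancestor-height $s$: each vertex $w\in\V{s}$ has at most $Rd^{\rk{w}-t}=Rd^{s-t}$ descendants in $\V{t}$ (by the degree-domination assumption, since vertices of $\V{t}$ sit $s-t$ levels below $w$), and likewise at most $Rd^{s-r}$ descendants in $\V{r}$. Applying Cauchy--Schwarz twice — once over the descendants of each fixed $w$ in $\V{t}$ against those in $\V{r}$, and once over $w\in\V{s}$ — converts the sum $\sum \Unorm{f_u}{\rp}\Unorm{g_v}{\rp}$ over pairs hanging off $w$ into $\sqrt{Rd^{s-t}}\sqrt{Rd^{s-r}}$ times $\sqrt{\sum_{u\in\V{t}}\Unorm{f_u}{\rp}^2}\sqrt{\sum_{v\in\V{r}}\Unorm{g_v}{\rp}^2}$ (using that the descendant sets of distinct $w\in\V{s}$ are disjoint). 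Combining with the decay factor $\lame^{2s-t-r}$ from Proposition~\ref{prop RuRv}, the contribution at height $s$ is of order
\[
  \dW{t}\dW{r}\,R\,\bigl(d\lame^2\bigr)^{s}\,\bigl(d\lame^2\bigr)^{-(t+r)/2}\,
  \exp\!\bigl(-\eps(\rkr-s)\bigr)
  \;=\;
  \dW{t}\dW{r}\,R\,\bigl(d\lame^2\bigr)^{s-(t+r)/2}\exp\!\bigl(-\eps(\rkr-s)\bigr),
\]
times the two $\ell^2$-norm factors. (Here I am being slightly cavalier about whether the $\sqrt{R}$ factors combine to $R$ or $\sqrt{R}\cdot\sqrt{R\,d^{\,|t-r|}}$-type bookkeeping; the cleaner route is to note $d^{s-t}\cdot d^{s-r} = d^{2s-t-r}$ and pull out $d^{-|t-r|/2}$ after writing $2s-t-r = 2(s-\max\{t,r\}) + |t-r|$.)

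The final step is the geometric sum over $s\in[\max\{t,r\},\rkr]$. Writing $m=\max\{t,r\}$ and $s = m + j$ for $j\ge 0$, the summand is a constant multiple of $(d\lame^2)^{m-(t+r)/2}\cdot(d\lame^2)^{j}\exp(-\eps(\rkr-m-j))$. Since $\exp(-1.1\eps)=\sqrt{d\lame^2}$ by Definition~\ref{def  varepsilon}, we have $(d\lame^2)^j\exp(\eps j) = \exp(-2.2\eps j)\exp(\eps j)=\exp(-1.2\eps j)$, so the sum over $j$ converges to a constant in $\CC$ times its first term, namely $(d\lame^2)^{m-(t+r)/2}\exp(-\eps(\rkr-m))$. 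Finally $m - (t+r)/2 = |t-r|/2$, which yields exactly the claimed factor $(d\lame^2)^{|t-r|/2}\exp(-\eps(\rkr-\max\{t,r\}))$. The argument for $|\EE{\rp}f_ug_v|$ is identical but with the $\exp(-\eps(\rkr-s))$ factor absent: the sum over $s$ of $(d\lame^2)^{s}$ is then dominated by its \emph{top} term $s=\rkr$ rather than converging geometrically from the bottom — wait, this needs care. In fact for $\EE{\rp}$ we instead sum $(d\lame^2)^{s-(t+r)/2}$ over $s\in[m,\rkr]$; since $d\lame^2<1$ this is a decreasing geometric series bounded by a constant times its first term $(d\lame^2)^{m-(t+r)/2}=(d\lame^2)^{|t-r|/2}$, giving the second displayed bound.

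\textbf{Main obstacle.} The principal difficulty is the combinatorial bookkeeping of the double Cauchy--Schwarz: one must be careful that after grouping pairs by common ancestor $w$ and bounding the number of descendants, the $\ell^2$ masses $\Unorm{f_u}{\rp}^2$ (resp.\ $\Unorm{g_v}{\rp}^2$) are each counted at most once across all $w\in\V{s}$ — this uses that each $u\in\V{t}$ has a unique ancestor in $\V{s}$, so the descendant sets $\{u\in\V{t}: u\preceq w\}$ partition $\V{t}$ as $w$ ranges over $\V{s}$. A second, milder subtlety is verifying that the geometric series in $s$ genuinely converges with ratio bounded away from $1$ in $\CC$ — this is where the gap built into Definition~\ref{def  varepsilon} between $\exp(-1.1\eps)=\sqrt{d\lame^2}$ and $\exp(-\eps)$ is essential, ensuring $(d\lame^2)\exp(\eps)<1$ so that the $\maxnorm{}$-bound series closes, and that $d\lame^2<1$ alone suffices for the $\EE{\rp}$-bound series. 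I would also need to handle the boundary case $\rkr - \max\{t,r\}$ small (so few terms in the $s$-sum) and the edge interaction with $\OO{\cdot}{-1}$ when $t=r$ and $u=v$, but the $u=v$ diagonal is explicitly excluded here and folded into the separate ${\bf 1}(r=t)$ term of Proposition~\ref{prop layerwise main}.
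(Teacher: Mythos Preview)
Your proposal is correct and follows essentially the same approach as the paper: invoke Proposition~\ref{prop RuRv} pairwise, group pairs by the height $s$ of their nearest common ancestor, use the degree-domination bound $|\{u\in\V{t}:u\preceq w\}|\le Rd^{s-t}$ together with a double Cauchy--Schwarz to reduce to the $\ell^2$-norms, and then sum the resulting geometric series in $s$ using the gap $d\lame^2\exp(\eps)<1$ from Definition~\ref{def  varepsilon}. The only cosmetic difference is that the paper relaxes the condition ``$w$ is the nearest common ancestor'' to the simpler ``$u,v\preceq w$'' (which overcounts but only by a constant absorbed into the geometric series), whereas you argue via the partition of $\V{t}$ by ancestors in $\V{s}$; both routes yield the same bound.
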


\begin{rem}
    Note that the condition \(u \neq v\) is redundant when \(t \neq r\) because
    \(\V{t}\) and \(\V{r}\) are naturally disjoint in that case.
\end{rem}

Let us state a quick corollary of Lemma~\ref{lem: layerwise} that will be used in the proof of Proposition~\ref{prop layerwise main}.
\begin{cor}\label{cor layerwise norm}
    Suppose \(\tCB \ge \sqrt{\frac{2}{\kappa}\,\tC_{\ref{lem: layerwise}}}\). 
    Then the following holds for sufficiently large \(\tCR\): 
    Consdier \(\rp\) satisfying Assumption~\ref{Assume rp}. 
    For \(t \in [0, \rkr]\) and a collection of functions
    \[
        f_u \in \dR{u} \quad \forall\,u \in \V{t},
    \]
    we have
    \begin{align*}
        \frac{1}{(1+\kappa)} \sum_{u \in \V{t}} \EE{\rp}\!\bigl(f_u^2\bigr)
        \;\;\le\;\;
        \EE{\rp}\!\Bigl(\sum_{u \in \V{t}} f_u\Bigr)^{2}
        \;\;\le\;\;
        (1+\kappa)\,\sum_{u \in \V{t}} \EE{\rp}\!\bigl(f_u^2\bigr)\,.
    \end{align*}
\end{cor}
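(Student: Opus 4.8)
\textbf{Proof plan for Corollary~\ref{cor layerwise norm}.}
The plan is to expand the square and apply Lemma~\ref{lem: layerwise} in the diagonal case $t=r$ to control the off-diagonal cross terms. Writing $f_t = \sum_{u \in \V{t}} f_u$, we have
\[
  \EE{\rp}(f_t^2)
  = \sum_{u \in \V{t}} \EE{\rp}(f_u^2)
  + \sum_{\substack{u,v \in \V{t}\\ u \neq v}} \EE{\rp}(f_u g_v),
\]
so it suffices to show that the absolute value of the cross-term sum is at most $\kappa \sum_{u \in \V{t}} \EE{\rp}(f_u^2)$. Applying the second inequality of Lemma~\ref{lem: layerwise} with $r = t$, the cross-term sum is bounded by
\[
  \tC_{\ref{lem: layerwise}}\, \dW{t}^2\, R \cdot (d\lame^2)^0 \cdot \sum_{u \in \V{t}} \Unorm{f_u}{\rp}^2
  = \tC_{\ref{lem: layerwise}}\, \dW{t}^2\, R \sum_{u \in \V{t}} \EE{\rp}(f_u^2),
\]
using that $|t-r| = 0$ makes the geometric factor equal to $1$, and that $\Unorm{f_u}{\rp}^2 = \EE{\rp}(f_u^2)$ by Definition~\ref{def norm}.

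It then remains to check that $\tC_{\ref{lem: layerwise}}\, \dW{t}^2\, R \le \kappa$. First I would split on whether $t=0$ or $t>0$. If $t=0$, then $\dW{t} = \frac{1}{\tCB R}$, so $\tC_{\ref{lem: layerwise}}\, \dW{t}^2\, R = \frac{\tC_{\ref{lem: layerwise}}}{\tCB^2 R} \le \frac{\tC_{\ref{lem: layerwise}}}{\tCB^2}$, and the hypothesis $\tCB \ge \sqrt{\tfrac{2}{\kappa}\tC_{\ref{lem: layerwise}}}$ gives $\frac{\tC_{\ref{lem: layerwise}}}{\tCB^2} \le \frac{\kappa}{2} \le \kappa$. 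If $t>0$, then $\dW{t} = \Dt \exp(-\eps t) \le \Dt$, so the quantity is at most $\tC_{\ref{lem: layerwise}}\, \Dt^2\, R$; since $\Dt = \max\{\cdots\} R \exp(-\eps \hB)$ and $\hB \ge \tCR(\log R + 1)$, this can be made $\le \kappa$ by choosing $\tCR$ large enough (this is exactly the ``$\Dt$ arbitrarily small'' mechanism from Definition~\ref{def:Dt-and-dWu}). Combining the two cases, $|\sum_{u\neq v}\EE{\rp}(f_u g_v)| \le \kappa \sum_{u\in\V{t}}\EE{\rp}(f_u^2)$, which yields both the upper bound $\EE{\rp}(f_t^2) \le (1+\kappa)\sum_u \EE{\rp}(f_u^2)$ and the lower bound $\EE{\rp}(f_t^2) \ge (1-\kappa)\sum_u \EE{\rp}(f_u^2) \ge \frac{1}{1+\kappa}\sum_u \EE{\rp}(f_u^2)$, using $1-\kappa \ge \frac{1}{1+\kappa}$.

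The proof is essentially routine given Lemma~\ref{lem: layerwise}; there is no serious obstacle. The only point requiring a little care is making sure that the single constant $\tC_{\ref{lem: layerwise}}$ is handled uniformly across the $t=0$ and $t>0$ cases — in the $t=0$ case the smallness comes from the hypothesis on $\tCB$, while in the $t>0$ case it comes from taking $\tCR$ large — so the quantified statement ``for sufficiently large $\tCR$'' must be read as: first fix $\tCB$ to satisfy the stated lower bound, then choose $\tCR$ large enough (depending on $M,d$ and hence on $\tC_{\ref{lem: layerwise}}, \kappa$) to handle the $t>0$ case. One should also note that $\Unorm{f_u}{\rp}$ here is a slight abuse: $f_u \in \dR{u} \subseteq \F{\rp_{\pe}}$, so $\Unorm{\cdot}{\rp}$ is well-defined, and $\Unorm{f_u}{\rp}^2 = \EE{\rp}f_u^2$ is immediate from the definition.
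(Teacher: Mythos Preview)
Your approach is exactly the paper's: expand the square, bound the off-diagonal sum via Lemma~\ref{lem: layerwise} with $r=t$, and split on $t=0$ versus $t>0$ to check $\tC_{\ref{lem: layerwise}}\,\dW{t}^2 R$ is small. There is, however, one genuine slip at the very end. You claim $1-\kappa \ge \tfrac{1}{1+\kappa}$, but this is false for every $\kappa>0$ since $(1-\kappa)(1+\kappa)=1-\kappa^2<1$. So from the bound $|\text{cross terms}|\le \kappa\sum_u \EE{\rp}(f_u^2)$ you cannot deduce the lower inequality as stated.

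The fix is to keep the sharper bound $\tfrac{\kappa}{2}$ rather than weakening to $\kappa$. In the $t=0$ case you already computed $\tC_{\ref{lem: layerwise}}/\tCB^2 \le \kappa/2$ from the hypothesis on $\tCB$ (this is precisely why the factor $2$ sits inside the square root), and in the $t>0$ case you can just as well make $\tC_{\ref{lem: layerwise}}\,\Dt^2 R \le \kappa/2$ by taking $\tCR$ large. Then $1-\tfrac{\kappa}{2}\ge \tfrac{1}{1+\kappa}$ does hold (equivalently $\tfrac{\kappa}{2}(1-\kappa)\ge 0$), and the lower bound follows. This is exactly how the paper closes the argument. (Minor typo: in your expansion the cross terms should read $\EE{\rp}(f_u f_v)$, not $\EE{\rp}(f_u g_v)$.)
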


\begin{proof}
    First, if \(t = \rkr\), there is only a single term since \(\V{\rkr} = \{\rp\}\), so there is nothing to prove. 
    
    Now, assume \(t \in [0, \rkr - 1]\). Consider the difference
    \begin{align*}
        \Bigl|\,
            \EE{\rp}\!\Bigl(\sum_{u \in \V{t}} f_u\Bigr)^{2}
            \;-\;
            \sum_{u \in \V{t}} \EE{\rp}\!\bigl(f_u^2\bigr)
        \Bigr|
        &= 
        \Bigl|\,
            \sum_{\substack{u,v \in \V{t}\\u \neq v}}
            \EE{\rp}\!\bigl(f_u\,f_v\bigr)
        \Bigr|
        \;\le\;
        \sum_{\substack{u,v \in \V{t}\\u \neq v}}
        \Bigl|\EE{\rp}\!\bigl(f_u\,f_v\bigr)\Bigr|.
    \end{align*}
    By applying Lemma~\ref{lem: layerwise} with \(r = t\) and \(f_u = g_u\), we obtain
    \begin{align*}
        \sum_{\substack{u,v \in \V{t}\\u \neq v}}
        \Bigl|\EE{\rp}\!\bigl(f_u\,f_v\bigr)\Bigr|
        \;\le\;
        \tC_{\ref{lem: layerwise}} \,\dW{t}^{2}\,R
        \sum_{u \in \V{t}} \EE{\rp}\!\bigl(f_u^2\bigr).
        \tag*{$(*)$}
    \end{align*}
    Hence, if 
    \[
        \tC_{\ref{lem: layerwise}}\;\dW{t}^{2}\,R \;\;\le\;\; \tfrac{1}{2}\,\kappa,
    \]
    then
    \[
        1 + \tC_{\ref{lem: layerwise}}\;\dW{t}^{2}\,R \;\;\le\;\; 1 + \kappa,
        \quad\text{and}\quad
        1 - \tC_{\ref{lem: layerwise}}\;\dW{t}^{2}\,R \;\;\ge\;\; 1 - \tfrac{1}{2}\,\kappa 
        \;\;\ge\;\; \tfrac{1}{\,1+\kappa\,}.
    \]
    From these inequalities, the lemma follows.

    Recall from \eqref{def  dW} that
    \[
        \dW{t} \;=\;
        \begin{cases}
            \displaystyle \frac{1}{\tCB\,R}, & \text{if } t=0,\\[6pt]
            \displaystyle \Dt\,\exp\bigl(-\eps\,t\bigr), & \text{if } t>0.
        \end{cases}
    \]
    For \(t \ge 1\), we have \(\dW{t} \le \Dt\). Since 
    \(\Dt = \tC'\,\exp\bigl(-\eps\,\hB\bigr)\) for some prefixed $\tC' \in \CC$ and \(\hB \ge \tCR\,\bigl(\log(R)+1\bigr)\),
    it follows that 
    \[
        \tC_{\ref{lem: layerwise}}\;\dW{t}^{2}\,R \;\;\le\;\; \tfrac{1}{2}\,\kappa
        \quad
        \text{when \(\tCR\) is sufficiently large.}
    \]
    When \(t = 0\), we have
    \[
        \tC_{\ref{lem: layerwise}}\;\dW{0}^{2}\,R
        \;=\;
        \frac{\tC_{\ref{lem: layerwise}}}{\tCB}
        \;\le\;\;
        \tfrac{1}{2}\,\kappa
        \quad
        \text{provided}
        \quad
        \tCB \;\ge\; \sqrt{\frac{2}{\kappa}\,\tC_{\ref{lem: layerwise}}}.
    \]
    Therefore, by choosing 
    \[
        \tCB \;\ge\; \sqrt{\frac{2}{\kappa}\,\tC_{\ref{lem: layerwise}}},
    \]
    we ensure that the above condition holds for all \(t \in [0, \rkr]\), completing 
    the proof.
\end{proof}

\begin{proof}[ Proof of Lemma \ref{lem: layerwise}]
    \noindent
    Without loss of generality, assume \(t \ge r\). For simplicity, let 
    \[
      V_t = \V{t}
      \quad\text{and}\quad
      V_r = \V{r}.
    \]
    For \(u,v \in \V{}\), let \(\rho(u,v)\) denote the nearest common ancestor of \(u\) and \(v\).  
    We can express the sum as
    \begin{align*}
      \sum_{\substack{u \in V_t,\,v \in V_r\\u \neq v}}
      \maxnorm{\D{\rp}\bigl(f_u\,g_v\bigr)}
      &=
      \sum_{s = t}^{\rkr}
      \sum_{w \in \V{s}}
      \sum_{\substack{u \in V_t,\,v \in V_r\\\rho(u,v) = w\\u \neq v}}
      \maxnorm{\D{\rp}\bigl(f_u\,g_v\bigr)}.
    \end{align*}
    
    \step{Invoke Proposition \ref{prop RuRv}}
    By invoking \eqref{eq RuRv D} from Proposition~\ref{prop RuRv}, we obtain
    \begin{align*}
      (*) 
      &= 
      \sum_{s = t}^{\rkr}
      \sum_{w \in \V{s}}
      \sum_{\substack{u \in V_t,\,v \in V_r\\\rho(u,v) = w\\u \neq v}}
      \tC_{\ref{prop RuRv}}\,
      \dW{t}\,\lame^{\,s-t}\,\dW{r}\,\lame^{\,s-r}\,
      \exp\bigl(-\eps\,(\rkr - s)\bigr)\,
      \Unorm{f_u}{\rp}\,\Unorm{g_v}{\rp}.
    \end{align*}
    Since all summands are non-negative, we can relax the conditions \(\rho(u,v) = w\) and \(u \neq v\) to \(u,v \pe w\), obtaining
    \begin{align}
      \nonumber
      (*)
      &\le
      \sum_{s = t}^{\rkr}
      \sum_{w \in \V{s}}
      \sum_{\substack{u \in V_t,\,v \in V_r\\u,v \pe w}}
      \tC_{\ref{prop RuRv}}\,
      \dW{t}\,\lame^{\,s-t}\,\dW{r}\,\lame^{\,s-r}\,
      \exp\bigl(-\eps\,(\rkr - s)\bigr)\,
      \Unorm{f_u}{\rp}\,\Unorm{g_v}{\rp}
      \\
      \label{eq layerwise 01}
      &\le
      \tC_{\ref{prop RuRv}}\,
      \dW{t}\,\dW{r}
      \sum_{s = t}^{\rkr}
      \lame^{\,s-t}\,\lame^{\,s-r}\,
      \exp\bigl(-\eps\,(\rkr - s)\bigr)\,
      \sum_{\substack{w \in \V{s}\\u,v \pe w}}
      \Unorm{f_u}{\rp}\,\Unorm{g_v}{\rp}.
    \end{align}
    
    \step{Bounding the sum over \(u \in V_t, v \in V_r\) such that \(u,v \pe w\)}
    For each \(w \in V_s\) in the above summation, 
    \(\{u \in V_t : u \pe w\}\) is the \((s-t)\)-th descendant set of \(w\), and has size at most \(R\,d^{s-t}\) by our tree assumption. 
    Similarly, 
    \(\{v \in V_r : v \pe w\}\) is the \((s-r)\)-th descendant set of \(w\), also with size at most \(R\,d^{s-r}\).
    
    Recall a special case of the Cauchy--Schwarz inequality for a sum of real numbers \(\{a_i\}_{i \in I}\):
    \[
      \biggl(\sum_{i \in I} a_i\biggr)^{2}
      \;=\;
      \biggl(\sum_{i \in I} 1 \cdot a_i\biggr)^2
      \;\le\;
      |I| \sum_{i \in I} a_i^{2}.
    \]
    Applying this we get
    \begin{align*}
      \sum_{\substack{u \in V_t,\,v \in V_r\\u,v \pe w}}
      \Unorm{f_u}{\rp}\,\Unorm{g_v}{\rp}
      &= 
      \Biggl(\,\sum_{\substack{u \in V_t\\u \pe w}} \Unorm{f_u}{\rp}\Biggr)
      \Biggl(\,\sum_{\substack{v \in V_r\\v \pe w}} \Unorm{g_v}{\rp}\Biggr)
      \\
      &\le
      \sqrt{\,R\,d^{s-t}}\,
      \sqrt{\sum_{\substack{u \in V_t\\u \pe w}} \Unorm{f_u}{\rp}^{2}}
      \;\cdot \;
      \sqrt{\,R\,d^{s-r}}\,
      \sqrt{\sum_{\substack{v \in V_r\\v \pe w}} \Unorm{g_v}{\rp}^{2}}.
    \end{align*}
    Thus, the sum over \(w\) in \eqref{eq layerwise 01} is bounded by
    \begin{align*}
      \sum_{w \in \V{s}}
      \sum_{\substack{u \in V_t,\,v \in V_r\\u,v \pe w}}
      \Unorm{f_u}{\rp}\,\Unorm{g_v}{\rp}
      &\le
      R\,d^{\,s-t}\,(\sqrt{d}\,)^{\,t-r}
      \sum_{w \in \V{s}}
      \sqrt{\sum_{\substack{u \in V_t\\u \pe w}} \Unorm{f_u}{\rp}^{2}}
      \,
      \sqrt{\sum_{\substack{v \in V_r\\v \pe w}} \Unorm{g_v}{\rp}^{2}}
      \\
      &\le
      R\,d^{\,s-t}\,(\sqrt{d}\,)^{\,t-r}
      \sqrt{\sum_{u \in V_t} \Unorm{f_u}{\rp}^{2}}
      \,
      \sqrt{\sum_{v \in V_r} \Unorm{g_v}{\rp}^{2}},
    \end{align*}
    where the last inequality again follows from Cauchy--Schwarz. Substituting this back into \eqref{eq layerwise 01} yields
    \begin{align}
      \label{eq layerwise 02}
      \sum_{\substack{u \in V_t,\,v \in V_r\\u \neq v}}
      \maxnorm{\D{\rp}\bigl(f_u\,g_v\bigr)}
      \;\le\;&
      \tC_{\ref{prop RuRv}}\,
      \dW{t}\,\dW{r}\,R\,
      \bigl(d\,\lame^{2}\bigr)^{\tfrac{t-r}{2}}\,
      \left(
        \sum_{s = t}^{\rkr}
        \bigl(d\,\lame^{2}\bigr)^{s-t}\,
        \exp\!\bigl(-\eps\,(\rkr - s)\bigr)
      \right) \cdot  \\
      \nonumber
      &\; \cdot 
      \sqrt{\sum_{u \in V_t} \Unorm{f_u}{\rp}^{2}}
      \sqrt{\sum_{v \in V_r} \Unorm{g_v}{\rp}^{2}}.
    \end{align}
    
    We pause to note that the same argument applies to bound
    \(\sum_{\substack{u \in V_t,\,v \in V_r\\u \neq v}} \bigl|\EE{\rp}\bigl(f_u\,g_v\bigr)\bigr|\),
    the only difference being that we invoke \eqref{eq RuRv E} from Proposition~\ref{prop RuRv}
    instead of \eqref{eq RuRv D}. The result matches the above expression but omits the factor 
    \(\exp\bigl(-\eps\,(\rkr - s)\bigr)\):
    \begin{align}
      \label{eq layerwise 03}
      \sum_{\substack{u \in V_t,\,v \in V_r\\u \neq v}}
      \bigl|\EE{\rp}\bigl(f_u\,g_v\bigr)\bigr|
      &\le
      \tC_{\ref{prop RuRv}}\,
      \dW{t}\,\dW{r}\,R\,
      \bigl(d\,\lame^{2}\bigr)^{\tfrac{t-r}{2}}
      \left(
        \sum_{s = t}^{\rkr}
        \bigl(d\,\lame^{2}\bigr)^{s-t}
      \right)
      \sqrt{\sum_{u \in V_t} \Unorm{f_u}{\rp}^{2}}
      \sqrt{\sum_{v \in V_r} \Unorm{g_v}{\rp}^{2}}.
    \end{align}

    \step{Bounding the sum of \((d\,\lame^{2})^{\,s-t}\,\exp\bigl(-\eps\,(\rkr - s)\bigr)\)}
From the definitions of \(\eps\) and \(\lame\) in Definition~\ref{def varepsilon}, we have
\[
  d\,\lame^{2}\,\exp(\eps)
  \;\le\;
  \exp(- 2 \cdot 1.1  + \eps) 
  \;=\;
  \exp\bigl(-1.2\,\eps\bigr).
\]
Thus, 
\[
  \sum_{s = t}^{\rkr}
  \bigl(d\,\lame^{2}\bigr)^{\,s-t}
  \exp\bigl(-\eps\,(\rkr - s)\bigr)
  \;\le\;
    \sum_{ n = 0}^\infty  \exp\bigl(-1.2\,\eps\,n\bigr) \cdot \exp\bigl(-\eps\,(\rkr - t)\bigr)
  \;\le\;
  \tC\,\exp\bigl(-\eps\,(\rkr - t)\bigr)
\]
for some \(\tC \in \CC\). Substituting this into \eqref{eq layerwise 02} yields
\begin{align*}
  \sum_{\substack{u \in V_t,\,v \in V_r\\u \neq v}}
  \maxnorm{\D{\rp}\bigl(f_u\,g_v\bigr)}
  &\le
  \tC\,
  \dW{t}\,\dW{r}\,R\,
  \bigl(d\,\lame^{2}\bigr)^{\tfrac{t-r}{2}}\,
  \exp\bigl(-\eps\,(\rkr - t)\bigr)\,
  \sqrt{\sum_{u \in V_t} \Unorm{f_u}{\rp}^{2}}
  \,\sqrt{\sum_{v \in V_r} \Unorm{g_v}{\rp}^{2}},
\end{align*}
where \(\tC\) has been increased if necessary to absorb any constant factors.

\step{Bounding the sum of \((d\,\lame^{2})^{\,s-t}\)}
Since \(d\,\lame^{2} < \exp\bigl(-2\,\eps\bigr)\in \CC\) from Definition \ref{def varepsilon}, it follows that
\[
  \sum_{s = t}^{\rkr}
  \bigl(d\,\lame^{2}\bigr)^{\,s-t}
  \;\le\;
  \tC
\]
for some \(\tC \in \CC\). Substituting this into \eqref{eq layerwise 02} gives
\begin{align*}
  \sum_{\substack{u \in V_t,\,v \in V_r\\u \neq v}}
  \bigl|\EE{\rp}\bigl(f_u\,g_v\bigr)\bigr|
  &\le
  \tC\,
  \dW{t}\,\dW{r}\,R\,
  \bigl(d\,\lame^{2}\bigr)^{\tfrac{t-r}{2}}\,
  \sqrt{\sum_{u \in V_t} \Unorm{f_u}{\rp}^{2}}
  \,\sqrt{\sum_{v \in V_r} \Unorm{g_v}{\rp}^{2}}.
\end{align*}

\end{proof}

\subsection{Proof of Proposition \ref{prop layerwise main}}
\begin{proof}
    \step{Case 1: \(t \neq r\) and neither is zero}

    Assume \(t \neq r\) and \(t, r \neq 0\). From Definition \ref{def:Dt-and-dWu}, we have
    \[
      \dW{t} \;=\; \Dt\,\exp\bigl(-\eps\,t\bigr)
      \quad\text{and}\quad
      \dW{r} \;=\; \Dt\,\exp\bigl(-\eps\,r\bigr).
    \]
    First, observe that
    \begin{align*}
      \maxnorm{\D{\rp}\bigl(f_t\,g_r\bigr)}
      &\;\le\;
      \sum_{u \in \V{t}}\,\sum_{v \in \V{r}}
      \maxnorm{\D{\rp}\bigl(f_u\,g_v\bigr)}
      \;=\;
      \sum_{\substack{u \in \V{t},\,v \in \V{r}\\u \neq v}}
      \maxnorm{\D{\rp}\bigl(f_u\,g_v\bigr)},
    \end{align*}
    where the equality follows from the fact that \(u\) and \(v\) lie in distinct layers of the subtree \(\T{\rp}\).

    \noindent
    Applying Lemma~\ref{lem: layerwise} and Corollary~\ref{cor layerwise norm} gives
    \begin{align*}
      (*)
      &\;\le\;
      (1 + \kappa)\,\tC_{\ref{lem: layerwise}}\,R\,
      \Dt^{2}\,
      \exp\bigl(-\eps\,t\bigr)\,
      \exp\bigl(-\eps\,r\bigr)\,
      \bigl(d\,\lame^{2}\bigr)^{\tfrac{|t-r|}{2}}\,
      \exp\!\Bigl(-\eps\bigl(\rkr - \max\{t,r\}\bigr)\Bigr)\,
      \Unorm{f_t}{\rp}\,\Unorm{g_r}{\rp}.
    \end{align*}
    Recalling from Definition~\ref{def varepsilon} that \(\bigl(d\,\lame^{2}\bigr)^{1/2} \le \exp\bigl(-1.1\,\eps\bigr)\), 
    together with $$\exp(- \eps t) \exp( - \eps r) =  \exp(-\eps\,\max\{t,r\})\exp(-\eps\,\min\{t,r\})\,$$ 
   we simplify:
    \begin{align}
    \nonumber
      (*)
      &\;\le\;
      (1 + \kappa)\,\tC_{\ref{lem: layerwise}}\,R\,
      \Dt^{2}\,
      \exp\bigl(-\eps\,\max\{t,r\}\bigr)\,
      \exp\bigl(-\eps\,\min\{t,r\}\bigr)\,
      \exp\bigl(-1.1\,\eps\,|t-r|\bigr)
      \\ \nonumber
      &\qquad{}\times
      \exp\!\Bigl(-\eps\,\bigl(\rkr - \max\{t,r\}\bigr)\Bigr)\,
      \Unorm{f_t}{\rp}\,\Unorm{g_r}{\rp}
      \\ 
      \label{eq layerwise 04}
      &=\;
      (1 + \kappa)\,\tC_{\ref{lem: layerwise}}\,R\,
      \Dt^{2}\,
      \exp\bigl(-\eps\,\rkr\bigr)\,
      \exp\!\Bigl(-1.1\,\eps\,|t-r|\;-\;\eps\,\min\{t,r\}\Bigr)\,
      \Unorm{f_t}{\rp}\,\Unorm{g_r}{\rp}.
    \end{align}
    
    Next, since we can choose \(\tCR\) large enough after fixing \(\tCB\), recall
    \[
      \Dt \;\le\; \tC'\,R\,
      \exp\bigl(-\tCR\,\bigl(\log(R)+1\bigr)\bigr)
    \]
    for some prefixed $\tC' \in \CC$, 
    so for sufficiently large \(\tCR\),
    \[
      R\,\Delta
      \;\le\;
      \frac{1}{\tCB}.
    \]
    Under this assumption, the above bound is even stronger than the claim in the proposition. This condition will be used repeatedly throughout the proof.

    \noindent
    A similar argument bounds \(\EE{\rp}\bigl(f_t\,g_r\bigr)\), except that \(\exp\!\bigl(-\eps\,(\rkr - \max\{t,r\})\bigr)\) does not appear:
    \begin{align*}
      \bigl|\EE{\rp}\bigl(f_t\,g_r\bigr)\bigr|
      &\;\le\;
      (1 + \kappa)\,\tC_{\ref{lem: layerwise}}\,R\,
      \Dt^{2}\,
      \exp\Bigl(-\eps\,(t + r)\Bigr)\,
      \exp\bigl(-1.1\,\eps\,|t-r|\bigr)\,
      \Unorm{f_t}{\rp}\,\Unorm{g_r}{\rp}.
    \end{align*}

    \step{Case 2: \(t \neq r\) and \(\min\{t,r\} = 0\)}
Assume without loss of generality that \(0 = t \le r\). Then
\[
  \dW{0} \;=\; \frac{1}{\tCB\,R}.
\]
The difference of this case from Case~1 is precisely the change of the value $\dW{t}$.  
Using Lemma~\ref{lem: layerwise} and Corollary~\ref{cor layerwise norm}, we obtain
\begin{align*}
  \maxnorm{\D{\rp}\bigl(f_t\,g_r\bigr)}
  &\;\le\;
  (1 + \kappa)\,\tC_{\ref{lem: layerwise}}\,
  \frac{\Dt}{\tCB}\,
  \exp\bigl(-\eps\,r\bigr)\,
  \bigl(d\,\lame^{2}\bigr)^{\tfrac{|t-r|}{2}}\,
  \exp\!\Bigl(-\eps\,\bigl(\rkr - \max\{t,r\}\bigr)\Bigr)\,
  \Unorm{f_t}{\rp}\,\Unorm{g_r}{\rp}
  \\
  &\;\le\;
  (1 + \kappa)\,\tC_{\ref{lem: layerwise}}\,
  \frac{\Dt}{\tCB}\,
  \exp\bigl(-\eps\,\rkr\bigr)\,
  \exp\!\Bigl(-1.1\,\eps\,|t-r|\;-\;\eps\,\min\{t,r\}\Bigr)\,
  \Unorm{f_t}{\rp}\,\Unorm{g_r}{\rp}.
\end{align*}
For the expectation term, the absence of the factor 
\(\exp\bigl(-\eps\,(\rkr - \max\{t,r\})\bigr)\) yields
\begin{align*}
  \bigl|\EE{\rp}\bigl(f_t\,g_r\bigr)\bigr|
  &\;\le\;
  (1 + \kappa)\,\tC_{\ref{lem: layerwise}}\,
  \frac{\Dt}{\tCB}\,
  \exp\Bigl(-\eps\,(t + r)\Bigr)\,
  \exp\bigl(-1.1\,\eps\,|t-r|\bigr)\,
  \Unorm{f_t}{\rp}\,\Unorm{g_r}{\rp}.
\end{align*}

\step{Case 3: \(t = r\) and \(t \neq 0\)}
Since \(t = r\neq 0\), the estimate from Case~1 still applies to the sum over distinct indices:
\begin{align*}
  &\sum_{\substack{u \in \V{t},\,v \in \V{r}\\u \neq v}}
  \maxnorm{\D{\rp}\bigl(f_u\,g_v\bigr)} \\
  \;\le\;&
  (1 + \kappa)\,\tC_{\ref{lem: layerwise}}\,R\,
  \Dt^{2}\,
  \exp\bigl(-\eps\,\rkr\bigr)\,
  \exp\!\Bigl(-1.1\,\eps\,|t-r|\;-\;\eps\,\min\{t,r\}\Bigr)\,
  \Unorm{f_t}{\rp}\,\Unorm{g_r}{\rp}.
\end{align*}
The only additional piece is the diagonal terms:
\[
  \maxnorm{\D{\rp}\bigl(f_t\,g_r\bigr)}
  \;\le\;
  \sum_{\substack{u \in \V{t},\,v \in \V{r}\\u \neq v}}
  \maxnorm{\D{\rp}\bigl(f_u\,g_v\bigr)}
  \;+\;
  \sum_{u \in \V{t}}
  \maxnorm{\D{\rp}\bigl(f_u\,g_u\bigr)}.
\]
By Proposition~\ref{prop RuRv} (last statement), we have
\begin{align*}
  \sum_{u \in \V{t}}
  \maxnorm{\D{\rp}\bigl(f_u\,g_u\bigr)}
  &\;\le\;
  \sum_{u \in \V{t}}
  \tC_{\ref{prop RuRv}}\,
  \Dt\,
  \exp\bigl(-\eps\,\rkr\bigr)\,
  \Unorm{f_u}{\rp}\,\Unorm{g_u}{\rp}
  \\
  &\;\le\;
  \tC_{\ref{prop RuRv}}\,
  \Dt\,
  \exp\bigl(-\eps\,\rkr\bigr)\,
  \sqrt{\sum_{u \in \V{t}} \Unorm{f_u}{\rp}^{2}}
  \,\sqrt{\sum_{u \in \V{t}} \Unorm{g_u}{\rp}^{2}}
  \\
  &\;\le\;
  (1 + \kappa)\,\tC_{\ref{prop RuRv}}\,
  \Dt\,
  \exp\bigl(-\eps\,\rkr\bigr)\,
  \Unorm{f_t}{\rp}\,\Unorm{g_t}{\rp},
\end{align*}
where the final step follows from the Cauchy--Schwarz inequality. Combining both estimates completes the proof for this case.

\step{Case 4: \(t = r = 0\)}
In this final case, we have \(\dW{0} = \tfrac{1}{\tCB\,R}\). Then, 
applying Lemma \ref{lem: layerwise} and Corollary \ref{cor layerwise norm} gives
\begin{align*}
  \sum_{\substack{u \in \V{0},\,v \in \V{0}\\u \neq v}}
  \maxnorm{\D{\rp}\bigl(f_u\,g_v\bigr)}
  &\;\le\;
  (1 + \kappa)\,\tC_{\ref{lem: layerwise}}\,
  \frac{1}{(\tCB)^{2}\,R}\,
  \exp\bigl(-\eps\,\rkr\bigr)\,
  \Unorm{f_0}{\rp}\,\Unorm{g_0}{\rp}.
\end{align*}

\noindent
For the diagonal term, we again use the last statement of 
Proposition~\ref{prop RuRv}, along with the Cauchy--Schwarz inequality, and Corollary \ref{cor layerwise norm} to get:
\begin{align*}
  \sum_{u \in \V{0}}
  \maxnorm{\D{\rp}\bigl(f_u\,g_u\bigr)}
  &\;\le\;
  (1 + \kappa)\,\tC_{\ref{prop RuRv}}
  \Bigl(
    \tlam^{\rkr}
    \;+\;
    \Dt\,\exp\bigl(-\eps\,\rkr\bigr)
  \Bigr)\,
  \Unorm{f_0}{\rp}\,\Unorm{g_0}{\rp}.
\end{align*}
\end{proof}

\section{Proof of Theorem \ref{theor main}}
\label{sec: proof main}

As a first step, we establish the following norm-comparison lemma.

\begin{lemma}\label{lem f norm}
    Suppose 
    \[
      \tCB \;\ge\; \sqrt{\frac{2}{\kappa}\,\tC_{\ref{lem: layerwise}}},
    \]
    and let \(\tCR\) be sufficiently large. Then, for any \(\rp\) satisfying Assumption~\ref{Assume rp} and any \(f \in \mathcal{T}_{K+1}(\rp)\), the following holds.

    By Lemma~\ref{prop PT 2K+1}, \(f\) can be expressed as
    \[
      f 
      \;=\; 
      \sum_{u \in \V{}} f_u,
      \quad
      \text{where } f_u \in \dR{u}.
    \]
    Next, group the terms of \(f\) according to the layers of the subtree \(\T{\rp}\). For \(t \in [0, \rkr]\), define
    \[
      f_t
      \;=\;
      \sum_{u \in \V{t}} f_u.
    \]
    Then,
    \[
      \frac{1}{\,1+\kappa\,}
      \sum_{t \in [0, \rkr]} 
        \EE{\rp}\!\bigl(f_t^2\bigr)
      \;\;\le\;\;
      \EE{\rp}\!\bigl(f^2\bigr)
      \;\;\le\;\;
      (1+\kappa)
      \sum_{t \in [0, \rkr]}
        \EE{\rp}\!\bigl(f_t^2\bigr).
    \]
\end{lemma}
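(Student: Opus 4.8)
The plan is to show that the cross terms in $\EE{\rp}(f^2) = \sum_{t,r} \EE{\rp}(f_t g_r)$ (with $g_r = f_r$) are negligible compared to the diagonal terms $\sum_t \EE{\rp}(f_t^2)$. The key input is the second part of Proposition~\ref{prop layerwise main}, which bounds $|\EE{\rp}(f_t f_r)|$ for $t \neq r$ by a quantity carrying the decay factor $\exp(-1.1\,\eps\,|t-r|)$ together with the small constant $\tC\,\Dt/\tCB$. First I would write
\[
  \Bigl| \EE{\rp}(f^2) - \sum_{t \in [0,\rkr]} \EE{\rp}(f_t^2) \Bigr|
  \;\le\;
  \sum_{t \neq r} \bigl| \EE{\rp}(f_t f_r) \bigr|,
\]
and then substitute the Proposition's bound. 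At this point I need to handle the weight $\exp(-\eps(t+r))$, which is crucial: it makes the double sum converge geometrically after an application of Cauchy--Schwarz.

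\textbf{Main computation.} Using $\exp(-\eps(t+r)) \le 1$ and AM--GM in the form $\Unorm{f_t}{\rp}\Unorm{f_r}{\rp} \le \tfrac12(\Unorm{f_t}{\rp}^2 + \Unorm{f_r}{\rp}^2)$, the double sum becomes
\[
  \sum_{t \neq r} \bigl| \EE{\rp}(f_t f_r) \bigr|
  \;\le\;
  \tC\,\frac{\Dt}{\tCB}
  \sum_{t \neq r} \exp(-1.1\,\eps\,|t-r|)\,
  \tfrac12\bigl(\Unorm{f_t}{\rp}^2 + \Unorm{f_r}{\rp}^2\bigr)
  \;\le\;
  \tC\,\frac{\Dt}{\tCB}\cdot
  \Bigl(\sum_{m \ge 1} \exp(-1.1\,\eps\,m)\Bigr)
  \sum_{t} \Unorm{f_t}{\rp}^2,
\]
by symmetry in $t,r$ and reindexing $m = |t-r|$. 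The geometric series $\sum_{m\ge1}\exp(-1.1\,\eps\,m)$ is a constant in $\CC$, so absorbing it (and $\tC$) we obtain a bound of the form $\tC'\,\tfrac{\Dt}{\tCB}\sum_t \Unorm{f_t}{\rp}^2$ for some $\tC' \in \CC$. Since $\Dt \le \tC''\,R\exp(-\eps\hB)$ with $\hB \ge \tCR(\log(R)+1)$, choosing $\tCR$ large enough forces $\tC'\,\Dt/\tCB \le \kappa/2$, hence
\[
  \Bigl| \EE{\rp}(f^2) - \sum_t \EE{\rp}(f_t^2) \Bigr|
  \;\le\;
  \tfrac{\kappa}{2}\sum_t \EE{\rp}(f_t^2),
\]
recalling $\Unorm{f_t}{\rp}^2 = \EE{\rp}(f_t^2)$. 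This gives $(1-\tfrac{\kappa}{2})\sum_t\EE{\rp}(f_t^2) \le \EE{\rp}(f^2) \le (1+\tfrac{\kappa}{2})\sum_t\EE{\rp}(f_t^2)$, and then using $1 - \tfrac{\kappa}{2} \ge \tfrac{1}{1+\kappa}$ and $1+\tfrac{\kappa}{2} \le 1+\kappa$ for $\kappa \in (0,1/2)$ yields the stated two-sided inequality.

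\textbf{Anticipated obstacle.} The main thing to be careful about is making sure the hypothesis of Proposition~\ref{prop layerwise main} is actually in force — namely that $\tCB \ge \sqrt{(2/\kappa)\,\tC_{\ref{prop layerwise main}}}$ (equivalently $\sqrt{(2/\kappa)\,\tC_{\ref{lem: layerwise}}}$), which is exactly the assumption of this lemma — and that the "sufficiently large $\tCR$" here is compatible with the one needed for Corollary~\ref{cor PartII group} (so that $\rp$ satisfies Assumption~\ref{Assume rp}) and for the Proposition itself. Since all these are finitely many constraints each requiring $\tCR$ large depending only on $M,d$ (and $\tCB$, which in turn depends only on $M,d$), they can be satisfied simultaneously; but I would state this explicitly. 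A secondary minor point: the bound in Proposition~\ref{prop layerwise main} for $\EE{\rp}(f_t g_r)$ is stated only for $t \neq r$, so one must be sure the diagonal $t=r$ terms are precisely the $\sum_t \EE{\rp}(f_t^2)$ we are comparing against — which they are by construction — and that no diagonal contribution is being double-counted.
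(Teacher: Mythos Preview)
Your proposal is correct and follows essentially the same route as the paper: bound the off-diagonal sum $\sum_{t\neq r}|\EE{\rp}(f_tf_r)|$ via Proposition~\ref{prop layerwise main}, apply AM--GM to symmetrize, sum the resulting geometric series in $|t-r|$, and then absorb everything using the smallness of $\Dt$ for large $\tCR$. The only cosmetic difference is that you discard the factor $\exp(-\eps(t+r))\le 1$ outright, whereas the paper keeps it and bounds $\exp(-\eps(t+r))\exp(-1.1\eps|t-r|)\le\exp(-2\eps\max\{t,r\})$ before summing; both routes yield a uniform $\CC$-constant on the inner sum, so your simplification is harmless.
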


As a corollary of Lemma~\ref{lem f norm} and Corollary~\ref{cor layerwise norm}, we obtain the following.
\begin{cor}\label{cor f norm}
    With the same setup as in Lemma~\ref{lem f norm}, the function $f = \sum_{u \in \V{}} f_u$ satisfies 
    \[
      \frac{1}{\,(1+\kappa)^{2}\,}
      \sum_{u \in \V{}} 
        \EE{\rp}\!\bigl(f_u^2\bigr)
      \;\;\le\;\;
      \EE{\rp}\!\bigl(f^2\bigr)
      \;\;\le\;\;
      (1+\kappa)^{2}
      \sum_{u \in \V{}} 
        \EE{\rp}\!\bigl(f_u^2\bigr).
    \]
\end{cor}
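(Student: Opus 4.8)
\textbf{Proof plan for Corollary~\ref{cor f norm}.} The plan is to chain together the two layer-wise norm comparisons that have already been established. Lemma~\ref{lem f norm} gives, with $f_t := \sum_{u \in \V{t}} f_u$, the two-sided bound
\[
  \frac{1}{1+\kappa} \sum_{t \in [0,\rkr]} \EE{\rp}(f_t^2)
  \;\le\;
  \EE{\rp}(f^2)
  \;\le\;
  (1+\kappa) \sum_{t \in [0,\rkr]} \EE{\rp}(f_t^2),
\]
while Corollary~\ref{cor layerwise norm}, applied to each fixed layer $t \in [0,\rkr]$, gives
\[
  \frac{1}{1+\kappa} \sum_{u \in \V{t}} \EE{\rp}(f_u^2)
  \;\le\;
  \EE{\rp}(f_t^2)
  \;\le\;
  (1+\kappa) \sum_{u \in \V{t}} \EE{\rp}(f_u^2).
\]
Both invocations are legitimate: $\rp$ satisfies Assumption~\ref{Assume rp} by hypothesis, and the running assumptions $\tCB \ge \sqrt{(2/\kappa)\,\tC_{\ref{lem: layerwise}}}$ and ``$\tCR$ sufficiently large'' are exactly the hypotheses needed for both Lemma~\ref{lem f norm} and Corollary~\ref{cor layerwise norm}.

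First I would sum the per-layer inequality over $t \in [0,\rkr]$, which yields
\[
  \frac{1}{1+\kappa} \sum_{t \in [0,\rkr]} \sum_{u \in \V{t}} \EE{\rp}(f_u^2)
  \;\le\;
  \sum_{t \in [0,\rkr]} \EE{\rp}(f_t^2)
  \;\le\;
  (1+\kappa) \sum_{t \in [0,\rkr]} \sum_{u \in \V{t}} \EE{\rp}(f_u^2).
\]
Since $\{\V{t}\}_{t \in [0,\rkr]}$ partitions $\V{}$, the double sum $\sum_{t}\sum_{u \in \V{t}} \EE{\rp}(f_u^2)$ is exactly $\sum_{u \in \V{}} \EE{\rp}(f_u^2)$. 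Substituting this into the bound from Lemma~\ref{lem f norm} gives
\[
  \frac{1}{(1+\kappa)^2} \sum_{u \in \V{}} \EE{\rp}(f_u^2)
  \;\le\;
  \EE{\rp}(f^2)
  \;\le\;
  (1+\kappa)^2 \sum_{u \in \V{}} \EE{\rp}(f_u^2),
\]
which is precisely the claimed statement. (One should note that the decomposition $f = \sum_{u \in \V{}} f_u$ referenced in the corollary is the one produced by Proposition~\ref{prop PT 2K+1}, and $f_u \in \dR{u}$, so both cited results apply verbatim.)

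There is essentially no obstacle here: the corollary is a purely formal composition of two already-proved two-sided estimates, and the only ``content'' is the observation that iterating a $(1+\kappa)$-distortion twice produces a $(1+\kappa)^2$-distortion. The one point that deserves a word of care is making sure the \emph{same} value of $\tCR$ (chosen large enough) simultaneously validates Lemma~\ref{lem f norm} and all $\rkr+1$ applications of Corollary~\ref{cor layerwise norm}; but since the largeness threshold in Corollary~\ref{cor layerwise norm} depends only on $M$, $d$, $\kappa$, $\tCB$ and not on the particular layer $t$ or on $\rkr$, taking the maximum of finitely many thresholds (in fact just two, since the Corollary's threshold is uniform in $t$) suffices.
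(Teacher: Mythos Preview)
Your proposal is correct and matches the paper's approach exactly: the paper presents this corollary as an immediate consequence of Lemma~\ref{lem f norm} and Corollary~\ref{cor layerwise norm} (without even writing out a proof), and your argument is precisely the chaining of those two $(1+\kappa)$-distortion bounds that the paper has in mind.
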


  \begin{proof}[Proof of Lemma~\ref{lem f norm}]
    Consider the difference
    \begin{align*}
      \Bigl|\EE{\rp}\bigl(f^{2}\bigr)
        \;-\;
        \sum_{t \in [0, \rkr]} 
          \EE{\rp}\bigl(f_{t}^{2}\bigr)
      \Bigr|
      \;=\;
      \Bigl|\,
        \sum_{\substack{t,r \in [0, \rkr]\\t \neq r}} 
          \EE{\rp}\bigl(f_{t}\,f_{r}\bigr)
      \Bigr|
      \;\le\;
      \sum_{\substack{t,r \in [0, \rkr]\\t \neq r}}
        \Bigl|\EE{\rp}\bigl(f_{t}\,f_{r}\bigr)\Bigr|.
    \end{align*}
    By invoking Proposition~\ref{prop layerwise main}, we obtain
    \begin{align}
      \label{eq f norm 01}
      (*)
      \;\le\;
      \tC_{\ref{prop layerwise main}}\,
      \frac{\Dt}{\tCB}
      \sum_{\substack{t,r \in [0, \rkr]\\t \neq r}}
        \exp\!\bigl(-\eps\,(t + r)\bigr)\,
        \exp\!\bigl(-1.1\,\eps\,|t - r|\bigr)\,
        \Unorm{f_{t}}{\rp}\,\Unorm{f_{r}}{\rp}.
    \end{align}
    Next, applying the inequality 
    \(|ab| \le \frac{1}{2}\bigl(a^{2} + b^{2}\bigr)\), 
    we get
    \begin{align*}
      (*)
      &\;\le\;
      \tC_{\ref{prop layerwise main}}\,
      \frac{\Dt}{\tCB}
      \sum_{\substack{t,r \in [0, \rkr]\\t \neq r}}
        \exp\!\bigl(-\eps\,(t + r)\bigr)\,
        \exp\!\bigl(-1.1\,\eps\,|t - r|\bigr)\,
        \frac{\Unorm{f_{t}}{\rp}^{2} + \Unorm{f_{r}}{\rp}^{2}}{2}
      \\
      &=\;
      \tC_{\ref{prop layerwise main}}\,
      \frac{\Dt}{\tCB}
      \sum_{t \in [0, \rkr]}
      \Unorm{f_{t}}{\rp}^{2}
      \sum_{\substack{r \in [0, \rkr]\\r \neq t}}
        \exp\!\bigl(-\eps\,(t + r)\bigr)\,
        \exp\!\bigl(-1.1\,\eps\,|t - r|\bigr).
    \end{align*}

    \noindent
    We now bound the inner sum by a rough estimate:
    \begin{align*}
      \sum_{\substack{r \in [0, \rkr]\\r \neq t}}
        \exp\!\bigl(-\eps\,(t + r)\bigr)\,
        \exp\!\bigl(-1.1\,\eps\,|t - r|\bigr)
      &\;\le\;
      \sum_{\substack{r \in [0, \rkr]\\r \neq t}}
        \exp\!\bigl(-\eps\,(t + r)\bigr)\,
        \exp\!\bigl(-\eps\,|t - r|\bigr)
      \\
      &\;=\;
      \sum_{\substack{r \in [0, \rkr]\\r \neq t}}
        \exp\!\bigl(-2\,\eps\,\max\{t,r\}\bigr)
      \\
      &\;\le\;
      (t+1)\,\exp(-2\,\eps\,t)
      \;+\;
      \sum_{r = t+1}^{\infty}
        \exp\!\bigl(-2\,\eps\,r\bigr)
      \\
      &\;\le\;
      (\tC + t)\,\exp(-2\,\eps\,t)
      \;\;\le\;\;
      \tC'
    \end{align*}
    for some \(\tC, \tC' \in \CC\). Substituting this back into \eqref{eq f norm 01}, we get
    \begin{align*}
      \Bigl|\EE{\rp}\bigl(f^{2}\bigr)
        \;-\;
        \sum_{t \in [0, \rkr]} 
          \EE{\rp}\bigl(f_{t}^{2}\bigr)
      \Bigr|
      &\;\le\;
      \tC_{\ref{prop layerwise main}}\,
      \frac{\Dt}{\tCB}\,\tC'
      \sum_{t \in [0, \rkr]}
        \Unorm{f_{t}}{\rp}^{2}.
    \end{align*}

    \noindent
    Finally, using
    \[
      \Dt
      \;\le\;
      \tC''\,R\,
      \exp\!\Bigl(-\tCR\,\bigl(\log(R)+1\bigr)\Bigr)
      \quad
      \text{for some }
      \tC'' \in \CC,
    \]
    we can choose \(\tCR\) sufficiently large to ensure the lemma’s claim holds. 
    This completes the proof.
    \end{proof}

\begin{proof}[Proof of Theorem \ref{theor main}]

  First, observe that
\[
  \D{\rp}\!\bigl(fg\bigr)
  \;=\;
  \sum_{t,r \in [0, \rkr]}
    \D{\rp}\!\bigl(f_t\,g_r\bigr).
\]
Applying Proposition~\ref{prop layerwise main} to each term in this sum yields
\begin{align*}
  \maxnorm{\D{\rp}\!\bigl(fg\bigr)}
  &\;\le\;
  \sum_{t,r \in [0, \rkr]}
    \maxnorm{\D{\rp}\!\bigl(f_t\,g_r\bigr)}
  \\
  &\;\le\;
  \sum_{t,r \in [0, \rkr]} \Bigg(
    \frac{\tC_{\ref{prop layerwise main}}}{(\tCB)^{2}}\,
    \exp\bigl(-\eps\,\rkr\bigr)\,
    \exp\!\Bigl(
      -1.1\,\eps\,|t-r|
      \;-\;\eps\,\min\{t,r\}
    \Bigr)\,
    \Unorm{f_t}{\rp}\,\Unorm{g_r}{\rp} \\
  &\phantom{\;\le\;} \;+\;
  {\bf 1}(r = t)\,
  \tC_{\ref{prop layerwise main}}\,
  \Bigl(
    \tlam^{\rkr}
    + 
    \Dt\,\exp\bigl(-\eps\,\rkr\bigr)
  \Bigr)\,
  \Unorm{f_t}{\rp}\,\Unorm{g_r}{\rp} \Bigg)\,.
\end{align*}

We estimate these two sums separately.

\step{First summand}
We bound the first summand similarly to the proof of Lemma~\ref{lem f norm}:
\begin{align*}
  \sum_{t,r \in [0, \rkr]}
    \frac{\tC_{\ref{prop layerwise main}}}{(\tCB)^{2}}
    \exp\bigl(-\eps\,\rkr\bigr)\,
    \exp\!\Bigl(-1.1\,\eps\,|t-r|\;-\;\eps\,\min\{t,r\}\Bigr)\,
    \Unorm{f_t}{\rp}\,\Unorm{g_r}{\rp}.
\end{align*}
Define the matrix \(B\) of size \(\bigl(\rkr +1\bigr)\times\bigl(\rkr +1\bigr)\) by
\[
  B_{t,r}
  \;=\;
  \exp\!\Bigl(-1.1\,\eps\,|t-r|\;-\;\eps\,\min\{t,r\}\Bigr),
\]
and let
\(\vec{f} = \bigl(\Unorm{f_t}{\rp}\bigr)_{t \in [0,\rkr]}\)
and
\(\vec{g} = \bigl(\Unorm{g_r}{\rp}\bigr)_{r \in [0,\rkr]}\).
Then the sum above can be written as
\[
  (*)
  \;=\;
  \frac{\tC_{\ref{prop layerwise main}}}{(\tCB)^{2}}
  \exp\bigl(-\eps\,\rkr\bigr)\,
  (\vec{f})^{\top}\,B\,\vec{g}
  \;\;\le\;\;
  \frac{\tC_{\ref{prop layerwise main}}}{(\tCB)^{2}}
  \exp\bigl(-\eps\,\rkr\bigr)\,
  \|\vec{f}\|\;\|B\|\;\|\vec{g}\|,
\]
where \(\|B\|\) is the operator norm of the matrix \(B\), and
\[
  \|\vec{f}\|
  \;=\;
  \sqrt{\sum_{t \in [0,\rkr]} \Unorm{f_t}{\rp}^{2}},
  \quad
  \|\vec{g}\|
  \;=\;
  \sqrt{\sum_{r \in [0,\rkr]} \Unorm{g_r}{\rp}^{2}}.
\]
By Lemma~\ref{lem f norm}, we also have
\[
  \|\vec{f}\|
  \;\le\;
  \sqrt{(1+\kappa)\,\EE{\rp}\bigl(f^{2}\bigr)}
  \;=\;
  \sqrt{1+\kappa}\,\Unorm{f}{\rp},
  \quad
  \|\vec{g}\|
  \;\le\;
  \sqrt{1+\kappa}\,\Unorm{g}{\rp}.
\]
Hence,
\[
  (*)
  \;\le\;
  (1+\kappa)
  \,\frac{\tC_{\ref{prop layerwise main}}}{(\tCB)^{2}}
  \exp\bigl(-\eps\,\rkr\bigr)\,
  \Unorm{f}{\rp}\,\Unorm{g}{\rp}\,\|B\|.
\]
    \step{Bounding the operator norm of \(B\)}
    Since \(B\) is a symmetric matrix, there exists a unit vector \(\vec{v}\) (i.e., \(\|\vec{v}\| = 1\)) such that
    \[
      \|B\|
      \;=\;
      \vec{v}^{\top}\,B\,\vec{v}.
    \]
    Furthermore,
    \begin{align*}
      \|B\|
      &\;=\;
      \vec{v}^{\top}\,B\,\vec{v}
      \;=\;
      \sum_{t,r} 
        v_{t}\,B_{t,r}\,v_{r}
      \;\le\;
      \sum_{t,r} 
        |v_{t}|\,
        B_{t,r}\,
        |v_{r}|,
    \end{align*}
    where the last inequality follows from the non-negativity of \(B_{t,r}\).

    \noindent
    Next, we reduce to a scenario similar to that in Lemma~\ref{lem f norm}. Notice:
    \begin{align*}
      \sum_{t,r} 
        |v_{t}|\,
        B_{t,r}\,
        |v_{r}|
      &\;\le\;
      \sum_{t,r}
        \frac{|v_{t}|^{2} 
              \;+\;
              |v_{r}|^{2}}{2}
        \,B_{t,r}
      \;=\;
      \sum_{t}
        |v_{t}|^{2}
        \Bigl(
          \sum_{r} B_{t,r}
        \Bigr).
    \end{align*}
    We now analyze the term \(\sum_{r} B_{t,r}\) with respect to \(t\):
    \begin{align*}
      \sum_{r} B_{t,r}
      &\;=\;
      \sum_{r}
        \exp\Bigl(
          -1.1\,\eps\, |t-r|
          \;-\;\eps\,\min\{t,r\}
        \Bigr)
      \\
      &\;\le\;
      \sum_{r}
        \exp\Bigl(
          -\eps\,\max\{t,r\}
        \Bigr)
      \;\le\;
      \tC
    \end{align*}
    for some constant \(\tC \in \CC\). Consequently,
    \[
      \sum_{t,r} 
        |v_t|\,
        B_{t,r}\,
        |v_r|
      \;\le\;
      \sum_{t}
        |v_{t}|^{2}\,\tC
      \;=\;
      \tC.
    \]
    Thus,
    \[
      \|B\|
      \;=\;
      \vec{v}^{\top}\,B\,\vec{v}
      \;\le\;
      \tC.
    \]
    Returning to our previous bound, we conclude that the first summand can be further bounded by
    \[
      (1+\kappa)\,
      \frac{\tC_{\ref{prop layerwise main}}}{(\tCB)^{2}}\,\tC\,
      \exp\bigl(-\eps\,\rkr\bigr)\,
      \Unorm{f}{\rp}\,\Unorm{g}{\rp}.
    \]

    \step{Second summand}
\begin{align*}
    &\;\;\sum_{t,r \in [0, \rkr]}
    \mathbf{1}\{r=t\}\;\tC_{\ref{prop layerwise main}}
    \Bigl(\tlam^{\rkr} + \Dt\,\exp\bigl(-\eps\,\rkr\bigr)\Bigr)\,
    \Unorm{f_t}{\rp}\,\Unorm{g_r}{\rp}
    \\
    &= 
    \tC_{\ref{prop layerwise main}}
    \Bigl(\tlam^{\rkr} + \Dt\,\exp\bigl(-\eps\,\rkr\bigr)\Bigr)\,
    \sum_{t}
      \Unorm{f_t}{\rp}\,\Unorm{g_t}{\rp}
    \\
    &\;\le\;
    \tC_{\ref{prop layerwise main}}
    \Bigl(\tlam^{\rkr} + \Dt\,\exp\bigl(-\eps\,\rkr\bigr)\Bigr)\,
    \sqrt{\sum_{t} \Unorm{f_t}{\rp}^{2}}
    \;\sqrt{\sum_{t} \Unorm{g_t}{\rp}^{2}}
    \\
    &\;\le\;
    (1+\kappa)\,\tC_{\ref{prop layerwise main}}
    \Bigl(\tlam^{\rkr} + \Dt\,\exp\bigl(-\eps\,\rkr\bigr)\Bigr)\,
    \Unorm{f}{\rp}\,\Unorm{g}{\rp},
\end{align*}
where we use Lemma~\ref{lem f norm} and the Cauchy--Schwarz inequality in the last two steps.

\noindent
Recall from Definition~\ref{def varepsilon} that \(\tlam^{\rkr} \le \exp\bigl(-\eps\,\rkr\bigr)\), 
and \(\Dt\) becomes arbitrarily small when \(\tCR\) is large. Hence,
\[
  (*)
  \;\le\;
  2\,(1+\kappa)\,\tC_{\ref{prop layerwise main}}
  \,\exp\bigl(-\eps\,\rkr\bigr).
\]

\step{Final bound}
Combining this with our earlier estimates for the first summand gives
\[
  \maxnorm{\D{\rp}\bigl(fg\bigr)}
  \;\le\;
  \tC'\,\exp\bigl(-\eps\,\rkr\bigr)\,
  \Unorm{f}{\rp}\,\Unorm{g}{\rp},
\]
where \(\tC'\) does not depend on \(\tCB\) (since \(\tCB \ge 1\)). We can then absorb constant terms into the exponent:
\[
  \maxnorm{\D{\rp}\bigl(fg\bigr)}
  \;\le\;
  \exp\bigl(-\eps\,(\rkr - \tC'')\bigr)\,
  \Unorm{f}{\rp}\,\Unorm{g}{\rp},
\]
for some \(\tC'' \in \CC\).

\noindent
Recalling the definition of \(\rkr\), we get
\[
  \maxnorm{\D{\rp}\bigl(fg\bigr)}
  \;\le\;
  \exp\!\Bigl(
    -\eps\,
    \Bigl(\h(\rp)\;-\;\bigl(\h_K + \hB + \tC''\bigr)
    \Bigr)
  \Bigr).
\]
We may also assume \(\tCR \ge \tC''\). Thus,
\[
  \h_K + \hB + \tC''
  \;\le\;
  \h_K + 2\,\tCR\,\bigl(\log(R) + 1\bigr).
\]
Therefore, for sufficiently large \(\tCR\),
\[
  \maxnorm{\D{\rp}\bigl(fg\bigr)}
  \;\le\;
  \exp\!\Bigl(
    -\eps\,
    \Bigl(\h(\rp)\;-\;\bigl(\h_K + 2\,\tCR\,(\log(R) + 1)\bigr)
    \Bigr)
  \Bigr),
  \quad
  \text{provided } \h(\rp) \ge \h_K + \hB.
\]
Since
\[
  \h_K + \hB
  \;\le\;
  \h_K + 2\,\tCR\,(\log(R) + 1),
\]
we conclude
\[
  \h_{K+1}
  \;\le\;
  \h_K + 2\,\tCR\,(\log(R) + 1).
\]

\end{proof}


\begin{thebibliography}{DKMZ11b}

  \bibitem[Abb17]{abbe2017community}
  Emmanuel Abbe.
  \newblock Community detection and stochastic block models: recent developments.
  \newblock {\em The Journal of Machine Learning Research}, 18(1):6446--6531, 2017.
  
  \bibitem[AS18]{AbbeSandon:18}
  Emmanuel Abbe and Colin Sandon.
  \newblock Proof of the achievability conjectures for the general stochastic block model.
  \newblock {\em Communications on Pure and Applied Mathematics}, 71(7):1334--1406, 2018.
  
  \bibitem[BCMR06]{BCMR:06}
  C.~Borgs, J.~Chayes, E.~Mossel, and S.~Roch.
  \newblock The kesten-stigum reconstruction bound is tight for roughly symmetric binary channels.
  \newblock In {\em Proceedings of IEEE FOCS 2006}, pages 518--530, 2006.
  
  \bibitem[BHK{\etalchar{+}}19]{BHK+19}
  Boaz Barak, Samuel Hopkins, Jonathan Kelner, Pravesh~K Kothari, Ankur Moitra, and Aaron Potechin.
  \newblock A nearly tight sum-of-squares lower bound for the planted clique problem.
  \newblock {\em SIAM Journal on Computing}, 48(2):687--735, 2019.
  
  \bibitem[BLM15]{BoLeMa:15}
  Charles Bordenave, Marc Lelarge, and Laurent Massouli{\'e}.
  \newblock Non-backtracking spectrum of random graphs: community detection and non-regular ramanujan graphs.
  \newblock In {\em Foundations of Computer Science (FOCS), 2015 IEEE 56th Annual Symposium on}, pages 1347--1357. IEEE, 2015.
  
  \bibitem[BRZ95]{BlRuZa:95}
  P.~M. Bleher, J.~Ruiz, and V.~A. Zagrebnov.
  \newblock On the purity of the limiting {G}ibbs state for the {I}sing model on the {B}ethe lattice.
  \newblock {\em J. Statist. Phys.}, 79(1-2):473--482, 1995.
  
  \bibitem[Cav78]{Cavender:78}
  J.~A. Cavender.
  \newblock Taxonomy with confidence.
  \newblock {\em Math. Biosci.}, 40(3-4), 1978.
  
  \bibitem[CGPR19]{chen2019suboptimality}
  Wei-Kuo Chen, David Gamarnik, Dmitry Panchenko, and Mustazee Rahman.
  \newblock Suboptimality of local algorithms for a class of max-cut problems.
  \newblock {\em The Annals of Probability}, 47(3):1587--1618, 2019.
  
  \bibitem[DKMZ11a]{DKMZ:11}
  A.~Decelle, F.~Krzakala, C.~Moore, and L.~Zdeborov\'a.
  \newblock Asymptotic analysis of the stochastic block model for modular networks and its algorithmic applications.
  \newblock {\em Physics Review E}, 84:066106, Dec 2011.
  
  \bibitem[DKMZ11b]{decelle2011asymptotic}
  Aurelien Decelle, Florent Krzakala, Cristopher Moore, and Lenka Zdeborov{\'a}.
  \newblock Asymptotic analysis of the stochastic block model for modular networks and its algorithmic applications.
  \newblock {\em Physical Review E}, 84(6):066106, 2011.
  
  \bibitem[EKYPS00]{EvKePeSc:00}
  W.~S. Evans, C.~Kenyon, Yuval Y.~Peres, and L.~J. Schulman.
  \newblock Broadcasting on trees and the {I}sing model.
  \newblock {\em Ann. Appl. Probab.}, 10(2):410--433, 2000.
  
  \bibitem[Far73]{Farris:73}
  J.~S. Farris.
  \newblock A probability model for inferring evolutionary trees.
  \newblock {\em Syst. Zool.}, 22(4):250--256, 1973.
  
  \bibitem[GS14]{gamarnik2014limits}
  David Gamarnik and Madhu Sudan.
  \newblock Limits of local algorithms over sparse random graphs.
  \newblock In {\em Proceedings of the 5th conference on Innovations in theoretical computer science}, pages 369--376, 2014.
  
  \bibitem[Hig77]{Higuchi:77}
  Y.~Higuchi.
  \newblock Remarks on the limiting {G}ibbs states on a {$(d+1)$}-tree.
  \newblock {\em Publ. Res. Inst. Math. Sci.}, 13(2):335--348, 1977.
  
  \bibitem[HM24]{HanMossel:23}
  Han Huang and Elchanan Mossel.
  \newblock Low degree hardness for broadcasting on trees.
  \newblock {\em Neurips}, 2024.
  
  \bibitem[Hop18]{hopkins2018statistical}
  Samuel Hopkins.
  \newblock {\em Statistical inference and the sum of squares method}.
  \newblock PhD thesis, Cornell University, 2018.
  
  \bibitem[HS17]{hopkins2017efficient}
  Samuel Hopkins and David Steurer.
  \newblock Efficient bayesian estimation from few samples: community detection and related problems.
  \newblock In {\em 2017 IEEE 58th Annual Symposium on Foundations of Computer Science (FOCS)}, pages 379--390. IEEE, 2017.
  
  \bibitem[Iof96a]{Ioffe:96b}
  D.~Ioffe.
  \newblock Extremality of the disordered state for the {I}sing model on general trees.
  \newblock In {\em Trees (Versailles, 1995)}, volume~40 of {\em Progr. Probab.}, pages 3--14. Birkh\"auser, Basel, 1996.
  
  \bibitem[Iof96b]{Ioffe:96a}
  D.~Ioffe.
  \newblock On the extremality of the disordered state for the {I}sing model on the {B}ethe lattice.
  \newblock {\em Lett. Math. Phys.}, 37(2):137--143, 1996.
  
  \bibitem[JKLM19]{JKLM:19}
  Vishesh Jain, Frederic Koehler, Jingbo Liu, and Elchanan Mossel.
  \newblock Accuracy-memory tradeoffs and phase transitions in belief propagation.
  \newblock In Alina Beygelzimer and Daniel Hsu, editors, {\em Proceedings of the Thirty-Second Conference on Learning Theory}, volume~99 of {\em Proceedings of Machine Learning Research}, pages 1756--1771, Phoenix, USA, 25--28 Jun 2019. PMLR.
  
  \bibitem[KM22]{KoehlerMossel:22}
  Frederic Koehler and Elchanan Mossel.
  \newblock Reconstruction on trees and low-degree polynomials.
  \newblock {\em Advances in Neural Information Processing Systems}, 35:18942--18954, 2022.
  
  \bibitem[KS66a]{KestenStigum:66}
  H.~Kesten and B.~P. Stigum.
  \newblock Additional limit theorems for indecomposable multidimensional {G}alton-{W}atson processes.
  \newblock {\em Ann. Math. Statist.}, 37:1463--1481, 1966.
  
  \bibitem[KS66b]{kesten1966additional}
  Harry Kesten and Bernt~P Stigum.
  \newblock Additional limit theorems for indecomposable multidimensional galton-watson processes.
  \newblock {\em The Annals of Mathematical Statistics}, 37(6):1463--1481, 1966.
  
  \bibitem[KWB19]{kunisky2019notes}
  Dmitriy Kunisky, Alexander~S Wein, and Afonso~S Bandeira.
  \newblock Notes on computational hardness of hypothesis testing: Predictions using the low-degree likelihood ratio.
  \newblock {\em arXiv preprint arXiv:1907.11636}, 2019.
  
  \bibitem[MM06]{MezardMontanari:06}
  M.~M\'{e}zard and A.~Montanari.
  \newblock Reconstruction on trees and the spin glass transition.
  \newblock {\em Journal of Statistical Physics}, 124:1317--1350, 2006.
  
  \bibitem[MMS20]{MoMoSa:20}
  Ankur Moitra, Elchanan Mossel, and Colin Sandon.
  \newblock Parallels between phase transitions and circuit complexity?
  \newblock In {\em Conference on Learning Theory}, pages 2910--2946, 2020.
  
  \bibitem[MNS15]{MoNeSl:15}
  E.~Mossel, J.~Neeman, and A.~Sly.
  \newblock Reconstruction and estimation in the planted partition model.
  \newblock {\em Probability Theory and Related Fields}, (3-4):431--461, 2015.
  \newblock The Arxiv version of this paper is titled Stochastic Block Models and Reconstruction.
  
  \bibitem[MNS18]{MoNeSl:18}
  Elchanan Mossel, Joe Neeman, and Allan Sly.
  \newblock A proof of the block model threshold conjecture.
  \newblock {\em Combinatorica}, 38(3):665--708, 2018.
  
  \bibitem[Moo17]{Moore:17}
  Cristopher Moore.
  \newblock The computer science and physics of community detection: Landscapes, phase transitions, and hardness.
  \newblock {\em arXiv preprint arXiv:1702.00467}, 2017.
  
  \bibitem[Mos01]{Mossel:01}
  E.~Mossel.
  \newblock Reconstruction on trees: beating the second eigenvalue.
  \newblock {\em Ann. Appl. Probab.}, 11(1):285--300, 2001.
  
  \bibitem[Mos04]{Mossel:04a}
  E.~Mossel.
  \newblock Phase transitions in phylogeny.
  \newblock {\em Trans. Amer. Math. Soc.}, 356(6):2379--2404 (electronic), 2004.
  
  \bibitem[Mos19]{Mossel:19deep}
  Elchanan Mossel.
  \newblock Deep learning and hierarchal generative models.
  \newblock arXiv preprint arXiv:1612.09057, 2019.
  
  \bibitem[Mos23]{Mossel:23}
  Elchanan Mossel.
  \newblock Combinatorial statistics and the sciences.
  \newblock In {\em International Congress of Mathematicians: 2022 July 6-14}, volume~6, chapter 5553, pages 1--20. 2023.
  
  \bibitem[MSS23]{MoSlSo:23}
  Elchanan Mossel, Allan Sly, and Youngtak Sohn.
  \newblock Exact phase transitions for stochastic block models and reconstruction on trees.
  \newblock In {\em STOC 2023: Proceedings of the 55th Annual ACM Symposium on Theory of Computing}, pages 96--102, 2023.
  
  \bibitem[MW24]{MontanariWein2024}
  Andrea Montanari and Alexander~S. Wein.
  \newblock Equivalence of approximate message passing and low-degree polynomials in rank-one matrix estimation.
  \newblock {\em Probability Theory and Related Fields}, 2024.
  
  \bibitem[Ney71]{Neyman:71}
  J.~Neyman.
  \newblock Molecular studies of evolution: a source of novel statistical problems.
  \newblock In S.~S. Gupta and J.~Yackel, editors, {\em Statistical desicion theory and related topics}, pages 1--27. 1971.
  
  \bibitem[RTSZ19]{RiSeZd:19}
  Federico Ricci-Tersenghi, Guilhem Semerjian, and Lenka Zdeborov{\'a}.
  \newblock Typology of phase transitions in bayesian inference problems.
  \newblock {\em Physical Review E}, 99(4):042109, 2019.
  
  \bibitem[Sly09]{Sly:09}
  A.~Sly.
  \newblock Reconstruction of random colourings.
  \newblock {\em Comm. Math. Phys.}, 288, 2009.
  
  \bibitem[Spi75]{Spitzer:75}
  F.~Spitzer.
  \newblock Markov random fields on an infinite tree.
  \newblock {\em Ann. Probability}, 3(3):387--398, 1975.
  
  \bibitem[SW20]{schramm2020computational}
  Tselil Schramm and Alexander~S Wein.
  \newblock Computational barriers to estimation from low-degree polynomials.
  \newblock {\em arXiv preprint arXiv:2008.02269}, 2020.
  
  \end{thebibliography}

\section{\bf Linear Algebra and Tensor Algebra}
Let us recall a basic fact about representing a bilinear form as a matrix.
\begin{fact}
    \label{fact positiveSemiDefinite}
    If a finite-dimensional vector space $H$ is equipped with a symmetric, semi-positive definite bilinear form $E$, then there exists a basis $\{e_i\}_{i \in A \sqcup B}$ such that the index set can be partitioned into two disjoint sets $A$ and $B$ so that $E$ can be represented as a diagonal matrix with diagonal entries $1$ corresponding to $A$ and $0$ corresponding to $B$:
    $$
        E(e_i,e_j) = \delta_{ij} {\bf 1}_A(i){\bf 1}_A(j) \mbox{ for every } i,j \in A \sqcup B\,,
    $$
    where ${\bf 1}_A(i) = 1$ if $i \in A$ and $0$ otherwise.
\end{fact}
\subsection{Projections like operators}
\begin{lemma}
    \label{lem LA_Pi}
    Let $W \subseteq V$ be two finite-dimensional vector spaces, and let $L: V \times V \to \R$ is a symmetric semi-positive definite bilinear form.
    Then, there exists a basis of $V$ that can be partitioned into four sets $B_{W,0}, B_{W,1}, B_{V,0}, B_{V,1}$ such that:
    \begin{enumerate}
        \item  $B_{W,0} \cup B_{W,1}$ forms a basis for $W$.
        \item  Representing $L$ as a matrix with respect to this basis, then $L$ is a diagonal matrix with diagonal entries $1$ corresponding to $B_{W,1}\cup B_{V,1}$, and $0$ for entries corresponding to $B_{W,0}\cup B_{V,0}$.
    \end{enumerate}

    Furthermore, define the  projection $\Pi_W: V \to W$ as
    $$
        \Pi_W \left(\sum_{v \in B_{W,0}\cup B_{W_1} \cup B_{V,0} \cup B_{V,1}} a_v v\right)
        =
        \sum_{v \in B_{W,0}\cup B_{W_1}} a_v v\,.
    $$
    This projection $\Pi_W$ satisfies the property
    $$
        L(f,g) = L(\Pi_W f,g)
    $$
    for every $f \in V$ and $g \in W$.
\end{lemma}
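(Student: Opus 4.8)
The plan is to build the desired basis of $V$ in two stages, working outward from $W$, and then check the projection property by a direct coordinate computation. First I would apply Fact~\ref{fact positiveSemiDefinite} to the restriction $L|_{W \times W}$, which is itself symmetric and semi-positive definite: this yields a basis $B_{W,1} \sqcup B_{W,0}$ of $W$ with $L(e_i,e_j) = \delta_{ij}$ for $i,j \in B_{W,1}$, and $L(e_i,e_j)=0$ whenever $i \in B_{W,0}$ or $j \in B_{W,0}$. The point of this step is that $B_{W,0}$ spans the radical of $L|_W$ inside $W$, while $B_{W,1}$ spans a complement on which $L$ is the standard inner product.

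Next I would extend $B_{W,1} \sqcup B_{W,0}$ to a basis of $V$ by choosing any complement $U$ of $W$ in $V$, and then correct the chosen vectors of $U$ so that they become $L$-orthogonal to $B_{W,1}$: for $u \in U$, replace $u$ by $u' := u - \sum_{i \in B_{W,1}} L(u,e_i)\, e_i$, which still projects to $u$ modulo $W$ and satisfies $L(u',e_i)=0$ for all $i \in B_{W,1}$ (using that $L(e_i,e_j)=\delta_{ij}$ on $B_{W,1}$). Note we cannot similarly orthogonalize against $B_{W,0}$, but we do not need to: every $e \in B_{W,0}$ lies in the radical of $L|_W$, and moreover I would first argue that $e$ lies in the radical of $L$ on all of $V$. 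This is the one slightly delicate point: a priori $B_{W,0}$ need only be in the radical of $L|_W$. To handle it, before extending, replace $B_{W,0}$ by a basis of the space $\{w \in W : L(w,v)=0 \text{ for all } v\in V\}$ intersected appropriately — more cleanly, first pick $B_{W,1}$ as a basis of a maximal subspace of $W$ on which $L$ is nondegenerate, and then one checks that the $L|_W$-radical complement actually lies in the full radical only after extending; so instead I would restructure: choose $B_{W,1}$ spanning a subspace on which $L|_W$ is the identity, let $W_0$ be its $L|_W$-orthocomplement in $W$, and just carry $W_0$'s basis as $B_{W,0}$ without claiming it is in the full radical. Then on the complement $U$, after the correction above each $u'$ is $L$-orthogonal to $B_{W,1}$; apply Fact~\ref{fact positiveSemiDefinite}-style Gram–Schmidt to $\mathrm{span}(B_{W,0} \cup \{u'\})$ relative to $L$, but keeping $B_{W,1}$ fixed, to split that span into $B_{V,1}$ (where $L$ is the identity) and $B_{V,0}$ (the $L$-radical of that span), absorbing any leftover radical directions from $W_0$ into the bookkeeping. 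The upshot is a basis $B_{W,0} \sqcup B_{W,1} \sqcup B_{V,0} \sqcup B_{V,1}$ of $V$ with $B_{W,0}\sqcup B_{W,1}$ a basis of $W$ and $L$ diagonal with $1$'s exactly on $B_{W,1} \cup B_{V,1}$.

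Finally, with the basis in hand the projection property is a one-line check. For $g \in W$, write $g = \sum_{j \in B_{W,0}\cup B_{W,1}} b_j e_j$, and note that since $L$ is diagonal in this basis, $L(e_i,g) = 0$ for every basis vector $e_i$ with $i \in B_{V,0}\cup B_{V,1}$ (such $e_i$ pairs to zero with every $e_j$, $j\in B_{W,1}$ — this uses that the $B_{V,\cdot}$ directions were made $L$-orthogonal to $B_{W,1}$ — and with $e_j$, $j \in B_{W,0}$, the pairing is zero because those sit in the radical part). Hence for $f = \sum_v a_v e_v$, only the $B_{W,0}\cup B_{W,1}$ components of $f$ contribute to $L(f,g)$, i.e. $L(f,g) = L(\Pi_W f, g)$. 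The main obstacle is the bookkeeping in the middle step: ensuring that the diagonalization of $L$ on the complement is compatible with (and does not disturb) the already-fixed identity block $B_{W,1}$, which is exactly why the correction $u \mapsto u'$ removing the $B_{W,1}$-components must be done before invoking Fact~\ref{fact positiveSemiDefinite} on the remaining directions.
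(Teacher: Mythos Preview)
Your overall plan matches the paper's: build $B_{W,0}\cup B_{W,1}$ inside $W$, then extend by orthogonalizing against $B_{W,1}$ and diagonalizing on what remains. The difficulty you flag as ``the one slightly delicate point'' is indeed the heart of the matter, and your first instinct---argue that every $e \in B_{W,0}$ actually lies in the radical of $L$ on all of $V$---is exactly right. What is missing is the one-line argument that makes this work: for a symmetric positive semi-definite form, $L(e,e)=0$ already forces $L(e,v)=0$ for every $v\in V$ (expand $0 \le L(e+tv,e+tv) = 2tL(e,v)+t^2L(v,v)$ and let $t\to 0^\pm$, or just invoke Cauchy--Schwarz). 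Since $e\in B_{W,0}$ is in the radical of $L|_W$, in particular $L(e,e)=0$, hence $e\in\operatorname{rad}(L)$. This is precisely the paper's opening step, and once you have it, the rest of your outline (orthogonalize the complement against $B_{W,1}$, then diagonalize) goes through cleanly with no further bookkeeping issues: every cross-term $L(e_i,e_j)$ between $B_{W,0}$ and any other basis vector vanishes automatically.

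By contrast, the ``restructuring'' alternative you propose is not a valid substitute. Once you apply Gram--Schmidt to $\operatorname{span}(B_{W,0}\cup\{u'\})$ jointly, the new orthonormal directions $B_{V,1}$ will in general be nontrivial linear combinations of the $u'$ and the old $B_{W,0}$ vectors, so they need not lie in $W$ nor in any complement of $W$; you then lose property~(1) that $B_{W,0}\cup B_{W,1}$ is a basis of $W$. The phrase ``absorbing any leftover radical directions from $W_0$ into the bookkeeping'' does not fix this. Drop the restructuring entirely and simply insert the radical argument above at the delicate point; the proof is then complete and essentially identical to the paper's.
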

\begin{proof}

    \step{Radical of $L$}
    Recall the radical of $L$ is defined as
    $$
        {\rm rad}(L) = \{v \in V\,:\, L(v,w) = 0 \mbox{ for every } w \in V\}
        = \{ v \in V \,:\, L(w,v) = 0 \mbox{ for every } w \in V\}\,,
    $$
    where the equality follows from the symmetry of $L$. Here we claim that
    $$
        {\rm rad}(L) = \{v \in V\,:\, L(v,v) = 0 \}\,.
    $$
    The inclusion $\subseteq$ is trivial. For the other direction, we consider $v \in V$ such that $L(v,v) = 0$. Let $w \in V$ be arbitrary. Consider the quadratic function
    \begin{align*}
        t \mapsto L(v + tw, v + tw) \,.
    \end{align*}
    The positive semi-definiteness of $L$ implies this function is non-negative for every $t \in \R$. Next, we expand the quadratic form and get
    \begin{align*}
        L(v + tw, v + tw) = L(v,v) + 2t L(v,w) + t^2 L(w,w)  = 2t L(v,w) + t^2 L(w,w) \ge 0\,.
    \end{align*}
    If $L(w,w) =0$, this implies $L(v,w) = 0$ by taking $t =1 $ and $t=-1$. If $L(w,w) > 0$, then we can choose $t = - \frac{L(v,w)}{L(w,w)}$, which leads to
    \begin{align*}
        -\frac{L(v,w)^2}{L(w,w)} \ge 0 \Rightarrow L(v,w) = 0\,.
    \end{align*}
    Therefore, the claim follows.

    \step{Construction of the basis}

    Let $B_{W,0}$ be a basis for $W \cap {\rm rad}(L)$.
    Next, extend $B_{W,0}$ to a basis $B_{W,0} \cup B_{W,1}$ for $W$. If ${\rm span}(B_{W,0}) \neq W$, repeatedly find a new vector $w \in W$ such that $L(w,w) = 1$ and $L(w,w') = 0$ for every previously found basis element $w'$.

    \step{Proof of Claim}

    We claim that this process will terminate, resulting a basis $B_{W,0} \cup B_{W,1}$ for $W$ such that, when $L$ is represented as a matrix with respect to this basis (restricted to $W$), then it is  diagonal with entries $1$ corresponding to $B_{W,1}$, and $0$ for entries corresponding to $B_{W,0}$.

    Let $k = {\rm dim}(W) - {\rm dim}(W \setminus {\rm rad}(L))$, and assume this process has been repeated $t$ times, where $w_1,w_2,\dots, w_t$ elements found in addition to those in $B_{W,0}$.
    For any linear combination $w = \sum_{i=1}^t a_i w_i + v$, where $v$ in the span of $B_{W,0}$, we have
    \begin{align*}
        L(w,w) =  \sum_{i=1}^t a_i^2 + L(v,v)\,,
    \end{align*}
    where the equality follows from the fact that $L(w_i,w_j) = 0$ for every $i \neq j$ in the construction and $v \in {\rm rad}(L)$.

    Thus, $L(w,w) = 0$ only if all $a_i = 0$, which implies $w$ can be zero only when $w=v$. And thus, $w=0$ if and only if $a_1,\dots, a_t = 0$ and $v = 0$. Therefore,
    we conclude that $w_1,\dots, w_t$ and $B_{W,0}$ are linearly independent. In particular, this implies $t \le k$.

    Now, suppose $t < k$. Pick any $v \in
        W \setminus {\rm span}(B_{W,0} \cup \{w_1,\dots, w_t\})$, and set
    \begin{align*}
        v' = v - \sum_{i=1}^t L(v,w_i) w_i\,.
    \end{align*}
    Clearly, we have $L(v',w_i) = 0$ for every $i = 1,\dots, t$. If $L(v',v') = 0$, then $v' \in {\rm rad}(L)$, which contradicts the assumption that $v \in W \setminus {\rm rad}(L)$. Therefore, $L(v',v') > 0$.
    This allows us to set
    \begin{align*}
        w_{t+1} = \frac{v'}{\sqrt{L(v',v')}}\,.
    \end{align*}
    Thus, we conclude that the process will terminate after $k$ steps, resulting in a basis $B_{W,0} \cup B_{W,1}$ with $B_{W,1} = \{w_1,\dots, w_k\}$. The fact that $L$ is diagonal with respect to this basis, taking values $1$ for $B_{W,1}$ and $0$ for $B_{W,0}$, follows directly from the construction.

    \step{Extend the basis}

    At the same time, we can extend $B_{W,0}$ to a basis $B_{W,0} \cup B_{V,0}$ for ${\rm rad}(L)$, we can simply just keep adding vector $v \in {\rm rad}(L)$ which are linear independent with the previously found basis elements.

    Finally, extend $B_{W,0} \cup B_{W,1}$ to a basis $B_{W,0} \cup B_{W,1} \cup B_{V,0} \cup B_{V,1}$ for $V$. Use the same procedure by finding $v \in V$ such that $L(v,v) = 1$ and $L(v,v') = 0$ for every previously found basis element. This completes the construction of the basis.

    The properties of this basis and $\Pi_W$ can be directly verified by the construction.
\end{proof}
\begin{lemma}
    \label{lem LA_Pi2}
    Let $W \subseteq V$ be two finite-dimensional vector spaces, and let $L: V \times V \to \R$ is a symmetric semi-positive definite bilinear form. Suppose we have an additional vector space $H \subseteq V$ such that
    \begin{align*}
        L(f,g) = 0 \mbox{ for every } f \in H \mbox{ and } g \in W\,.
    \end{align*}
    Then, there exists a basis of $V$ that can be partitioned into three sets $B_{W,1}, B_{V,0} \supseteq H$ , and $B_{V,1}$ such that:
    \begin{enumerate}
        \item ${\rm Span}(B_{W}) \subseteq W$.
        \item Representing $L$ as a matrix with respect to this basis, then it is a diagonal matrix with diagonal entries $1$ corresponding to $B_{W,1}\cup B_{V,1}$, and $0$ for entries corresponding to $B_{V,0}$.
    \end{enumerate}
    Furthermore, if we define the projection $\Pi: V \to {\rm span}(B_W)$ as
    \begin{align*}
        \Pi \left(\sum_{v \in B_{W,1} \cup B_{V,0} \cup B_{V,1}} a_v v\right) = \sum_{v \in B_{W,1}} a_v v\,,
    \end{align*}
    This projection $\Pi$ satisfies
    \begin{enumerate}
        \item $L(f,g) = L(\Pi f,g)$ for every $f \in V$ and $g \in W$.
        \item $\Pi f = 0$ for every $f \in H$.
    \end{enumerate}
\end{lemma}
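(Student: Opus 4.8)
The plan is to bootstrap from Lemma~\ref{lem LA_Pi} and then post-compose with a coordinate projection that removes the radical of $L$ restricted to $W$. First I would apply Lemma~\ref{lem LA_Pi} to the pair $W\subseteq V$ and the form $L$, obtaining a basis of $V$ partitioned as $B_{W,0}\sqcup B_{W,1}\sqcup B_{V,0}'\sqcup B_{V,1}$, where $B_{W,0}\cup B_{W,1}$ is a basis of $W$, the matrix of $L$ in this basis is diagonal with $1$'s on $B_{W,1}\cup B_{V,1}$ and $0$'s on $B_{W,0}\cup B_{V,0}'$, and the coordinate projection $\Pi_W\colon V\to W={\rm span}(B_{W,0}\cup B_{W,1})$ satisfies $L(f,g)=L(\Pi_W f,g)$ for all $f\in V$ and $g\in W$. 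Recall from that construction that $B_{W,0}$ spans $W\cap{\rm rad}(L)$ while the $B_{W,1}$-block of $L|_W$ is the identity; in particular, for $w\in W$ the quantity $L(w,w)$ is the sum of squares of its $B_{W,1}$-coordinates.

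Next I would set $W_0={\rm span}(B_{W,0})$, $W_1={\rm span}(B_{W,1})$, let $\pi\colon W\to W_1$ be the coordinate projection along $W_0$, and define $\Pi:=\pi\circ\Pi_W\colon V\to W_1\subseteq W$. Relabelling $B_W:=B_{W,1}$, $B_{V,0}:=B_{W,0}\cup B_{V,0}'$, and keeping $B_{V,1}$, the matrix of $L$ is still diagonal with $1$'s exactly on $B_{W,1}\cup B_{V,1}$ and $0$'s on $B_{V,0}$, and $\Pi$ is precisely the coordinate projection onto ${\rm span}(B_W)$ along ${\rm span}(B_{V,0}\cup B_{V,1})$, matching the formula in the statement. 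The two asserted properties of $\Pi$ then follow quickly: for property~(1), since $\Pi_W f-\Pi f=\Pi_W f-\pi\Pi_W f\in W_0\subseteq{\rm rad}(L)$ one gets $L(\Pi f,g)=L(\Pi_W f,g)=L(f,g)$ for all $g\in W$; for property~(2), if $f\in H$ then $\Pi_W f\in W$ and $L(\Pi_W f,\Pi_W f)=L(f,\Pi_W f)=0$ by the hypothesis $L(H,W)=0$, so all $B_{W,1}$-coordinates of $\Pi_W f$ vanish, i.e. $\Pi_W f\in W_0=\ker\pi$, whence $\Pi f=0$. Finally $H\subseteq\ker\Pi={\rm span}(B_{V,0}\cup B_{V,1})$, so $H$ lies in the span of the "kernel block" of the basis (and $H\cap{\rm rad}(L)\subseteq{\rm span}(B_{V,0})$), which is what the decomposition statement records.

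The main obstacle — really the only nontrivial point — is the step asserting that $L(w,w)=0$ forces the $B_{W,1}$-coordinates of $w\in W$ to vanish, equivalently that ${\rm rad}(L|_W)=W\cap{\rm rad}(L)$. This is exactly where positive semi-definiteness of $L$ enters (the identity fails for indefinite forms), and it is precisely this fact that guarantees $\Pi_W$ already sends $H$ into $W_0$, so that the post-composition by $\pi$ only has to erase that residual piece. Everything else is routine bookkeeping about coordinate projections, so I would not expect genuine difficulty there; the write-up would mostly consist of checking that the relabelled basis still diagonalizes $L$ with the claimed pattern of $1$'s and $0$'s and that $\pi\circ\Pi_W$ is the coordinate projection described by the displayed formula.
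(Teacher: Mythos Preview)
Your argument is correct and a bit different from the paper's. The paper redoes the orthogonalization from scratch: it first asserts $H\subseteq{\rm rad}(L)$, picks $B_{V,0}$ to be a basis of ${\rm rad}(L)$, then extends by Gram--Schmidt inside $W$ and finally inside $V$. You instead invoke Lemma~\ref{lem LA_Pi} as a black box to get the four-block basis and projection $\Pi_W$, merge $B_{W,0}$ into the null block, and post-compose with the coordinate map $\pi$ killing $W_0$; the crucial step --- that $\Pi_W(H)\subseteq W_0$ --- you deduce from $L(\Pi_Wf,\Pi_Wf)=L(f,\Pi_Wf)=0$ and positive semi-definiteness. This is cleaner in one respect: the paper's assertion $H\subseteq{\rm rad}(L)$ does not actually follow from the hypothesis $L(H,W)=0$ (take $V=\mathbb R^2$ with the standard inner product, $W=\mathbb R e_1$, $H=\mathbb R e_2$), and correspondingly the literal clause ``$B_{V,0}\supseteq H$'' in the statement can fail. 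What both arguments genuinely deliver --- and what the paper actually uses when constructing $\PK{u}$ --- are the two projection properties $L(f,g)=L(\Pi f,g)$ for $g\in W$ and $\Pi|_H=0$; your route establishes these without the extra (and unneeded) claim, and you correctly flag that only $H\subseteq\ker\Pi={\rm span}(B_{V,0}\cup B_{V,1})$ is guaranteed.
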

\begin{proof}
    The proof is similar to that of Lemma \ref{lem LA_Pi}, we just outline the procedure.

    The space $H$ is contained in the radical of $L$:
    $$
        H \subseteq {\rm rad}(L) = \{v \in V\,:\, L(v,v) = 0\}\,,
    $$
    where the last equality was established in the proof of Lemma \ref{lem LA_Pi}.

    We start by taking $B_{V,0}$ to be a basis of ${\rm rad}(L)$. For every vector in $v \in V \setminus {\rm rad}(L)$,  we have $L(v,v)>0$.
    Next, we extend this basis to $B_{V,0} \cup B_{W,1}$ as a basis for $W$ by the same procedure as in the proof of Lemma \ref{lem LA_Pi}. Then, we further extend to $B_{V,0} \cup B_{W,1} \cup B_{V,1}$ as a basis for $V$, by selecting $v \in V$ such that $L(v,v) = 1$ and $L(v,v') = 0$ for every previously found basis element.
    As shown in the proof of Lemma \ref{lem LA_Pi}, the properties of the basis and the first property of $\Pi$ can be directly verified by the construction.

    The second property follows from the fact that $H \subseteq {\rm rad}(L)$ and $\Pi {\rm rad}(L) = 0$.

\end{proof}

\subsection{Submultiplicativity of the tensor product operator norms}
\begin{lemma}
    \label{lem: mainTensorCore}
    Consider four vector spaces $\{H_i^\pm\}_{i = 1,2}$, each equipped with a symmetric semi-positive definite bilinear form $E_i^\pm$. Suppose there are two bilinear map $L_i: H_i^+ \times H_i^- \to \R$ for $i = 1,2$ such that
    \begin{align*}
        |L_i(f,g)| \le \delta_i \sqrt{E_i^+(f,f) E_i^-(g,g)} \mbox{ for } f \in H_i^+, g \in H_i^-\,.
    \end{align*}
    then the their tensor product: $L_1 \otimes L_2: H_1^+ \otimes H_2^+ \times H_1^- \otimes H_2^- \to \R$ defined by
    $$
        (L_1 \otimes L_2)(f_1\otimes f_2,\,g_1 \otimes g_2) = L_1(f_1,g_1)L_2(f_2,g_2),
    $$
    satisfies
    \begin{align}
        \label{eq: mainTensorCore}
        |L_1\otimes L_2 (f,g)| \le \delta_1 \delta_2 \sqrt{E_1^+\otimes E_2^+(f,f) E_1^-\otimes E_2^-(g,g) }
        \mbox{ for } f \in H_1^+\otimes H_2^+, g \in H_1^-\otimes H_2^-.
    \end{align}
\end{lemma}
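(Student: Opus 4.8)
# Proof Proposal for Lemma~\ref{lem: mainTensorCore}

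The plan is to reduce the tensor‑product statement to ordinary matrix submultiplicativity by first diagonalizing each bilinear form. Concretely, I would invoke Fact~\ref{fact positiveSemiDefinite} applied to each of the four spaces $H_i^{\pm}$: there is a basis of $H_i^{+}$ indexed by $A_i^{+}\sqcup B_i^{+}$ on which $E_i^{+}$ is diagonal with $1$'s on $A_i^{+}$ and $0$'s on $B_i^{+}$, and similarly for $H_i^{-}$. The first observation to record is that $L_i$ must vanish as soon as either argument lies in the radical: if $E_i^{+}(f,f)=0$ then the hypothesis $|L_i(f,g)|\le \delta_i\sqrt{E_i^{+}(f,f)E_i^{-}(g,g)}$ forces $L_i(f,g)=0$ for all $g$, and symmetrically for the $B_i^{-}$ directions. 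Hence $L_i$ is supported on $\mathrm{span}(A_i^{+})\times\mathrm{span}(A_i^{-})$, and on that subspace the hypothesis says exactly that the matrix $M_i := \big(L_i(e_a^{+},e_b^{-})\big)_{a\in A_i^{+},\,b\in A_i^{-}}$ has operator norm (with respect to the standard Euclidean inner products induced by the $A_i^{\pm}$ bases) at most $\delta_i$.

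Next I would identify $L_1\otimes L_2$ with the Kronecker product $M_1\otimes M_2$. Writing a general $f\in H_1^{+}\otimes H_2^{+}$ in the product basis, the components supported on any $B_i^{+}$ factor contribute $0$ to $L_1\otimes L_2(f,g)$ by the previous paragraph (bilinearity lets one drop those terms), and meanwhile $E_1^{+}\otimes E_2^{+}(f,f)$ equals the squared Euclidean norm of the $A_1^{+}\times A_2^{+}$‑block of $f$ (the $B$‑directions only add non‑negative — in fact zero — contributions to the form but can only increase $\sqrt{E\otimes E(f,f)}$, so discarding them is safe for the inequality). Thus it suffices to prove $|(M_1\otimes M_2)(\tilde f,\tilde g)|\le \delta_1\delta_2\,\|\tilde f\|_2\,\|\tilde g\|_2$ for vectors $\tilde f,\tilde g$ in the appropriate Euclidean spaces, which is the standard fact that the operator norm of a Kronecker product is the product of the operator norms: $\|M_1\otimes M_2\|_{\mathrm{op}}=\|M_1\|_{\mathrm{op}}\|M_2\|_{\mathrm{op}}$. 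This in turn follows, for instance, from the singular value decompositions $M_i = \sum_k \sigma_k^{(i)} u_k^{(i)} (v_k^{(i)})^{\top}$ with $\sigma_k^{(i)}\le\delta_i$: then $M_1\otimes M_2 = \sum_{k,\ell}\sigma_k^{(1)}\sigma_\ell^{(2)}\,(u_k^{(1)}\otimes u_\ell^{(2)})(v_k^{(1)}\otimes v_\ell^{(2)})^{\top}$ is an orthogonal decomposition with singular values $\sigma_k^{(1)}\sigma_\ell^{(2)}\le\delta_1\delta_2$.

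Putting the pieces together: given arbitrary $f\in H_1^{+}\otimes H_2^{+}$ and $g\in H_1^{-}\otimes H_2^{-}$, project onto the non‑radical part, apply the Kronecker bound, and then bound the Euclidean norms of the projections by $\sqrt{E_1^{+}\otimes E_2^{+}(f,f)}$ and $\sqrt{E_1^{-}\otimes E_2^{-}(g,g)}$ respectively, yielding \eqref{eq: mainTensorCore}. The only genuinely delicate point is the bookkeeping around the radicals — making sure that the degenerate ($E$‑null) directions of the bilinear forms are handled cleanly, i.e.\ that $L_i$ really does annihilate them and that dropping them does not break the inequality in either direction. Once that is pinned down, everything else is the routine linear‑algebra identity for Kronecker products, and the iteration to $k$ factors (as needed for Lemma~\ref{lem: mainTensorProduct}) is an immediate induction. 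I expect the write‑up to be short: one paragraph on diagonalization and the radical argument, one paragraph on the Kronecker‑norm identity, and one line assembling the bound.
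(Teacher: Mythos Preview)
Your proposal is correct and follows essentially the same approach as the paper: both first use Fact~\ref{fact positiveSemiDefinite} to diagonalize each $E_i^{\pm}$, observe that $L_i$ vanishes on the radical directions, and thereby reduce to the case where the $H_i^{\pm}$ are genuine Hilbert spaces with $L_i$ an operator of norm $\le \delta_i$. The only cosmetic difference is that in the Hilbert space step the paper carries out the tensor-norm bound by an explicit Cauchy--Schwarz computation in orthonormal bases rather than naming it as the Kronecker-product operator-norm identity and citing SVD, but the content is the same.
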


Here we restate the Lemma \ref{lem: mainTensorProduct}, which will be a direct consequence of the above lemma.
\begin{lemma}
    Consider $2k$ vectors spaces each equipped with a symmetric semi-positive definite bilinear form $(H_i^\pm  , E_i^\pm)_{i \in k}$
    and consider $k$ finite sets $U_1,\dots, U_k$.
    Suppose we have two bilinear maps $L_i: H_i^+ \times H_i^- \to \R^{U_i}$ such that
    \begin{align*}
        \|L_i(f,g)\|_{\rm max} \le \delta_i \sqrt{E_i^+(f,f) E_i^-(g,g)} \mbox{ for } f \in H_i^+, g \in H_i^-,
    \end{align*}
    where
    \begin{align*}
        \maxnorm{x} = \max_{u \in U} |x(u)| \mbox{ for } x \in \R^{U_i}\,.
    \end{align*}
    Then,
    \begin{align*}
        \|\bigotimes_{i \in [k]} L_i (f,g)\|_{\rm max} \le \prod_{i \in k}\delta_i \sqrt{E_1^+\otimes E_2^+(f,f) E_1^-\otimes E_2^-(g,g) }
        \mbox{ for } f \in \bigotimes_{i \in [k]} H_k^+, g \in \bigotimes_{i \in [k]}H_i^-,
    \end{align*}
    where
    \begin{align*}
        \maxnorm{x} = \max_{u = (u_1,u_2,\dots,u_k) \in U_1\times U_2 \cdots \times U_k} |x(u)| \mbox{ for } x \in \bigotimes_{i \in [k]}\R^{U_i} =
        \R^{U_1\times U_2 \cdots \times U_k}\,.
    \end{align*}
\end{lemma}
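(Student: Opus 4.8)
The plan is to prove the $k$-fold statement (the restatement of Lemma~\ref{lem: mainTensorProduct}) by induction on $k$, using Lemma~\ref{lem: mainTensorCore} as the engine for the inductive step, and reducing the $\maxnorm{\cdot}$ version to the scalar-valued version of that lemma coordinate by coordinate. The base case $k=1$ is immediate: for each $a \in U_1$ the map $(f,g)\mapsto L_1(f,g)(a)$ is a scalar bilinear form on $H_1^+\times H_1^-$ whose absolute value is bounded by $\delta_1\sqrt{E_1^+(f,f)E_1^-(g,g)}$ by hypothesis (taking the $\max$ over $a$ only strengthens this for a fixed $a$), so the claim holds trivially.

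For the inductive step, assume the statement holds for $k-1$. Given bilinear maps $L_i : H_i^+\times H_i^- \to \R^{U_i}$, $i\in[k]$, fix an arbitrary index $u = (u_1,\dots,u_k) \in U_1\times\cdots\times U_k$; it suffices to bound $\bigl|\bigotimes_{i\in[k]} L_i(f,g)(u)\bigr|$ by $\prod_i \delta_i \sqrt{\bigotimes_i E_i^+(f,f)}\sqrt{\bigotimes_i E_i^-(g,g)}$ for all $f\in\bigotimes_i H_i^+$, $g\in\bigotimes_i H_i^-$. Write $\bigotimes_{i\in[k]}L_i = \bigl(\bigotimes_{i\in[k-1]}L_i\bigr)\otimes L_k$, regarded as a pair of bilinear maps into $\R^{U_1\times\cdots\times U_{k-1}}$ and $\R^{U_k}$ respectively. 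For the first factor, the inductive hypothesis gives, for each fixed $(u_1,\dots,u_{k-1})$, a scalar bilinear form on $\bigotimes_{i\in[k-1]}H_i^\pm$ with absolute value bounded by $\prod_{i\in[k-1]}\delta_i \sqrt{\bigotimes_{i\in[k-1]}E_i^+}\sqrt{\bigotimes_{i\in[k-1]}E_i^-}$; for the second factor, the hypothesis on $L_k$ gives, for the fixed coordinate $u_k$, a scalar bilinear form on $H_k^\pm$ with absolute value bounded by $\delta_k\sqrt{E_k^+}\sqrt{E_k^-}$. Now apply the two-factor Lemma~\ref{lem: mainTensorCore} with $H_1^\pm := \bigotimes_{i\in[k-1]}H_i^\pm$, $E_1^\pm := \bigotimes_{i\in[k-1]}E_i^\pm$, the scalar bilinear map obtained above as ``$L_1$'' with constant $\prod_{i\in[k-1]}\delta_i$, and $H_2^\pm := H_k^\pm$, $E_2^\pm := E_k^\pm$, the $u_k$-coordinate of $L_k$ as ``$L_2$'' with constant $\delta_k$. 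Its conclusion \eqref{eq: mainTensorCore}, together with the compatibility of the iterated tensor product of bilinear forms, i.e. $\bigl(\bigotimes_{i\in[k-1]}E_i^\pm\bigr)\otimes E_k^\pm = \bigotimes_{i\in[k]}E_i^\pm$, yields precisely the desired bound at the coordinate $u$. Taking the maximum over all $u$ finishes the induction.

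Two routine points need to be checked along the way but present no real difficulty. First, one must verify that the iterated tensor product of symmetric semi-positive definite bilinear forms is again symmetric and semi-positive definite, and that $\bigl(\bigotimes_{i\in[k-1]}E_i\bigr)\otimes E_k = \bigotimes_{i\in[k]}E_i$ as bilinear forms on $\bigotimes_{i\in[k]}H_i$; this is standard and can be seen, e.g., from Fact~\ref{fact positiveSemiDefinite}, by choosing for each $H_i$ a basis diagonalizing $E_i$ and taking tensor products of basis vectors. Second, one should note that evaluating a $\R^{U}$-valued multilinear map at a coordinate commutes with the tensor construction in the obvious way, so that ``the $(u_1,\dots,u_{k-1})$-coordinate of $\bigotimes_{i\in[k-1]}L_i$'' really is the $(k-1)$-fold tensor product of the coordinate maps of the $L_i$; this is immediate on pure tensors and extends by multilinearity.

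The main obstacle — though it is more a matter of careful bookkeeping than of genuine mathematical difficulty — is the proof of the two-factor core Lemma~\ref{lem: mainTensorCore} itself, which the induction rests on. There the point is that a general element $f\in H_1^+\otimes H_2^+$ is \emph{not} a pure tensor, so one cannot simply multiply the two scalar bounds. The standard route is: use Fact~\ref{fact positiveSemiDefinite} to pick bases $\{e^+_{1,\alpha}\}$ of $H_1^+$ and $\{e^+_{2,\beta}\}$ of $H_2^+$ (and similarly $\{e^-_{1,\gamma}\}$, $\{e^-_{2,\delta}\}$) diagonalizing $E_1^+,E_2^+,E_1^-,E_2^-$; expand $f = \sum_{\alpha,\beta} f_{\alpha\beta}\, e^+_{1,\alpha}\otimes e^+_{2,\beta}$ and $g = \sum_{\gamma,\delta} g_{\gamma\delta}\, e^-_{1,\gamma}\otimes e^-_{2,\delta}$; then $L_1\otimes L_2(f,g) = \sum L_1(e^+_{1,\alpha},e^-_{1,\gamma}) L_2(e^+_{2,\beta},e^-_{2,\delta}) f_{\alpha\beta} g_{\gamma\delta}$, and the hypotheses control $|L_i(e^+_{i,\cdot},e^-_{i,\cdot})|$ by $\delta_i$ times the product of the (diagonalized) norms of the basis vectors, which are $0$ or $1$. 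Grouping the sum appropriately and applying Cauchy--Schwarz twice (once in the ``$1$''-indices, once in the ``$2$''-indices) gives \eqref{eq: mainTensorCore}. Since this core lemma is stated in the excerpt and its proof is deferred to the appendix, for the present statement I would simply cite Lemma~\ref{lem: mainTensorCore} and carry out the induction above.
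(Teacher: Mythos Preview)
Your proposal is correct and follows essentially the same route as the paper: reduce to the scalar-valued two-factor core Lemma~\ref{lem: mainTensorCore} by fixing coordinates in the target $\R^{U_1\times\cdots\times U_k}$, and use induction on $k$ (the paper first isolates the $k=2$ case and then inducts, whereas you fold the coordinate-fixing directly into the inductive step, but this is a cosmetic difference). Your remarks on the routine verifications and the sketch of the core lemma via basis diagonalization and Cauchy--Schwarz are also in line with the paper's treatment.
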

\begin{proof}
    \underline{Reduction to the case when $k=2$} It suffices to prove the case when $k=2$. The general case follows by induction:
    Simply view $\bigotimes_{i \in [k+1]} H_i^+ =  (\bigotimes_{i \in [k]} H_i^+) \otimes H_{k+1}^+$, $\bigotimes_{i \in [k+1]} H_i^- =  (\bigotimes_{i \in [k]} H_i^-) \otimes H_{k+1}^-$. Then, it is a direct check that we apply the corollary in the two tensor products case leads to the induction step.

    \underline{Case when $k=2$.}
    Here for each $u_i \in U_i$ we define
    $$
        L_{i,u_i}(f,g) = L_i(f,g)(u_i).
    $$
    With $|L_{i,u_i}(f,g)| \le \|L_i(f,g)\|_{\rm max} \le \delta_i$, we could apply the lemma to each $L_{i,u_i}$ and get
    $$
        |L_{1,u_1}\otimes L_{2,u_2}(f,g)| \le \delta_1 \delta_2 \sqrt{E_1^+\otimes E_2^+(f,f) E_1^-\otimes E_2^-(g,g) }.
    $$
    Since it holds for every $(u_1,u_2) \in U_1 \times U_2$, it follows that
    $$
        \|L_1 \otimes L_2 (f,g)\|_{\rm max} \le \delta_1 \delta_2 \sqrt{E_1^+\otimes E_2^+(f,f) E_1^-\otimes E_2^-(g,g) }.
    $$
\end{proof}

\begin{proof} [Proof of Lemma \ref{lem: mainTensorCore}]
    Let us first show that it is suffice to prove the lemma when $H_i^\pm$ are Hilbert spaces.

    \step{Reduction to Hilbert Space}:
    By Fact \ref{fact positiveSemiDefinite}, for each space $H_i^\pm$ we can find a basis $B_{i,0}^\pm \sqcup B_{i,1}^\pm$ such that $E_i^\pm$ is represented as a diagonal matrix with diagonal entries $1$ corresponding to $B_{i,1}^\pm$ and $0$ for $B_{i,0}^\pm$. For each $f \in H_i^+$ and $g \in H_i^-$, let $f = f_0 + f_1$ and $g = g_0 + g_1$ be the decomposition according to the basis. Then, the assumption on $L_i$ implies that
    $$
        L_i(f,g)
        =
        L_i(f_0, g_0) + L_i(f_0, g_1) + L_i(f_1, g_0) +
        L_i(f_1, g_1)
        =
        L_i(f_1, g_1)\,.
    $$

    \step{Tensor Product Basis}
    Next, we consider the tensor product $H_1^+ \otimes H_2^+$, which has a basis $v \otimes w$ for $v \in B_{1,1}^+ \sqcup B_{1,0}^+$ and $w \in B_{2,1}^+ \sqcup B_{2,0}^+$. For any $f \in H_1^+ \otimes H_2^+$, we express $f = f_0 + f_1$, where $f_1$ is the component corresponding to the basis $B_{1,1}^+ \otimes B_{2,1}^+$, and $f_0$ is the rest. Similarly, for $g \in H_1^- \otimes H_2^-$, we express $g = g_0 + g_1$ in the same way. We claim that
    \begin{align*}
        L_1 \otimes L_2 (f,g)
        =
        L_1 \otimes L_2 (f_1, g_1)\,.
    \end{align*}
    To see this, consider $L_1\otimes L_2 (f_0,g_1)$. Due to the bilinear property,
    $L_1\otimes L_2 (f_0,g_1)$ is a linear combination of
    \begin{align*}
        L_1 \otimes L_2 (v^+ \otimes w^+, v^- \otimes w^-) = L_1(v^+, v^-) L_2(w^+, w^-)\,,
    \end{align*}
    with either $v^+ \in B_{1,0}^+$ or $w^+ \in B_{2,0}^+$ or both.
    Thus, either $L_1(v^+, v^-)$ or $L_2(w^+, w^-)$ must be $0$ due to the property of $L_i$. The same argument applies to $L_1\otimes L_2 (f_1,g_0)$, and $L_1\otimes L_2 (f_0,g_0)$.

    Furthermore, it is also straightforward to verify that  $E_1^+ \otimes E_2^+(f,f) = E_1^+ \otimes E_2^+(f_1,f_1)$ and $E_1^- \otimes E_2^-(g,g) = E_1^- \otimes E_2^-(g_1,g_1)$.

    Therefore, it suffices to prove the statement for $f_1$ and $g_1$. This restricts $H_i^\pm$ to the span of $B_{i,1}^\pm$, where $E_i^\pm$ acts as the identity operator with the basis. Therefore, we reduce the lemma to the case when $H_i^\pm$ are Hilbert spaces.

    Now, we consider the case when $H_i^\pm$ are Hilbert spaces.

    \step{$L_i$ as linear operator from $H_i^+$ to $H_i^-$}
    When $H_i^\pm$ are Hilbert spaces where the inner product is introduced by $E_i^\pm$, we can view $L_i$ as a linear operator from $H_i^+$ to $H_i^-$. Specifically, for each $f \in H_i^+$, the map $g \mapsto L_i(f, g)$ for $g \in H_i^-$ is a linear functional on $H_i^-$, which can be identified with a unique element $f^* \in H_i^-$ such that $L_i(f,g) = \langle f^*, g\rangle$. We write $L_if = f^*$ and $L_i(f,g) = \langle L_if ,g\rangle$. The assumption of the lemma implies that the operator norm $\|L_i\| \le \delta_i$. Further, $H_1^+ \otimes H_2^-$ is also a Hilbert space with the inner product defined by $E_1^+ \otimes E_2^+$, and the same for $H_1^- \otimes H_2^-$.

    Now, take an orthonormal basis $\{v_i^+\}$ for $H_1^+$ and $\{w_i^+\}$ for $H_2^+$, and let $\{v_k^-\}$ and $\{w_\ell^-\}$ be the corresponding orthonormal basis for $H_1^-$ and $H_2^-$, respectively.
    For any $f \in H_1^+ \otimes H_2^+$ and $g \in H_1^- \otimes H_2^-$,
    we express  $ f= \sum_{i,\alpha} a_{i\alpha} v_i^+ \otimes w_\alpha^+$ and $g = \sum_{j,\beta} b_{j\beta} v_j^- \otimes w_\beta^-$. We start with expressing $L_1 \otimes L_2(f,g)$ in terms of the basis:
    \begin{align*}
        L_1 \otimes L_2(f,g)
        = &
        \sum_{i,j,\alpha,\beta} a_{i\alpha} b_{j\beta} L_1(v_i^+, v_j^-) L_2(w_\alpha^+, w_\beta^-) \,.
    \end{align*}
    Next, we invoke the bilinear property of $L_2$ and then $L_1$ to get
    \begin{align*}
        (*)
        = &
        \sum_{i,j,\alpha}a_{i\alpha}  L_1(v_i^+, v_j^-) L_2 \Big(w_\alpha^+, b_{j\beta} \sum_{\beta} w_\beta^- \Big)
        =
        \sum_j L_2 \Big( \sum_{i,\alpha} a_{i\alpha}  L_1(v_i^+, v_j^-) w_\alpha^+,\, \sum_\beta b_{j\beta} w_\beta^- \Big)\,.
    \end{align*}
    Now, to estimate the absolute value, we first have
    \begin{align*}
        |L_1 \otimes L_2(f,g)|
        \le &
        \sum_{j}
        \bigg| L_2 \Big( \sum_{i,\alpha} a_{i\alpha}  L_1(v_i^+, v_j^-) w_\alpha^+,\, \sum_\beta b_{j\beta} w_\beta^- \bigg) \Big|\,.
    \end{align*}
    For each $j$, we apply the assumption of $L_2$ to get
    \begin{align*}
        \bigg| L_2 \Big( \sum_{i,\alpha} a_{i\alpha}  L_1(v_i^+, v_j^-) w_\alpha^+,\, \sum_\beta b_{j\beta} w_\beta^- \bigg) \Big|
        \le &
        \delta_2 \sqrt{\sum_{\alpha} \Big(  L_1\big(\sum_i a_{i\alpha}v_i^+, v_j^-\big)\Big)^2} \sqrt{ \sum_\beta b_{j\beta}^2} \,,
    \end{align*}
    where we also relied on $\{w_\alpha^+\}$ and $\{w_\beta^-\}$ being orthonormal basis of $H_2^+$ and $H_2^-$, respectively. Substituting this back to the above estimate, we get
    \begin{align*}
        |L_1 \otimes L_2(f,g)|
        \le &
        \delta_2
        \sum_j  \sqrt{\sum_{\alpha} \Big(  L_1\big(\sum_i a_{i\alpha}v_i^+, v_j^-\big)\Big)^2} \sqrt{ \sum_\beta b_{j\beta}^2}
        \le
        \delta_2
        \sqrt{\sum_{j,\alpha} \Big(  L_1\big(\sum_i a_{i\alpha}v_i^+, v_j^-\big)\Big)^2} \sqrt{ \sum_{j,\beta} b_{j\beta}^2} \,,
    \end{align*}
    where the last inequality follows from the Cauchy-Schwarz inequality.

    Now, we consider $\sum_{j,\alpha} \Big(  L_1\big(\sum_i a_{i\alpha}v_i^+, v_j^-\big)\Big)^2$. Let us fix $\alpha$ and sum over $j$:
    \begin{align*}
        \sum_j \Big(  L_1\big(\sum_i a_{i\alpha}v_i^+, v_j^-\big)\Big)^2
        =
        \sum_j \Big  \langle \underbrace{L_1(\sum_i a_{i\alpha}v_i^+)}_{\in H_1^-},  v_j^- \Big\rangle ^2
        =
        \Big\| L_1(\sum_i a_{i\alpha}v_i^+) \Big\|^2
        \le \delta_1^2 \sum_i a_{i\alpha}^2\,.
    \end{align*}
    where we used the fact that $\{v_j^-\}$ is an orthonormal basis of $H_1^-$, the assumption that the operator norm of $L_1$ is bounded by $\delta_1$, and then $\{v_i^+\}$ is an orthonormal basis of $H_1^+$. Substituting this back to the above estimate, we get
    \begin{align*}
        |L_1 \otimes L_2(f,g)|
        \le
        \delta_1\delta_2 \sqrt{\sum_{i,\alpha}a_{i\alpha}^2} \sqrt{\sum_{j,\beta} b_{j\beta}^2}
        =
        \delta_1\delta_2 \|f\| \|g\|\,,
    \end{align*}
    and the lemma follows.

\end{proof}

Here we restate the Lemma \ref{lem tensorNorm}.
\begin{lemma}
    Consider $k$ finite-dimensional vector spaces $H_1, H_2, \dots, H_k$, each equipped with a symmetric, semi-positive definite bilinear form
    $E_i: H_i \times H_i \to \R$ for $i = 1,2,\dots, k$.
    Suppose there exists linear operators $L_i: H_i \rightarrow  H_i$ for $i \in [k]$ such that
    \begin{align*}
        E_i(L_i(f), L_i(f)) \le \delta_i E_i(f,f) \mbox{ for every } f \in H_i\,,
    \end{align*}
    for some $\delta_i \ge 0$.
    Then it follows that
    \begin{align*}
        \bigotimes_{i=1}^k E_i \left(
        \bigotimes_{i=1}^k L_i f, \bigotimes_{i=1}^k L_i f \right)
        \le \prod_{i=1}^k \delta_i
        \bigotimes_{i=1}^k E_i(f,f) \mbox{ for every } f \in \bigotimes_{i=1}^kH_i \,.
    \end{align*}
\end{lemma}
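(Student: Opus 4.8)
The plan is to reduce to the case where each $E_i$ is a genuine inner product (i.e. positive definite) and then run a direct spectral/orthonormal-basis computation, exactly paralleling the reduction already used in the proof of Lemma~\ref{lem: mainTensorCore}. First I would apply Fact~\ref{fact positiveSemiDefinite} to each $H_i$: choose a basis $\{e^{(i)}_j\}_{j \in A_i \sqcup B_i}$ so that $E_i$ is the diagonal form that is $1$ on $A_i$ and $0$ on $B_i$. Write $H_i^{0} := \mathrm{span}\{e^{(i)}_j : j \in B_i\} = \mathrm{rad}(E_i)$ and $H_i^{1} := \mathrm{span}\{e^{(i)}_j : j \in A_i\}$. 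The hypothesis $E_i(L_i f, L_i f) \le \delta_i E_i(f,f)$ forces $L_i(H_i^0) \subseteq \mathrm{rad}(E_i) = H_i^0$ (since $E_i(f,f)=0 \Rightarrow E_i(L_if,L_if)=0$), so $L_i$ descends to a well-defined operator $\bar L_i$ on the quotient $H_i^1 \cong H_i / H_i^0$, which is a Hilbert space with the induced inner product $\langle\cdot,\cdot\rangle_i := E_i(\cdot,\cdot)$, and $\|\bar L_i\|^2 \le \delta_i$ by hypothesis.

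Next I would transfer the problem to the tensor product. The form $\bigotimes_i E_i$ on $\bigotimes_i H_i$ has radical equal to $\sum_i \bigl(\bigotimes_{j<i} H_j\bigr)\otimes H_i^0 \otimes \bigl(\bigotimes_{j>i}H_j\bigr)$, and modding out by it identifies the quotient with $\bigotimes_i H_i^1$, on which $\bigotimes_i E_i$ is the genuine tensor-product inner product and $\bigotimes_i L_i$ descends to $\bigotimes_i \bar L_i$. Both sides of the claimed inequality only depend on $f$ through its image in this quotient — the right side manifestly, and the left side because $\bigotimes_i L_i$ maps the radical into itself (same one-line argument as above, component by component). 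So it suffices to prove: for Hilbert spaces $K_1,\dots,K_k$ and bounded operators $\bar L_i : K_i \to K_i$ with $\|\bar L_i\| \le \sqrt{\delta_i}$, one has $\|\bigotimes_i \bar L_i\|^2 \le \prod_i \delta_i$, i.e. $\|\bigotimes_i \bar L_i\| \le \prod_i \|\bar L_i\|$. This is the standard submultiplicativity of operator norms under tensor products; I would prove it by induction on $k$, the step $k=2$ handled by picking an orthonormal basis $\{u_\alpha\}$ of $K_1$, writing $f = \sum_\alpha u_\alpha \otimes f_\alpha$ with $f_\alpha \in K_2$, noting $(\bar L_1 \otimes \bar L_2) f = \sum_\alpha (\bar L_1 u_\alpha) \otimes (\bar L_2 f_\alpha)$, and estimating
\[
  \|(\bar L_1 \otimes \bar L_2)f\|^2
  = \sum_{\alpha,\beta} \langle \bar L_1 u_\alpha, \bar L_1 u_\beta\rangle \, \langle \bar L_2 f_\alpha, \bar L_2 f_\beta\rangle
  \le \|\bar L_2\|^2 \sum_{\alpha,\beta} \langle \bar L_1^* \bar L_1 u_\alpha, u_\beta\rangle \langle f_\alpha, f_\beta \rangle
\]
and then bounding $\bar L_1^* \bar L_1 \preceq \|\bar L_1\|^2 \,\mathrm{Id}$ on the Gram matrix $(\langle f_\alpha,f_\beta\rangle)$, which is positive semidefinite, to get $\le \|\bar L_1\|^2\|\bar L_2\|^2 \sum_\alpha \|f_\alpha\|^2 = \|\bar L_1\|^2\|\bar L_2\|^2 \|f\|^2$. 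The general $k$ follows by grouping $\bigotimes_{i\le k}$ as $\bigl(\bigotimes_{i<k}\bigr)\otimes$ and applying the $k=2$ case.

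The only genuinely delicate step is the radical bookkeeping for the tensor product — verifying that $\mathrm{rad}(\bigotimes_i E_i)$ is exactly the sum of the "one $H_i^0$ factor" subspaces and that both $\bigotimes_i L_i$ and the two quadratic forms factor cleanly through the quotient. This is entirely analogous to the reduction already carried out in the proof of Lemma~\ref{lem: mainTensorCore} (there phrased via the basis $B_{i,1}^\pm \otimes B_{j,1}^\pm$ splitting off), so I would largely cite that argument rather than repeat it; once we are on honest Hilbert spaces the submultiplicativity estimate above is routine linear algebra. I expect the writeup to be short, deferring as the paper does to the appendix-style level of detail.
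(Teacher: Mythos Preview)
Your proposal is correct; the quotient-by-radical reduction and the Gram-matrix/PSD estimate for the Hilbert-space case are both valid. The paper's route is slightly different and more elementary: it also inducts down to $k=2$ and picks the diagonal basis of Fact~\ref{fact positiveSemiDefinite}, but never passes to a quotient or invokes adjoints. Instead it factors $L_1\otimes L_2 = (I_1\otimes L_2)\circ(L_1\otimes I_2)$ and bounds each one-slot operator directly from the hypothesis: writing $f=\sum_{i,j}s_{ij}\,e_i\otimes u_j$, one has $E_1\otimes E_2(L_1\otimes I_2\,f,\,L_1\otimes I_2\,f)=\sum_j b_j\,E_1\bigl(L_1(\sum_i s_{ij}e_i),\,L_1(\sum_i s_{ij}e_i)\bigr)\le \delta_1\,E_1\otimes E_2(f,f)$, and then the same for the second slot. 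This sidesteps your radical-bookkeeping concern entirely --- the diagonal entries $a_i,b_j\in\{0,1\}$ simply ride along in the computation --- and avoids PSD matrix ordering; your argument is the standard operator-theoretic proof of $\|A\otimes B\|\le\|A\|\,\|B\|$ and is more conceptual but slightly heavier.
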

\begin{proof}

    Indeed, it is sufficient to prove the lemma for $k=2$, as the general case can always be reduced to this case by induction.
    For example, suppose the lemma holds for $k-1$. In the case $k$, we can treat
    $H_2 \otimes H_3 \otimes \dots \otimes H_k$ as a single vector space $H_2'$ equipped with the bilinear form $E_2 \otimes E_3 \otimes \dots \otimes E_k$. Further, define $L_2' = L_2 \otimes L_3 \otimes \dots \otimes L_k$, which satisfies the inequality condition with $\delta_2' = \delta_2 \delta_3 \dots \delta_k$ by the induction hypothesis. This reduces the case $k$ to the case $2$. It is sufficient to prove the lemma for $k=2$.

    Recalling Fact \ref{fact positiveSemiDefinite},
    we can choose bases for $H_1$ and $H_2$, say $e_1,\dots, e_{d_1}$ for $H_1$ and $u_1,\dots, u_{d_2}$ for $H_2$, such that the matrices representing $E_1$ with respect to $\{e_1,\dots, e_{d_1}\}$ and $E_2$ with respect to $\{u_1,\dots, u_{d_2}\}$ are diagonal matrices with diagonal entries either $1$ or $0$.

    Now, consider $f \in H_1 \otimes H_2$, and express it with respect to the basis $e_i \otimes u_j$ as
    $$f = \sum_{i,j} s_{ij} e_i \otimes u_j. $$
    Then,
    $$
        E_1 \otimes E_2(f,f) = \sum_{i,j} a_i b_j s_{ij}^2\,,
    $$
    where $a_i$ and $b_j$ are the diagonal entries of $E_1$ and $E_2$, respectively, each taking values of either $1$ or $0$.

    Let $I_2$ be the identity map on $H_2$. Then, $ L_1 \otimes I_2 f = \sum_{i,j} s_{ij} L_1(e_i) \otimes u_j$ satisfies
    \begin{align*}
        E_1 \otimes E_2 \left(  \sum_{i,j} s_{ij} L_1(e_i) \otimes u_j,\, \sum_{i,j} s_{ij} L_1(e_i) \otimes u_j \right)
        = &
        \sum_{j,j'} E_1 \otimes E_2 \left(\sum_{i} s_{ij} L_1(e_i) \otimes u_j, \sum_{i} s_{ij'} L_1(e_i) \otimes u_{j'} \right) \\
        = &
        \sum_{j,j'} E_1 (\sum_{i} s_{ij} L_1(e_i), \sum_{i} s_{ij'} L_1(e_i))   E_2(u_j,u_{j'})                                  \\
        = &
        \sum_{j} E_1 \left(\sum_{i} s_{ij} L_1(e_i), \sum_{i} s_{ij} L_1(e_i) \right)  b_j\,.
    \end{align*}
    Using the inequality condition on $L_1$, we have
    \begin{align*}
        (*)
        \le &
        \sum_{j} \delta_1 a_i s_{i,j}^2  b_j
        \le
        \delta_1 E_1\otimes E_2(f,f)\,.
    \end{align*}

    Next, express $ L_1 \otimes I_2 f$ in terms of the basis $e_i \otimes u_j$ as
    \begin{align*}
        L_1 \otimes I_2 f = \sum_{i,j} t_{ij} e_i \otimes u_j.
    \end{align*}
    Applying a similar argument to $I_1 \otimes L_2(L_1 \otimes I_2 f) $, we find that
    \begin{align*}
        E_1 \otimes E_2 \left(  \sum_{i,j} t_{ij} e_i \otimes L_2(u_j),\, \sum_{i,j} t_{ij} e_i \otimes L_2(u_j) \right)
        \le &
        \delta_2 E_1\otimes E_2\Big( L_1 \otimes I_2 f, L_1 \otimes I_2 f \Big) \\
        \le &
        \delta_1 \delta_2 E_1\otimes E_2(f,f)\,.
    \end{align*}

    Finally, noting that $(I_1 \otimes L_2) \circ (L_1 \otimes I_2) = L_1 \otimes L_2$, the lemma follows.
\end{proof}

\end{document}